\documentclass[a4paper, reqno, titlepage, twoside, 11pt]{article}
\usepackage[utf8]{inputenc}
\usepackage{fancyhdr}
\usepackage{amsthm}
\usepackage{amssymb}
\usepackage[american]{babel}
\usepackage{exscale}
\usepackage[mathscr]{euscript}   
\usepackage{url}
\usepackage[all,ps,tips,tpic]{xy}
\usepackage{color}
\usepackage{bbm}
\usepackage{bbold}   
\usepackage{a4}
\usepackage{fancyhdr}
\usepackage{mathdots}   
\usepackage{mathrsfs}
\usepackage{MnSymbol}
\parindent0em
\oddsidemargin1,9cm
\evensidemargin2,3cm


\addtolength{\headwidth}{\marginparsep}
\headheight=15pt

\footskip1,4cm
\headsep0,8cm
\textheight22,1cm

\fancypagestyle{plain}{
\fancyhf{}
\cfoot{\thepage}
 
  }

\newtheorem{lem}{Lemma}

\newtheorem{definition}[lem]{Definition}

\newtheorem{deflem}[lem]{Definition/Lemma}

\newtheorem{cor}[lem]{Corollary}

\newtheorem{prop}[lem]{Proposition}

\newtheorem*{defin}{Definition}

\newtheorem*{THM}{Theorem}
\newtheorem*{Lemma}{Lemma}

\theoremstyle{remark}

\DeclareMathOperator{\id}{id}
\DeclareMathOperator{\Name}{Name}
\DeclareMathOperator{\dom}{dom}

\DeclareMathOperator{\Ord}{Ord}
\DeclareMathOperator{\OR}{OR}
\DeclareMathOperator{\SUPP}{supp}
\DeclareMathOperator{\cf}{cf}

\DeclareMathOperator{\Card}{Card}
\DeclareMathOperator{\HT}{ht}
\DeclareMathOperator{\Succ}{Succ}

\DeclareMathOperator{\supp}{supp}
\DeclareMathOperator{\rg}{rg}
\DeclareMathOperator{\im}{im}

\newcommand{\uhr}{\hspace*{-0,5mm} \upharpoonright \hspace*{-0,5mm} }

\newcommand{\m}{\mathbb}
\newcommand{\tbl}{\textquotedblleft}
\newcommand{\tbr}{\textquotedblright}

\newcommand{\ol}{\overline}
\newcommand{\wt}{\widetilde}

\setlength{\hoffset}{-1cm}
\setlength{\voffset}{-0.8cm}
\addtolength{\textwidth}{0.5cm}
\addtolength{\textheight}{1.5cm}



\begin{document}

\title{An Easton-like theorem for Zermelo-Fraenkel Set Theory without Choice}
\begin{center} \Large  \bfseries An Easton-like theorem for \\ Zermelo-Fraenkel Set Theory without Choice \end{center}

\begin{center} { \scshape Anne Fernengel and Peter Koepke } \end{center}

\vspace*{3mm}

\begin{center} \bfseries Abstract \end{center}
\begin{quote} \small We show that in Zermelo-Fraenkel Set Theory without the Axiom of Choice a surjectively modified continuum function $\theta (\kappa)$ can take almost arbitrary values for all infinite cardinals.  
This choiceless version of Easton's Theorem is in sharp contrast to the situation in $ZFC$, where for singular cardinals $\kappa$, the value of $2^\kappa$ is strongly influenced by the behavior of the continuum function below. \\
Our construction can roughly be described as follows: 
In a ground model $V \vDash ZFC\, \plus\, GCH$ with a \tbl reasonable\tbr\, function $F: \Card \rightarrow \Card$ on the 
infinite cardinals, 
a class forcing $\m{P}$ is introduced, which blows up the power sets of all cardinals according to $F$. The eventual model $N \vDash ZF$ is a symmetric extension by $\m{P}$ such that $\theta^N (\kappa) = F(\kappa)$ holds for all $\kappa$.

\end{quote}

\vspace*{3mm}

\paragraph{Introduction.}

In 1970, William Easton proved that for regular cardinals $\kappa$, any reasonable behavior of the $2^\kappa$-function is consistent with $ZFC$ (\cite{Easton}). \\ For singular cardinals $\kappa$ though, 
the situation is a lot more involved, since then the value of $2^\kappa$ is strongly influenced by the behavior of the continuum function below. For instance, the {\textit{Singular Cardinals Hypothesis}} ($SCH$) implies that for any singular cardinal $\kappa$ with $2^\nu < \kappa$ for all $\nu < \kappa$, it already follows that $2^\kappa = \kappa^\plus$. Its negation's consistency strength was determined by Motik Gitik in \cite{G1} and \cite{G2} to be the existence of a measurable cardinal $\lambda$ with Mitchell order $\sigma (\lambda) = \lambda^{\plus \plus}$. \\[-4mm]

Silver's Theorem (\cite{Silver}) states that for any cardinal $\kappa$ of uncountable cofinality with $2^\nu = \nu^\plus$ for all $\nu < \kappa$, it already follows that $2^\kappa = \kappa^\plus$. Hence, the $SCH$ holds if it holds for all singular cardinals of countable cofinality. In particular, Easton's Theorem can not be generalized to singular cardinals. \\ Another prominent result concerning upper bounds on the continuum function for singular cardinals is the following theorem by Shelah (\cite{Shelah}): 
\[ \mbox{If } 2^{\aleph_n} < \aleph_\omega \mbox{ for all } n < \omega, \mbox{ then } 2^{\aleph_\omega} < \aleph_{\omega_4}. \]
Without the Axiom of Choice, however, there is a lot more possible.
One has to distinguish between {\textit{injective}} and \textit{surjective} failures of the $SCH$: From Theorem 2 in \cite{AK} it follows that the $GCH$ below $\aleph_\omega$ together with an injective map $f: \lambda \rightarrow \powerset(\aleph_\omega)$ for high $\lambda$ has rather mild consistency strength. On the other hand, Motik Gitik and Peter Koepke prove in \cite{GK} without any large cardinal assumptions that for any cardinal $\lambda$, there exists a model $N$ of the theory  \[ZF\, \plus \, \mbox{\tbl} GCH \mbox{\textit{ holds below }} \aleph_\omega \tbr\, \plus \, \mbox{\tbl} \mbox{\textit{ there is a surjective map }} f: \powerset(\aleph_\omega) \rightarrow \lambda\tbr.\] 

Generally, the \tbl size\tbr\,of $\powerset(\kappa)$ can be measured surjectively by the $\theta$-function \[\theta (\kappa) := \sup \{ \alpha \in \Ord\ | \ \exists\, f: \powerset(\kappa) \rightarrow \alpha \mbox{ surjective function}\},\] which provides a surjective substitute for the continuum function $2^\kappa$ in settings without the Axiom of Choice.

If $\theta(\kappa) = \lambda$, then there exists a surjective function $f: \powerset(\kappa) \rightarrow \alpha$ for all $\alpha < \lambda$, but there is not surjective function $f: \powerset(\kappa) \rightarrow \lambda$. \\[-3mm]

We show that in $ZF$, there is an analogue of Easton's Theorem 
for regular \textit{and} singular cardinals. Namely, the only constraints on the $\theta$-function
are the obvious ones: weak monotonicity, and $\theta (\kappa) \geq \kappa^{\plus \plus}$ for all $\kappa$. \\[-3mm]

We write $Card$ for the class of infinite cardinals.

\begin{THM} Let $V$ be a ground model of $ZFC \, + \, GCH$ with a function $F: \Card \rightarrow \Card$ such that the following properties hold: \begin{itemize} \item $\forall \kappa\ F(\kappa) \geq \kappa^{\plus \plus}$ \item $\forall\, \kappa, \lambda \ \big(\kappa \leq \lambda \rightarrow F(\kappa) \leq F(\lambda)\big)$. \end{itemize} Then there is a cardinal-preserving extension $N \supseteq V$ with $N \vDash ZF$ such that $\theta^N (\kappa) = F(\kappa)$ holds for all $\kappa$. 
\end{THM}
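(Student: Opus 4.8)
The plan is to obtain $N$ as a symmetric submodel of a class forcing extension $V[G]$ that simultaneously blows up the power set of every infinite cardinal. For each $\kappa\in\Card$ I would first design a notion of forcing $\m{Q}_\kappa$ which, via a suitable group of automorphisms, adds new subsets of $\kappa$ together with surjections $\mathcal{P}(\kappa)\to\alpha$ for every $\alpha<F(\kappa)$, while keeping any enumeration of length $F(\kappa)$ of the new sets out of the symmetric model: for singular $\kappa$ this is modelled on the Gitik--Koepke construction at $\aleph_\omega$ from \cite{GK}, carried out relative to a fixed cofinal sequence in $\kappa$, and for regular $\kappa$ on an analogous but more straightforward addition of a family of mutually generic subsets of $\kappa$ indexed by $F(\kappa)$. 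In either case, using $GCH$ in $V$, $\m{Q}_\kappa$ is arranged to be ${<}\kappa$-distributive and to carry a chain condition that does not collapse cardinals. Let $\m{P}$ be the Easton-support product of the $\m{Q}_\kappa$; the usual factorization $\m{P}\cong\m{P}_{\leq\kappa}\times\m{P}_{>\kappa}$ into a small initial part and a highly closed tail shows, exactly as in Easton's original argument, that $\m{P}$ preserves $ZFC$ and all cardinals and cofinalities of $V$.

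Next I would equip each factor $\m{Q}_\kappa$ with the automorphism group $\m{G}_\kappa$ coming from permutations of the index set $F(\kappa)$ and with the normal filter $\m{F}_\kappa$ generated by the pointwise stabilisers $\mathrm{fix}(E)$ of subsets $E\subseteq F(\kappa)$ of size ${\leq}\kappa$; the Easton-support product of the $(\m{G}_\kappa,\m{F}_\kappa)$ is a symmetric system on $\m{P}$, and I take $N$ to be the class of all sets possessing a hereditarily symmetric $\m{P}$-name, interpreted by $G$. The support threshold is the delicate design choice: it must be coarse enough that the canonical names for the individual generic subsets, and for long enough initial segments of the family, remain symmetric, but fine enough that no single symmetric object enumerates the whole family. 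That $N$ is a transitive $ZF$-model with $V\subseteq N\subseteq V[G]$ is the standard symmetric-extension argument; the one point needing extra care, since $\m{P}$ is a proper class, is Power Set and Replacement in $N$, for which I would verify that every set of $N$ is already added by a set-sized symmetric subforcing $\m{P}\uhr\gamma$ while the remaining tail is sufficiently closed, so that the usual reflection and approximation arguments apply. Since $V[G]$ preserves all $V$-cardinals and $V\subseteq N\subseteq V[G]$, the model $N$ is then cardinal-preserving, as required.

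It remains to compute $\theta^N(\kappa)$ for each $\kappa$. For the lower bound $\theta^N(\kappa)\geq F(\kappa)$, fix $\alpha<F(\kappa)$ and write down a hereditarily symmetric name for a surjection $\mathcal{P}(\kappa)^N\to\alpha$: it reads off, from a subset $x\subseteq\kappa$, the index of the generic set it equals whenever that index is small, and combines such indices using a fixed stratification of $\alpha$ into pieces below the support threshold; its support lies in $\m{F}_\kappa$, so it is symmetric, and genericity makes it onto $\alpha$. For singular $\kappa$ one also uses that $\mathcal{P}(\kappa)^N$ codes the families added at all $\nu<\kappa$, so that $\theta^N(\kappa)$ automatically dominates the values $F(\nu)$, which is consistent with the assumed monotonicity of $F$; together with the trivial inequality $\theta^N(\kappa)\geq\theta^V(\kappa)=\kappa^{++}$ this also covers the small ordinals and confirms that $F(\kappa)\geq\kappa^{++}$ is the only lower constraint one needs to impose.

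\textbf{The main obstacle} will be the matching upper bound $\theta^N(\kappa)\leq F(\kappa)$, i.e.\ that \emph{no} hereditarily symmetric name is forced to be a surjection $\mathcal{P}(\kappa)^N\to F(\kappa)$. Given such a name $\dot f$ with symmetry support $E\in\m{F}_\kappa$, one must produce an ordinal missed by $\dot f$: the heart of the matter is a homogeneity argument showing that if $p\Vdash\dot f(\dot x)=\check\beta$ for some $\beta$ too large to be controlled by $E$ together with the support of $\dot x$, then an automorphism fixing $\dot f$ pointwise and moving $p$ suitably forces a conflicting value, a contradiction; hence every value of $\dot f$ is already determined by the restriction of its argument to the part of the generic family below $E$, and a counting of such restricted names --- again using $GCH$ in $V$ and the chain condition of $\m{P}$ --- bounds the range of $\dot f$ strictly below $F(\kappa)$. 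Making this argument go through is precisely what pins down the support threshold in $\m{F}_\kappa$, and it must be carried out uniformly in $\kappa$, simultaneously for regular and singular cardinals and compatibly with the way the different factors interact inside $\mathcal{P}(\kappa)^N$; this uniformity, rather than any single instance, is the technical core of the theorem.
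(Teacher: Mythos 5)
Your proposal tracks the broad shape of the argument -- a class forcing that multiplies $\kappa$-subsets, a symmetric submodel cut out by a normal filter, a homogeneity argument for the upper bound -- but it departs from the paper's construction in two structural ways, and one of those departures opens a genuine gap.

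First, the architecture is different. The paper does not form an Easton-support product of per-cardinal forcings $\m{Q}_\kappa$; it works with a forcing $\m{P}_0$ whose conditions are functions on trees whose levels are indexed by \emph{all} cardinals simultaneously, with \emph{finitely} many maximal points and no splitting at limits, together with a \emph{finite}-support Cohen-type product $\m{P}_1$ for successor cardinals. Because of the finite supports, $V[G]$ itself is \emph{not} a $ZFC$-model (it has a cofinal $\omega$-sequence of ordinals definable from $G$), and the authors invest a full section in proving Separation, Replacement and Power Set in $N$ by approximating sets via set-sized ``mild'' subforcings $\m{P}_0\uhr t(p)\times\m{P}_1\uhr\{(\ol\kappa_l,\ol\imath_l)\}$. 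Your claim that the Easton factorization gives $ZFC$ in $V[G]$ is therefore not the route the paper takes, though an Easton-support variant might still yield a $ZF$ model by a different route; the real problem is elsewhere.

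The genuine gap is in your choice of normal filter. You propose to generate $\m{F}_\kappa$ from pointwise stabilisers $\mathrm{fix}(E)$ with $|E|\leq\kappa$, and you then assert that ``long enough initial segments of the family remain symmetric.'' Those two statements are incompatible: a name for the sequence $\langle (G)_{(\kappa,i)}\mid i<\alpha\rangle$ with $\kappa<\alpha<F(\kappa)$ cannot be fixed by any $\mathrm{fix}(E)$ with $|E|\leq\kappa$, since a permutation that moves a single index $i\in\alpha\setminus E$ changes the sequence. With only such stabilisers the symmetric submodel sees at most the $\kappa$-supported fragments, and the very counting argument you sketch for the upper bound would then push $\theta^N(\kappa)$ all the way down to $\kappa^{++}$, destroying the lower bound you need. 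The paper's resolution is precisely the extra family of subgroups $Small_0(\kappa,[0,\alpha))$ (and $Small_1(\kappa^\plus,[0,\alpha))$) consisting of permutations that only act within $\omega$-blocks below $\alpha$: these are \emph{not} pointwise stabilisers, they generate together with the $Fix$-subgroups a normal filter, and they make the names for the ``cloud'' sequence $\langle (\wt G_0)_{(\kappa,i)}^\alpha\mid i<\alpha\ \mathrm{limit}\rangle$ symmetric, which is what yields the surjections $\powerset^N(\kappa)\to\alpha$ for all $\alpha<F(\kappa)$. Without this mechanism (or an equivalent one), the lower bound $\theta^N(\kappa)\geq F(\kappa)$ fails. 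The upper bound then requires matching work: the paper's proof is not a raw counting of names, but a delicate isomorphism argument showing that a putative surjection $S$ has a surjective restriction $S^\beta$ captured inside a cardinal-preserving extension that also carries an injection $\dom S^\beta\hookrightarrow\beta$, and the $Small$-subgroups are again used throughout to control which automorphisms fix $\dot S$.
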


In other words, the $\theta$-function can take almost arbitrary values. \\[-4mm]

This paper is structured as follows. Section 1 contains a collection of basic definitions and properties about class forcing, that we will refer to later on. In section 2, a class forcing $\m{P}$ is introduced, which blows up the power sets of all cardinals $\kappa$ according to $F$. For a $V$-generic filter $G$ on $\m{P}$, the structure $\left\langle V[G], \in, V, G\right\rangle$ will not satisfy $ZFC$: For instance, $\m{P}$ adds a cofinal function $f: \omega \rightarrow \Ord$. However, we will see that certain set-sized subforcings of $\m{P}$ are fairly mild and preserve cardinals and the $GCH$. In section 3, a group $A$ of $\m{P}$-automorphisms is introduced, with a collection of $A$-subgroups that determine our symmetric submodel $N$.  
In section 4, we prove that $N \vDash ZF$, and any set of ordinals $X \subseteq \alpha$ with $X \in N$ is contained in a cardinal-preserving $V$-generic extension by  
a\tbl mild\tbr\,$\m{P}$-subforcing as mentioned before. Hence, cardinals are $N$-$V$-absolute. 
By construction of $N$, we will have surjections $s: \powerset^N (\kappa) \rightarrow \alpha$ for all $\kappa$ and $\alpha < F(\kappa)$.
Finally, in section 5 we prove
that $\theta^N (\kappa) \leq F(\kappa)$ for all $\kappa$, 
i.e. there is no surjective function $S: \powerset (\kappa) \rightarrow F(\kappa)$ in $N$. We conclude with a remark about the $\theta$-function and $DC$ in section 6.

\section{Basic Notations and Facts about Class Forcing.} \label{class forcing} 
We briefly review a couple of basic concepts and notations about class forcing and symmetric extensions.
For a detailed introduction to forcing and class forcing, see \cite{Jech} and \cite{Friedman}. Concerning the construction of symmetric extensions, the standard method for forcing with Boolean values as described in \cite{Jech2}, is translated to partial orders. A comprehensive presentation of this technique can be found in \cite{ID}. \\[-2mm]

We work with a ground model $V$ and a forcing $(\m{P}, \leq, \m{1})$, which will be a $V$-definable class. As usual, we denote by $Name(\m{P})^V$ the class of $\m{P}$-names in $V$. 

For a $V$-generic filter $G$ on $\m{P}$ and $V[G] := \{\dot{x}^G\ | \ \dot{x} \in \Name (\m{P})^V\}$, we will work with the structure $\left\langle V[G], \in, V, G\right\rangle$, where we have unary predicate symbols for the ground model and the generic filter, with
the canonical names \[\check{V} := \{ (\check{x}, \m{1})\ | \ x \in V\}\ \ , \ \ \mbox{ and }\dot{G} := \{ (\check{p}, p)\ | \ p \in \m{P} \}. \] 

We extend our language of set theory $\mathcal{L}_\in$ by unary predicate symbols $A$ and $B$ for $V$ and $G$ respectively, and denote this extended language by $\mathcal{L}_\in^{A, B}$. \\ Generally, $\left\langle V[G], \in, V, G \right \rangle$ is not a model of $ZFC$.

\begin{defin}
For a formula $\varphi (v_0, \, \ldots \,, v_{n-1}) \in \mathcal{L}_{\in}^{A, B}$, a condition $p \in \m{P}$ and $\m{P}$-names $\dot{x}_0, \, \ldots \,,  \dot{x}_{n-1} \in \Name (\m{P})^V$, write \[p \Vdash^{V, \check{V}, \dot{G}}_{\m{P}} \varphi (\dot{x}_0, \, \ldots \,, \dot{x}_{n-1})\] if for any $G$ a $V$-generic filter on $\m{P}$ with $p \in G$, it follows that $\varphi (\dot{x}_0^G, \, \ldots \,, \dot{x}_{n-1}^G)$ holds in the structure $\left\langle V[G], \in, V, G\right\rangle$. \end{defin}

As for set forcing, we have the following \textit{symmetry lemma}: \begin{Lemma} If \[p \Vdash^{V, \check{V}, \dot{G}}_{\m{P}} \varphi (\dot{x}_0, \, \ldots \,, \dot{x}_{n-1})\] and $\pi$ is a $\m{P}$-automorphism extending to the name space $\Name(\m{P})^V$ as usual, then \[\pi p \Vdash^{V, \check{V}, \pi \dot{G}}_{\m{P}} \varphi (\pi \dot{x}_0, \, \ldots \,, \pi \dot{x}_{n-1}),\] i.e. for any $G$ a $V$-generic filter on $\m{P}$ with $\pi p \in G$, it follows that $\varphi ((\pi \dot{x}_0)^G, \, \ldots \,, (\pi \dot{x}_{n-1})^G)$ holds in the structure $\left\langle V[G], \in, V, \pi G\right\rangle$. \end{Lemma}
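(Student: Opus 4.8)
The approach is to take advantage of the fact that, as set up here, the relation $p\Vdash^{V,\check{V},\dot{G}}_{\m{P}}\varphi(\dot{x}_0,\dots,\dot{x}_{n-1})$ is \emph{defined} by quantifying over all $V$-generic filters, so the symmetry lemma is really just a change of variables in that quantifier. First I would recall the customary extension of a $\m{P}$-automorphism $\pi$ to names: by $\in$-recursion, $\pi\dot{x}:=\{(\pi\dot{y},\pi q):(\dot{y},q)\in\dot{x}\}$. Since $\pi$ is a $V$-definable order-automorphism of $\m{P}$ fixing $\m{1}$, a straightforward induction gives $\pi\check{x}=\check{x}$ for all $x\in V$, hence $\pi\check{V}=\check{V}$; moreover $\pi$ is a bijection of $\Name(\m{P})^V$ onto itself, and one checks, again by induction, that $(\pi\dot{x})^G=\dot{x}^{H}$ for every name $\dot{x}$ and every $V$-generic $G$, where $H:=\{q\in\m{P}:\pi q\in G\}$; in particular $(\pi\dot{G})^G=H$.

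Then, given an arbitrary $V$-generic filter $G$ on $\m{P}$ with $\pi p\in G$, I would pass to $H:=\{q\in\m{P}:\pi q\in G\}=(\pi\dot{G})^G$ and verify the following routine points: $H$ is again a $V$-generic filter on $\m{P}$; $p\in H$, since $p\in H\iff\pi p\in G$; $(\pi\dot{x}_i)^G=\dot{x}_i^{H}$ for all $i<n$ by the identity above; $V[H]=V[G]$, because $V[H]=\{\dot{x}^H:\dot{x}\in\Name(\m{P})^V\}=\{(\pi\dot{x})^G:\dot{x}\in\Name(\m{P})^V\}=V[G]$ as $\pi$ permutes $\Name(\m{P})^V$; and the ground-model predicate is interpreted as $V$ in both $\langle V[G],\in,V,(\pi\dot{G})^G\rangle$ and $\langle V[H],\in,V,H\rangle$ because $\pi\check{V}=\check{V}$. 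Consequently $\langle V[G],\in,V,(\pi\dot{G})^G\rangle$ is literally the structure $\langle V[H],\in,V,H\rangle$ and $(\pi\dot{x}_i)^G=\dot{x}_i^H$; since $p\in H$ and $p\Vdash^{V,\check{V},\dot{G}}_{\m{P}}\varphi(\dot{x}_0,\dots,\dot{x}_{n-1})$ by hypothesis, this gives $\langle V[H],\in,V,H\rangle\models\varphi(\dot{x}_0^H,\dots,\dot{x}_{n-1}^H)$, i.e.\ $\langle V[G],\in,V,(\pi\dot{G})^G\rangle\models\varphi((\pi\dot{x}_0)^G,\dots,(\pi\dot{x}_{n-1})^G)$. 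As $G$ was an arbitrary $V$-generic filter containing $\pi p$, this is precisely $\pi p\Vdash^{V,\check{V},\pi\dot{G}}_{\m{P}}\varphi(\pi\dot{x}_0,\dots,\pi\dot{x}_{n-1})$.

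The step requiring the most attention is the claim that $H$ is $V$-generic, since for a proper class forcing one must be careful about which dense classes genericity is tested against; the point is simply that $\pi$ (and $\pi^{-1}$), being $V$-definable order-automorphisms of $\m{P}$, carry every $V$-definable dense subclass $D\subseteq\m{P}$ to the $V$-definable dense subclass $\pi[D]$ (resp.\ $\pi^{-1}[D]$), so $G$ meets every such $D$ if and only if $H$ does. Everything else — the name recursion, the behaviour of $\check{V}$ and $\dot{G}$, and the identity $V[H]=V[G]$ — is purely formal and mirrors the set-forcing argument, so once genericity of $H$ is settled the proof is complete. Finally I would remark that applying the statement just proved with $\pi^{-1}$ in place of $\pi$ yields the reverse implication as well, so in fact $p\Vdash^{V,\check{V},\dot{G}}_{\m{P}}\varphi(\dot{x}_0,\dots,\dot{x}_{n-1})$ holds if and only if $\pi p\Vdash^{V,\check{V},\pi\dot{G}}_{\m{P}}\varphi(\pi\dot{x}_0,\dots,\pi\dot{x}_{n-1})$.
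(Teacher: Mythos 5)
Your proof is correct, and it is precisely the argument the paper is alluding to when it prefaces the lemma with ``As for set forcing, we have the following symmetry lemma'' (no proof is given in the text). The change-of-variables reduction via $H:=\pi^{-1}[G]$, together with the observations that $\pi\check V=\check V$, $(\pi\dot G)^G=H$, $(\pi\dot x)^G=\dot x^H$, and $V[H]=V[G]$ because $\pi$ permutes $\Name(\m{P})^V$, is exactly the standard argument, and you are right that the only non-routine point in the class-forcing setting is that $H$ is again $V$-generic, which you handle correctly by noting that $\pi$ is a $V$-definable automorphism and hence carries $V$-definable dense subclasses to $V$-definable dense subclasses.
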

 
We define a $V$-generic \textit{symmetric extension} in the following setting:

We will have a group $A$ of $\m{P}$-automorphisms and finitely many formulas $\varphi_0 (x, y), $ $\ldots$, $\varphi_{l-1}(x, y)$ such that for any $i < l$ and $\alpha \in \Ord$, the class $A_i (\alpha) = \{\pi \in A\ | \ \varphi_i (\pi, \alpha)\} \subseteq A$ is a subgroup, such that the following \textit{normality property} holds: Whenever $\pi \in A$ and $i < l$, $\alpha \in \Ord$, then there are finitely many $i_0, \ldots, i_{n-1} < l$ and ordinals $\alpha_0, \ldots, \alpha_{n-1}$ with $\pi^{-1} A_i (\alpha) \pi \supseteq A_{i_0} (\alpha_0)\, \cap\, \cdots \, \cap \, A_{i_{n-1}} (\alpha_{n-1})$.
This corresponds to the property that in the set forcing case, the subgroups $A_i (\alpha)$ generate a normal filter. \\ A $\m{P}$-name $\dot{x}$ is \textit{symmetric} iff 
there are finitely many $(\alpha_0, i_0), \ldots, (\alpha_{n-1}, i_{n-1})$ with $i_0, \ldots, i_{n-1} < l$, $\alpha_0, \ldots, \alpha_{n-1} \in \Ord$, such that \[\{\pi \in A\ | \ \pi \dot{x} = \dot{x}\} \supseteq A_{i_0}(\alpha_0)\, \cap\, \cdots\, \cap\, A_{i_{n-1}} (\alpha_{n-1}).\]
Recursively, a name $\dot{x}$ is \textit{hereditarily symmetric}, $\dot{x} \in HS$, if $\dot{x}$ is symmetric, and $\dot{y} \in HS$ for all $(\dot{y}, p) \in \dot{x}$.

The according \textit{symmetric submodel} is $V(G) := \{\dot{x}^G\ | \ \dot{x} \in HS\}$.  \\ We will work with the structure $\left\langle V(G), \in, V \right\rangle$ with an additional predicate symbol for the ground model.\\[-2mm]

\begin{defin} For an $\mathcal{L}_{\in}^{A, B}$-formula $\varphi (v_0,\,  \ldots, \, v_{n-1})$, a condition $p \in \m{P}$ and $\dot{x}_0, \, \ldots \,, \dot{x}_{n-1} \in HS$, let \[ p \,(\Vdash_s)^{V, \check{V}}_{\m{P}} \varphi (\dot{x}_0, \, \ldots \,, \dot{x}_{n-1})\] if for any $G$ a $V$-generic filter on $\m{P}$ with $p \in G$, it follows that $\varphi (\dot{x}_0^G, \, \ldots \,, x_{n-1}^G)$ holds in the symmetric extension $\left\langle V(G), \in, V\right\rangle$. \end{defin} 

This symmetric forcing relation \tbl $\Vdash_s$\tbr\, satisfies most of the basic properties of \tbl $\Vdash$\tbr. \\[-4mm]

The symmetry lemma holds as well: \begin{Lemma} If $\dot{x}_0, \, \ldots\, \dot{x}_{n-1} \in HS$ and $p \in \m{P}$ with \[p \, (\Vdash_s)^{V, \check{V}}_{\m{P}} \varphi (\dot{x}_0, \, \ldots \,, \dot{x}_{n-1})\] and $\pi$ is a $\m{P}$-automorphism extending to the name space as usual, it follows that \[\pi p \, (\Vdash_s)^{V, \check{V}}_{\m{P}} \varphi (\pi \dot{x}_0, \, \ldots \,, \pi \dot{x}_{n-1}), \] i.e. for any $G$ a $V$-generic filter on $\m{P}$ with $\pi p \in G$, it follows that $\varphi ((\pi \dot{x}_0)^G, \, \ldots \,, (\pi \dot{x}_{n-1})^G)$ holds in $\left\langle V(G), \in, V\right\rangle$. \end{Lemma}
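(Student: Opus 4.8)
The plan is to prove this directly from the semantic definition of $(\Vdash_s)$, imitating the argument for the ordinary symmetry lemma but using the extra fact that the symmetric submodel is itself invariant under the automorphism. First I would record the preliminary observation that $\pi$ maps $HS$ onto $HS$, so that $\pi \dot x_0, \ldots, \pi \dot x_{n-1}$ are again hereditarily symmetric and the conclusion is well-formed: if $\dot x$ is symmetric with $\{\sigma \in A\ |\ \sigma \dot x = \dot x\} \supseteq A_{i_0}(\alpha_0) \cap \cdots \cap A_{i_{n-1}}(\alpha_{n-1})$, then the stabilizer of $\pi \dot x$ equals $\pi\,\{\sigma\ |\ \sigma \dot x = \dot x\}\,\pi^{-1}$, which contains $\pi\big(A_{i_0}(\alpha_0) \cap \cdots \cap A_{i_{n-1}}(\alpha_{n-1})\big)\pi^{-1} = \bigcap_j \pi A_{i_j}(\alpha_j)\pi^{-1}$; applying the normality property to $\pi^{-1}$, each $\pi A_{i_j}(\alpha_j)\pi^{-1}$ contains a finite intersection of basic subgroups, so $\pi \dot x$ is symmetric. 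Hereditary symmetry then follows by induction on the rank of the name, and applying the same to $\pi^{-1}$ gives $\pi[HS] = HS$.

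Next, fix a $V$-generic filter $G$ on $\m{P}$ with $\pi p \in G$ and set $G' := \pi^{-1}[G] = \{\pi^{-1} q\ |\ q \in G\}$. Since $\pi^{-1}$ is a $\m{P}$-automorphism, $G'$ is again $V$-generic, and $p \in G'$ because $\pi p \in G$. By the hypothesis applied to the generic $G'$, the statement $\varphi(\dot x_0^{G'}, \ldots, \dot x_{n-1}^{G'})$ holds in $\left\langle V(G'), \in, V\right\rangle$. Now I would invoke the standard identity $(\pi \dot y)^{\pi H} = \dot y^{H}$, valid for every $\m{P}$-name $\dot y$ and $V$-generic $H$ (proved by induction on names); with $H = G'$ and $\pi H = G$ this yields $(\pi \dot y)^{G} = \dot y^{G'}$ for every name $\dot y$, in particular $\dot x_i^{G'} = (\pi \dot x_i)^{G}$ for each $i < n$. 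Combining this with $\pi[HS] = HS$ gives $V(G') = \{\dot y^{G'}\ |\ \dot y \in HS\} = \{(\pi \dot y)^{G}\ |\ \dot y \in HS\} = \{\dot z^{G}\ |\ \dot z \in HS\} = V(G)$, so that the structures $\left\langle V(G'), \in, V\right\rangle$ and $\left\langle V(G), \in, V\right\rangle$ literally coincide.

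Putting these facts together, the validity of $\varphi(\dot x_0^{G'}, \ldots, \dot x_{n-1}^{G'})$ in $\left\langle V(G'), \in, V\right\rangle$ is nothing but the validity of $\varphi((\pi \dot x_0)^{G}, \ldots, (\pi \dot x_{n-1})^{G})$ in $\left\langle V(G), \in, V\right\rangle$. Since $G$ was an arbitrary $V$-generic filter containing $\pi p$, this is exactly the assertion $\pi p\, (\Vdash_s)^{V, \check{V}}_{\m{P}} \varphi(\pi \dot x_0, \ldots, \pi \dot x_{n-1})$, which completes the proof. The only steps requiring genuine care are the bookkeeping of $\pi$ versus $\pi^{-1}$ on generics and the observation that — in contrast to the $\left\langle V[G], \in, V, G\right\rangle$-version of the symmetry lemma, where the predicate for the generic must be transported to $\pi G$ — here no predicate needs to move, because the symmetric submodel satisfies $V(\pi^{-1}[G]) = V(G)$; the normality property enters only through the identity $\pi[HS] = HS$.
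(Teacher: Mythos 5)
Your proof is correct, and the paper does not actually give one: this symmetry lemma is stated in Section~1 as part of the standard background on symmetric extensions (with the construction itself outsourced to the cited literature), so there is nothing in the paper to compare against line by line. Your argument is the standard one, and you have executed it cleanly. The two points where such proofs often slip are both handled properly: you derive $\pi[HS] = HS$ from the normality property by computing $\{\sigma\ |\ \sigma(\pi\dot x) = \pi\dot x\} = \pi\,\{\sigma\ |\ \sigma\dot x = \dot x\}\,\pi^{-1}$ and then applying normality to $\pi^{-1}$ (correct, since the paper formulates normality as $\pi^{-1} A_i(\alpha)\pi \supseteq \cdots$, and substituting $\pi^{-1}$ for $\pi$ yields the conjugation you need), and you correctly identify the key structural difference from the $\langle V[G], \in, V, G\rangle$-version: the generic predicate need not be transported because $V(\pi^{-1}[G]) = V(G)$, which is exactly the content of $\pi[HS] = HS$ together with the evaluation identity $(\pi\dot y)^{\pi H} = \dot y^{H}$. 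The bookkeeping of $G' = \pi^{-1}[G]$, $p \in G'$, and $\pi G' = G$ is all in order.
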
 

Our class forcing $\m{P}$ will be an increasing union of set-sized complete subforcings: \\[-3mm]

\textit{There is a sequence $( (\m{P}_{\alpha}, \leq_{\alpha})\ | \ \alpha \in \Ord)$ of set forcing such that \begin{itemize} \vspace*{-1mm}\item for all $\alpha < \beta$, it follows that $\m{P}_{\alpha} \subseteq \m{P}_{\beta}$ with $\leq_{\alpha} \, = \, \leq_{\beta} \, \uhr \, \m{P}_{\beta}$, \vspace*{-1mm} \item if $A \subseteq \m{P}_{\alpha}$ is a maximal antichain, then $A$ is also maximal in $\m{P}$. \end{itemize}}

Hence, the forcing $\m{P}$ satisfies the forcing theorem for \tbl $\Vdash$\tbr\,and \tbl $\Vdash_s$\tbr\,(cf. \cite{PRPPA}).

\section{The forcing.}

We start from a ground model $V \vDash ZFC\, \plus \, GCH$ with a function $F: \Card \rightarrow \Card$ on the class of infinite cardinals such that the following properties hold: \begin{itemize} \item $\forall \kappa\ F(\kappa) \geq \kappa^{\plus \plus}$ \item $\forall\, \kappa, \lambda\ \big(\kappa \leq \lambda \rightarrow F(\kappa) \leq F(\lambda)\big)$. \end{itemize}

In this section, we define our class forcing $\m{P}$ and give some basic properties.

We will have to treat limit cardinals and successor cardinals separately: 
$\m{P}$ is a product $\m{P} := \m{P}_0 \times \m{P}_1$, where $\m{P}_0$ will blow up the power sets of all limit cardinals $\kappa$, and $\m{P}_1$ is in charge of the successor cardinals $\kappa^\plus$. \\[-3mm]

The conditions in $\m{P}_0$ will be functions on trees with finitely many maximal points. \\[-4mm]

For constructing $\m{P}_0$, our function $F$ has to be modified as follows: For all limit cardinals $\kappa$, let $F_{\lim} (\kappa) := F(\kappa)$, and for any successor cardinal $\kappa^{\plus} > \aleph_\omega$, let $F_{\lim} (\kappa^{\plus}) := F(\ol{\kappa})$, where $\ol{\kappa} := \sup \{ \lambda < \kappa^{\plus} \ | \ \lambda \mbox{ is a limit cardinal }\}$. For $n < \omega$, set $F_{\lim} (\aleph_n) := F(\aleph_0)$. Let $F_{\lim} (0) := \{ 0\}$. \\[-3mm]

Our trees' levels will be indexed by cardinals, and on any level $\kappa$, the trees contain finitely many vertices $(\kappa, i)$ with $i < F_{\lim} (\kappa)$.

\begin{definition} \label{tree} A partial order $(t, \leq_t)$ is an {\upshape{$F_{\lim}$-tree}}, if \[t \subseteq \bigcup_{\kappa \in \Card} \{\kappa\} \times F_{\lim} (\kappa) \ \cup \ \{ (0, 0) \}\] 
with the following properties: \begin{itemize} \item If $(\kappa, i ) \leq_t (\lambda, j)$, then $\kappa \leq \lambda$. \item For any $(\lambda,j) \in t$ and $\kappa < \lambda$, there exists exactly one $i < F_{\lim} (\kappa)$ with $(\kappa, i) \leq_t (\lambda, j)$.
\item The tree $t$ has finitely many {\normalfont maximal points}, i.e. there are finitely many $(\kappa_0, i_0), \, \ldots \,, (\kappa_{n-1}, i_{n-1})$ with $t = \{ (\kappa, i)\ | \ \exists\, m < n\ (\kappa, i ) \leq_t (\kappa_m, i_m)\}$. \item There is no {\normalfont splitting at limits}, i.e. for any limit level $\kappa$ and $(\kappa, i), (\kappa, i^\prime) \in t$ with $\{(\lambda, j) \in t\ | \ (\lambda, j) \leq_t (\kappa, i)\} = \{(\lambda, j) \in t\ | \ (\lambda, j) \leq_t (\kappa, i^\prime)\}$, it follows that $i = i^\prime$. \end{itemize} 

If $(t, \leq_t)$ is an $F_{\lim}$-tree with maximal points $(\kappa_0, i_0), \, \ldots \,, (\kappa_{n-1}, i_{n-1})$, we call $\HT t := \max \{\kappa_0, \, \ldots \,, \kappa_{n-1})$ the {\normalfont height} \hspace*{-1,5mm} of $t$. \\[-2mm]

\end{definition}
The first and second conditions make sure that for any $F_{\lim}$-tree $(t, \leq_t)$, the predecessors of any $(\kappa, i) \in t$ with $\kappa = \aleph_\alpha$ are linearly ordered by $\leq_t$ in order type $\alpha$ (or $\alpha + 1$ for $\alpha$ finite), and for any $(\kappa, i) \in t$, it follows that $(0, 0) \leq_t (\kappa, i)$.\\[-2mm] 

There is a canonical partial order on the class of $F_{\lim}$-trees as follows: Let $(s, \leq_s) \leq_{F_{\lim}-tree} (t, \leq_t)$ iff $s \supseteq t$ and $\leq_s\; \supseteq \;\leq_t$. \\[-2mm]

The conditions in our forcing $\m{P}_0$ will be functions $p: (t(p), \leq_{t(p)}) \rightarrow V$ whose domain $( t(p), \leq_{t(p)})$ is an $F_{\lim}$-tree. \\
The functional values of $p$ below any maximal point $(\kappa, i) \in t(p)$, will make up a partial $0$-$1$-function on $\kappa$. If $(\kappa, i)$ and $(\lambda, j)$ are the maximal points of two branches meeting at level $\nu$, then the according $0$-$1$-functions coincide up to $\nu$. \\ Hence, a $\m{P}_0$-generic filter will add a new $\kappa$-subset $G_{(\kappa, i)}$ below any vertex $(\kappa, i)$ with $i < F_{\lim}(\kappa)$. 
The fourth condition in Definition \ref{tree} makes sure that for any $i, i^\prime < F_{\lim}(\kappa)$ with $i \neq i^\prime$, the according $\kappa$-subsets $G_{(\kappa, i)}$ and $G_{(\kappa, i^\prime)}$ given by the branches below $(\kappa, i)$ and $(\kappa, i^\prime)$ are not equal. Hence, our forcing adds $F_{\lim}(\kappa)$-many pairwise distinct $\kappa$-subsets for any cardinal $\kappa$. \\[-3mm]

Denote by $Fn(x, 2, \kappa)$ the collection of all functions $f: \dom f \rightarrow 2$ with $\dom f \subseteq x$ and $|\dom f| < \kappa$.

\begin{definition} The class forcing $(\m{P}_0, \leq_0)$ consists of all functions $p: (t (p), \leq_{t (p)})$ $\rightarrow V$ such that $(t (p), \leq_{t (p)})$ is an $F_{\lim}$-tree, and  \begin{itemize} \item $p (\kappa^{\plus}, i) \in Fn( \, [\kappa, \kappa^{\plus}), 2, \kappa^{\plus})$ for all $(\kappa^\plus, i) \in t(p)$ with $\kappa^\plus$ a successor cardinal, \item $p (\aleph_0, i) \in Fn(\aleph_0, 2, \aleph_0)$ for all $(\aleph_0, i) \in t(p)$,\, {\normalfont and}  \item $p (\kappa, i) = \emptyset$ for all $(\kappa, i) \in t(p)$ with $\kappa$ a limit cardinal or $\kappa = 0$. \item For $(\kappa, i) \in t (p)$, let \[p_{(\kappa, i)} := \bigcup \, \{ p(\nu^\plus, j)\ | \ (\nu^\plus, j) \leq_{t (p)} (\kappa, i) \}. \]
We require that $|p_{(\kappa, i)}| < \kappa$ for all $i < F_{\lim}(\kappa)$ whenever $\kappa$ is a regular limit cardinal. \end{itemize}

For $p: (t (p), \leq_{t (p)}) \rightarrow V$, $q : (t (q), \leq_{t (q)}) \rightarrow V$  
conditions in $\m{P}_0$, let $q \leq_0 p$ iff \begin{itemize} \item $(t (q), \leq_{t (q)}) \leq_{F_{\lim}-tree} (t(p), \leq_{t(p)})$,
\item $q(\kappa, i) \supseteq p (\kappa, i)$ for all $(\kappa, i) \in t (p)$.

\end{itemize} 

Let $\m{1}_0 := \emptyset$. \end{definition}

For $p \in \m{P}_0$, $p: (t (p), \leq_{t(p)}) \rightarrow V$ we call $\HT p := \HT t(p)$ the \textit{height} of $p$. \\[-3mm]

For a cardinal $\lambda$, we denote by $p \uhr (\lambda + 1): t (p) \uhr (\lambda + 1) \rightarrow V$ the restriction of $p$ to the subtree $t (p) \uhr (\lambda + 1) := \{ (\kappa, i) \in t (p)\ | \ \kappa \leq \lambda\}$,
$\leq_{t (p) \, \uhr \, (\lambda + 1)} \: := \ \leq_{t (p)} \, \cap \: (t (p) \uhr (\lambda + 1))$. Then $p \uhr (\lambda + 1) \in \m{P}_0$ with $p \leq_0 p \uhr (\lambda + 1)$. Let $\m{P}_0 \uhr (\lambda + 1) := \{p \uhr (\lambda + 1)\ | \ p \in \m{P}_0\}$. \\ 
Similarly, we define $p \uhr [\lambda, \infty): t (p) \uhr [\lambda, \infty) \rightarrow V$ (which is not a condition in $\m{P}_0$), with $t (p) \, \uhr \, [\lambda, \infty) := \{ (\kappa, i) \in t (p)\ | \ \kappa \geq \lambda\}$. 
Let $\big( p \uhr [\lambda, \infty) \big) (\kappa, i) := p (\kappa, i)$ for all $(\kappa, i) \in t(p)$ with $\kappa > \lambda$, and $\big( p \uhr [\lambda, \infty) \big) (\lambda, i) := \emptyset$ for any $(\lambda, i) \in t(p)$. Set $\m{P}_0 \uhr [\lambda, \infty) := \{p \uhr [\lambda, \infty)\ | \ p \in \m{P}_0\}$. \\
The forcing $\m{P}_0$ is dense in the product $\m{P}_0 \uhr (\lambda + 1)\, \times\, \m{P}_0 \uhr [\lambda, \infty)$. \\
Similarly, for cardinals $\mu$, $\lambda$ with $\mu < \lambda$, we define $p \, \uhr \, [\mu, \lambda + 1): t(p) \, \uhr \, [\mu, \lambda + 1) \rightarrow V$ with $t(p)\, \uhr \, [\mu, \lambda + 1) := \{ (\kappa, i) \in t(p)\ | \ \mu \leq \kappa \leq \lambda\}$, and set $\m{P}_0 \uhr [\mu, \lambda + 1) := \{p \uhr [\mu, \lambda + 1)\ | \ p \in \m{P}_0\}$. \\[-2mm]

For conditions $p, q \in \m{P}_0$ with $p\, \| \, q$, it follow that $t(p)\, \cup \, t(q)$ with the order relation $\leq_{t(p)}\, \cup \, \leq_{t(q)}$ is an $F_{\lim}$-tree as well, and we can define a \textit{maximal common extension} $p\, \cup\, q$ of $p$ and $q$ as follows: Let $t(p\, \cup\, q) \,:= \,t(p)\, \cup\, t(q)$, $\leq_{t(p\, \cup \, q)} \, := \,\leq_{t(p)}\, \cup \, \leq_{t(q)}$ with $(p\, \cup\, q) (\kappa, i) := p(\kappa, i)\, \cup\, q(\kappa, i)$ for any $(\kappa, i) \in t(p)\, \cap \, t(q)$, $(p\, \cup\, q) (\kappa, i) := p(\kappa, i)$ whenever $(\kappa, i) \in t(p) \setminus t(q)$ and $(p\, \cup \, q) (\kappa, i) := q(\kappa, i)$ for all $(\kappa, i) \in t(q) \setminus t(p)$. \\[-2mm]

Surely, the class forcing $\m{P}_0$ does not preserve $ZFC$: For example, with the generic filter $G$ as a parameter, one can use the finiteness of the trees and construct in $V[G]$ a cofinal function $f: \omega \rightarrow \Ord$. \\[-3mm]

However, 

\begin{lem} The forcing $(\m{P}_0, \leq_0)$ is an increasing union of set-sized complete subforcings. \end{lem}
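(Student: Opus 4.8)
The plan is to exhibit the desired increasing union explicitly, taking $\m{P}_0 \uhr (\lambda+1)$ for $\lambda$ ranging over the infinite cardinals as the approximating set forcings. First I would check that each $\m{P}_0 \uhr (\lambda+1)$ is genuinely a set: an $F_{\lim}$-tree of height at most $\lambda$ has only finitely many maximal points, each lying on some level $\kappa \leq \lambda$ with $i < F_{\lim}(\kappa)$, and the functional values $p(\kappa^\plus, i)$ lie in $Fn([\kappa,\kappa^\plus),2,\kappa^\plus)$, a set; so there is only a set of such $p$. Next, monotonicity: for $\lambda \leq \mu$ we have $\m{P}_0 \uhr (\lambda+1) \subseteq \m{P}_0 \uhr (\mu+1)$, since a tree living below level $\lambda$ lives below level $\mu$, and the induced orders agree because $\leq_0$ is defined pointwise (reverse inclusion of trees and forward inclusion of the $0$-$1$-functions), so the order on the smaller forcing is literally the restriction of the order on the larger one. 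Finally $\m{P}_0 = \bigcup_{\lambda \in \Card} \m{P}_0 \uhr (\lambda+1)$, because every $p \in \m{P}_0$ has some height $\HT p =: \lambda$ and then $p = p \uhr (\lambda+1) \in \m{P}_0 \uhr (\lambda+1)$.

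The substantive point is that each $\m{P}_0 \uhr (\lambda+1)$ is a \emph{complete} subforcing of $\m{P}_0$, which by the criterion recalled in Section~\ref{class forcing} reduces to showing: every maximal antichain $A \subseteq \m{P}_0 \uhr (\lambda+1)$ is still maximal in $\m{P}_0$. So fix such an $A$ and an arbitrary $q \in \m{P}_0$; I must find $p \in A$ with $p \,\|\, q$. The idea is that $\m{P}_0$ factors as a (dense subset of a) product $\m{P}_0 \uhr (\lambda+1) \times \m{P}_0 \uhr [\lambda,\infty)$, as already noted in the text, and that the part of $q$ below level $\lambda$, namely $q \uhr (\lambda+1)$, is itself a condition in $\m{P}_0 \uhr (\lambda+1)$. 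By maximality of $A$ in $\m{P}_0 \uhr (\lambda+1)$ there is $p \in A$ compatible with $q \uhr (\lambda+1)$ inside $\m{P}_0 \uhr (\lambda+1)$; let $r \leq_0 p, q \uhr (\lambda+1)$ be a common extension there — concretely one may take $r = p \cup (q\uhr(\lambda+1))$, the maximal common extension. It then remains to amalgamate $r$ with the upper part $q \uhr [\lambda,\infty)$ of $q$ into a single condition of $\m{P}_0$ extending both $p$ and $q$; since $r$ and $q$ agree on the overlap (both restrict $q\uhr(\lambda+1)$ below $\lambda$, up to the compatibility with $p$) and their trees glue along the level-$\lambda$ vertices without creating splitting at limits below $\lambda$, the union $r \cup q$ is a well-defined $F_{\lim}$-tree condition, and it witnesses $p \,\|\, q$.

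The main obstacle I anticipate is precisely this amalgamation: I must verify that gluing $p$ (or $r$) with the upper part of $q$ along level $\lambda$ yields an object satisfying \emph{all four} clauses of Definition~\ref{tree} — in particular "no splitting at limits" and "finitely many maximal points" — and all the cardinality constraints in the definition of $\m{P}_0$ (the requirement $|p_{(\kappa,i)}| < \kappa$ at regular limit cardinals, and $p(\kappa^\plus,i) \in Fn([\kappa,\kappa^\plus),2,\kappa^\plus)$). The no-splitting clause is the delicate one: if $\lambda$ is a limit cardinal, the vertices $(\lambda,i)$ of $p$ and the vertices $(\lambda,j)$ of $q$ must be matched up correctly — but this is exactly forced by the first two clauses of Definition~\ref{tree}, which pin down each level-$\mu$ vertex ($\mu < \lambda$) as the unique predecessor there, so that a level-$\lambda$ vertex is determined by its set of predecessors; compatibility of $p$ with $q\uhr(\lambda+1)$ then guarantees the matching is consistent. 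Once that bookkeeping is done, compatibility of $p$ with $q$ in $\m{P}_0$ follows, completing the proof. (An entirely analogous argument, using the intermediate restrictions $\m{P}_0 \uhr [\mu,\lambda+1)$, shows more generally that $\m{P}_0 \uhr (\lambda+1)$ is complete, which is all that the displayed conditions in Section~\ref{class forcing} require for the forcing theorem to apply to $\m{P}_0$.)
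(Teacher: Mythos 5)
Your proof is correct and follows essentially the same route as the paper: write $\m{P}_0$ as the increasing union of the $\m{P}_0 \uhr (\lambda+1)$, and establish completeness by showing that a maximal antichain $A$ stays maximal via the amalgamation $r \cup q$ where $r$ is a common extension of some $p\in A$ with $q\uhr(\lambda+1)$ inside $\m{P}_0 \uhr (\lambda+1)$. The paper states the amalgamation step more tersely (relying on the already-defined maximal common extension $p\cup q$ of compatible conditions), while you pause to flag the clauses of the tree definition that must be rechecked; that extra care is harmless and the worries you raise do indeed dissolve for the reasons you anticipate.
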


\begin{proof} For $\alpha \in Ord$, let $(\m{P}_0)_\alpha := \m{P}_0 \uhr (\aleph_\alpha + 1) = \{p \in \m{P}_0\ | \ \HT p \leq \aleph_\alpha\}$ with $(\leq_0)_\alpha$ the ordering on $(\m{P}_0)_\alpha$ induced by $\leq_0$. Then $\m{P}_0 = \bigcup \, \{ (\m{P}_0)_\alpha\ | \ \alpha \in \Ord\}$ is an increasing union of set-sized forcings such that for any $\alpha < \beta$, it follows that $(\m{P}_0)_\alpha \subseteq (\m{P}_0)_\beta$ with $(\leq_0)_\alpha = (\leq_0)_\beta\, \uhr \, (\m{P}_0)_\alpha$. \\ Let $A$ be a maximal antichain in some $(\m{P}_0)_\alpha$. It remains to verify that $A$ is also maximal in $\m{P}_0$. Assume towards a contradiction, there was a condition $p \in \m{P}_0$ with $p \, \bot \, q$ for all $q \in A$. Take $\ol{q} \in A$ with $\ol{q}\; \| \; p \uhr (\aleph_\alpha + 1)$, and denote by $r$ a common extension of $p \uhr (\aleph_\alpha + 1)$ and $\ol{q}$ in $(\m{P}_0)_\alpha$. Then $\ol{r} := r\, \cup \, p$ with $\ol{r} \uhr (\aleph_\alpha + 1) = r$, $\ol{r} \uhr [\aleph_\alpha, \infty) = p \uhr [\aleph_\alpha, \infty)$ is a common extension of $p$ and $\ol{q}$. Contradiction.  

\end{proof}

Hence, it follows that $\m{P}_0$ satisfies the forcing theorem for every $\mathcal{L}_\in^{A, B}$-formula $\varphi$; in particular, the forcing relation $\Vdash^{V, \check{V}, \dot{G}}_{\m{P}_0}$ is definable. 
Thus, the forcing theorem also holds for the symmetric forcing relation $(\Vdash_s)_{\m{P}}^{V, \check{V}}$ and $\varphi$ an $\mathcal{L}_\in^A$-formula. \\[-3mm]

For any $G$ a $V$-generic filter on $\m{P}_0$ and $\alpha$ an ordinal, it follows that the filter $(G_0)_{\alpha}:= \{p \in G_0\ | \ \HT p \leq \aleph_\alpha\}$ is $V$-generic on $(\m{P}_0)_{\alpha}$. \\[-2mm]

For successor cardinals $\kappa^{\plus}$, the forcing $\m{P}_0$ only adds $F_{\lim} (\kappa^{\plus})$-many $\kappa^{\plus}$-subsets, which might be less than the desired $F(\kappa^{\plus})$, so we need a second forcing $\m{P}_1$ to blow up the power sets $\powerset (\kappa^\plus)$.
  
The reason why we use for $\m{P}_0$ the function $F_{\lim}$ instead of $F$ is that for singular limit cardinals $\kappa$, we will have to use the forcing $\m{P}_0 \uhr (\kappa^\plus + 1)$ instead of $\m{P}_0 \uhr (\kappa + 1)$ for capturing $\kappa$-subsets in $N$ in our proof of $\theta^N (\kappa) \leq F(\kappa)$, and we will need that $F_{\lim} (\kappa^{\plus}) =  F(\kappa)$ to make sure that $\m{P}_0 \uhr (\kappa^\plus + 1)$ only has size $F(\kappa)$. \\[-1mm]

The forcing $\m{P}_1$ will be a variant of Easton forcing with finite support: We will have a finite support-product of forcings $Fn (\, [\kappa, \kappa^{\plus}) \times F (\kappa^{\plus}), 2, \kappa^{\plus})$, where a successor cardinal $\kappa^{\plus}$ shall only be included into the forcing if $F(\kappa^{\plus})$ is strictly greater than $F(\nu^\plus)$ for all $\nu < \kappa$.

\begin{definition} Let $\Succ^\prime$ denote the class of all successor cardinals $\kappa^{\plus}$ with the property that $F(\kappa^{\plus}) > F(\nu^\plus)$ for all $\nu^\plus < \kappa^\plus$. The forcing $(\m{P}_1, \leq_1, \m{1}_1)$ consists of all conditions $p: \supp p \rightarrow V$ with $\supp p \subseteq Succ^\prime$ finite and
\[p(\kappa^{\plus}) \in Fn (\, [\kappa, \kappa^{\plus}) \times F(\kappa^{\plus}), 2, \kappa^{\plus})\] for all $\kappa^\plus \in \supp p$ such that $\dom\, p (\kappa^\plus)$ is rectangular, i.e. $\dom\, p(\kappa^{\plus}) = \dom_x\, p(\kappa^{\plus}) \times \dom_y \, p (\kappa^{\plus})$ for some $\dom_x \, p(\kappa^{\plus}) \subseteq [\kappa, \kappa^{\plus})$ and $\dom_y \, p(\kappa^{\plus}) \subseteq F(\kappa^{\plus})$. \\ The conditions in $\m{P}_1$ are ordered by reverse inclusion: Let $q \leq_1 p$ iff $\supp q \supseteq \supp p$ with $q (\kappa^\plus) \supseteq p(\kappa^\plus)$ for all $\kappa^\plus \in \supp p$. \\ The maximal condition is $\m{1}_1 := \emptyset$. \end{definition} 

For a cardinal $\lambda$ and $p \in \m{P}_1$, we denote by $p \uhr (\lambda + 1)$ the restriction of $p$ to the domain $\{\kappa^{\plus} \in \supp p\ | \ \kappa^{\plus} \leq \lambda\}$. Similarly, we write $p \uhr [\lambda, \infty)$ for the restriction of $p$ to $\{\kappa^{\plus} \in \supp p \ | \ \kappa^{\plus} > \lambda\}$. 

Let $\m{P}_1 \uhr (\lambda + 1):= \{p_1 \uhr (\lambda + 1)\ | \ p_1 \in \m{P}_1\}$, and $\m{P}_1 \uhr [\lambda, \infty) := \{p_1 \uhr [\lambda, \infty)\ | \ p_1 \in \m{P}_1\}$. Then $\m{P}_1 \, \cong \, \m{P}_1 \uhr (\lambda + 1)\, \times\, \m{P}_1 \uhr [\lambda, \infty)$. \\[-3mm] 

For a successor cardinal $\kappa^{\plus} \in Succ^\prime$, we set $\m{P}_1 (\kappa^{\plus}) := \{p (\kappa^{\plus})\ | \ p \in \m{P}_1\} = Fn ( \, [\kappa, \kappa^{\plus}) \times F(\kappa^{\plus}), 2, \kappa^{\plus})$. If $G_1$ is a $V$-generic filter on $\m{P}_1$, it follows that $G_1 (\kappa^\plus) := \{p (\kappa^\plus)\ | \ p \in G_1\}$ is $V$-generic on $\m{P}_1 (\kappa^\plus)$. \\[-2mm]

For $\alpha \in \Ord$, let $(\m{P}_1)_\alpha := \{p \uhr (\aleph_\alpha + 1)\ | \ p \in \m{P}_1\} = \{p \in \m{P}_1\ | \ \supp p \subseteq \aleph_\alpha + 1\}$. Then $\m{P}_1 = \bigcup \{ (\m{P}_1)_\alpha\ | \ \alpha \in \Ord\}$ is an increasing union of set-sized complete subforcings. 

Hence, $\m{P}_1$ satisfies the forcing theorem for every $\mathcal{L}_\in^{A, B}$-formula $\varphi$, and the forcing relation $\Vdash_{\m{P}_1}^{V, \check{V}, \dot{G}}$ is definable. Thus, the forcing theorem also holds for $(\Vdash_s)_{\m{P}}^{V, \check{V}}$ and $\varphi$ a $\mathcal{L}_\in^A$-formula. \\[-3mm]

If $G_1$ is a $V$-generic filter on $\m{P}_1$ and $\alpha \in \Ord$, it follows that $(G_1)_{\alpha}:= \{p \in G_1\ | \ \supp p \subseteq \aleph_\alpha + 1\}$ is a $V$-generic filter on $(\m{P}_1)_{\alpha}$.

\begin{definition} \[( \m{P}, \leq) := (\m{P}_0 \times \m{P}_1, \leq_{\,\m{P}_0 \times \m{P}_1}).\] \end{definition}

If $G_0$ is $V$-generic on $\m{P}_0$ and $G_1$ is $V[G_0]$-generic on $\m{P}_0$, then it follows as in the set forcing case that $G := G_0 \times G_1$ is a $V$-generic filter on $\m{P}$. From the definability of $\Vdash_{\m{P}_0}^{V, \check{V}, \dot{G}}$, it follows that the converse is true, as well. \\[-2mm]

If $p = (p_0, p_1)$ is a condition in $p$ and $\lambda \in \Card$, let $p \uhr (\lambda + 1) := (p_0 \uhr (\lambda + 1), p_1 \uhr (\lambda + 1))$. Set $\eta (p) := \min \{\lambda\ | \ p \uhr (\lambda + 1) = p\}$. \\ Similarly as before, we let $\m{P} \uhr (\lambda + 1) := \{p \uhr (\lambda + 1)\ | \ p \in \m{P}\}$. \\[-2mm]

Our eventual symmetric submodel $N \subseteq V[G]$ will have the crucial property that sets of ordinals $X \subseteq \alpha$ with $X \in N$ can be captured in \tbl mild\tbr\, $V$-generic extensions of the following form:

\begin{deflem} \label{gen} For conditions $p, q \in \m{P}_0$ with $(t(q), \leq_{t(q)}) \leq_{F_{\lim}-tree}$ $(t(p), \leq_{t(p)})$, we denote by $q \uhr t(p)$ the restriction of $q$ to the domain $t(p)$. Let \[\m{P}_0 \uhr t (p) := \{q \uhr t(p) \ | \ q \in \m{P}_0, t(q) \leq t(p) \},\] with the partial order induced by $\leq_0$, and the maximal element $\m{1}_{\m{P}_0 \, \uhr \, t(p)}: t(p) \rightarrow V$ with $\m{1}_{\m{P}_0 \, \uhr \, t(p)} (\kappa, i) = \emptyset$ for all $(\kappa, i) \in t(p)$. \\[-3mm]

For $G_0$ a $V$-generic filter on $\m{P}_0$ and $p \in G_0$, it follows that \[G_0 \uhr t (p) := \{ q \uhr t (p) \ | \ q \in G_0, t(q) \leq_{F_{\lim}-tree} t(p)\} = \{q \in G_0\ | \ t(q) = t(p)\} \]is a $V$-generic filter on $\m{P}_0 \uhr t (p)$. \end{deflem}

\begin{proof} Consider a dense set $D \subseteq \m{P}_0 \uhr t (p)$. It suffices to show that $\ol{D} := \{q \in \m{P}_0\ | \ q \uhr t (p) \in D\}$ is dense in $\m{P}_0$ below $p$. \\
Take $q \in \m{P}_0$ with $q \leq_0 p$. There exists $r \in \m{P}_0 \uhr t (p)$, $r \in D$, with $r \leq_0 q \uhr t (p)$. We define a condition $\ol{q} \in \m{P}_0$ as follows: $(t (\ol{q}), \leq_{t (\ol{q})}) := (t (q), \leq_{t (q)})$ with $\ol{q} (\kappa, i) := r (\kappa, i)$ for $(\kappa, i) \in t (p)$, and $\ol{q} (\kappa, i) := q(\kappa, i)$, else. Then $\ol{q} \leq_0 q$ with $\ol{q} \uhr t (p) = r \in D$ as desired. \end{proof} 

For finitely many $(\kappa_0, i_0), \, \ldots \,, $ $(\kappa_{n-1}, i_{n-1}) \in t (p)$, we denote by $t (p) \uhr \{ (\kappa_0, i_0), \, \ldots \,, $ $(\kappa_{n-1}, i_{n-1})\}$ the subtree $ \{(\kappa, i) \in t(p)\ | \ \exists\, m < n \ (\kappa, i) \leq_{t (p)} (\kappa_m, i_m)\}$ with the ordering induced by $\leq_{t(p)}$. We write $p \uhr \{ (\kappa_0, i_0), \, \ldots \,, (\kappa_{n-1}, i_{n-1})\}$ for the restriction of $p$ to the subtree $t(p) \uhr \{ (\kappa_0, i_0), \ldots, (\kappa_{n-1}, i_{n-1})\}$. \\ If the set $\{(\kappa_0, i_0), \ldots, (\kappa_{n-1}, i_{n-1})\}$ {\normalfont contains all maximal points of $p$}, i.e. for any $(\kappa, i) \in t(p)$ there is $l < n$ with $(\kappa, i) \leq_{t(p)} (\kappa_l, i_l)$, then we sometimes use the notation $G_0 \uhr \{(\kappa_0, i_0), \, \ldots \,, (\kappa_{n-1}, i_{n-1})\}$ instead of $G_0 \uhr t(p)$. \\[-3mm]

We have similar restrictions for $\m{P}_1$: 

\begin{deflem}
Consider finitely many cardinals $\ol{\kappa}_0, \, \ldots \,, \ol{\kappa}_{\ol{n}-1} \in \Succ^\prime$, and $\ol{\imath}_0 < F(\ol{\kappa}_0), \, \ldots \,, \ol{\imath}_{\ol{n}-1} < F(\ol{\kappa}_{\ol{n}-1})$. For a condition $p_1 \in \m{P}_1$, we define $p_1 \uhr \{ (\ol{\kappa}_0, \ol{\imath}_0), \, \ldots \,, $ $(\ol{\kappa}_{\ol{n}-1}, \ol{\imath}_{\ol{n}-1}) \}$ as follows: \[\dom \ p_1 \uhr \{ (\ol{\kappa}_0, \ol{\imath}_0), \, \ldots \,, (\ol{\kappa}_{\ol{n}-1}, \ol{\imath}_{\ol{n}-1}) \} := \]\[ \big( \dom_x p (\ol{\kappa}_0) \times \{\ol{\imath}_0 \}\big) \, \cup \, \cdots \, \cup \big( \dom_x p (\ol{\kappa}_{\ol{n}-1}) \times \{\ol{\imath}_{\ol{n}-1}\} \big) = \]\[ = \{(\alpha, i) \in \dom p\ | \ \exists\, l < \ol{n}\ \;i = \ol{\imath}_l\},\] 
and for any $ (\alpha, \ol{\imath}_l ) \in \dom \, p_1 (\ol{\kappa}_l)$, \[ \big(p_1 \uhr \{ (\ol{\kappa}_0, \ol{\imath}_0), \, \ldots \,, (\ol{\kappa}_{\ol{n}-1}, \ol{\imath}_{\ol{n}-1}) \}\big) (\alpha, \ol{\imath}_l) := p_1 (\ol{\kappa}_l) (\alpha, \ol{\imath}_l).\]

Let
\[\m{P}_1 \uhr \{ (\ol{\kappa}_0, \ol{\imath}_0), \, \ldots \,, (\ol{\kappa}_{\ol{n}-1}, \ol{\imath}_{\ol{n}-1}) \} := \{p_1 \uhr \{ (\ol{\kappa}_0, \ol{\imath}_0), \, \ldots \,, (\ol{\kappa}_{\ol{n}-1}, \ol{\imath}_{\ol{n}-1})\}\ | \ p_1 \in \m{P}_1\}.\]

For $G_1$ a $V$-generic filter on $\m{P}_1$,
it follows that \[G_1 \uhr \{(\ol{\kappa}_0, \ol{\imath}_0), \, \ldots \,, (\ol{\kappa}_{\ol{n}-1}, \ol{\imath}_{\ol{n}-1})\} := \{\, p_1 \uhr \{ (\ol{\kappa}_0, \ol{\imath}_0), \, \ldots \,, (\ol{\kappa}_{\ol{n}-1}, \ol{\imath}_{\ol{n}-1})\}\ | \ p_1 \in G_1\, \}\] is a $V$-generic filter on $\m{P}_1 \uhr \{ (\ol{\kappa}_0, \ol{\imath}_0), \, \ldots \,, (\ol{\kappa}_{\ol{n}-1}, \ol{\imath}_{\ol{n}-1}) \}$.  \\[-3mm]

\end{deflem}

In other words, for any $l < \ol{n}$ with $\ol{\kappa}_l = \wt{\kappa}_l^\plus$, it follows that $\m{P}_1 \uhr \{ (\ol{\kappa}_0, \ol{\imath}_0), \, \ldots \,, (\ol{\kappa}_{\ol{n}-1}, \ol{\imath}_{\ol{n}-1}) \}$ adds a new Cohen-subset to the interval $[\wt{\kappa}_{l}, \wt{\kappa}_{l}^\plus)$. 

\begin{proof} If $D$ is a dense 
subset of $\m{P}_1 \uhr \{ (\ol{\kappa}_0, \ol{\imath}_0), \, \ldots \,, (\ol{\kappa}_{\ol{n}-1}, \ol{\imath}_{\ol{n}-1}) \}$, it follows that \[\ol{D} := \{p_1 \in \m{P}_1\ | \ p_1 \uhr \{ (\ol{\kappa}_0, \ol{\imath}_0), \, \ldots \,, (\ol{\kappa}_{\ol{n}-1}, \ol{\imath}_{\ol{n}-1}) \} \in D \}\] is dense in $\m{P}_1$.
\end{proof}

Hence, if $G = G_0 \times G_1$ is a $V$-generic filter on $\m{P}$ with $(\kappa_0, i_0), \, \ldots \,, (\kappa_{n-1}, i_{n-1}), $ $(\ol{\kappa}_0, \ol{\imath}_0), \, \ldots \,, (\ol{\kappa}_{\ol{n}-1}, \ol{\imath}_{\ol{n}-1})$ as before and $p \in G_0$, $p: t (p) \rightarrow V$ such that $\{(\kappa_0, i_0)\,,$ $\ldots$\,, \,$(\kappa_{n-1}, i_{n-1})\} \subseteq t(p)$ contains all maximal points of $t(p)$, it follows that \[G_0 \uhr t(p) \times G_1 \uhr \{ (\ol{\kappa}_0, \ol{\imath}_0), \, \ldots \,, (\ol{\kappa}_{\ol{n}-1}, \ol{\imath}_{\ol{n}-1})\} \] is a $V$-generic filter on $\m{P}_0 \uhr t (p) \times \m{P}_1 \uhr \{ (\ol{\kappa}_0, \ol{\imath}_0), \, \ldots \,, (\ol{\kappa}_{\ol{n}-1}, \ol{\imath}_{\ol{n}-1})\}$. \\[-3mm]

We will now see that these forcings preserves all cardinals. 

\begin{prop} \label{prescard} Consider a condition $p \in \m{P}_0$ such that $\{ (\kappa_0, i_0), \, \ldots \,, (\kappa_{n-1}, i_{n-1})\} \subseteq t(p)$ contains all maximal points of $t(p)$; moreover, finitely many $(\ol{\kappa}_0, \ol{\imath}_0)$,\,$\ldots$\,,$(\ol{\kappa}_{\ol{n}-1}, \ol{\imath}_{\ol{n}-1})$ with $\ol{\kappa}_0, \, \ldots \,, \ol{\kappa}_{\ol{n}-1} \in \Succ^\prime$, $\ol{\imath}_0 < F (\ol{\kappa}_0), \, \ldots \,, \ol{\imath}_{\ol{n}-1} < F(\ol{\kappa}_{\ol{n}-1})$. \\The forcing \[\m{P}_0 \uhr t(p) \times \m{P}_1 \uhr \{(\ol{\kappa}_0, \ol{\imath}_0), \, \ldots \,, (\ol{\kappa}_{\ol{n}-1}, \ol{\imath}_{\ol{n}-1}) \} \] preserves cardinals and the $GCH$.  \end{prop}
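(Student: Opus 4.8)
The plan is to carry out the standard Easton-style analysis, adapted to the tree-shaped conditions of $\m{P}_0$. Write $\m{R}:=\m{P}_0\uhr t(p)\times\m{P}_1\uhr\{(\ol{\kappa}_0,\ol{\imath}_0),\ldots,(\ol{\kappa}_{\ol{n}-1},\ol{\imath}_{\ol{n}-1})\}$. First I would observe that $\m{R}$ is a set forcing which, for every infinite cardinal $\lambda$, factors as $\m{R}\cong\m{R}^{\leq\lambda}\times\m{R}^{>\lambda}$, where $\m{R}^{\leq\lambda}$, resp.\ $\m{R}^{>\lambda}$, is the restriction of $\m{R}$ to the tree vertices and $\m{P}_1$-coordinates of level $\leq\lambda$, resp.\ $>\lambda$; for the $\m{P}_0$-factor this is the tree version of the splitting $\m{P}_0\uhr(\lambda+1)\times\m{P}_0\uhr[\lambda,\infty)$ from Section~2 (a level-$\lambda$ vertex carries value $\emptyset$ when $\lambda$ is a limit cardinal, and otherwise is put into $\m{R}^{\leq\lambda}$), and for the $\m{P}_1$-factor it is immediate since the finitely many coordinates lie at distinct levels.

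The two structural facts driving the argument are the following. \emph{Closure of the tail}: $\m{R}^{>\lambda}$ is $\lambda^+$-closed, and remains so over any intermediate extension. All conditions of $\m{R}^{>\lambda}$ share a single fixed domain; each coordinate value lies in some $Fn([\sigma,\sigma^+),2,\sigma^+)$ with $\sigma\geq\lambda$, hence in a $\sigma^+$-closed (so $\lambda^+$-closed) poset; and for a descending sequence indexed by $\leq\lambda$ the pointwise union is again a condition, because coordinatewise the union has size $<\sigma^+$ by regularity of $\sigma^+>\lambda$ and the size requirements $|q_{(\kappa,i)}|<\kappa$ at regular limit levels $\kappa>\lambda$ persist by regularity of $\kappa$ — computations that are local and so independent of the ground model. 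In particular $\m{R}^{>\lambda}$ adds no new subsets of $\lambda$ and preserves cardinals $\leq\lambda^+$, even over $V[\m{R}^{\leq\lambda}]$. \emph{Smallness of the initial part}: if $\lambda$ is regular then $|\m{R}^{\leq\lambda}|\leq\lambda$ — for $\lambda=\aleph_0$ because the level-$\aleph_0$ values are finite, for a successor $\lambda=\rho^+$ by a direct count using $GCH$ in $V$ (at most $\rho$ relevant levels, finitely many branches, at most $\sigma^+\leq\lambda$ values per coordinate), and for an inaccessible $\lambda$ because the ``bounded size at regular limits'' clause in the definition of $\m{P}_0$ forces every condition of $\m{R}^{\leq\lambda}$ to be trivial above some level $<\lambda$. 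Thus for $\lambda$ regular $\m{R}^{\leq\lambda}$ is $\lambda^+$-c.c.\ and preserves $\lambda^+$.

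For $\lambda$ \emph{singular} the situation is genuinely different: $\m{R}^{\leq\lambda}$ can have size $\lambda^+$ and fail the $\lambda^+$-c.c.\ (with finitely many full branches running through, say, $\aleph_\omega$, one can build antichains of size $\lambda^+$), so one cannot just invoke a chain condition. Instead I would use the ``Easton-tower'' structure: for every regular $\mu<\lambda$ one has $\m{R}^{\leq\lambda}\cong\m{R}^{\leq\mu}\times\m{R}^{(\mu,\lambda]}$ with $|\m{R}^{\leq\mu}|\leq\mu$ and $\m{R}^{(\mu,\lambda]}$ being $\mu^+$-closed (even over $V[\m{R}^{\leq\mu}]$), so every function $\mu\to\Ord$ in $V[\m{R}^{\leq\lambda}]$ already lies in $V[\m{R}^{\leq\mu}]$, a model that preserves $\lambda^+$ and its regularity. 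A cofinal map of length $<\lambda^+$ into $\lambda^+$ would, after passing to a cofinal restriction and (if its length is $\lambda$) composing with a cofinal map of length $\cf(\lambda)$ into $\lambda$, yield a cofinal map of some regular length $<\lambda$ into $\lambda^+$ living in such a $V[\m{R}^{\leq\mu}]$ — impossible. Hence $\m{R}^{\leq\lambda}$, and therefore $\m{R}$, preserves $\lambda^+$.

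Assembling: every successor cardinal $\kappa=\mu^+$ is preserved by factoring $\m{R}=\m{R}^{\leq\mu}\times\m{R}^{>\mu}$ (the first factor preserves $\mu^+$ by the $\mu$-regular or $\mu$-singular case above, the second is $\mu^+$-closed), and limit cardinals are preserved as suprema of preserved cardinals. For $GCH$, fix a cardinal $\kappa$; since $\m{R}^{>\kappa}$ is $\kappa^+$-closed over $V[\m{R}^{\leq\kappa}]$, every subset of $\kappa$ in $V[\m{R}]$ already lies in $V[\m{R}^{\leq\kappa}]$, so it is enough to see $2^\kappa\leq\kappa^+$ there. For $\kappa$ regular this is a nice-names count in the small poset $\m{R}^{\leq\kappa}$, using $GCH$ in $V$. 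For $\kappa$ singular one observes that $X\subseteq\kappa$ is determined by $\langle X\cap\mu_i\mid i<\cf(\kappa)\rangle$ for a fixed cofinal sequence of regular $\mu_i<\kappa$; each $X\cap\mu_i$ lies in $V[\m{R}^{\leq\mu_i}]$, where $2^{\mu_i}=\mu_i^+$ by the regular case, and this $\cf(\kappa)$-sequence is itself absorbed into $V[\m{R}^{\leq\mu}]$ for a suitable regular $\mu<\kappa$, so a nice-names count there bounds the number of possibilities — hence $2^\kappa$ — by $\kappa^+$. The main obstacle is precisely this singular case: one must replace the false statement ``$\m{R}^{\leq\lambda}$ is $\lambda^+$-c.c.'' by the tower factorization together with the robust closure of the tail, track carefully which short sequences get absorbed into which small initial extension, and check that $GCH$ in $V$ makes every nice-names count come out to $\kappa^+$ rather than $\kappa^{++}$.
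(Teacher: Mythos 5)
Your overall plan matches the paper's: factor the poset at a cardinal into a small lower part and a closed upper part, use the $GCH$ in $V$ and a nice-names count for regular cardinals, and factor at $\cf(\lambda)$ for singular $\lambda$. However, the justification you give for ``closure of the tail persists over any intermediate extension'' is flawed. The pointwise union of a descending $\leq\lambda$-sequence of conditions of $\m{R}^{>\lambda}$ certainly produces a lower bound \emph{when the sequence lies in $V$}; but if the sequence lies in $V[\m{R}^{\leq\lambda}]\setminus V$, the coordinatewise union may be a partial function that is not a member of $V$ at all (for instance, it can code a new subset of $\lambda$ added by the lower part), and hence it is not an element of the $V$-poset $\m{R}^{>\lambda}$. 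Closure is in general \emph{not} preserved by a mutually generic factor; what does survive, and what you actually need, is $\lambda^+$-distributivity over $V[\m{R}^{\leq\lambda}]$, and the standard tool delivering that is Easton's lemma, which additionally requires the lower part to be $\lambda^+$-c.c.

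Since for singular $\lambda$ your lower part $\m{R}^{\leq\lambda}$ may have size $\lambda^+$ and need not be $\lambda^+$-c.c.\ --- a phenomenon you yourself point out --- the blanket claims ``$\m{R}^{>\lambda}$ adds no new subsets of $\lambda$ over $V[\m{R}^{\leq\lambda}]$'' and ``every subset of $\kappa$ in $V[\m{R}]$ already lies in $V[\m{R}^{\leq\kappa}]$'' are unjustified at singular $\lambda$, $\kappa$, and should be dropped. Fortunately your proof does not actually depend on them: in the Easton-tower step and in the singular-$GCH$ step you always refactor at a \emph{regular} $\mu<\lambda$, where the lower part does have size $\leq\mu$ and Easton's lemma does apply, and this is precisely how the paper proceeds (it factors at $\lambda$ itself when $\lambda$ is a successor or regular limit, and at $\eta=\cf(\lambda)$ when $\lambda$ is singular, i.e.\ always at a regular cardinal). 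Replacing your local pointwise-union argument by an explicit appeal to Easton's lemma at regular factorization points closes the gap and brings your proof in line with the paper's both in substance and in technique.
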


\begin{proof}
Similarly as in \cite[Lemma 1]{GK}, we show that for all cardinals $\lambda$, \[\big(2^\lambda \big)^{V[G_0 \; \uhr \; t (p) \times G_1 \, \uhr \, \{(\ol{\kappa}_0, \ol{\imath}_0), \, \ldots \,\}]} = (\lambda^\plus)^V.\]
First, consider the case that $\lambda = \ol{\lambda}^\plus$ is a successor cardinal. Let $(\m{P}_0 \uhr t (p)) \uhr (\lambda + 1):= \{q \uhr (\lambda + 1)\ | \ q \in \m{P}_0 \uhr t (p)\}$ and $(\m{P}_0 \uhr t (p)) \uhr [\lambda, \infty) := \{q \uhr [\lambda, \infty)\ | \ q \in \m{P}_0 \uhr t (p)\}$. \\
Similarly, let $(\m{P}_1 \uhr \{(\ol{\kappa}_0, \ol{\imath}_0), \, \ldots \,\}) \uhr (\lambda + 1) := \{ (p \uhr (\lambda + 1) ) \uhr \{ (\ol{\kappa}_0, \ol{\imath}_0), \, \ldots \,\} \ | \ p \in \m{P}_1 \}$ denote the lower part, and $(\m{P}_1 \uhr \{(\ol{\kappa}_0, \ol{\imath}_0), \, \ldots \,\}) \uhr [\lambda, \infty) := \{ (p \uhr [\lambda, \infty) )\uhr \{(\ol{\kappa}_0, \ol{\imath}_0), \, \ldots \,\} \ | \ p \in \m{P}_1 \}$ the upper part of the forcing $\m{P}_1 \uhr \{(\ol{\kappa}_0, \ol{\imath}_0), \, \ldots \,\}$. \\[-3mm]

Then $\m{P}_0 \uhr t (p)  \times \m{P}_1 \uhr \{(\ol{\kappa}_0, \ol{\imath}_0), \, \ldots \,, (\ol{\kappa}_{\ol{n}-1}, \ol{\imath}_{\ol{n}-1}) \} $ can be factored as 

\[\Big((\m{P}_0 \uhr t (p))  \uhr (\lambda + 1)\times ( \m{P}_1 \uhr \{(\ol{\kappa}_0, \ol{\imath}_0), \, \ldots \,, (\ol{\kappa}_{\ol{n}-1}, \ol{\imath}_{\ol{n}-1})\}) \uhr (\lambda + 1) \big) \times \] \[\Big((\m{P}_0 \uhr t (p))  \uhr [\lambda, \infty) \times ( \m{P}_1 \uhr \{(\ol{\kappa}_0, \ol{\imath}_0), \, \ldots \,, (\ol{\kappa}_{\ol{n}-1}, \ol{\imath}_{\ol{n}-1})\} ) \uhr [\lambda, \infty)\Big),\] 

where the first factor has cardinality $\leq \lambda$, since $\lambda = \ol{\lambda}^\plus$ is a successor cardinal, and the second factor is $\leq\lambda$ - closed. Thus, it follows that

\[\big(2^\lambda \big)^{V[G_0 \, \uhr \, t (p) \times G_1 \, \uhr \, \{(\ol{\kappa}_0, \ol{\imath}_0), \, \ldots \,\}]} \leq 
|\powerset(\lambda)|^V = (\lambda^\plus)^V\] as desired.\\[-3mm]

If $\lambda$ is a regular limit cardinal, the same argument applies.\\[-3mm]

It remains to show that \[\big(2^\lambda \big)^{V[G_0 \, \uhr \, t (p) \times G_1 \, \uhr \, \{(\ol{\kappa}_0, \ol{\imath}_0), \, \ldots \,\}]} = (\lambda^\plus)^V\] in the case that $\lambda$ is a singular limit cardinal. Assume the contrary and take $\lambda$ least such that $\eta := \cf \lambda < \lambda$ and \[\big(2^\lambda \big)^{V[G_0 \, \uhr \, t (p) \times G_1 \, \uhr \, \{(\ol{\kappa}_0, \ol{\imath}_0), \, \ldots \,\}]} > (\lambda^\plus)^V.\] Let $(\lambda_i\ | \ i < \eta)$ denote a cofinal sequence in $\lambda$. By assumption, it follows that \[\big(2^{\ol{\lambda}}\big)^{V[G_0 \, \uhr \, t (p) \times G_1 \, \uhr \, \{(\ol{\kappa}_0, \ol{\imath}_0), \, \ldots \,\}]} = (\ol{\lambda}^\plus)^V\] for all $\ol{\lambda} < \lambda$. Thus, \[2^\lambda \leq \prod_{i < \eta} 2^{\lambda_i} \leq \big(2^{< \lambda} \big)^{\eta} = \lambda^\eta \leq \lambda^\lambda = 2^\lambda\] holds true in $V$ and $V[G_0 \uhr t (p) \times G_1 \uhr \{(\ol{\kappa}_0, \ol{\imath}_0), \, \ldots \,\}]$. Since $\eta$ is regular, we have \[\big| \, (\m{P}_0 \uhr t (p)) \uhr (\eta + 1)\times (\m{P}_1 \uhr \{(\ol{\kappa}_0, \ol{\imath}_0), \, \ldots \,\}) \uhr (\eta + 1) \, \big| \leq \eta,\] and \[(\m{P}_0 \uhr t (p)) \uhr [\eta, \infty) \times (\m{P}_1 \uhr \{(\ol{\kappa}_0, \ol{\imath}_0), \, \ldots \,\}) \uhr [\eta, \infty)\] is $\leq\eta$ - closed. Thus, \[(2^\lambda)^{V[G_0 \, \uhr \, t (p) \times G_1 \, \uhr \, \{ (\ol{\kappa}_0, \ol{\imath}_0), \, \ldots \, \} ]} = (\lambda^\eta)^{V[G_0 \, \uhr \, t (p) \times G_1 \, \uhr \, \{ (\ol{\kappa}_0, \ol{\imath}_0), \, \ldots \, \} ]} \leq \]\[ \leq (\lambda^\eta)^{V[(G_0 \, \uhr \, t (p)) \, \uhr \, (\eta + 1) \times (G_1 \, \uhr \, \{ (\ol{\kappa}_0, \ol{\imath}_0), \, \ldots \,\}) \, \uhr \, (\eta + 1)]} \leq (2^\lambda)^{V[(G_0 \, \uhr \, t (p)) \, \uhr \, (\eta + 1) \times (G_1 \, \uhr \, \{ (\ol{\kappa}_0, \ol{\imath}_0), \, \ldots \,\}) \, \uhr \, (\eta + 1)]} \leq \]\[ \leq |\powerset( \lambda \times \eta)|^V \leq (2^\lambda)^V = (\lambda^\plus)^V,\] which gives the desired contradiction. 

\end{proof}

We will see that any set of ordinals in our eventual symmetric submodel $N$ can be captured in a generic extension by one of these forcings $\m{P}_0 \uhr t (p)  \times \m{P}_1 \uhr \{(\ol{\kappa}_0, \ol{\imath}_0), \, \ldots \,, (\ol{\kappa}_{\ol{n}-1}, \ol{\imath}_{\ol{n}-1})\} $.
Hence, $N$ preserves all cardinals.

\section{Symmetric names.} \label{symmetric names}
For defining our symmetric submodel $N$, we first we need a group $A$ of $\m{P}$-automorphisms.
We will have $A = A_0 \times A_1$ with $A_0$ a group of $\m{P}_0$-automorphisms, and $A_1$ a group of $\m{P}_1$-automorphisms.
\begin{definition} Denote by $A_0 ({\normalfont levels})$ the collection of all $\pi = (\pi (\kappa)\ | \ \kappa \in \Card, \kappa < \HT \pi)$ with $\HT \pi$, the {\normalfont height} of $\pi$, a cardinal, such that any $\pi (\kappa): \{ (\kappa, i) \ | \ i < F_{\lim} (\kappa)\} \rightarrow \{ (\kappa, i)\ | \ i < F_{\lim} (\kappa) \}$ is a bijection with finite support $\supp \pi (\kappa) := \{(\kappa, i) \ | \ \pi (\kappa) (\kappa, i) \neq (\kappa, i)\}$. \\ A map $\pi \in A_0 ({\normalfont levels})$ induces an automorphism $\pi_{tree}$ on the class of $F_{\lim}$-trees as follows: Set $\pi_{tree} (t, \leq_t) := (s, \leq_s)$ with $s := \pi[t] := \{ \pi (\kappa) (\kappa, i)\ | \ (\kappa, i) \in t\}$, where for $\kappa \geq \HT \pi$, we take for $\pi (\kappa)$ the identity on $\{(\kappa, i)\ | \ i < F_{\lim} (\kappa)\}$. Let $\leq_s \, := \, \pi[\leq_t] := \{(\pi (\kappa) (\kappa, i), \pi(\lambda) (\lambda, k))\ | \ (\kappa, i) \leq_t (\lambda, k)\}$. \\ Also, $\pi$ induces an automorphism $\ol{\pi}: \m{P}_0 \rightarrow \m{P}_0$: For $p \in \m{P}_0$, $p: t (p) \rightarrow V$, let $\ol{\pi} (p): \pi_{tree} (t (p), \leq_{t (p)} ) \rightarrow V$ with $\ol{\pi}(p) \big(\pi (\kappa) (\kappa, i)\big) = p(\kappa, i)$ for all $(\kappa, i) \in t (p)$. \\ Let \[A_0 := \{\ol{\pi} \ | \ \pi \in A_0 ({\normalfont levels})\}.\]  
\end{definition}

We will often confuse an automorphism $\pi$ with its extensions $\pi_{tree}$ and $\ol{\pi}$. \\[-3mm] 

Note that for an $F_{\lim}$-tree $t (p)$, it follows that $\pi (t (p))$ is essentially the same tree, where only the vertices $(\kappa, i)$ have now different \tbl names\tbr\, $\pi (\kappa) (\kappa, i)$.\\[-3mm]

Any $\pi \in A_0$ can be extended to an automorphism on $\Name (\m{P}_0)$ as usual, which will be denoted by the same letter. \\[-2mm]

Let $\kappa$ be a cardinal and $G_0$ a $V$-generic filter on $\m{P}_0$. For every $i < F_{\lim} (\kappa)$, the forcing $\m{P}_0$ adjoins a new $\kappa$-subset $(G_0)_{(\kappa, i)}$ given by the branch through $(\kappa, i)$: 
\[ (G_0)_{(\kappa, i)} = \{\zeta < \kappa\ | \ \exists\, p \in G_0 \ \exists\, (\lambda, j) \leq_{t (p)} (\kappa, i):\ p(\lambda, j) (\zeta) = 1 \}.\] 

Then $ (G_0)_{(\kappa, i)}$ has  a canonical name \[(\dot{G}_0)_{(\kappa, i)} := \{ (\zeta, p)\ | \ \zeta < \kappa\, , \, p \in \m{P}_0 \uhr (\kappa + 1)\, , \, \exists\, (\lambda, j) \leq_{t (p)} (\kappa, i): \ p(\lambda, j) (\zeta) = 1\}, \]

For any $\pi \in A_0$, it follows that $\pi \, \big((\dot{G}_0)_{(\kappa, i)} \big) = (\dot{G}_0)_{\pi (\kappa) (\kappa, i)}$.
Thus, our automorphisms in $A_0$ allow for swapping the generic subsets. \\[-3mm]

We call an automorphism $\pi \in A_0$ \textit{small} if it satisfies the following property: \\[-3mm]

\textit{For all $(\kappa, i)$, it follows that $\pi(\kappa)(\kappa, i) = (\kappa, j)$ such that there is a limit ordinal $\gamma (i)$ with $i, j \in [\gamma (i), \gamma (i) + \omega)$. } \\[-3mm]

It is not difficult to see that for any pair of conditions $p, q \in \m{P}_0$, there is a small automorphism $\pi \in A_0$ with $\pi p \, \| \, q$. Indeed, by the finiteness of the trees, it is possible to arrange that for any $(\kappa, i) \in t(p)$, we have $\pi (\kappa) (\kappa, i) \notin t(p) \, \cup\, t(q)$. \\[-2mm]

Now, we turn to $\m{P}_1$. This time, we do not quite have a group of automorphism on $\m{P}_1$, but a collection $A_1$ of bijections $\pi: D_\pi \rightarrow D_\pi$ such that any $D_\pi$ is dense in $\m{P}_1$ with the following properties: \begin{itemize} \item Whenever $p, q \in \m{P}_1$ with $p \in D_\pi$ and $q \leq p$ such that $\supp q = \supp p$, it follows that also $q \in D$. \item For any $p \in D_\pi$ and $\ol{\supp} \subseteq \supp p$, it follows that also $p \, \uhr \, \ol{\supp} \in D$. \end{itemize} Indeed, for any map $\pi \in A_1$, we will have a finite \tbl support\tbr\,  $supp \, \pi \subseteq Succ^\prime$, and for any $\kappa^\plus \in \supp \pi$ a \tbl domain\tbr\, $dom \, \pi (\kappa^\plus) = dom_x \,\pi (\kappa^\plus)\, \times\, dom_y \,\pi (\kappa^\plus)$ with $|dom \, \pi (\kappa^\plus)| < \kappa^\plus$ such that \[D_\pi = \{p \in \m{P}_1\ | \ \forall\, \kappa^\plus \in \supp \pi\, \cap \, \supp p\ \ \dom p(\kappa^\plus) \supseteq \dom \pi (\kappa^\plus)\}\hspace*{1,5cm} (\ast).\]

For any $\pi, \sigma \in A_1$, $\pi: D_\pi \rightarrow D_\pi$, $\sigma: D_\sigma \rightarrow D_\sigma$, there will be a map $\vartheta: D_\vartheta \rightarrow D_\vartheta$ in $A_1$ with $\vartheta = \sigma \circ \pi$ on $D_\vartheta = D_\pi \cap D_\sigma$. \\
Also, for any $\pi \in A_1$, there will be a map $\pi^{-1}$ in $A_1$ with $D_{\pi^{-1}} = D_\pi$ and $\pi \circ \pi^{-1} = \pi^{-1} \circ \pi = \id_{D_\pi}$. \\ We call this structure a \textit{group of partial $\m{P}_1$-automorphisms}.\\[-2mm]

For any $D = D_\pi$ with $(\ast)$ as above, we define a hierarchy $\ol{\Name (\m{P}_1)}^D_\alpha$ recursively as follows: \begin{itemize} \item $\ol{\Name (\m{P}_1)}^D_0 := \emptyset$, \item $\ol{\Name (\m{P}_1)}^D_{\alpha + 1}:= \{\dot{x} \in \Name (\m{P}_1) \ | \ x \subseteq \ol{\Name (\m{P})}_\alpha^D \, \times \, D\}$, and \item $\ol{\Name (\m{P}_1)}^D_\lambda := \bigcup_{\alpha  < \lambda} \ol{\Name (\m{P}_1)}^D_\alpha$ for $\lambda$ a limit ordinal. \end{itemize}

Let \[\ol{\Name (\m{P}_1)}^D := \bigcup_{\alpha \in \Ord} \ol{\Name (\m{P}_1)}^D_{\alpha}.\]  

Whenever $\pi: D_\pi \rightarrow D_\pi$ and $\dot{x} \in \ol{\Name (\m{P}_1)}^{D_\pi}$, we can define $\pi \dot{x}$ as usual. \\ However, for a $\m{P}_1$-name $\dot{x} \notin \ol{\Name(\m{P}_1)}^{D_\pi}$, it is not clear how to apply $\pi$, so we need an extension $\ol{\dot{x}}^{D_\pi}$: 

Let $D \subseteq \m{P}_1$ with property $(\ast)$ as above. Define recursively: \[ \ol{\dot{x}}^D := \{(\ol{\dot{y}}^D, \ol{p})\ | \ \exists\,(\dot{y}, p) \in \dot{x}\, :\ \, \ol{p} \leq p, \; \ol{p} \in D, \; \supp \ol{p} = \supp p \}.\]
Then for any $V$-generic filter $G_1$ on $\m{P}_1$ it follows that $(\ol{\dot{x}}^D)^{G_1} = \dot{x}^{G_1}$.\\[-2mm]

Let $\pi: D_\pi \rightarrow D_\pi$, $\sigma: D_\sigma \rightarrow D_\sigma$ as above.  
It is not difficult to verify the following properties:

\begin{itemize} \item For any $\dot{x} \in \Name (\m{P}_1)$, $\ol{\ol{\dot{x}}^{D_\pi}}^{D_\sigma} = \ol{\dot{x}}^{D_\pi \cap D_\sigma}$. \item Whenever $\dot{x} \in \ol{\Name(\m{P}_1)}^{D_\pi}$, then also $\ol{\dot{x}}^{D_\sigma} \in D_\pi$ with $\pi \ol{\dot{x}}^{D_\sigma} = \ol{\pi \dot{x}}^{D_\sigma}$. 
\end{itemize}

If $G_1$ is $V$-generic on $\m{P}_1$, then for any $\kappa^\plus \in Succ^\prime$, $i < F(\kappa^\plus)$, the generic $\kappa^\plus$-subset \[(G_1)_{(\kappa^\plus, i)} := \{ \zeta \in [\kappa, \kappa^\plus)\ | \ \exists\, p \in G_1\ p (\kappa^\plus) (\zeta, i) = 1\}\] has the canonical name \[(\dot{G}_1)_{(\kappa^\plus, i)} := \{ (\zeta, p)\ | \ \zeta \in [\kappa, \kappa^\plus), p \in \m{P}_1 \uhr (\kappa^\plus + 1), p (\kappa^\plus) (\zeta, i) = 1\}.\]

Firstly, we want that for any two of these generic $\kappa^{\plus}$-subsets $(G_1)_{(\kappa^\plus, i)}$ and $(G_1)_{(\kappa^\plus, i^\prime)}$, there is an automorphism in $A_1$ interchanging them. In other words: We want to include into $A_1$ the collection of all $\pi = (\pi (\kappa^\plus) \ | \ \kappa^\plus \in \supp \pi)$  
with $D_\pi = \m{P}_1$ such that for any $\kappa^\plus \in \SUPP \pi$, there is a bijection $f_\pi (\kappa^\plus)$ on a finite set $supp\, \pi (\kappa^\plus) \subseteq F(\kappa^\plus)$ with $\pi (\dot{G}_1)_{(\kappa^\plus, i)} =  (\dot{G}_1)_{(\kappa^\plus, f_\pi (\kappa^\plus) (i))}$ for all $i \in \supp \pi (\kappa^\plus)$. \\ For these automorphisms $\pi$, we will have $\pi p (\kappa^\plus) (\zeta, i) = p (\kappa^\plus) \big(\zeta, f_\pi (\kappa^\plus) (i)\big)$ whenever $p \in \m{P}_1$ and $\zeta \in [\kappa, \kappa^\plus)$, $i \in \SUPP \pi (\kappa^\plus)$. For all the remaining $\kappa^\plus$ and $(\zeta, i)$, we will have $\pi p (\kappa^\plus) (\zeta, i) = p(\kappa^\plus) (\zeta, i)$. \\[-3mm]

Also, we want that for any $p, q \in \m{P}_1$, there is an automorphism $\pi \in A_1$ with $\pi p \, \| \, q$. These $\pi$ will be of the following form:
For any $\kappa^\plus \in \supp \pi$,    there is  
$\dom \pi (\kappa^\plus) = \dom_x \pi (\kappa^\plus) \, \times \, \dom_y \pi (\kappa^\plus) \subseteq [\kappa, \kappa^\plus) \times F(\kappa^\plus)$ with $|\dom \pi (\kappa^\plus)| < \kappa^\plus$, and a collection \[ \big(\pi (\kappa^\plus) (\zeta, i)\ | \ (\zeta, i) \in \dom \pi (\kappa^\plus)\big) \in 2^{\dom \pi (\kappa^\plus)}, \] such that $\pi$ changes the values $p (\kappa^\plus) (\zeta, i)$ if and only if $\pi (\kappa^\plus) (\zeta, i) = 1$; i.e., for any condition $p \in D_\pi$, we will have $\pi p (\kappa^\plus) (\zeta, i) \neq p (\kappa^\plus) (\zeta, i)$ whenever $\pi (\kappa^\plus) (\zeta, i) = 1$, and $\pi p (\kappa^\plus) (\zeta, i) = p (\kappa^\plus) (\zeta, i)$ in the case that $\pi (\kappa^\plus) (\zeta, i) = 0$ or $(\zeta, i) \notin \dom \pi (\kappa^\plus)$. \\[-3mm]

$A_1$ will be generated by those two types of automorphisms. \\[-3mm]

All the $(\zeta, i)$ with $(\zeta, i) \in \dom \pi(\kappa^\plus)$ \textit{and} $i \in \supp \pi (\kappa^\plus)$ will have to be treated seperately: Namely, for any $\zeta \in \dom_x \pi (\kappa^\plus)$, we will have a bijection $\pi (\kappa^\plus) (\zeta)$ which maps the sequence $(p (\zeta, i)\ | \ i \in \supp \pi(\kappa^\plus))$ to $((\pi p) (\zeta, i)\ | \ i \in \supp \pi (\kappa^\plus))$. \\ These bijections $\pi (\kappa^\plus) (\zeta)$ will be necessary to retain a group structure. \\[-3mm]

This results in the following definition:

\begin{definition} \label{a1} $A_1$ consists of all automorphisms $\pi: D_\pi \rightarrow D_\pi$, $\pi = (\pi (\kappa^\plus)\ | \ \kappa^\plus \in \supp \pi)$ with finite support $\supp \pi \subseteq Succ^\prime$ such that for all $\kappa^\plus \in \supp \pi$, there are 
\begin{itemize} \item a finite set $\supp \pi (\kappa^\plus) \subseteq F(\kappa^\plus)$ with a bijection $f_\pi (\kappa^\plus): \supp \pi (\kappa^\plus) \rightarrow \supp \pi (\kappa^\plus)$, \item a domain $\dom \pi (\kappa^\plus) = \dom_x \pi (\kappa^\plus) \times \dom_y \pi (\kappa^\plus) \subseteq [\kappa, \kappa^\plus)\, \times\, F(\kappa^\plus)$ with $|\dom \pi (\kappa^\plus)| < \kappa^\plus$ such that $\supp \pi (\kappa^\plus) \subseteq \dom_y \pi (\kappa^\plus)$, and a collection $\big( \pi (\kappa^\plus) (\zeta, i)\ | $ $\ (\zeta, i) \in [\kappa, \kappa^\plus) \times F(\kappa^\plus) \big)$ 
with $\pi (\kappa^\plus) (\zeta, i) \in 2$ for all $(\zeta, i)$, and $\pi (\kappa^\plus) (\zeta, i) = 0$ whenever $(\zeta, i) \notin \dom \pi (\kappa^\plus)$, \mbox{\normalfont and }\item for any $\zeta \in \dom_x \pi (\kappa^\plus)$, a bijection $\pi (\kappa^\plus) (\zeta): 2^{\supp \pi (\kappa^\plus)} \rightarrow 2^{\supp \pi (\kappa^\plus)}$
\end{itemize}

such that $D_\pi := \{p \in \m{P}_1\ | \ \forall \kappa^\plus \in \supp p\, \cap \, \supp \pi\ \,\dom p (\kappa^\plus) \supseteq \dom \pi (\kappa^\plus)\}$, and
for any $p \in D_\pi$, the condition $\pi p$ is defined as follows: \\[-3mm]

We will have $\supp (\pi p) = \supp p$ with $\pi p (\kappa^\plus) = p(\kappa^\plus)$ whenever $\kappa^\plus \in \supp p \, \setminus \, \supp \pi$. \\ Let now $\kappa^\plus \in \supp p\, \cap\, \supp \pi$.

\begin{itemize} \item For any $i \in \supp \pi (\kappa^\plus)$ and $\zeta \notin \dom_x \pi (\kappa^\plus)$,
we have \[\pi p (\kappa^\plus) (\zeta, i) = p (\kappa^\plus) \big(\zeta, f_\pi (\kappa^\plus) (i)\big).\] \item For $\zeta \in \dom_x \pi (\kappa^\plus)$, \[\big( \pi p(\kappa^\plus)(\zeta, i)\ | \ i \in \supp \pi (\kappa^\plus)\big) = \pi (\kappa^\plus) (\zeta) \big(p (\kappa^\plus) (\zeta, i)\ | \ i \in \supp \pi (\kappa^\plus) \big).\]\item Whenever $i \notin \supp \pi (\kappa^\plus)$, then $\pi p(\kappa^\plus) (\zeta, i) := p (\kappa^\plus) (\zeta, i)$ if $\pi (\kappa^\plus) (\zeta, i) = 0$, and $\pi p(\kappa^\plus) (\zeta, i) \neq p (\kappa^\plus) (\zeta, i)$ in the case that $\pi (\kappa^\plus) (\zeta, i) = 1$. \end{itemize} \end{definition}

In other words: Outside the domain $\dom \pi (\kappa^\plus)$, we have a swap of the horizontal lines $p (\kappa^\plus) (\cdot\, , i)$ for $i \in \supp \pi (\kappa^\plus)$ according to $f_\pi (\kappa^\plus)$. \\Inside $\dom \pi (\kappa^\plus)$, the values $\pi p (\kappa^\plus) (\zeta, i)$ for $i \in \supp \pi (\kappa^\plus)$ are determined by the maps $\pi (\kappa^\plus) (\zeta)$, while any of the remaining values $\pi p (\kappa^\plus) (\zeta, i)$ with $i \notin \supp \pi (\kappa^\plus)$ is equal to $p(\kappa^\plus) (\zeta, i)$ if and only if $\pi (\kappa^\plus) (\zeta, i) = 1$. \\[-3mm]

We use the dense sets $D_\pi$ to make sure that $\dom p (\kappa^\plus)$ is not mixed up by $\pi$: Since $\dom p (\kappa^\plus) \supseteq \dom \pi(\kappa^\plus)$ for all $\kappa^\plus \in \supp p\, \cap \, \supp \pi$, it follows that $\dom \pi p (\kappa^\plus) = \dom p(\kappa^\plus)$. \\[-3mm]

It is not difficult to verify that any $\pi \in A_1$ is indeed a $\leq$-preserving bijection on $D_\pi$. The inverse maps $\pi^{-1}$ and concatenations $\pi \circ \sigma$ can be written down explicitly, using the definition above. Hence, $A_1$ is indeed a \textit{group of partial $\m{P}_1$-automorphisms}. \\[-2mm]

For a map $\pi \in A_1$, $\pi: D_\pi \rightarrow D_\pi$ and $\kappa^\plus \in \supp \pi$, the domain $dom\, \pi (\kappa^\plus) = \dom_x \pi (\kappa^\plus) \, \times\, \dom_y \pi (\kappa^\plus) $ is determined by $D_\pi = \{p \in \m{P}_1\ | \ \forall\, \kappa^\plus \in \supp \pi\, \cap\, \supp p\ \dom p(\kappa^\plus) \supseteq \dom \pi (\kappa^\plus)\}$, and the maps $\big(\pi (\kappa^\plus) (\zeta, i)\ | \ (\zeta, i) \in [\kappa, \kappa^\plus)\, \times\, F(\kappa^\plus)\big)$ for $i \notin \supp \pi (\kappa^\plus)$ are given by $\pi$, as well. \\ However,  
if $\supp  \pi (\kappa^\plus) \subseteq \dom_y \pi (\kappa^\plus)$ with maps $f_\pi (\kappa^\plus) : \supp \pi (\kappa^\plus) \rightarrow \supp \pi (\kappa^\plus)$ and $\pi (\zeta): 2^{\supp \pi (\kappa^\plus)} \rightarrow 2^{\supp \pi (\kappa^\plus)}$ satifies the properties from Definition \ref{a1}, 
then any finite set $\ol{\supp}\, \pi (\kappa^\plus)$ with $\supp \, \pi (\kappa^\plus) \subseteq \ol{\supp}\, \pi (\kappa^\plus) \subseteq \dom_y (\kappa^\plus)$ is suitable for describing $\pi$ as well, after the maps $f_\pi (\kappa^\plus)$ and $\pi (\kappa^\plus) (\zeta)$ have been modified accordingly.

Thus, when talking about $supp \, \pi (\kappa^\plus)$ for some map $\pi \in A_1$, we will always think of $supp \, \pi (\kappa^\plus) \subseteq \dom_y \pi (\kappa^\plus)$ least with the property that there are maps $f_\pi (\kappa^\plus)$ and $\pi (\kappa^\plus) (\zeta)$ describing $\pi$ as in Definition \ref{a1}. Then the attributes $f_\pi (\kappa^\plus)$ and $(\pi (\kappa^\plus) (\zeta)\ | \ \zeta \in \dom_x \pi (\kappa^\plus))$ are unique, as well. \\[-2mm]

Let $A := A_0 \times A_1$ denote the collection of all automorphisms $\pi = (\pi_0, \pi_1)$, $\pi: D_\pi \rightarrow D_\pi$ with $D_\pi := \m{P}_0 \times D_{\pi_1}$. \\[-3mm] 

We will now define the $A$-subgroups that generate our symmetric model $N$, where by \textit{$A$-subgroup}, we mean in this context a subclass $B \subseteq A$ that is a group of partial automorphisms. \\ We will have $A$-subgroups of the form $B_0 \, \times\, A_1$ for an $A_0$-subgroup $B_0 \subseteq A_0$, and $A_0 \, \times\, B_1$ for an $A_1$-subgroup $B_1 \subseteq A_1$. \\[-3mm]

We start with $A_0$. \\[-4mm]

Firstly, for any $\kappa \in \Card, i < F_{\lim} (\kappa)$, we want to include the subgroup \[Fix_0(\kappa, i) := \{\pi_0 \in A_0\ | \ \pi_0(\kappa) (\kappa, i) = (\kappa, i)\},\] 
which makes sure that any canonical name $(\dot{G}_0)_{(\kappa, i)}$  is hereditarily symmetric, since $\pi_0 (\dot{G}_0)_{(\kappa, i)} = (\dot{G}_0)_{(\kappa, i)}$ for all $\pi_0 \in Fix_0 (\kappa, i)$. Thus, our model $N$ will contain any of the adjoined $\kappa$-subsets $(G_0)_{(\kappa, i)}$ given by the branches through the generic tree. \\[-3mm]

For any cardinal $\kappa$ and $\alpha < F_{\lim} (\kappa)$, we want in $N$ 
a surjection $s: \powerset(\kappa) \rightarrow \alpha$; which gives $\theta^N (\kappa) \geq F (\kappa)$ for all limit cardinals $\kappa$. 
However, we have to make sure that $\theta^N (\kappa) < F(\kappa)^\plus$; so the sequence $\big( (G_0)_{(\kappa, i)}\ | \ i < F (\kappa)\big)$ must not be contained in $N$. \\[-3mm]
     
Therefore, for cardinals $\kappa$ and $\alpha < F_{\lim} (\kappa)$ a limit ordinal, 
we consider the subgroup $Small_0(\kappa, [0, \alpha))$ containing all the $\m{P}_0$-automorphisms $\pi_0$ with the property that $\pi_0 (\kappa)$ is \textit{small below} $\alpha$, i.e. for any $i < \alpha$, it follows that $\pi_0 (\kappa) (\kappa, i) = (\kappa, j)$ for some $j$ such that $i, j \in [\gamma (i), \gamma (i) + \omega)$ for a limit ordinal $\gamma (i)$:
\[ Small_0 (\kappa, [0, \alpha)) := \{\pi_0 \in A_0\ | \ \forall\, i < \alpha, \; i \in [\gamma (i), \gamma (i) + \omega) \mbox{ with } \gamma (i) \mbox{ a limit ordinal: } \] \[ \pi_0 (\kappa) (\kappa, i) = (\kappa, j) \mbox{ for some } j \in [\gamma (i), \gamma (i) + \omega) \}.\]

Now, for any limit ordinal $i < \alpha$, we can define a \tbl cloud\tbr\, around $(\dot{G}_0)_{(\kappa, i)}$ as follows: \[(\dot{\wt{G_0}})_{(\kappa, i)}^\alpha := \big\{ \, \big(\pi (\dot{G_0})_{(\kappa, i)}, \m{1}\big)\ | \ \pi \in Small_0 (\kappa, [0, \alpha))\, \big\} = \] \[ = \big\{ \,\big( (\dot{G_0})_{(\kappa, i + n)}, \m{1} \big)\ | \ n < \omega\, \big\}.\] 

Then $(\wt{G_0})_{(\kappa, i)}^\alpha := \big ( (\dot{\wt{G_0}})_{(\kappa, i)}^\alpha \big)^G$ is the set of all $(G_0)_{(\kappa, i + n)}$ for $n < \omega$; so any two distinct clouds $(\wt{G_0})_{(\kappa, i)}^\alpha$ and $(\wt{G_0})_{(\kappa, j)}^\alpha$ for limit ordinals $i, j < \alpha$ are indeed disjoint. Hence, the sequence $\big( (\wt{G_1})_{(\kappa, i)}^\alpha\ | \ i < \alpha \mbox{ a limit ordinal }\big)$, which has a canonical symmetric name stabilized by all $\pi \in Small_0 (\kappa, [0, \alpha))$, gives a surjection $s: \powerset (\kappa) \rightarrow \alpha$ in $N$. \\[-2mm]

Note that the subgroups $Fix_0 (\kappa, i) \subseteq A_0$ and $Small_0 (\kappa, [0, \alpha)) \subseteq A_0$ are not normal. However, one can check that the collection of the $Fix_0 (\kappa, i)$ and $Small_0 (\kappa, [0, \alpha))$ satisfies the \textit{normality property} mentioned in Chapter \ref{class forcing}. \\[-3mm]

Now, we turn to $A_1$. 
For any $\kappa \in Succ^\prime$, $\kappa = \ol{\kappa}^\plus$ and $i < F(\kappa)$, we want to include the $A_1$-subgroup
\[Fix_1 (\kappa, i) := \{\pi \in A_1\ | \ \forall\, p \in D_\pi\ (\pi p) \, \uhr \, \{(\kappa, i)\} = p\, \uhr \, \{(\kappa, i)\}\}.\]This makes sure that any generic $\kappa$-subset $(G_1)_{(\kappa, i)}$  
is contained in our eventual symmetric submodel $N$, since 
$\pi (\dot{G}_1)_{(\kappa, i)} = (\dot{G}_1)_{(\kappa, i)}$ for all $\pi \in Fix_1 (\kappa, i)$. \\Again, we have to make sure that the sequence $\big( (G_1)_{(\kappa, i)}\ | \ i < F(\kappa)\big)$ is not included into $N$, in order to achieve $\theta^N (\kappa) \leq F (\kappa)$. 
On the other hand, we need surjections $s: \powerset (\kappa) \rightarrow \alpha$ for all $\alpha < F(\kappa)$. Thus, similarly as before, for $\kappa \in Succ^\prime$, $\alpha < F(\kappa)$, let \[Small_1 (\kappa, [0, \alpha)) := \{\pi \in A_1\ | \ \forall\, i < \alpha\ \; i \notin \supp \pi (\kappa)\}.\]  

Then $Small_1 (\kappa, [0, \alpha))$ does not contain any of those automorphisms that interchange some $(\dot{G}_1)_{(\kappa, i)}$ and $(\dot{G}_1)_{(\kappa, j)}$ for $i, j < \alpha$. Thus, for any $i < \alpha$, we can define a \tbl cloud\tbr \,$(\wt{G_1})_{(\kappa, i)}^\alpha$ around $ (G_1)_{(\kappa, i)}$ with the symmetric name \[(\dot{\wt{G_1}})_{(\kappa, i)}^\alpha := \big\{ \, \big( \pi (\dot{G_1})_{(\kappa, i)}, \m{1}\big)\ | \ \pi \in Small_1 (\kappa, [0, \alpha))\, \big\}\] such that for $(\wt{G_1})_{(\kappa, i)}^\alpha := \big( (\dot{\wt{G_1}})_{(\kappa, i)}^\alpha \big)^G$, it follows that any two distinct clouds $(\wt{G_1})_{(\kappa, i)}$ and $(\wt{G_1})_{(\kappa, j)}$ are indeed disjoint. Hence, the sequence $\big( (\wt{G_1})_{(\kappa, i)}\ | \ i < \alpha\big)$, which has a symmetric name stabilized by all $\pi \in Small_1 (\kappa, [0, \alpha))$, gives a surjection $s: \powerset (\kappa) \rightarrow \alpha$ in $N$.\\[-3mm]   

Again, the normality property holds. \\[-3mm]       

\begin{definition} A name $\dot{x}$ is {\normalfont symmetric}, whenever there are $n, m, \ol{n}, \ol{m} < \omega$ and \begin{itemize} \item $\kappa_0, \, \ldots \,, \kappa_{n-1} \in \Card$, $i_0 < F_{\lim} (\kappa_0), \, \ldots \,, i_{n-1} < F_{\lim} (\kappa_{n-1})$ \item $\lambda_0, \, \ldots \,, \lambda_{m-1} \in \Card$, $\alpha_0 < F_{\lim} (\lambda_0), \, \ldots \,, \alpha_{m-1} < F_{\lim} (\lambda_{m-1})$ \item $\ol{\kappa}_0, \, \ldots \,, \ol{\kappa}_{\ol{n}-1} \in \Succ^\prime$, $\ol{\imath}_0 < F (\ol{\kappa}_0), \, \ldots \,, \ol{\imath}_{\ol{n}-1} < F (\ol{\kappa}_{\ol{n}-1})$ \item $\ol{\lambda}_0, \, \ldots \,, \ol{\lambda}_{\ol{m}-1} \in \Succ^\prime,  \ol{\alpha}_0 < F (\ol{\lambda}_0), \, \ldots \,, \ol{\alpha}_{\ol{m}-1} < F (\ol{\lambda}_{\ol{m}-1})$ \end{itemize} such that $\{\pi \in A\ | \ \pi \ol{\dot{x}}^{D_\pi} = \ol{\dot{x}}^{D_\pi}\}$ is a superset of the following intersection: \[Fix_0 (\kappa_0, i_0)\, \cap\, \cdots\, \cap\, Fix_0 (\kappa_{n-1}, i_{n-1})\, \cap\, Small_0 (\lambda_0, [0, \alpha_0))\, \cap\, \cdots\, \cap \, Small_0 (\lambda_{m-1}, [0, \alpha_{m-1}))\, \cap\] \[ \cap\, Fix_1 (\ol{\kappa}_0, \ol{\imath}_0)\, \cap\, \cdots\, \cap\, Fix_1 (\ol{\kappa}_{\ol{n}-1}, \ol{\imath}_{\ol{n}-1})\, \cap\, Small_1 (\ol{\lambda}_0, [0, \ol{\alpha}_0))\, \cap\, \cdots\, \cap \, Small_1 (\ol{\lambda}_{\ol{m}-1}, [0, \ol{\alpha}_{\ol{m}-1})).\]

Recursively, a name $\dot{x}$ is {\normalfont hereditarily symmetric}, $\dot{x} \in HS$, if $\dot{x}$ is symmetric, and $\dot{y} \in HS$ for all $(\dot{y}, p) \in \dot{x}$. \end{definition}  

\vspace*{2mm}
The following properties are not difficult to verify:
\begin{itemize} \item If $\dot{x} \in HS$ and $\pi \in A$, it follows that also $\ol{\dot{x}}^{D_\pi} \in HS$. 
\item If $\dot{x} \in HS$ and $\pi \in A$ with $\dot{x} \in \ol{\Name(\m{P})}^{D_\pi}$, then also $\pi \dot{x} \in HS$. \end{itemize}

\vspace*{2mm}

For $G$ a $V$-generic filter on $\m{P}$, our symmetric submodel is defined as follows: \[V(G) := \{\dot{x}^G\ | \ \dot{x} \in HS\}.\]
\section{The symmetric submodel} 

Fix a $V$-generic filter $G$ on $\m{P}$, and let $N := V(G)$ as defined in section \ref{symmetric names}. \\We claim that $N$ satifies the statement from our theorem, i.e. $N \vDash ZF$, $N$ preserves all $V$-cardinals, and $\theta^N (\kappa) = F(\kappa)$ for all $\kappa$. \\[-3mm]

In this section, we will verify that $N$ is indeed a model of $ZF$, although the class forcing $\m{P}$ does not preserve $ZFC$. Later on, we will see that any set of ordinals located in $N$ can be captured in a \tbl mild\tbr\,$V$-generic extension that preserves cardinals and the $GCH$. \\[-3mm]

For $\alpha \in \Ord$, let $\m{P}_\alpha := (\m{P}_0)_\alpha\, \times\, (\m{P}_1)_\alpha$. Then \[\m{P} = \bigcup \{ \m{P}_{\alpha} \ | \ \alpha \in \Ord\}\] is an increasing union of set-sized complete subforcings, so the forcing theorem holds for $\Vdash_{\m{P}}^{V, \check{V}, \dot{G}}$ and the symmetric forcing relation $(\Vdash_s)_{\m{P}}^{V, \check{V}}$. \\ Also, it follows that any $G_{\alpha} := \{p \in G\ | \ p \in \m{P}_{\alpha}\}$ is a $V$-generic filter on $\m{P}_{\alpha}$. \\[-3mm] 

This implies that both the generic extension $V[G]$ and the symmetric submodel $N$ satisfy the axioms of Extensionality, Foundation, Pairing, Union and Infinity. 

\begin{prop} \label{separation} The Axiom of Separation holds in $\left\langle V[G], \in, V\right\rangle$ and $\left\langle N, \in, V\right\rangle $ for every $\mathcal{L}_\in^{A}$-formula $\varphi (v_0, \ldots, v_{n-1})$. 
\end{prop}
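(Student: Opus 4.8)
The plan is to carry out, for each of the two structures, the standard ``define the obvious forcing name'' proof of Separation: for $\langle V[G],\in,V\rangle$ with the forcing relation $\Vdash^{V,\check{V},\dot{G}}_{\m{P}}$, and for $\langle N,\in,V\rangle$ with the symmetric forcing relation $(\Vdash_s)^{V,\check{V}}_{\m{P}}$. Both are available as definable class relations, because $\m{P}=\bigcup_\alpha\m{P}_\alpha$ is an increasing union of set-sized complete subforcings, so the forcing theorem holds for $\Vdash^{V,\check{V},\dot{G}}_{\m{P}}$ and all $\mathcal{L}_\in^{A,B}$-formulas, and hence for $(\Vdash_s)^{V,\check{V}}_{\m{P}}$ and all $\mathcal{L}_\in^{A}$-formulas. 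The only step that goes beyond the set-forcing treatment of symmetric extensions is the verification, in the $N$-case, that the name produced is hereditarily symmetric; carrying this out in the presence of the \emph{partial} $\m{P}_1$-automorphisms, i.e.\ with the extensions $\ol{(\cdot)}^{D_\pi}$, is the step I expect to be the main obstacle.

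For $\langle V[G],\in,V\rangle$: fix an $\mathcal{L}_\in^{A}$-formula $\varphi(v_0,\dots,v_{n-1})$, a set $\dot{a}^G\in V[G]$, parameters $\dot{p}_1^G,\dots,\dot{p}_{n-1}^G$, and an $\alpha$ with $\dot{a},\dot{p}_1,\dots,\dot{p}_{n-1}\in\Name(\m{P}_\alpha)^V$. I would put
\[
\dot{b}:=\big\{\,(\dot{z},q)\ \big|\ q\in\m{P}_\alpha,\ \exists r\,\big((\dot{z},r)\in\dot{a}\ \wedge\ q\le r\big),\ q\Vdash^{V,\check{V},\dot{G}}_{\m{P}}\big(\dot{z}\in\dot{a}\ \wedge\ \varphi(\dot{z},\dot{p}_1,\dots,\dot{p}_{n-1})\big)\,\big\},
\]
which is a set and hence a legitimate $\m{P}$-name. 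A routine genericity argument then shows $\dot{b}^G=\{x\in\dot{a}^G\mid\langle V[G],\in,V\rangle\models\varphi(x,\vec{\dot{p}}^{\,G})\}$, so this set belongs to $V[G]$. The only point here that is not pure set-forcing routine is that, for $\dot{b}$ to be an honest $V$-set rather than a definable proper class of pairs, one restricts the conditions $q$ to the set-sized complete subforcing $\m{P}_\alpha$; this is where the approximation structure of $\m{P}$ from Section~\ref{class forcing} enters (together with the weak homogeneity of the tail forcing $\m{P}\uhr[\aleph_\alpha,\infty)$ inherited from the automorphisms of Section~\ref{symmetric names}, which makes the relevant deciding conditions dense already in $\m{P}_\alpha$); cf.\ \cite{PRPPA}.

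For $\langle N,\in,V\rangle$ I would argue in the same way, now with $\dot{a},\dot{p}_1,\dots,\dot{p}_{n-1}\in HS$ and $(\Vdash_s)$ replacing $\Vdash$:
\[
\dot{b}:=\big\{\,(\dot{z},q)\ \big|\ q\in\m{P}_\alpha,\ \exists r\,\big((\dot{z},r)\in\dot{a}\ \wedge\ q\le r\big),\ q\,(\Vdash_s)^{V,\check{V}}_{\m{P}}\big(\dot{z}\in\dot{a}\ \wedge\ \varphi(\dot{z},\dot{p}_1,\dots,\dot{p}_{n-1})\big)\,\big\}.
\]
The forcing theorem for $(\Vdash_s)^{V,\check{V}}_{\m{P}}$ again gives $\dot{b}^G=\{x\in\dot{a}^G\mid\langle N,\in,V\rangle\models\varphi(x,\vec{\dot{p}}^{\,G})\}$, and since every $\dot{z}$ occurring in $\dot{b}$ occurs in $\dot{a}\in HS$ we have $\dot{z}\in HS$; so the whole proof reduces to showing that $\dot{b}$ is \emph{symmetric}, for then $\dot{b}\in HS$ and $\dot{b}^G\in N$. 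Using the symmetry of $\dot{a}$ and of the $\dot{p}_j$, I pick a single finite list of distinguished subgroups whose intersection
\[
E:=Fix_0(\kappa_0,i_0)\cap\cdots\cap Small_0(\lambda_0,[0,\alpha_0))\cap\cdots\cap Fix_1(\ol{\kappa}_0,\ol{\imath}_0)\cap\cdots\cap Small_1(\ol{\lambda}_0,[0,\ol{\alpha}_0))\cap\cdots
\]
lies inside each of the groups $\{\pi\in A\mid\pi\,\ol{\dot{a}}^{D_\pi}=\ol{\dot{a}}^{D_\pi}\}$ and $\{\pi\in A\mid\pi\,\ol{\dot{p}_j}^{D_\pi}=\ol{\dot{p}_j}^{D_\pi}\}$; such an $E$ exists because these finitely many lists can simply be concatenated. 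The remaining claim is $E\subseteq\{\pi\in A\mid\pi\,\ol{\dot{b}}^{D_\pi}=\ol{\dot{b}}^{D_\pi}\}$. For $\pi\in E$ I would apply the symmetry lemma for $(\Vdash_s)^{V,\check{V}}_{\m{P}}$ to the defining condition $q\,(\Vdash_s)(\dot{z}\in\dot{a}\wedge\varphi(\dot{z},\vec{\dot{p}}))$ and combine it with the listed facts that $\pi$ fixes $\ol{\dot{a}}^{D_\pi}$ and each $\ol{\dot{p}_j}^{D_\pi}$, that $\pi$ commutes with $\ol{(\cdot)}^{D_\pi}$ on $\ol{\Name(\m{P})}^{D_\pi}$, and that $\ol{\ol{\dot{x}}^{D_\pi}}^{D_\sigma}=\ol{\dot{x}}^{D_\pi\cap D_\sigma}$ (Section~\ref{symmetric names}); from these one reads off that $\pi$ carries the defining pairs of $\ol{\dot{b}}^{D_\pi}$ bijectively onto themselves, i.e.\ $\pi\,\ol{\dot{b}}^{D_\pi}=\ol{\dot{b}}^{D_\pi}$. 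This bookkeeping with the extension operation $\ol{(\cdot)}^{D_\pi}$, which the set-forcing case does not require, is precisely the subtlety that has to be handled with care; once it is in place the argument is the familiar one for symmetric extensions, and Separation in $\langle N,\in,V\rangle$ follows because every element of $N$ and every $\mathcal{L}_\in^{A}$-parameter over $N$ is of the form $\dot{x}^G$ with $\dot{x}\in HS$.
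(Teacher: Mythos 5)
Your outline matches the paper's, but the single step you park in a parenthetical --- that for $\mathcal{L}_\in^A$-statements $\psi$ whose names lie in $\Name(\m{P}\uhr(\lambda+1))^V$, the deciding conditions are dense already in $\m{P}\uhr(\lambda+1)$ --- is exactly what the paper's proof establishes, and it is essentially the entire content of the argument. It cannot be inherited as a black box from Section~\ref{symmetric names}: you must exhibit, for a given $\ol{p}\in\m{P}$ forcing $\psi$ and any $q\le\ol{p}\uhr(\lambda+1)$ in $\m{P}$, a $\m{P}$-automorphism $\pi\in A$ that is the identity on $\m{P}\uhr(\lambda+1)$ (so that it fixes $\ol{\dot{a}}^{D_\pi}$ and the $\ol{\dot{p}_j}^{D_\pi}$) and satisfies $\pi\ol{p}\,\|\,q$. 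The paper carries this out by a level-by-level construction: $\pi_0$ relabels every $t(\ol{p}_0)$-vertex above level $\lambda$ to a fresh vertex outside $t(\ol{p}_0)\cup t(q_0)$ (possible because $F_{\lim}$-trees have finitely many maximal points, hence finitely many vertices per level to avoid), and $\pi_1$ is the bit-flipping automorphism with $\supp\pi_1(\alpha^\plus)=\emptyset$ on the intersection of the rectangular domains of $\ol{p}_1$ and $q_1$. You should also state explicitly why the argument is confined to $\mathcal{L}_\in^A$-formulas: $\pi$ changes $G$ to $\pi G\ne G$, yet $V[\pi G]=V[G]$, so the structure $\langle V[G],\in,V\rangle$ is $\pi$-invariant while $\langle V[G],\in,V,G\rangle$ is not; without this observation the symmetry-lemma step does not close. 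Your citation of~\cite{PRPPA} supplies the forcing theorem but not this homogeneity, which is a property of this particular $\m{P}$ established via the automorphism group of Section~\ref{symmetric names}.

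The part you flag as ``the main obstacle'' --- checking $\dot{b}\in HS$ with the $\ol{(\cdot)}^{D_\pi}$-bookkeeping --- you handle correctly and in fact more explicitly than the paper, which dispatches the $N$-case with ``the proof is similar.'' Your extra clause $\exists r\,((\dot{z},r)\in\dot{a}\wedge q\le r)$ in the definition of $\dot{b}$ is a harmless variant of the paper's $\dot{x}\in\dom\dot{a}$: in the ``$\supseteq$''-direction one extends the deciding $q\in G\cap\m{P}_\alpha$ further, inside the filter $G\cap\m{P}_\alpha$, below some $r$ with $(\dot{z},r)\in\dot{a}$. Finally, the paper works with the cut $\m{P}\uhr(\lambda+1)$ rather than the approximation $\m{P}_\alpha$; the two are essentially the same and the choice is cosmetic.
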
 
\begin{proof} We first consider $V[G]$. Let $a \in V[G]$ and $\varphi(v_0, \ldots, v_{n-1}) \in \mathcal{L}_\in^A$. 
W.l.o.g. assume $n = 1$ and take a parameter $z := z_0$ in $V[G]$. We have to show that there is $b \in V[G]$ with \[b = \big\{x \in a \  \big| \ \left\langle V[G], \in, V\right\rangle \, \vDash \varphi (x, z)\big\}.\] Take a cardinal $\lambda$ large enough such that there are names $\dot{a}, \dot{z} \in \Name (\m{P} \uhr (\lambda + 1))$  
with $a = \dot{a}^{G \, \uhr \, (\lambda + 1)}$, $z = \dot{z}^{G \, \uhr \, (\lambda + 1)}$. \\ Let \[\dot{b} := \{ (\dot{x}, p)\ | \ \dot{x} \in \dom\, \, \dot{a}, \, p \in \m{P} \uhr (\lambda + 1), p \Vdash^{V, \check{V}}_{\m{P}} (\dot{x} \in \dot{a}\, \wedge\, \varphi (\dot{x}, \dot{z}) )\, \}.\]  
We claim that $\dot{b}^G = b$. The direction \tbl $\subseteq$\tbr\, is clear. Concerning \tbl $\supseteq$\tbr\,, consider $x \in b$.
Let $\dot{x} \in \dom \dot{a}$ with $x  = \dot{x}^G$ and $\ol{p} \in G$ with \[\ol{p} \Vdash^{V, \check{V}}_{\m{P}} (\dot{x} \in \dot{a}\, \wedge\, \varphi (\dot{x}, \dot{z})).\] Let $p := \ol{p} \uhr (\lambda + 1)$. It suffices to verify also $p \Vdash^{V, \check{V}}_{\m{P}} (\dot{x} \in \dot{a}\, \wedge\, \varphi (\dot{x}, \dot{z}))$. If not, there would be $q \in \m{P}$, $q \leq p$ with \[q \Vdash^{V, \check{V}}_{\m{P}} \neg \, \big (\dot{x} \in \dot{a}\, \wedge\, \varphi(\dot{x}, \dot{z})\big).\] We construct a $\m{P}$-automorphism $\pi$ with $\pi \ol{p} \, \| \, q$ such that $\pi$ is the identity on $\m{P} \uhr (\lambda + 1)$. Then $\pi \ol{\dot{x}}^{D_\pi} = \ol{\dot{x}}^{D_\pi}$, $\pi \ol{\dot{a}}^{D_\pi} = \ol{\dot{a}}^{D_\pi}$ and $\pi \ol{\dot{z}}^{D_\pi} = \ol{\dot{z}}^{D_\pi}$; hence, \[\pi \ol{p} \Vdash^{V, \check{V}}_{\m{P}} \big(\ol{\dot{x}}^{D_\pi} \in \ol{\dot{a}}^{D_\pi} \, \wedge\, \varphi (\ol{\dot{x}}^{D_\pi}, \ol{\dot{z}}^{D_\pi})\big), \] contradicting that $\pi \ol{p}\, \| \, q$. \\[-3mm]
 
We start with $\pi_0$. Let $\HT \pi_0 := \max \{ \eta (\ol{p}), \eta (q)\}$. 
For $\alpha \leq \lambda$, let $\pi_0 (\alpha)$ be the identity. For $\lambda^\plus \leq \alpha \leq \HT \pi_0$, take for $\pi_0 (\alpha)$ a bijection on $\{(\alpha, i)\ | \ i < F_{\lim} (\alpha) \}$ with finite support such that for any $(\alpha, i) \in t(\ol{p})$, it follows that $\pi_0(\alpha) (\alpha, i) = (\alpha, j)$ for some $(\alpha, j) \notin t(\ol{p})\, \cup \, t(q)$.
Then from $q \leq \ol{p} \uhr (\lambda + 1)$ it follows that $\pi_0 \ol{p}_0 \, \| \, q_0$. \\[-3mm]

Now, we turn to $\pi_1$. Let $\supp \pi_1 := \supp \ol{p}_1\, \cup\, \supp q_1$. For $\alpha^\plus \in \supp \pi_1$ with $\alpha^\plus \leq \lambda$, let $\pi_1 (\alpha^\plus)$ be the identity. For $\alpha^\plus \in \supp \pi_1$ with $\alpha^\plus > \lambda$, we define $\pi_1 (\alpha^\plus)$ as follows: Let $\dom \pi_1 (\alpha^\plus) := \dom \ol{p}_1 (\alpha^\plus)\, \cap \, \dom q_1 (\alpha^\plus)$ and $\SUPP \pi_1 (\alpha^\plus) = \emptyset$; then we only need to define $\pi_1 (\alpha^\plus)(\zeta, i)$ for $\zeta \in \dom_x \ol{p}_1 (\alpha^\plus)\, \cap \, \dom_x q_1 (\alpha^\plus)$, $i \in \dom_y \ol{p}_1 (\alpha^\plus)\, \cap \, \dom_y q_1 (\alpha^\plus)$. Let $\pi_1 (\alpha^\plus) (\zeta, i) = 0$ if $\ol{p}_1 (\alpha^\plus) (\zeta, i) = q_1 (\alpha^\plus) (\zeta, i)$, and $\pi_1 (\alpha^\plus) (\zeta, i) = 1$ in the case that $\ol{p}_1 (\alpha^\plus) (\zeta, i) \neq q_1 (\alpha^\plus) (\zeta, i)$. Then $\pi_1 \ol{p}_1 \, \| \, q_1$. \\[-3mm]

Hence, our automorphism $\pi = (\pi_0, \pi_1)$ is as desired. \\[-3mm]

This proves Separation in $V[G]$ for any $\mathcal{L}_\in^A$-formula $\varphi$. \\[-3mm]

The proof for $N$ is similar, using symmetric names and the symmetric forcing relation $(\Vdash_s)^{V, \check{V}}_{\m{P}}$.

\end{proof}   

Now, in order to show that Replacement holds in $N$, it is enough to verify the Axiom Scheme of Collection (and then use Separation):

\begin{prop} \label{Replacement} For any $\mathcal{L}_\in^{A}$-formula $\varphi (x, y, v_0, \, \ldots \,, v_{n-1})$ and $a$, $z_0, \, \ldots \,, z_{n-1} \in N$ such that \[\left \langle N, \in, V \right\rangle \, \vDash \forall  x\in a\ \exists y \ \varphi (x, y, z_0, \, \ldots \,, z_{n-1}),\] there exists $b \in  N$ with the property that\[ \left\langle N, \in, V \right \rangle \ \vDash \ \forall x \in a \ \exists y \in  b\ \varphi (x, y, z_0, \, \ldots \,, z_{n-1}).\] 
\end{prop}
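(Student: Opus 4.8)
The plan is to adapt the standard argument that symmetric extensions satisfy the Collection scheme to the present class-forcing setting, the one extra ingredient being a rank bound on witnessing hereditarily symmetric names obtained from $ZFC$ in the ground model $V$. First I would fix names $\dot a, \dot z_0, \dots, \dot z_{n-1} \in HS$ for $a, z_0, \dots, z_{n-1}$, write $\vec z := (\dot z_0, \dots, \dot z_{n-1})$, and fix a condition $p_0 \in G$ with $p_0 \,(\Vdash_s)^{V, \check{V}}_{\m{P}}\, \forall x \in \dot a\ \exists y\ \varphi(x, y, \vec z)$. Since $\dot a$ and the $\dot z_j$ are symmetric, I would fix once and for all a single finite intersection
\[ E \ := \ Fix_0(\kappa_0, i_0)\, \cap\, \cdots\, \cap\, Small_1(\ol\lambda_{\ol m - 1}, [0, \ol\alpha_{\ol m - 1})) \]
of subgroups of the four kinds $Fix_0, Small_0, Fix_1, Small_1$ such that $\pi\, \ol{\dot a}^{D_\pi} = \ol{\dot a}^{D_\pi}$ and $\pi\, \ol{\dot z_j}^{D_\pi} = \ol{\dot z_j}^{D_\pi}$ for all $\pi \in E$ and $j < n$ (intersect the supports of $\dot a$ and of the $\dot z_j$).

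Next, since $\m{P}$ is an increasing union of set-sized complete subforcings the relation $(\Vdash_s)^{V, \check{V}}_{\m{P}}$ is definable in $V$, and $HS$ is a definable class of $V$. Picking $\alpha_0$ with $\dot a, \vec z, p_0 \in V_{\alpha_0} \cap \Name(\m{P}_{\alpha_0})$, an automorphism argument as in Proposition~\ref{separation} lets me restrict attention to conditions lying in the set-sized subforcing $\m{P}_{\alpha_0}$, so that Collection in $V \vDash ZFC$ applied to $\dom \dot a \times \m{P}_{\alpha_0}$ yields a single ordinal $\theta > \alpha_0$ bounding the ranks of the $\dot y \in HS$ and the least $\gamma$ with $\dot y \in \Name(\m{P}_\gamma)$ over all pairs $(\dot x, q) \in \dom \dot a \times \m{P}_{\alpha_0}$ for which some extension of $q$ forces $\dot x \in \dot a \wedge \varphi(\dot x, \dot y, \vec z)$ with some such $\dot y$; I would take $\theta$ closed under the operations $\dot y \mapsto \pi \dot y$ and $\dot y \mapsto \ol{\dot y}^{D_\pi}$ (which neither raise rank nor leave $HS$). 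Then I would set
\[ \dot b \ := \ \big\{\, (\dot y, q) \ \big|\ \dot y \in HS \cap \Name(\m{P}_\theta),\ \mathrm{rank}(\dot y) < \theta,\ q \in \m{P}_\theta,\ \exists\, \dot x \in \dom \dot a\ \ q\,(\Vdash_s)\, (\dot x \in \dot a\, \wedge\, \varphi(\dot x, \dot y, \vec z))\, \big\}, \]
a set in $V$ all of whose components are hereditarily symmetric.

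To finish I would verify, first, that $\dot b$ is symmetric with $E$ as its support: for $\pi \in E$ and $(\dot y, q) \in \dot b$, the symmetry lemma for $(\Vdash_s)$ together with $\pi\, \ol{\dot a}^{D_\pi} = \ol{\dot a}^{D_\pi}$ and $\pi\, \ol{\dot z_j}^{D_\pi} = \ol{\dot z_j}^{D_\pi}$ turns a condition forcing $\dot x \in \dot a \wedge \varphi(\dot x, \dot y, \vec z)$ into one forcing the same statement with $\pi\, \ol{\dot x}^{D_\pi}$ — which, as $\pi$ fixes $\ol{\dot a}^{D_\pi}$ and $\ol{\dot x}^{D_\pi}$ occurs in it, is again a name occurring in $\dot a$ — and $\pi\, \ol{\dot y}^{D_\pi}$ in place of $\dot x, \dot y$; closure of $\theta$ keeps the result a member of $\dot b$, giving $\pi\, \ol{\dot b}^{D_\pi} \subseteq \dot b$, hence $\pi\, \ol{\dot b}^{D_\pi} = \ol{\dot b}^{D_\pi}$ by applying this to $\pi^{-1}$, so $\dot b \in HS$. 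Then, to see $b := \dot b^G$ works: given $x \in a$, pick $\dot x \in \dom \dot a$ with $\dot x^G = x$ and $r \in G$, $r \leq p_0$, with $r\,(\Vdash_s)\,\dot x \in \dot a$; the choice of $p_0$ makes the conditions forcing $\varphi(\dot x, \dot y, \vec z)$ for some $\dot y \in HS$ dense below $r$, and by the bound $\theta$ together with another restriction argument there is such a pair $(\dot y, q)$ with $q \in G$ and $(\dot y, q) \in \dot b$; then $\dot y^G \in \dot b^G = b$ and $\langle N, \in, V \rangle \vDash \varphi(x, \dot y^G, \vec z)$.

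The hard part will be the symmetry of $\dot b$: one has to check that the common support $E$ of $\dot a$ and the $\dot z_j$ persists to $\dot b$, which leans on the symmetry lemma for the symmetric forcing relation, on $\pi\, \ol{\dot x}^{D_\pi}$ falling back into $\dom \dot a$, and on a careful choice of the truncation ordinal $\theta$ so that the bookkeeping maps $\pi$ and $\ol{\cdot}^{D_\pi}$ — unavoidable because $A_1$ consists only of \emph{partial} automorphisms — do not push a witness out of $HS \cap \Name(\m{P}_\theta)$ or raise its rank past $\theta$. The subsidiary point that $\dot b$ is a genuine set despite $\m{P}$ being a proper class is handled, both in the definition of $\dot b$ and in the last step, by the automorphism argument already used in Proposition~\ref{separation} to reduce to a set-sized complete subforcing.
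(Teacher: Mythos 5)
Your route differs genuinely from the paper's. The paper works inside the extension: it introduces the diagonal hierarchy $N_\alpha := \{\dot x^{G_\alpha}\ |\ \dot x \in HS \cap \Name_{\alpha+1}(\m{P}_\alpha)^V\}$, shows via an isomorphism argument that the function $x \mapsto \min\{\beta\ |\ \exists y \in N_\beta\ \varphi(x,y)\}$ belongs to the set-generic $ZFC$-model $V[G \uhr (\lambda+1)]$, and applies Replacement there to bound the $\beta$'s, so that $N_\delta$ serves as the cover. You do all the bounding in $V$ instead, using the $V$-definability of $(\Vdash_s)$ and of $HS$ to apply Collection in $V$ to $\dom \dot a \times \m{P}_{\alpha_0}$ and then reading off a name $\dot b$. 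What you save is the paper's careful verification that the $N_\alpha$-hierarchy is definable in $\langle V[G], \in, V, G\rangle$ (for which the paper reworks the recursion theorem to sidestep the lack of Replacement in $V[G]$). What you pay for it is a density argument for the final coverage claim, and that is where your sketch is incomplete.

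The Collection step gives $\theta$ bounding the witness names $\dot y$ for pairs $(\dot x, q)$ with $q$ ranging over $\m{P}_{\alpha_0}$ only. Given $x = \dot x^G \in a$, the condition $r'' \in G$ forcing $\dot x \in \dot a \wedge \varphi(\dot x, \dot y'', \vec z)$, for the witness $\dot y''$ coming from the hypothesis, may lie far above level $\aleph_{\alpha_0}$, with $\dot y''$ of rank far above $\theta$. Cutting down to $r_0 := r'' \uhr (\aleph_{\alpha_0}+1)$ and invoking the bound yields a rank-$<\theta$ name $\dot y' \in \Name(\m{P}_\theta)$ and a $q' \leq r_0$ in $\m{P}_\theta$ forcing the statement — but $q'$ need not be compatible with $r''$ and so need not lie in $G$ or below the given condition. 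What is actually needed is a further automorphism step: choose $\pi \in E$, identity on $\m{P}_{\alpha_0}$, with $\pi q' \,\|\, r''$ (by the tree-disjointing technique of Proposition~\ref{separation}); then $\pi q' \,(\Vdash_s)\, (\dot x \in \dot a \wedge \varphi(\dot x, \pi \ol{\dot y'}^{D_\pi}, \vec z))$ with $\pi \ol{\dot y'}^{D_\pi}$ still a rank-$<\theta$ name in $\Name(\m{P}_\theta)$, and a common extension of $\pi q'$ and $r''$ lies below $r''$ and forces the statement with the bounded witness; one more application of the automorphism argument (legitimate now, since all names involved live in $\Name(\m{P}_\theta)$) restricts this condition to $\m{P}_\theta$ and lands the pair in $\dot b$ with a condition in $G$. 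This two-stage argument is the heart of the coverage proof, and calling it ``another restriction argument'' undersells it: as written, your proof has a gap there. A minor further point on the symmetry check: $\pi \ol{\dot a}^{D_\pi} = \ol{\dot a}^{D_\pi}$ gives that $\pi \ol{\dot x}^{D_\pi}$ equals $\ol{\dot x'}^{D_\pi}$ for some $\dot x' \in \dom \dot a$, not that $\pi \ol{\dot x}^{D_\pi}$ itself occurs in $\dot a$.
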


\begin{proof} For an ordinal $\alpha$ and the set forcing $\m{P}_\alpha$ as above, the $\Name_\beta (\m{P}_\alpha)^V$-hierarchy is defined recursively (in $V$) as usual: $\dot{x} \in \Name_{\beta + 1} (\m{P}_\alpha)^V$ iff $\dot{x} \subseteq \Name_\beta (\m{P}_\alpha)^V\, \times\, \m{P}_\alpha$, and for $\lambda$ a limit ordinal, $\dot{x} \in \Name_\lambda (\m{P}_\alpha)^V$ iff $\dot{x} \in \Name_\beta (\m{P}_\alpha)^V$ for some $\beta < \lambda$. \\
We are going to use the following \tbl diagonal hierarchy\tbr: For $\alpha \in \Ord$, let \[N_\alpha := \{\dot{x}^{G_{\alpha}}\ | \ \dot{x} \in HS\, \cap\, \Name_{\alpha + 1} (\m{P}_{\alpha})^V\}.\]
One has to check that this hierarchy is indeed definable in the structure $\left\langle V[G], \in, V, G \right\rangle$, i.e. there is an $\mathcal{L}_{\in}^{A, B}$-formula $\tau$ such that $\left\langle V[G], \in, V, G\right\rangle\  \vDash \tau (x, \alpha)$ iff $\alpha = \min \{\beta\ | \ x \in N_\beta\}$. 
Therefore, one first has to make sure that the interpretation function $(\cdot)^G$ is definable within $\left \langle V[G], \in, V, G \right \rangle$, where some extra care is needed, since the recursion theorem can only be applied very carefully (we do not have replacement in $V[G]$).  

However, for evaluating $\m{P}_\alpha$-names, it will be sufficient that replacement holds inside the set-generic extension $V[G_\alpha]$ (cf. \cite[Lemma 4.4]{Gitik}):
One can mimic the proof of the recursion theorem in $ZFC$ to construct function $f(x, y)$ in $\left \langle V[G], \in, V, G \right\rangle$ with $f(\alpha, \dot{v}) = v$ if and only if $\dot{v} \in \Name_{\alpha + 1} (\m{P}_{\alpha})^V$ and $v = \dot{v}^{G_{\alpha}}$. At some points, where we would like to use replacement to make sure that certain terms are indeed a set, we realize (recursively) that everything constructed so far could also be constructed inside $V[G_\alpha]$, so we can apply replacement there. \\[-3mm]

This function $f(\alpha, \dot{v})$ can be used to define our $N_\alpha$-hierarchy: Let $\tau (x, \alpha)$ be the formula \[\alpha = \min \{ \beta\ | \ \exists\, \dot{x} \in HS\, \cap \, \Name_{\beta + 1} (\m{P}_{\beta})^V\ \; x = f(\beta, \dot{x})\}.\] Then $\left\langle V[G], \in, V, G \right\rangle \, \vDash \tau (x, \alpha)$ if and only if $\alpha = \min\{\beta\ | \ \;x \in N_\beta\}$. \\[-2mm]

Now, consider $a \in N$ and an $\mathcal{L}_\in^{A}$-formula $\varphi$ with \[\left\langle N, \in V \right\rangle \vDash \forall x \in a \ \exists y \ \varphi (x, y).\] (We suppress the parameters $z_0, \ldots, z_{n-1}$ for simplicity.)
We have to show that there exists $b \in  N$ with the property that\[ \forall x \in a \ \exists y \in  b\ \left\langle N, \in, V\right\rangle\  \vDash \varphi (x, y).\] 

First, we use structural induction over the formula $\varphi$ to construct an $\mathcal{L}_\in^{A, B}$-formula $\ol{\varphi}$ such that for all $x \in a$ and $y$,\[ \left\langle V[G], \in, V, G \right\rangle \vDash \ol{\varphi} (x, y)\] if and only if \[ \left\langle N, \in, V \right \rangle \, \vDash \varphi (x, y). \] 

Then we define in $\left\langle V[G], \in, V, G \right\rangle$:\[A := \big\{\, (x, \alpha)\ | \ x \in a\; \wedge\; \alpha = \min \{\, \beta\ | \ \exists\, y \ \exists\, \dot{y} \in HS\, \cap\, \Name_{\beta+1} (\m{P}_\beta)^V \; :\ y = f(\dot{y}, \beta)\, \wedge\, \ol{\varphi} (x, y)\, \}\, \big\}.\]

Then $A = \big\{\, (x, \alpha)\ | \ x \in a\, \wedge\, \alpha = \min \{\beta\ | \ \exists\, y \in N_\beta\ \left\langle N, \in, V \right\rangle \vDash \varphi (x, y)\}\, \big\}$. \\[-3mm]

It suffices to show that there exists $\delta$ with $\rg\, A \subseteq \delta$, since this would imply that for all $x \in a$, there exists $y \in N_\delta$ with $\left\langle N, \in, V \right\rangle\,  \vDash \varphi (x, y)$. \\[-3mm]

Take $\lambda$ large enough such that there is $\dot{a} \in HS\, \cap \, \Name (\m{P} \uhr (\lambda + 1))^V$ with $a  = \dot{a}^{G \, \uhr \, (\lambda + 1)}$. We claim that $A \in V[G \uhr (\lambda + 1)]$. \\[-3mm]

Let \[A^\prime := \big\{ (\dot{x}^{G \, \uhr \, (\lambda + 1)}, \alpha)\ | \ \dot{x} \in \dom \dot{a}, \alpha = \min\{\beta\ | \ \exists\, \dot{y} \in HS\, \cap\, \Name_{\beta+1} (\m{P}_\beta)^V \]\[\ \exists\, p\, :\ p \Vdash_{\m{P}}^{V, \check{V}, \dot{G}} \big(\dot{x} \in \dot{a}\, \wedge\, \ol{\varphi}(\dot{x}, \dot{y})\big)\, , \, p \uhr (\lambda + 1) \in G \uhr (\lambda + 1) \}\, \big\}.\]
\vspace*{1mm}

Then $A^\prime \in V[G \uhr (\lambda + 1)]$. It remains to prove that $A = A^\prime$. \\[-3mm] 

Therefore, it suffices to show that in $\left\langle V[G], \in, V, G \right\rangle$, for any $\dot{x} \in \dom \dot{a}$ and $\beta \in \Ord$ the following are equivalent: \begin{itemize} \item[(I)]$\dot{x}^{G \, \uhr \, (\lambda + 1)} \in a\, \wedge\, \exists\, \dot{y} \in HS\, \cap\, \Name_{\beta+1} (\m{P}_\beta)^V\ \exists\, y\ :\ y = f(\dot{y}, \beta) \, \wedge\, \ol{\varphi} (\dot{x}^{G\, \uhr\, (\lambda + 1)}, y)$ \item[(II)] $\exists\, \dot{y} \in HS\, \cap\, \Name_{\beta+1} (\m{P}_\beta)^V\ \exists\, p\, :\ p \Vdash_{\m{P}}^{V, \check{V}, \dot{G}} \big(\dot{x} \in \dot{a}\, \wedge\, \ol{\varphi} (\dot{x}, \dot{y})\big),$ \\ $p \uhr (\lambda + 1) \in G \uhr (\lambda + 1)$. 
\end{itemize}  

The direction \tbl (I) $\Rightarrow$ (II)\tbr\, is clear. Concerning \tbl (II) $\Rightarrow$ (I)\tbr\,, assume towards a contradiction that there was $\dot{x} \in \dom \dot{a}$, $\beta \in \Ord$ and $\dot{y} \in HS\, \cap\, \Name_{\beta+1} (\m{P}_\beta)^V$ with $p \Vdash_{\m{P}}^{V, \check{V} ,\dot{G}} \big(\dot{x} \in \dot{a}\, \wedge\, \ol{\varphi} (\dot{x}, \dot{y})\big)$ for some $p \in \m{P}$ with $p \uhr (\lambda + 1) \in G \uhr (\lambda + 1)$, but (I) fails. \\ From $p \Vdash_{\m{P}}^{V, \check{V}, \dot{G}} \dot{x} \in \dot{a}$ with $p \uhr (\lambda + 1) \in G \uhr (\lambda + 1)$ and $\dot{x}, \dot{a} \in \Name (\m{P} \uhr (\lambda + 1))^V$, it follows that $\dot{x}^{G \, \uhr \, (\lambda + 1)} \in \dot{a}^{G \, \uhr \, (\lambda + 1)} = a$; hence, \[ \left\langle V[G], V, \in, G \right\rangle \vDash \neg \big(\, \exists\, \dot{y} \in HS\, \cap\, \Name_{\beta+1} (\m{P}_\beta)^V\ \exists\, y\ :\ y = f(\dot{y}, \beta) \, \wedge\, \ol{\varphi} (\dot{x}^{G\, \uhr\, (\lambda + 1)}, y)\, \big). \]

Take $q \in G$ such that \[q \Vdash_{\m{P}}^{V, \check{V}, \dot{G}} \, \forall \dot{y} \in HS\, \cap\, \Name_{\beta + 1} (\m{P}_\beta)^V\ \forall\,y \ : \ y = f(\dot{y}, \beta) \, \longrightarrow \, \neg \ol{\varphi} (\dot{x}, y). \] As in Proposition \ref{separation}, we can construct an automorphism $\pi$ such that $\pi p \, \| \, q$, and $\pi$ is the identity on $\m{P} \uhr (\lambda + 1)$. Then $\pi \ol{\dot{x}}^{D_\pi} = \ol{\dot{x}}^{D_\pi}$; hence, \[\pi p \Vdash_{\m{P}}^{V, \check{V}, \pi \dot{G}} \, \ol{\varphi} (\dot{x}, \pi \ol{\dot{y}}^{D_\pi}).\] 

By structural induction over the formula $\ol{\varphi}$, one can use an isomorphism argument to show that for any condition $r \in \m{P}$, it follows that $r \Vdash_{\m{P}}^{V, \check{V}, \dot{G}} \ol{\varphi} (\dot{x}, \pi \ol{\dot{y}}^{D_\pi})$ if and only if $r \Vdash_{\m{P}}^{V, \check{V}, \pi \dot{G}} \ol{\varphi} (\dot{x}, \pi \ol{\dot{y}}^{D_\pi})$. The induction step regarding the existential quantifier follows from the fact that for any $\dot{v} \in HS\, \cap \, \Name_{\alpha + 1} (\m{P}_\alpha)^V$ and $\pi \in A$, also $\pi \ol{\dot{v}}^{D_\pi} \in HS\, \cap \, \Name_{\alpha+1} (\m{P}_\alpha)^V$, and $\dot{v}^H = (\pi \ol{\dot{v}}^{D_\pi})^{\pi H}$ for any $V$-generic filter $H$ on $\m{P}$. \\[-3mm]

Hence, it follows that also \[ \pi p \Vdash_{\m{P}}^{V, \check{V}, \dot{G}} \, \ol{\varphi} ( \dot{x}, \pi \ol{\dot{y}}^{D_\pi}).\] But $\pi \ol{\dot{y}}^{D_\pi} \in HS\, \cap \, \Name_{\beta + 1} (\m{P}_\beta)^V$, which contradicts $\pi p\, \| \, q$. \\[-3mm]

Thus, (I) and (II) are equivalent, which implies $A = A^\prime$ as desired. Now, since $A \in V[G \uhr (\lambda + 1)]$, we can apply Replacement in the $ZFC$-model $V[G \uhr (\lambda + 1)]$ and obtain that $\rg\, A \subseteq \delta$ for some ordinal $\delta$. Therefore, \[\forall x \in a\ \exists y \in N_\delta\ \left \langle N, \in, V \right \rangle \vDash \varphi (x, y). \] Since $N_\delta \in N$ (the canonical name $\dot{N}_\delta := \{ (\dot{x}, \m{1})\ | \ \dot{x} \in HS\, \cap \, \Name_{\alpha + 1} (\m{P}_\alpha)^V \}$ is symmetric), this finishes the proof.

\end{proof}

Similarly, one can show that the Axiom of Replacement holds true in $V[G]$ as long the formula $\varphi$ does not make use of the parameter $G$ for the generic filter:

\begin{prop} For any $\mathcal{L}_\in^{A}$-formula $\varphi (x, y, v_0, \, \ldots \,, v_{n-1})$ and $a, z_0, \, \ldots \,, z_{n-1} \in V[G]$ such that \[\left\langle V[G], \in, V, G\right\rangle\, \vDash \forall x \in a \ \exists y \ \varphi (x, y, z_0, \, \ldots \,, z_{n-1}),\] it follows that there exists $b \in  V[G]$ with the property that\[ \left\langle V[G], \in, V, G\right\rangle\, \vDash \forall x \in a \ \exists y \in  b\ \varphi (x, y, z_0, \, \ldots \,, z_{n-1}).\] 
\end{prop}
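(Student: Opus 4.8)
The plan is to follow the proof of Proposition~\ref{Replacement} almost verbatim, with two simplifications: since here $\varphi$ is an $\mathcal{L}_\in^{A}$-formula interpreted directly in $\left\langle V[G], \in, V\right\rangle$ (equivalently in $\left\langle V[G], \in, V, G\right\rangle$, as $\varphi$ never mentions the predicate $B$), there is no need to pass to a formula $\ol{\varphi}$ relativized to the symmetric submodel, and no hereditary-symmetry restriction on names is imposed. Concretely I would replace the \emph{diagonal hierarchy} $N_\alpha$ by \[V[G]_\alpha := \{\dot{x}^{G_\alpha}\ | \ \dot{x} \in \Name_{\alpha+1}(\m{P}_\alpha)^V\},\] and note that, exactly as in the proof of Proposition~\ref{Replacement}, this hierarchy is definable in $\left\langle V[G], \in, V, G\right\rangle$: one constructs the evaluation function $f(\alpha, \dot{v})$ with $f(\alpha, \dot{v}) = v$ iff $\dot{v} \in \Name_{\alpha+1}(\m{P}_\alpha)^V$ and $v = \dot{v}^{G_\alpha}$ by mimicking the recursion theorem and invoking Replacement inside each set-generic extension $V[G_\alpha]$, where it is available. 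It then suffices to produce a single ordinal $\delta$ with $\forall\, x \in a\ \exists\, y \in V[G]_\delta\ \left\langle V[G], \in, V\right\rangle \vDash \varphi(x, y, z_0, \ldots, z_{n-1})$, since then $b := V[G]_\delta$ is as required and lies in $V[G]$ via the canonical name $\{(\dot{x}, \m{1})\ | \ \dot{x} \in \Name_{\delta+1}(\m{P}_\delta)^V\}$.

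To locate $\delta$, fix a cardinal $\lambda$ large enough that $a = \dot{a}^{G\uhr(\lambda+1)}$ and $z_i = \dot{z}_i^{G\uhr(\lambda+1)}$ for suitable names in $\Name(\m{P}\uhr(\lambda+1))^V$, and (suppressing the parameters) set \[A := \big\{(x, \alpha)\ | \ x \in a\, \wedge\, \alpha = \min\{\beta\ | \ \exists\, y \in V[G]_\beta\ \left\langle V[G], \in, V\right\rangle \vDash \varphi(x, y)\}\big\};\] the inner class of $\beta$ is nonempty by hypothesis, so $A$ is well defined. In parallel, define \[A' := \big\{(\dot{x}^{G\uhr(\lambda+1)}, \alpha)\ | \ \dot{x} \in \dom\dot{a},\ \alpha = \min\{\beta\ | \ \exists\, \dot{y} \in \Name_{\beta+1}(\m{P}_\beta)^V\ \exists\, p\ \big(p \Vdash^{V, \check{V}}_{\m{P}} (\dot{x} \in \dot{a} \wedge \varphi(\dot{x}, \dot{y}))\big)\, \wedge\, p\uhr(\lambda+1) \in G\uhr(\lambda+1)\}\big\}.\] Since $\dom\dot{a}$ is a set, the forcing relation $\Vdash^{V, \check{V}}_{\m{P}}$ and the hierarchies $\Name_{\beta+1}(\m{P}_\beta)^V$ are $V$-definable, and $G\uhr(\lambda+1)$ is available as a parameter, the class $A'$ is in fact a set of $V[G\uhr(\lambda+1)]$.

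The main step is the equality $A = A'$, which reduces, for $\dot{x} \in \dom\dot{a}$ and $\beta \in \Ord$, to the equivalence of ``$\exists\, y \in V[G]_\beta\ \left\langle V[G], \in, V\right\rangle \vDash \varphi(\dot{x}^{G\uhr(\lambda+1)}, y)$'' with ``$\exists\, \dot{y} \in \Name_{\beta+1}(\m{P}_\beta)^V\ \exists\, p\ \big(p \Vdash^{V, \check{V}}_{\m{P}} (\dot{x} \in \dot{a} \wedge \varphi(\dot{x}, \dot{y})) \wedge p\uhr(\lambda+1) \in G\uhr(\lambda+1)\big)$''. One direction is immediate (take $p \in G$). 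For the converse I would argue by contradiction as in Proposition~\ref{Replacement}: if such a $p$ exists but no witness lies in $V[G]_\beta$, pick $q \in G$ forcing --- phrased via $f$ --- that no name in $\Name_{\beta+1}(\m{P}_\beta)^V$ yields a witness, and construct, exactly as in Proposition~\ref{separation}, an automorphism $\pi$ that is the identity on $\m{P}\uhr(\lambda+1)$ with $\pi p\, \|\, q$. Then $\pi\ol{\dot{x}}^{D_\pi} = \ol{\dot{x}}^{D_\pi}$ and $\pi\ol{\dot{a}}^{D_\pi} = \ol{\dot{a}}^{D_\pi}$ (both evaluating as $\dot{x}$, $\dot{a}$), while $\pi\ol{\dot{y}}^{D_\pi} \in \Name_{\beta+1}(\m{P}_\beta)^V$; and because $\varphi \in \mathcal{L}_\in^A$ contains no occurrence of the generic-filter predicate, the symmetry lemma gives directly $\pi p \Vdash^{V, \check{V}}_{\m{P}} (\ol{\dot{x}}^{D_\pi} \in \ol{\dot{a}}^{D_\pi} \wedge \varphi(\ol{\dot{x}}^{D_\pi}, \pi\ol{\dot{y}}^{D_\pi}))$, with no separate isomorphism argument needed --- this being the one point that was elaborate in Proposition~\ref{Replacement} and here collapses. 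This contradicts $\pi p\, \|\, q$ together with the choice of $q$.

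With $A = A' \in V[G\uhr(\lambda+1)]$, Replacement in the $ZFC$-model $V[G\uhr(\lambda+1)]$ yields $\rg A \subseteq \delta$ for some ordinal $\delta$; hence $\forall\, x \in a\ \exists\, y \in V[G]_\delta\ \left\langle V[G], \in, V\right\rangle \vDash \varphi(x, y, z_0, \ldots, z_{n-1})$, and $b := V[G]_\delta$ completes the proof (using Separation to cut $b$ down if one wants the tighter bounded-quantifier form). The only genuine obstacle is inherited from Proposition~\ref{Replacement}: making the diagonal hierarchy --- equivalently, the evaluation function $f$ --- definable over $\left\langle V[G], \in, V, G\right\rangle$ despite the failure of Replacement there, which is handled by running the recursion inside the set-generic extensions $V[G_\alpha]$. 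Everything else is routine, and somewhat lighter than the symmetric case since $\varphi$ does not refer to $G$.
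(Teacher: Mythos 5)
Your proposal is correct and follows exactly the route the paper indicates: replace the diagonal hierarchy $N_\alpha$ by $V[G]_\alpha := \{\dot{x}^{G_\alpha}\ |\ \dot{x} \in \Name_{\alpha+1}(\m{P}_\alpha)^V\}$, drop the $HS$ restriction and the relativized formula $\ol{\varphi}$, and run the $A = A'$ argument verbatim. Your observation that the structural-induction isomorphism argument degenerates because $\varphi$ does not mention the generic-filter predicate is the right simplification, though note one still uses the auxiliary fact that $\pi \ol{\dot{y}}^{D_\pi} \in \Name_{\beta+1}(\m{P}_\beta)^V$ when reading off the final contradiction against $q$.
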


One can use basically the same proof, but with the hierarchy $\big( (V[G])_\alpha\ | \ \alpha \in \Ord \big)$ instead of $(N_\alpha\ | \ \alpha \in \Ord$), where $(V[G])_\alpha := \{\dot{x}^{G_{\alpha}}\ | \ \dot{x} \in \Name_{\alpha+1} (\m{P}_{\alpha})\}$. 

\begin{prop} \label{power set} The Axiom of Power Set holds in $N$.
\end{prop}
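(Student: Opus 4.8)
The plan is to show that for any $a \in N$, the collection of all subsets of $a$ that lie in $N$ is captured by a single symmetric name, so that Separation (Proposition \ref{separation}) then carves out $\powerset^N(a)$. First I would reduce to the case where $a$ is a set of hereditarily symmetric names: fix $\dot{a} \in HS$ with $a = \dot{a}^G$, and take a cardinal $\lambda$ large enough that $\dot{a} \in \Name(\m{P}\uhr(\lambda+1))^V$ and moreover all the "coordinates" $(\kappa,i)$, $(\ol{\kappa},\ol{\imath})$ occurring in a symmetry witness for $\dot{a}$ have $\kappa,\ol{\kappa} \le \lambda$. The key structural observation is the standard one for symmetric extensions: if $b \subseteq a$ with $b \in N$ and $\dot{b} \in HS$ is a name for it, then $\dot{b}$ is equivalent (has the same realization under every generic) to the name
\[
\dot{b}^* := \{(\dot{x}, p)\ | \ \dot{x} \in \dom\dot{a},\ p \in \m{P},\ p \Vdash^{V,\check V}_{\m{P}} \dot{x} \in \dot{b}\,\},
\]
and the point is to bound the conditions $p$ that can appear. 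Using the automorphism/homogeneity arguments from Propositions \ref{separation} and \ref{Replacement} — pushing $p$ down to $p\uhr(\mu+1)$ for a suitable $\mu$ via an automorphism fixing everything below $\mu$ and fixing $\dot{a}$, $\dot{x}$ — one shows $p$ may be taken in $\m{P}\uhr(\mu+1)$ for a single $\mu$ depending only on $\lambda$ and on the symmetry witness of $\dot{b}$.

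The difficulty is that $\mu$ a priori depends on $\dot{b}$, so the family of relevant names is a proper class; this is where the diagonal-hierarchy machinery from Proposition \ref{Replacement} is needed again. The plan is to argue that every $b \in \powerset^N(a)$ has a name in $HS \cap \Name_{\delta+1}(\m{P}_\delta)^V$ for one fixed $\delta$: indeed, any symmetry witness for $\dot b$ forces $\dot{b}^*$ to be decided by conditions of height below some $\aleph_{\gamma}$, but — and here is the real content — since $b \subseteq a$ and $a = \dot a^{G\uhr(\lambda+1)}$, whether $\dot x^G \in b$ is already decided by $G\uhr(\mu+1)$ for a $\mu$ that can be taken uniformly, because the relevant generic subsets swapped by the $Small$-groups and fixed by the $Fix$-groups attached to coordinates $>\lambda$ are "generic over" $a$ and cannot affect membership in a subset of $a$ that is itself symmetric. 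Concretely, I would show: if $\dot b \in HS$ has symmetry witness involving only coordinates $\le\lambda$ together with finitely many coordinates $>\lambda$, then those high coordinates can be absorbed by an automorphism fixing $\dot a$, yielding an equivalent name with witness among coordinates $\le\lambda$ only, hence a name in $\Name_{\delta+1}(\m{P}_\delta)^V$ for $\delta$ chosen with $\aleph_\delta > \lambda$ large enough to accommodate all such names (there is a set of them).

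Granting this, set
\[
\dot{P} := \big\{\,(\dot{b}, \m{1})\ \big|\ \dot{b} \in HS \cap \Name_{\delta+1}(\m{P}_\delta)^V,\ \m{1}\, (\Vdash_s)^{V,\check V}_{\m{P}}\, \dot{b} \subseteq \dot{a}\,\big\}.
\]
This $\dot P$ is symmetric — in fact its stabilizer is all of $A$, since $A$ permutes $HS \cap \Name_{\delta+1}(\m{P}_\delta)^V$ (up to the $\ol{(\cdot)}^{D_\pi}$ operation, which preserves both $HS$ and the $\Name_{\delta+1}(\m{P}_\delta)^V$ level, as noted after Definition \ref{a1}) and preserves the relation "$\m{1}$ forces $\subseteq \dot a$" because $\m{1}\,(\Vdash_s)\,\dot a$ is fixed up to the absorption argument above. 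Hence $\dot P \in HS$ and $P := \dot P^G \in N$, and by construction every $b \in N$ with $b \subseteq a$ equals $\dot b^G$ for some such $\dot b$, so $b \in P$; that is, $\powerset^N(a) \subseteq P$. Finally, applying Separation in $N$ (Proposition \ref{separation}) to $P$ with the formula "$x \subseteq a$" yields $\powerset^N(a) = \{x \in P\ | \ x \subseteq a\} \in N$.

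I expect the main obstacle to be the uniform height bound $\delta$: one must rule out that a subset of $a$ in $N$ secretly codes unboundedly much of the generic via finitely many high coordinates in its symmetry witness. The resolution is the absorption lemma sketched above — that finitely many $Fix$/$Small$ coordinates above $\lambda$ in a witness for a name of a subset of $a$ can always be traded away by an automorphism fixing $\dot a$ — together with the fact, already used in Proposition \ref{Replacement}, that $HS \cap \Name_{\delta+1}(\m{P}_\delta)^V$ is a set and the diagonal hierarchy $N_\delta$ is definable over $\langle V[G], \in, V, G\rangle$; the verification that $\dot P$ is genuinely symmetric (its stabilizer contains the required intersection of subgroups, here trivially since it is all of $A$) is then routine.
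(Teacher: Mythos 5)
Your high-level strategy matches the paper's: bound $\powerset^N(Y)$ inside a single set-generic extension $V[G\uhr(\lambda+1)]$, use Replacement there together with the diagonal hierarchy $N_\alpha$ of Proposition~\ref{Replacement} to obtain a uniform $\delta$ with $\powerset^N(Y)\subseteq N_\delta$, then finish by Separation in $N$. But the hard step is exactly the one you flag, the ``absorption lemma'', and as stated it does not hold. The $\m{P}$-automorphisms in $A$ act horizontally: they permute indices within a level. They cannot lower a $Fix_0(\kappa,i)$-coordinate with $\kappa>\lambda$ to a coordinate at a level $\le\lambda$. The restriction of the branch $(G_0)_{(\kappa,i)}$ below level $\lambda$ is indeed $(G_0)_{(\lambda,j)}$ for the predecessor $(\lambda,j)$, but $j$ depends on the generic filter and is not definable in $V$, so no $\pi\in A$ performs the trade you propose. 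What actually tames the high coordinates is not an automorphism: it is the factoring of $\m{P}_0\uhr t(r)\times\m{P}_1\uhr\{\cdots\}$ into a lower part of size $\le\lambda$ and a $\le\lambda$-closed upper part (cf.\ Proposition~\ref{prescard}), with the automorphism argument serving the separate purpose of showing that membership of $\dot x^G$ in a subset $Z\subseteq Y$ is already decided by a condition from that restricted product. Your proposal collapses these two distinct mechanisms into one ``absorption'' move.

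There is a second, related gap. Even granting that each $b\in\powerset^N(Y)$ lies in $V[G\uhr(\lambda+1)]$ (the paper's $(\ast)$), it does not follow that $b$ has a \emph{hereditarily symmetric} name in $\Name(\m{P}\uhr(\lambda+1))^V$, much less one in a single $HS\cap\Name_{\delta+1}(\m{P}_\delta)^V$: the natural nice name built from $\dom\dot Y$ and $\m{P}\uhr(\lambda+1)$ references $\dot b$ and therefore inherits its possibly-high witness, and re-symmetrizing it is precisely what your absorption lemma would have to do. The paper avoids this entirely by never rewriting names of subsets at all. It works in $\left\langle V[G],\in,V,G\right\rangle$ with the $N_\beta$-rank function $F\colon a\to\Ord$ on $a:=\powerset^{V[G\uhr(\lambda+1)]}(Y)\supseteq\powerset^N(Y)$, shows $F\in V[G\uhr(\lambda+1)]$ by another isomorphism argument (ordinal values require no re-symmetrizing of names), and invokes Replacement in the $ZFC$-model $V[G\uhr(\lambda+1)]$ to bound $\rg F$. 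That is the step producing the uniform $\delta$, and it is absent from your plan. A smaller error: the stabilizer of your $\dot P$ is not all of $A$, since the defining condition references $\dot a$ and a $\pi$ moving $\dot a$ need not preserve $\m{1}\,(\Vdash_s)\,\dot b\subseteq\dot a$; at best it is the stabilizer of $\dot a$, which is a finite intersection and hence still adequate, but your claim as written is incorrect.
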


\begin{proof} Consider a set $Y \in N$. 
We first show: \[\exists\, \ol{\lambda} \in \Card\ \powerset^N (Y) \subseteq V[G \uhr (\ol{\lambda}+1)] \hspace*{2cm} (\ast).\]

Take a cardinal $\mu$ large enough such that $Y \in V[G \, \uhr \, (\mu+1)]$ and $|Y|^{V[G \, \uhr \, (\mu + 1)]} \leq \mu$, i.e. there exists an injection $\iota: Y \hookrightarrow \mu$ in $V[G \, \uhr \, (\mu + 1)]$. Take $\dot{Y} \in \Name (\m{P} \, \uhr \,(\mu + 1))^V$ with $Y = \dot{Y}^{G \, \uhr \, (\mu + 1)}$. Let $\lambda := F(\mu)^\plus$; then $|\m{P} \uhr (\mu + 1)| \leq \lambda$.

We claim that $\powerset^N (Y) \subseteq V[G \uhr (\lambda+1)]$. \\[-2mm]

Consider $Z \in \powerset^N (Y)$, $Z = \dot{Z}^G$ with $\dot{Z} \in HS$ such that $\pi \ol{\dot{Z}}^{D_\pi} = \ol{\dot{Z}}^{D_\pi}$ for all $\pi$ which are contained in the intersection\[ Fix_0 (\kappa_0, i_0) \, \cap\, \, \ldots \,\, \cap\, Fix_0(\kappa_{n-1}, i_{n-1})\, \cap \, Small_0 (\lambda_0, [0, \alpha_0))\, \cap\, \cdots\, \cap \, Small_0 (\lambda_{m-1}, [0, \alpha_{m-1}))\, \cap\, \]\[ \cap\, Fix_1 (\ol{\kappa}_0, \ol{\imath}_0)\, \cap\, \cdots\, \cap\, Fix_1 (\ol{\kappa}_{\ol{n}-1}, \ol{\imath}_{\ol{n}-1})\, \cap\, \cdots\, \cap \,Small_1 (\ol{\lambda}_0, [0, \ol{\alpha}_0))\, \cap \, Small_1 (\ol{\lambda}_{\ol{m}-1}, [0, \ol{\alpha}_{\ol{m}-1})).\]

Take a condition $r \in G$ such that $t(r)$ contains the vertices $(\kappa_0, i_0), \ldots, (\kappa_{n-1}, i_{n-1})$ and all $t(r)$-branches have height $\geq \mu$.

Then $G_0 \uhr (\mu + 1)\, \times\, (G_0 \uhr t(r)) \uhr [\mu, \infty)\, \times\, G_1 \uhr (\mu + 1)\, \times\, \big( G_1 \uhr \{ (\ol{\kappa}_0, \ol{\imath}_0), \ldots, (\ol{\kappa}_{\ol{m}-1}, \ol{\imath}_{\ol{m}-1})\}\big) \uhr [\mu, \infty)$ is a $V$-generic filter on $\m{P}_0 \uhr (\mu + 1)\, \times \, (\m{P}_0 \uhr t(r)) \uhr [\mu, \infty)\, \times\, \m{P}_1 \uhr (\mu + 1)\, \times \, \big( \m{P}_1 \uhr \{(\ol{\kappa}_0, \ol{\imath}_0), \ldots, (\ol{\kappa}_{\ol{m}-1}, \ol{\imath}_{\ol{m}-1}) \} \big) \uhr [\mu, \infty)$.  \\[-3mm]

We want to show that $Z$ is contained in the intermediate generic extension \[V\big [G_0 \uhr (\mu + 1)\, \times\, (G_0 \uhr t(r)) \uhr [\mu, \infty)\, \times\, G_1 \uhr (\mu + 1)\, \times\, \big( G_1 \uhr \{ (\ol{\kappa}_0, \ol{\imath}_0), \ldots, (\ol{\kappa}_{\ol{m}-1}, \ol{\imath}_{\ol{m}-1})\}\big) \uhr [\mu, \infty) \big]. \]

Let $Z^\prime$ be the set of all $\dot{y}^{G \, \uhr \, (\mu + 1)}$ with $\dot{y} \in \dom \dot{Y}$ such that there exists $p \in \m{P}$, $p_0 \leq r$, with $p \Vdash_{\m{P}}^{V, \check{V}, \dot{G}} \dot{y} \in \dot{Z}$ such that: \begin{itemize} \item $ p_0 \uhr (\mu + 1) \in G_0 \uhr (\mu + 1)$, \item $(p_0 \uhr t(r)) \uhr [\mu, \infty) \in (G_0 \uhr t(r)) \uhr [\mu, \infty)$, \item $p_1 \uhr (\mu + 1) \in G_1 \uhr (\mu + 1)$, \item $(p_1 \uhr \{ (\ol{\kappa}_0, \ol{\imath}_0),\, \ldots\, \}) \uhr [\mu, \infty) \in (G_1 \uhr \{ (\ol{\kappa}_0, \ol{\imath}_0), \, \ldots\, \}) \uhr [\mu, \infty)$. \end{itemize}

It suffices to show that $Z = Z^\prime$. The direction \tbl$\subseteq$\tbr\, follows from the forcing theorem. For \tbl $\supseteq$\tbr , we use an isomorphism argument similarly as before: Assume there was $\dot{y}^{G \, \uhr \, (\mu + 1)} \in Z^\prime \setminus Z$ with $\dot{y} \in \dom \dot{Y}$ and $p$ with $p \Vdash \dot{y} \in \dot{Z}$ as in the definition of $Z^\prime$. \\ Take $q \in G$ such that $q_0 \leq r$ and $q \Vdash \dot{y} \notin \dot{Z}$. We will construct an automorphism $\pi$ with $\pi p \, \| \, q$ such that $\pi$ restricted to $\m{P} \uhr (\mu + 1)$ is the identity, and additionally,
\[ \pi \in Fix_0 (\kappa_0, i_0)\, \cap\, \cdots\, \cap\, Small_0 (\lambda_0, [0, \alpha_0))\, \cap\, \cdots\, \cap\, Fix_1 (\ol{\kappa}_0, \ol{\imath}_0)\, \cap\, \cdots\]\[ \cdots\, \cap\, Small_1 (\ol{\lambda}_0, [0, \ol{\alpha}_0))\, \cap\, \cdots\;.\]But then, from $\pi p \Vdash \pi \ol{\dot{y}}^{D_\pi} \in \pi \ol{\dot{Z}}^{D_\pi}$ and $\pi \ol{\dot{Z}}^{D_\pi} = \ol{\dot{Z}}^{D_\pi}$, $\pi \ol{\dot{y}}^{D_\pi} = \ol{\dot{y}}^{D_\pi}$, it follows that $\pi p \Vdash  \ol{\dot{y}}^{D_\pi} \in \ol{\dot{Z}}^{D_\pi}$. Together with $\pi p\, \| \, q$ and $q \Vdash \dot{y} \notin \dot{Z}$, this gives the desired contradiction. \\[-2mm] 

We start with the construction of $\pi_0$. Let $\HT \pi := \max \{\eta (p), \eta(q)\}$. For $\alpha \leq \mu$, let $\pi_0 (\alpha)$ be the identity. 
In the case that $\alpha \in [\mu^\plus, \HT \pi]$, we take for $\pi_0 (\alpha)$ a bijection on $\{(\alpha, i)\ | \ i < F_{\lim} (\alpha)\}$ with finite support such that: \begin{itemize} \item for any $(\alpha, i) \in t(r)$, we have $\pi_0 (\alpha) (\alpha, i) = (\alpha, i)$, \item for any $(\alpha, i) \in t(p) \setminus t(r)$, we have $\pi_0 (\alpha) (\alpha, i) = (\alpha, j)$ for some $j < F_{\lim} (\alpha)$ with $(\alpha, j) \notin t(p)\, \cup \, t(q)$, \item for any $i < F_{\lim} (\alpha)$ with $i \in [\gamma (i), \gamma (i) + \omega)$ for $\gamma$ a limit ordinal, we have $\pi_0 (\alpha) (\alpha, i) = (\alpha, i^\prime)$ such that also $i^\prime \in [\gamma (i), \gamma (i) + \omega)$. \end{itemize}

Then $\pi_0$ is the identity on $\m{P}_0 \uhr (\mu + 1)$, and $\pi_0 \in Fix_0 (\kappa_0, i_0)\, \cap\, \cdots\, \cap\, Fix_0 (\kappa_{n-1}, i_{n-1})$, since $\pi_0 (\alpha) (\alpha, i) = (\alpha, i)$ for all $(\alpha, i) \in t(r)$. Moreover, $\pi_0 \in Small_0 [\lambda_0, [0, \alpha_0))\, \cap\, \cdots\, \cap\, Small_0 (\lambda_{m-1}, [0, \alpha_{m-1}))$, since we only use small permutations. 
By construction, it follows that $\pi_0 p_0 \, \| \, q_0$. \\[-3mm] 

The map $\pi_1$ 
can be constructed as in the proof of Proposition \ref{separation}. Then $\pi_1 p_1\, \| \, q_1$, $\pi_1$ restricted to $\m{P}_1 \uhr (\mu + 1)$ is the identity, $\pi_1 \in Fix_1 (\ol{\kappa}_0, \ol{\imath}_0)\, \cap \, \cdots\, \cap \, Fix_1 (\ol{\kappa}_{\ol{n}-1}, \ol{\imath}_{\ol{n}-1})$ since $p_1$ and $q_1$ agree on $\m{P}_1 \uhr \{(\ol{\kappa}_0, \ol{\imath}_0), \, \ldots \,\}$, and $\pi_1 \in Small_1 (\ol{\lambda}_0, [0, \ol{\alpha}_0))\, \cap\, \cdots\, \cap\, Small_1 (\lambda_{\ol{m}-1}, [0, \alpha_{\ol{m}-1}))$, since $\SUPP \pi_1 (\alpha^\plus) = \emptyset$ for all $\alpha^\plus \in \Succ^\prime$. \\[-3mm]

Hence, our automorphism $\pi$ has all the desired properties, which implies $Z = Z^\prime$; so 
\[Z \in V\big[\, G_0 \uhr (\mu + 1)\, \times\, (G_0 \uhr t(r)) \uhr [\mu, \infty)\, \times \, G_1 \uhr (\mu + 1)\, \times\, \big(G_1 \uhr \{(\ol{\kappa}_0, \ol{\imath}_0), \, \ldots \,\}\big) \uhr [\mu, \infty)\, \big]. \]

Recall that 
we have an injection $\iota: Y \hookrightarrow \mu$ in $V[G \uhr (\mu + 1)]$; so using the parameter $Z$, we can construct in $V[ G_0\, \uhr\, (\mu + 1)\, \times\, (G_0 \, \uhr \, t(r))\, \uhr \, [\mu, \infty)\, \times\, G_1 \, \uhr \, (\mu + 1)\, \times\, (G_1 \uhr \{ (\ol{\kappa}_0, \ol{\imath}_0), \, \ldots \,\})\, \uhr \, [\mu, \infty)]$ a function $\iota_Z: \mu \rightarrow 2$ with $\iota_Z(\alpha) = 1$ iff $\alpha \in \, \im (\iota)$ with $\iota^{-1} (\alpha) \in Z$, and $\iota_Z(\alpha) = 0$, else. \\[-3mm]

The forcing   \[\m{P}_0 \, \uhr \, (\mu + 1) \, \times\, (\m{P}_0 \uhr t(r)) \uhr [\mu, \infty)\, \times \, \m{P}_1\, \uhr \, (\mu + 1)\, \times \, (\m{P}_1 \uhr \{ (\ol{\kappa}_0, \ol{\imath}_0), \, \ldots \,\}) \uhr [\mu, \infty) \] can be factored as \[\Big( \, \m{P}_0 \, \uhr \, (\mu + 1) \, \times\, (\m{P}_0 \uhr t(r)) \uhr [\mu, \lambda + 1)\, \times \, \m{P}_1\, \uhr \, (\mu + 1)\, \times \, (\m{P}_1 \uhr \{ (\ol{\kappa}_0, \ol{\imath}_0), \, \ldots \,\}) \uhr [\mu, \lambda + 1)\, \Big)\ \times \] \[\times \, \Big(\, (\m{P}_0 \uhr t(r)) \uhr [\lambda, \infty)\, \times \, (\m{P}_1 \uhr \{ (\ol{\kappa}_0, \ol{\imath}_0), \, \ldots \,\}) \uhr [\lambda, \infty)\, \Big),\] \vspace*{1mm} where the \tbl lower part\tbr\,  \[\m{P}_0 \, \uhr \, (\mu + 1) \, \times\, (\m{P}_0 \uhr t(r)) \uhr [\mu, \lambda + 1)\, \times \, \m{P}_1\, \uhr \, (\mu + 1)\, \times \, (\m{P}_1 \uhr \{ (\ol{\kappa}_0, \ol{\imath}_0), \, \ldots \,\}) \uhr [\mu, \lambda + 1) \] has cardinality $\leq F_{\lim} (\mu) \, \cdot\, \lambda\, \cdot\, F(\mu)^\plus\, \cdot\, \lambda = F(\mu)^\plus = \lambda$, and the \tbl upper part\tbr\, \[(\m{P}_0 \uhr t(r)) \uhr [\lambda, \infty)\, \times \, (\m{P}_1 \uhr \{ (\ol{\kappa}_0, \ol{\imath}_0), \, \ldots \,\}) \uhr [\lambda, \infty)\] is $\leq \lambda$-closed. \\[-3mm]

Hence, \[\iota_Z \in V\big[G_0 \uhr (\mu + 1) \, \times\, (G_0 \uhr t(r)) \uhr [\mu, \lambda + 1) \, \times\, G_1 \uhr (\mu + 1) \, \times\, (G_1 \uhr \{ (\ol{\kappa}_0, \ol{\imath}_0), \, \ldots \,\}) \uhr [\mu, \lambda + 1) \big]; \]
so $\iota_Z \in V[G \uhr (\lambda + 1)]$, which implies that also $Z \in V[G \uhr (\lambda + 1)]$. \\[-3mm]

Since $Z \in \powerset^N (Y)$ was arbitrary, it follows that $\powerset^N (Y) \subseteq V[G \uhr (\lambda + 1)]$ as desired. This proves $(\ast)$. \\[-3mm]

Now, let $a := \powerset^{V[G \, \uhr \, (\lambda+1)]} (Y) \in V[G \uhr (\lambda+1)]$. Then $\powerset^N (Y) \subseteq a$. Take $\dot{a} \in \Name(\m{P} \uhr (\lambda+1))^V$ with $a = \dot{a}^G = \dot{a}^{G \, \uhr \, (\lambda + 1)}$. \\[-3mm]

Inside the structure $ \left\langle V[G], \in, V, G\right\rangle$, we define a function $F: a \rightarrow Ord$ as follows: \\[-3mm]

{\textit{For $z \in a$, let $F(z) = \alpha$ if $\alpha = \min \{\beta\ | \ z \in N_\beta\}$ if such an $\alpha$ exists. Let $F(z) = 0$, else. }} \\[-3mm]

Now, we will use the function $f(\dot{x}, \alpha)$ from Proposition \ref{Replacement}
with the property that 
with $\left\langle V[G], \in, V, G \right\rangle \vDash f(\dot{x}, \alpha) = x$ iff $\dot{x} \in \Name_{\alpha + 1} (\m{P}_{\alpha})^V$ with $x = \dot{x}^{G_{\alpha}}$. \\[-3mm]           

Let $\eta (z, \beta)$ denote the statement    
\[\exists\, \dot{x} \in HS\, \cap \, \Name_{\beta+1} (\m{P}_{\beta})\ \ z = f(\dot{x}, \beta).\]
Then \[F = \big\{ \, (\dot{z}^{G \, \uhr \, (\lambda+1)}, \alpha)\ | \ \dot{z} \in \dom \dot{a}\, \wedge\, \dot{z}^{G \, \uhr \, (\lambda + 1)} \in \dot{a}^{G \, \uhr \, (\lambda+1)}\, \wedge\, \alpha = \min \{\beta\ | \ \eta (\dot{z}^{G \, \uhr \, (\lambda+1)}, \beta)\}\, \big\}\, \cup \] \[\cup\, \big\{ \, (\dot{z}^{G \, \uhr \, (\lambda+1)}, 0)\ | \ \dot{z} \in \dom \dot{a} \, \wedge \, \dot{z}^{G \, \uhr \, (\lambda+1)} \in \dot{a}^{G \, \uhr \, (\lambda+1)}\, \wedge\, \neg \exists\, \beta\ \eta(\dot{z}^{G \, \uhr \, (\lambda+1)}, \beta) \, \big\}. \]

We claim that $F \in V[G \uhr (\lambda+1)]$. \\[-3mm]
 
Let \[\wt{F} := \big\{ \, (\dot{z}^{G \, \uhr\,  (\lambda+1)}, \alpha)\ | \ \dot{z} \in \dom \dot{a}, \dot{z}^{G \, \uhr \,(\lambda+1)} \in \dot{a}^{G \, \uhr \, (\lambda+1)}, \exists\, p: \ p \Vdash_{\m{P}}^{V, \check{V}, \dot{G}} \,\alpha = \min \{\beta\ | \ \eta (\dot{z}, \beta)\}, \]\[ p \uhr (\lambda + 1) \in G \uhr (\lambda + 1)\, \big\} \ \ \cup\] \[ \cup \ \big\{ \, (\dot{z}^{G \, \uhr \, (\lambda+1)}, 0)\ | \ \dot{z} \in \dom \dot{a}, \dot{z}^{G \, \uhr \, (\lambda+1)} \in \dot{a}^{G \, \uhr \, (\lambda+1)}, \exists\, p: \ p \Vdash_{\m{P}}^{V, \check{V}, \dot{G}}\,  \neg \exists\, \beta\ \eta (\dot{z}, \beta), \] \[ p \uhr (\lambda + 1) \in G \uhr (\lambda + 1)\, \big\}. \] It suffices to show that $F = \wt{F}$. The direction \tbl $\subseteq$\tbr\, follows from the forcing theorem. Concerning \tbl $\supseteq$\tbr\,, we proceed as in the proof of Proposition \ref{Replacement}:

Assume towards a contradiction, there was $(\dot{z}^{G \, \uhr \, (\lambda+1)}, \alpha) \in \wt{F} \setminus F$ with $\dot{z} \in \dom \dot{a}$, $\dot{z}^{G \, \uhr \, (\lambda+1)} \in \dot{a}^{G \, \uhr \, (\lambda+1)}$. W.l.o.g., let $\alpha> 0$. \\ Take $p \in \m{P}$ with \[ p \Vdash_{\m{P}}^{V, \check{V}, \dot{G}} \, \alpha = \min \{\beta\ | \ \eta (\dot{z}, \beta)\} \] and $p \uhr (\lambda + 1) \in G \uhr (\lambda + 1)$. Since $(\dot{z}^{G \, \uhr \, (\lambda+1)}, \alpha) \notin F$, there must be $q \in G$ with \[q \Vdash_{\m{P}}^{V, \check{V}, \dot{G}} \, \neg\, (\alpha = \min \{\beta\ | \ \eta(\dot{z}, \beta)\}).\] 
As in the proof of Proposition \ref{separation}, we construct an automorphism $\pi$ with $\pi p \, \| q$ such that $\pi$ restricted to $\m{P} \uhr (\lambda+1)$ is the identity. Then $\pi \ol{\dot{z}}^{D_\pi} = \ol{\dot{z}}^{D_\pi}$; so \[\pi p \Vdash_{\m{P}}^{V, \check{V}, \pi \dot{G}} \ \alpha = \min \{\beta\ | \ \eta(\dot{z}, \beta)\}. \]    

Now, for any condition $r \in \m{P}$ and $\beta$ an ordinal, we have $r \Vdash_{\m{P}}^{V, \check{V}, \dot{G}} \eta (\dot{z}, \beta)$ if and only if $r \Vdash_{\m{P}}^{V, \check{V}, \pi \dot{G}} \eta (\dot{z}, \beta)$, like in the proof of Proposition \ref{Replacement}. Hence, \[ \pi p \Vdash_{\m{P}}^{V, \check{V}, \dot{G}} \alpha = \min \{\beta\ | \ \eta(\dot{z}, \beta) \}, \] contradicting that $\pi p \, \| \, q$. \\[-3mm] 
 
The case $\alpha = 0$ is similar. Hence, $F  =\wt{F} \in V[G \uhr (\lambda+1)]$ as desired. \\[-3mm]

Now, by Replacement in $V[G \uhr (\lambda+1)]$, it follows that $rg\; F$ is bounded by some ordinal $\delta$. Then any $z \in \powerset^N (Y) \subseteq a$ is contained in some $N_\alpha$ for $\alpha < \delta$; hence, $\powerset^N (Y) \subseteq N_\delta$. By the Axiom of Separation, this implies $\powerset^N (Y) \in N$ as desired. \\[-3mm]

\end{proof}

Thus, we have shown that the symmetric extension $N$ is indeed a model of $ZF$. \\[-3mm]

We will now see that $N$ preserves all $V$-cardinals, which follows from the fact that any set of ordinals $X \in N$, $X \subseteq \alpha$ can be captured in a \tbl mild\tbr\,$V$-generic extension by a forcing as in Proposition \ref{prescard}: 

\begin{lem}[Approximation Lemma] \label{approx} Let $X \in N$, $X \subseteq \alpha$ with $X = \dot{X}^G$ such that $\pi \ol{\dot{X}}^{D_\pi} = \ol{\dot{X}}^{D_\pi}$ for all $\pi$ which are contained in the intersection \[Fix_0 (\kappa_0, i_0) \, \cap\, \cdots\, \cap\, Fix_0 (\kappa_{n-1}, i_{n-1})\, \cap\, Small_0 (\lambda_0, [0, \alpha_0))\, \cap\, \cdots\, \cap\, Small_0 (\lambda_{m-1}, [0, \alpha_{m-1}))\, \cap\, \]\[\cap\, Fix_1 (\ol{\kappa}_0, \ol{\imath}_0)\, \cap\, \cdots\, \cap\, Fix_1 (\ol{\kappa}_{\ol{n}-1}, \ol{\imath}_{\ol{n}-1})\, \cap\, Small_1 (\ol{\lambda}_0, [0, \ol{\alpha}_0))\, \cap\, \cdots\, \cap\, Small_1(\ol{\lambda}_{\ol{m}-1}, [0, \ol{\alpha}_{\ol{m}-1})).\] Let $r \in G_0$ such that  
$\{(\kappa_0, i_0), \, \ldots \,, (\kappa_{n-1}, i_{n-1})\} \subseteq t(r)$ contains all maximal points of $t(r)$. \\[-3mm]  

Then \[X \in V[G_0 \uhr t (r) \times G_1 \uhr \{(\ol{\kappa}_0, \ol{\imath}_0), \, \ldots \,, (\ol{\kappa}_{\ol{n}-1}, \ol{\imath}_{\ol{n}-1})\}] = \] \[ = V[G_0 \uhr \{ (\kappa_0, i_0), \, \ldots \,, (\kappa_{n-1}, i_{n-1})\} \times G_1 \uhr \{(\ol{\kappa}_0, \ol{\imath}_0), \, \ldots \,, (\ol{\kappa}_{\ol{n}-1}, \ol{\imath}_{\ol{n}-1})\}].\] \end{lem}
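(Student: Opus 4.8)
The plan is to show that the set of ordinals $X = \dot X^G$ lies in the sub-extension by proving $X = X'$, where $X'$ is defined from the forcing relation by reading off exactly those $\xi < \alpha$ that are forced into $\dot X$ by a condition whose restriction to the ``relevant'' coordinates lies in the corresponding restricted generic filter. Concretely, I would set
\[
X' := \{ \xi < \alpha \mid \exists\, p \in \m{P},\ p_0 \leq r,\ p \Vdash_{\m{P}}^{V, \check V, \dot G} \check\xi \in \dot X,\ (p_0 \uhr t(r)) \in G_0 \uhr t(r),\ (p_1 \uhr \{(\ol\kappa_0, \ol\imath_0), \ldots\}) \in G_1 \uhr \{(\ol\kappa_0, \ol\imath_0), \ldots\} \}.
\]
Since $G_0 \uhr t(r)$ and $G_1 \uhr \{(\ol\kappa_0, \ol\imath_0), \ldots, (\ol\kappa_{\ol n -1}, \ol\imath_{\ol n -1})\}$ are $V$-generic on the respective restricted forcings (by Definition/Lemma \ref{gen} and the subsequent Definition/Lemma for $\m{P}_1$, together with the remark combining them), and since the defining condition on $p$ only refers to these two restricted filters and to the (definable) forcing relation, we will have $X' \in V[G_0 \uhr t(r) \times G_1 \uhr \{(\ol\kappa_0, \ol\imath_0), \ldots\}]$. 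The equality $V[G_0 \uhr t(r) \times \cdots] = V[G_0 \uhr \{(\kappa_0, i_0), \ldots\} \times \cdots]$ is immediate from the notational identification $G_0 \uhr t(r) = G_0 \uhr \{(\kappa_0, i_0), \ldots, (\kappa_{n-1}, i_{n-1})\}$ whenever $\{(\kappa_0, i_0), \ldots, (\kappa_{n-1}, i_{n-1})\}$ contains all maximal points of $t(r)$.

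The inclusion $X \subseteq X'$ is the easy direction: if $\xi \in X$, pick $\bar p \in G$ with $\bar p_0 \leq r$ (possible since $r \in G_0$) and $\bar p \Vdash \check\xi \in \dot X$; then $\bar p$ witnesses $\xi \in X'$ because the relevant restrictions of $\bar p$ lie in the relevant restricted filters by definition of the latter. For $X' \subseteq X$ I would argue by contradiction using a symmetry/isomorphism argument, exactly in the style of the proofs of Proposition \ref{separation} and Proposition \ref{power set}. Suppose $\xi \in X' \setminus X$, witnessed by $p$ as in the definition of $X'$. Pick $q \in G$ with $q_0 \leq r$ and $q \Vdash \check\xi \notin \dot X$. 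The goal is to build an automorphism $\pi = (\pi_0, \pi_1) \in A$ with $\pi p \,\|\, q$ such that simultaneously (a) $\pi$ is the identity on all coordinates that feed into $t(r)$ and into $\{(\ol\kappa_0, \ol\imath_0), \ldots\}$ — so that $\pi p$ is still compatible with $q$ on those coordinates, using $p_0 \uhr t(r) \,\|\, q_0 \uhr t(r)$ (both lie in $G_0 \uhr t(r)$) and likewise for the $\m{P}_1$-part — and (b) $\pi$ lies in the stabilizing intersection $Fix_0(\kappa_0,i_0) \cap \cdots \cap Small_1(\ol\lambda_{\ol m -1},[0,\ol\alpha_{\ol m -1}))$ of $\dot X$. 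Then $\pi p \Vdash \pi\ol{\dot X}^{D_\pi} \ni \pi\ol{\check\xi}^{D_\pi}$, i.e. $\pi p \Vdash \check\xi \in \ol{\dot X}^{D_\pi}$ (using $\pi\ol{\dot X}^{D_\pi} = \ol{\dot X}^{D_\pi}$ and that $\check\xi$ is fixed), contradicting $\pi p \,\|\, q$ and $q \Vdash \check\xi \notin \dot X$.

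The construction of $\pi$ is the technical heart, and it is the same construction already carried out in Proposition \ref{power set}: for $\pi_0$, on levels $\alpha$ below the heights involved one takes a finite-support, small permutation of $\{(\alpha,i) \mid i < F_{\lim}(\alpha)\}$ that fixes every vertex of $t(r)$ pointwise, moves every vertex of $t(p) \setminus t(r)$ off $t(p) \cup t(q)$, and respects the blocks $[\gamma, \gamma+\omega)$ so that the result lands in every relevant $Small_0(\lambda_j,[0,\alpha_j))$; fixing $t(r)$ pointwise puts $\pi_0$ into every $Fix_0(\kappa_l,i_l)$ since those vertices lie in $t(r)$. For $\pi_1$ one uses the recipe from Proposition \ref{separation}: on each $\alpha^\plus$ in the supports, set $\dom\pi_1(\alpha^\plus) := \dom p_1(\alpha^\plus) \cap \dom q_1(\alpha^\plus)$, $\SUPP\pi_1(\alpha^\plus) = \emptyset$, and let $\pi_1(\alpha^\plus)(\zeta,i)$ record whether $p_1$ and $q_1$ disagree there; the empty $\SUPP$ places $\pi_1$ in every $Small_1(\ol\lambda_j,[0,\ol\alpha_j))$, and the fact that $p_1$ and $q_1$ already agree on $\m{P}_1 \uhr \{(\ol\kappa_0,\ol\imath_0),\ldots\}$ (both restrictions lie in the same restricted generic filter) forces $\pi_1$ to fix those coordinates, hence $\pi_1 \in \bigcap_l Fix_1(\ol\kappa_l,\ol\imath_l)$. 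The main obstacle — as always in these arguments — is bookkeeping: verifying that the same $\pi$ can meet all the compatibility demands and all the symmetry demands at once, in particular that making $\pi_0$ fix $t(r)$ (needed both for $Fix_0$ and for keeping $\pi p_0 \,\|\, q_0$) does not conflict with moving the rest of $t(p)$ away from $t(q)$; this works precisely because $q_0 \leq r$, so the part of $q_0$ that could clash with $\pi_0 p_0$ is already inside $t(r)$, which $\pi_0$ fixes. Finally one notes that the isomorphism lemma (Symmetry Lemma) is only needed in the mild form ``$\pi$ the identity on the named coordinates implies $\pi\ol{\dot X}^{D_\pi} = \ol{\dot X}^{D_\pi}$ and $\pi p \Vdash^{V,\check V,\pi\dot G}(\cdots)$,'' and since $\check\xi$ and the relevant sub-filters are untouched, $\pi\dot G$ may be replaced by $\dot G$ on the statement $\check\xi \in \ol{\dot X}^{D_\pi}$, completing the contradiction and hence the proof that $X = X'$.
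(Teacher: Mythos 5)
Your proposal is correct and follows essentially the same route as the paper: the paper defines the same set $X'$ (there called $X'$ as well, with the same four clauses on $q=(q_0,q_1)$) and then disposes of $X=X'$ in one line, ``follows by an isomorphism argument as before.'' Your elaboration of that isomorphism argument — including the $\pi_0$ fixing $t(r)$ pointwise while moving $t(p_0)\setminus t(r)$ off $t(p_0)\cup t(q_0)$ via small transpositions, and the $\pi_1$ recipe with empty $\SUPP$ from Proposition~\ref{separation} — is exactly the construction the paper is alluding to (it is spelled out in Proposition~\ref{power set}), so the two proofs coincide.
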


\begin{proof}  Define \[X^\prime := \{\beta < \alpha\ | \ \exists\, q = (q_0, q_1)\, :\ q_0 \leq_0 r\, , \, q \Vdash \beta \in \dot{X}\, , \, q_0 \uhr t(r) \in G_0 \uhr t(r),\] 
\[q_1 \uhr \{ (\ol{\kappa}_0, \ol{\imath}_0), \, \ldots \,\} \in G_1 \uhr \{ (\ol{\kappa}_0, \ol{\imath}_0), \, \ldots \,\}\, \}. \] 

Then $X = X^\prime$ follows by an isomorphism argument as before.

\end{proof}

From Lemma \ref{approx} and Proposition \ref{prescard} we obtain: 

\begin{cor} Cardinals are $N$-$V$-absolute. \end{cor}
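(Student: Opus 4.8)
The plan is to derive the Corollary directly from the Approximation Lemma (Lemma \ref{approx}) together with the cardinal-preservation statement of Proposition \ref{prescard}, with no further combinatorial work. The only thing to observe is that ``being a cardinal'' is a $\Pi_1$-type property that reflects nicely between an inner model and an outer model, so it suffices to transfer it back and forth along the intermediate set-generic extensions supplied by the Approximation Lemma.

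First I would fix a $V$-cardinal $\kappa$ and argue $\kappa$ is still a cardinal in $N$. If not, there is in $N$ a bijection $f : \kappa \to \mu$ for some $\mu < \kappa$; coding $f$ as a set of ordinals $X \subseteq \kappa$ (via an ordinal pairing function, which is absolute), Lemma \ref{approx} places $X$, hence $f$, in some intermediate extension $V[G_0 \uhr t(r) \times G_1 \uhr \{(\ol{\kappa}_0,\ol{\imath}_0),\ldots\}]$. By Proposition \ref{prescard} that extension preserves all $V$-cardinals, so $\kappa$ is not collapsed there --- contradiction. Conversely, if $\mu$ is a cardinal in $N$ but not in $V$, then since $N \subseteq V[G]$ and $V[G] \supseteq V$ one would want the reverse direction; here I would instead note that every ordinal that is a cardinal in $V$ remains one in $N$ by the argument just given, and every ordinal that is \emph{not} a cardinal in $V$ is witnessed by a bijection in $V \subseteq N$, so it is not a cardinal in $N$ either. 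Hence the class of cardinals computed in $N$ equals the class of cardinals computed in $V$.

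The only mildly delicate point is the application of Lemma \ref{approx}: the lemma is stated for an arbitrary $X \in N$ with $X \subseteq \alpha$, and one must check that a symmetric name $\dot X$ for our coded bijection genuinely admits a stabilizer containing an intersection of groups $Fix_0$, $Small_0$, $Fix_1$, $Small_1$ of the required form --- but this is exactly the definition of $\dot X \in HS$, so there is nothing to prove beyond unpacking definitions and choosing the condition $r \in G_0$ whose tree $t(r)$ contains the finitely many relevant vertices $(\kappa_0,i_0),\ldots,(\kappa_{n-1},i_{n-1})$ among its maximal points (such $r$ exists by genericity and the finiteness of the trees). I expect no real obstacle: the two substantive results have already been established, and the Corollary is their immediate conjunction once one records that cardinality is downward absolute from $V[G]$ to $N$ to $V$ in the trivial direction and upward from $V$ to $N$ via the Approximation Lemma.
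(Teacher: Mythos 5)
Your proof is correct and follows exactly the route the paper intends: the Corollary is stated immediately after Lemma \ref{approx} and Proposition \ref{prescard} as their direct consequence, and your argument — code a hypothetical collapsing bijection as a set of ordinals, apply the Approximation Lemma to place it in a mild intermediate extension, then invoke Proposition \ref{prescard} to contradict cardinal preservation, together with the trivial converse direction since $V \subseteq N$ — is precisely that intended deduction with nothing missing.
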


A factoring argument shows that for $X \subseteq \kappa$ with $\kappa$ a cardinal, the according forcings in the statement of Lemma \ref{approx} can be cut off at level $\kappa^{\plus}$:

\begin{cor} \label{corapprox} Let $X \in N$, $X \subseteq \kappa$ with $\kappa$ a limit cardinal. Then there are $n, n^\prime < \omega$, $j_0, \, \ldots \,,\, j_{n-1} < F_{\lim} (\kappa^{\plus}) = F(\kappa)$, and $\ol{\kappa}_0, \, \ldots \,, \ol{\kappa}_{n^\prime-1} \in Succ^\prime$ with $\ol{\kappa}_0 < \kappa, \, \ldots \,, \ol{\kappa}_{n^\prime-1} < \kappa$; $\ol{\imath}_0 < F(\ol{\kappa}_0), \, \ldots \,, \ol{\imath}_{n^\prime-1} < F(\ol{\kappa}_{n^\prime-1})$ such that \[X \in V [G_0 \uhr \{(\kappa^{\plus}, j_0), \, \ldots \,, (\kappa^{\plus}, j_{n-1})\} \times G_1 \uhr \{ (\ol{\kappa}_0, \ol{\imath}_0), \, \ldots \,, (\ol{\kappa}_{n^\prime-1}, \ol{\imath}_{n^\prime-1})\} \times G_1 (\kappa^{\plus})].\]

For a successor cardinal $\kappa^\plus$ and $X \in N$, $X \subseteq \kappa^{\plus}$, there are $n, n^\prime < \omega$, $j_0, \, \ldots \,, j_{n-1} < F_{\lim} (\kappa^{\plus})$, and $\ol{\kappa}_0, \, \ldots \,, \ol{\kappa}_{n^\prime-1} \in Succ^\prime$ with $\ol{\kappa}_0 \leq \kappa^{\plus}, \, \ldots \,, \ol{\kappa}_{n^\prime-1} \leq \kappa^{\plus}$; $\ol{\imath}_0 < F(\ol{\kappa}_0), \, \ldots \,, \ol{\imath}_{n^\prime-1} < F(\ol{\kappa}_{n^\prime-1})$ such that \[X \in V [G_0 \uhr \{(\kappa^{\plus}, j_0), \, \ldots \,, (\kappa^{\plus}, j_{n-1})\} \times G_1 \uhr \{ (\ol{\kappa}_0, \ol{\imath}_0), \, \ldots \,, (\ol{\kappa}_{n^\prime-1}, \ol{\imath}_{n^\prime-1})\}].\]

\end{cor}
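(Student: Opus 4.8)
The plan is to deduce both statements from the Approximation Lemma~\ref{approx} by a factoring argument that truncates the capturing forcing at level $\kappa^\plus$. Take first $X \subseteq \kappa$ with $\kappa$ a limit cardinal. Fix a symmetric name $\dot X$ for $X$ stabilized by the displayed intersection of subgroups $Fix_0(\kappa_l,i_l)$, $Small_0(\lambda_l,[0,\alpha_l))$, $Fix_1(\ol\kappa_l,\ol\imath_l)$, $Small_1(\ol\lambda_l,[0,\ol\alpha_l))$, and a condition $r \in G_0$ whose maximal points all lie among $(\kappa_0,i_0),\ldots,(\kappa_{n-1},i_{n-1})$; Lemma~\ref{approx} gives $X \in V[G_0 \uhr t(r) \times G_1 \uhr \{(\ol\kappa_0,\ol\imath_0),\ldots,(\ol\kappa_{\ol n-1},\ol\imath_{\ol n-1})\}]$. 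By genericity of $G_0$ pick $r' \leq_0 r$ in $G_0$ all of whose branches have height $\geq \kappa^\plus$; since $t(r') \supseteq t(r)$ we still have $X \in V[G_0 \uhr t(r') \times G_1 \uhr \{(\ol\kappa_0,\ol\imath_0),\ldots\}]$. Let $(\kappa^\plus,j_0),\ldots,(\kappa^\plus,j_{n-1})$ enumerate the vertices of $t(r')$ at level $\kappa^\plus$ (so $j_l < F_{\lim}(\kappa^\plus)=F(\kappa)$); then $t(r')\uhr(\kappa^\plus+1)=t(r')\uhr\{(\kappa^\plus,j_0),\ldots,(\kappa^\plus,j_{n-1})\}$, and, splitting the values of conditions in $\m{P}_0 \uhr t(r')$ and the columns of $\m{P}_1 \uhr \{(\ol\kappa_0,\ol\imath_0),\ldots\}$ at level $\kappa^\plus$, the capturing forcing factors, up to the usual dense embedding, as a product of a \emph{low part} $\m{P}_0 \uhr \{(\kappa^\plus,j_0),\ldots,(\kappa^\plus,j_{n-1})\} \times \m{P}_1 \uhr \{(\ol\kappa_l,\ol\imath_l)\ | \ \ol\kappa_l \leq \kappa^\plus\}$ and a \emph{high part} $(\m{P}_0 \uhr t(r'))\uhr[\kappa^\plus,\infty) \times \m{P}_1 \uhr \{(\ol\kappa_l,\ol\imath_l)\ | \ \ol\kappa_l > \kappa^\plus\}$, where $G_0 \uhr \{(\kappa^\plus,j_0),\ldots\}$ is $V$-generic on $\m{P}_0 \uhr \{(\kappa^\plus,j_0),\ldots\}$ by Definition/Lemma~\ref{gen}.

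Two observations then finish this case. First, the low part has cardinality $\leq\kappa^\plus$: by $GCH$ in $V$ the fixed tree $t(r')\uhr(\kappa^\plus+1)$ has only $\leq\kappa$ vertices, each carrying a value out of a set of size $\leq\kappa^\plus$, so there are at most $(\kappa^\plus)^\kappa=\kappa^\plus$ conditions for the $\m{P}_0$-factor, while each surviving $\m{P}_1$-coordinate contributes a single column $Fn([\wt\kappa_l,\wt\kappa_l^\plus),2,\wt\kappa_l^\plus)$ of size $\leq\kappa^\plus$; in particular the low part is $\kappa^{\plus\plus}$-c.c. Second, the high part is $\leq\kappa^\plus$-closed: its $\m{P}_0$-factor carries values only on levels $\geq\kappa^{\plus\plus}$, where the value sets $Fn([\mu,\mu^\plus),2,\mu^\plus)$ with $\mu\geq\kappa^\plus$ absorb unions of length $\leq\kappa^\plus$ and the requirement $|p_{(\lambda,i)}|<\lambda$ at regular limit levels $\lambda>\kappa^\plus$ survives such unions because $\lambda$ is regular above $\kappa^\plus$; its $\m{P}_1$-factor is a finite product of forcings $Fn([\wt\kappa_l,\wt\kappa_l^\plus),2,\wt\kappa_l^\plus)$ with $\wt\kappa_l\geq\kappa^\plus$, each $\leq\wt\kappa_l$-closed. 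By the Easton-type lemma that a $\kappa^{\plus\plus}$-c.c.\ forcing times a $\leq\kappa^\plus$-closed forcing adds no subset of $\kappa^\plus$ beyond the extension by the c.c.\ factor, and since $X\subseteq\kappa\subseteq\kappa^\plus$, we get $X \in V[G_0 \uhr \{(\kappa^\plus,j_0),\ldots,(\kappa^\plus,j_{n-1})\} \times G_1 \uhr \{(\ol\kappa_l,\ol\imath_l)\ | \ \ol\kappa_l \leq \kappa^\plus\}]$. As $\kappa$ is a limit cardinal, $\ol\kappa_l\leq\kappa^\plus$ splits into $\ol\kappa_l<\kappa$ and $\ol\kappa_l=\kappa^\plus$, and $\m{P}_1\uhr\{(\kappa^\plus,\ol\imath_l)\ | \ \ol\kappa_l=\kappa^\plus\}$ embeds completely into $\m{P}_1(\kappa^\plus)$, so the right-hand model is contained in $V[G_0 \uhr \{(\kappa^\plus,j_0),\ldots,(\kappa^\plus,j_{n-1})\} \times G_1 \uhr \{(\ol\kappa_l,\ol\imath_l)\ | \ \ol\kappa_l<\kappa\} \times G_1(\kappa^\plus)]$, which is the assertion.

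The second case, $X \subseteq \kappa^\plus$ with $\kappa^\plus$ a successor cardinal, runs along exactly the same lines, again truncating at level $\kappa^\plus$: the low part $\m{P}_0 \uhr \{(\kappa^\plus,j_0),\ldots\} \times \m{P}_1 \uhr \{(\ol\kappa_l,\ol\imath_l)\ | \ \ol\kappa_l \leq \kappa^\plus\}$ still has size $\leq\kappa^\plus$ (no full factor $\m{P}_1(\kappa^\plus)$ is needed now, the coordinate $\kappa^\plus$, if present among the $\ol\kappa_l$, simply staying in the low part) hence is $\kappa^{\plus\plus}$-c.c., the high part (levels $\geq\kappa^{\plus\plus}$, coordinates $\ol\kappa_l>\kappa^\plus$) is again $\leq\kappa^\plus$-closed, and the Easton-type lemma gives $X \in V[G_0 \uhr \{(\kappa^\plus,j_0),\ldots,(\kappa^\plus,j_{n-1})\} \times G_1 \uhr \{(\ol\kappa_l,\ol\imath_l)\ | \ \ol\kappa_l \leq \kappa^\plus\}]$, as claimed. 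The one step that needs genuine care is verifying that the high part really is $\leq\kappa^\plus$-closed — in particular that the regularity constraint in the definition of $\m{P}_0$ is preserved under unions of length $\kappa^\plus$ — together with the bookkeeping needed to read off the displayed shapes of the low and high parts; the remainder is a routine use of the standard product, chain condition and closure lemmas and the density facts already recorded for $\m{P}_0$ and $\m{P}_1$.
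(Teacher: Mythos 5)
Your proof is correct and follows the same route as the paper's: apply the Approximation Lemma to get a capturing forcing, pick a condition in $G_0$ whose branches reach level $\kappa^\plus$, factor the capturing forcing at level $\kappa^\plus$ into a small (${\leq}\kappa^\plus$-sized, hence $\kappa^{\plus\plus}$-c.c.) lower part and a ${\leq}\kappa^\plus$-closed upper part, and conclude by the Easton product lemma that $X$ lies in the extension by the lower part, which is absorbed by the displayed forcing. The only cosmetic difference is that you extend $r$ to $r'$ rather than assuming w.l.o.g. $\kappa_l \geq \kappa^\plus$ up front; the extra care you note about preservation of the regular-limit support constraint under long unions is legitimate but works exactly as you indicate.
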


\begin{proof} First, we consider the case that $\kappa$ is a limit cardinal. From Lemma \ref{approx}, it follows that there are finitely many cardinals $\kappa_0, \, \ldots \,, \kappa_{n-1}$, and $i_0 < F_{\lim} (\kappa_0), \, \ldots \,, i_{n-1} < F_{\lim} (\kappa_{n-1})$; moreover, finitely many $\ol{\kappa}_0, \, \ldots \,, \ol{\kappa}_{\ol{n}-1} \in Succ^\prime$ and $\ol{\imath}_0 < F(\ol{\kappa}_0), \, \ldots \,, \ol{\imath}_{\ol{n}-1} < F(\ol{\kappa}_{\ol{n}-1})$ with \[X \in V[G_0 \uhr \{ (\kappa_0, i_0), \, \ldots \,, (\kappa_{n-1}, i_{n-1})\} \times G_1 \uhr \{(\ol{\kappa}_0, \ol{\imath}_0), \, \ldots \,, (\ol{\kappa}_{\ol{n}-1}, \ol{\imath}_{\ol{n}-1})\}].\] W.l.o.g. we can assume $\kappa_0, \, \ldots \,, \kappa_{n-1} \geq \kappa^{\plus}$. Take a condition $r \in G_0$ such that $\{(\kappa_0, i_0), \, \ldots \,, (\kappa_{n-1}, i_{n-1})\} \subseteq t(r)$ contains all maximal points of $t(r)$. Then \[G_0 \uhr \{ (\kappa_0, i_0), \, \ldots \,, (\kappa_{n-1}, i_{n-1})\} \times G_1 \uhr \{(\ol{\kappa}_0, \ol{\imath}_0), \, \ldots \,, (\ol{\kappa}_{\ol{n}-1}, \ol{\imath}_{\ol{n}-1})\}\] is a $V$-generic filter on the forcing \[\m{P}_0 \uhr t (r) \times \m{P}_1 \uhr \{ (\ol{\kappa}_0, \ol{\imath}_0), \, \ldots \,, (\ol{\kappa}_{\ol{n}-1}, \ol{\imath}_{\ol{n}-1}\}),\] which can be factored in a \tbl lower part\tbr\, \[\big((\m{P}_0 \uhr t (r)) \uhr (\kappa^{\plus} + 1)\big) \times \big((\m{P}_1 \uhr \{ (\ol{\kappa}_0, \ol{\imath}_0), \, \ldots \,, (\ol{\kappa}_{\ol{n}-1}, \ol{\imath}_{\ol{n}-1})\}) \uhr (\kappa^{\plus} + 1) \big),\] with cardinality $\leq \kappa^{\plus}$, and an \tbl upper part\tbr \[\big((\m{P}_0 \uhr t (r)) \uhr [\kappa^{\plus}, \infty)\big) \times \big((\m{P}_1 \uhr \{ (\ol{\kappa}_0, \ol{\imath}_0), \, \ldots \,, (\ol{\kappa}_{\ol{n}-1}, \ol{\imath}_{\ol{n}-1})\}) \uhr [\kappa^{\plus}, \infty)\big),\] which is $\leq \kappa^{\plus}$-closed. Thus, $X$ is contained in the generic extension by the lower part: 
Let $(\kappa^{\plus}, j_0), \, \ldots \,, (\kappa^{\plus}, j_{n-1})$ denote the $\leq_{t (r)}$-predecessors of $(\kappa_0, i_0), \, \ldots \,, $ $(\kappa_{n-1}, i_{n-1})$ respectively, on level $\kappa^\plus$. Moreover, assume w.l.o.g. that $0 \leq n^\prime \leq n^{\prime \prime} \leq \ol{n}$ with $\ol{\kappa}_0, \, \ldots \,, \ol{\kappa}_{n^{\prime}-1} < \kappa$; $\ol{\kappa}_{n^\prime}, \, \ldots \,, \ol{\kappa}_{n^{\prime \prime}-1} = \kappa^{\plus}$, and $\ol{\kappa}_{n^{\prime \prime}}, \, \ldots \,, \ol{\kappa}_{\ol{n}-1} > \kappa^{\plus}$. 
Then \[X \in V[G_0 \uhr \{(\kappa^{\plus}, j_0), \, \ldots \,, (\kappa^{\plus}, j_{n-1})\} \times G_1 \uhr \{(\ol{\kappa}_0, \ol{\imath}_0), \, \ldots \,, (\ol{\kappa}_{n^{\prime \prime}-1}, \ol{\imath}_{n^{\prime \prime} - 1})\}] \subseteq \] \[ \subseteq V[G_0 \uhr \{ (\kappa^{\plus}, j_0), \, \ldots \, (\kappa^\plus, j_{n-1})\} \times G_1 \uhr \{(\ol{\kappa}_0, \ol{\imath}_0), \, \ldots \,, (\ol{\kappa}_{n^\prime-1}, \ol{\imath}_{n^\prime-1})\} \times G_1 (\kappa^{\plus})]\] as desired. \\[-2mm]

The case $X \subseteq \kappa^{\plus}$ is similar.

\end{proof}

\section{\bfseries $ \mathbf{ \boldsymbol{\forall} \boldsymbol{\kappa \in } Card\ \boldsymbol{\theta^N} \boldsymbol{(\kappa)}  = F\boldsymbol{(\kappa)}}$}

Firstly, using the subgroups $Small_0 (\kappa, [0, \alpha))$ or $Small_1 (\kappa, [0, \alpha))$, it is not difficult to see that $\theta^N (\kappa) \geq F(\kappa)$ for all cardinals $\kappa$; i.e. for any $\alpha < F(\kappa)$, there exists in $N$ a surjection $s: \powerset (\kappa) \rightarrow \alpha$: 

\begin{prop} $\forall\, \kappa \in \Card\ \theta^N (\kappa) \geq F(\kappa)$. \end{prop}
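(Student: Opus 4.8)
The plan is to produce, for each cardinal $\kappa$ and each $\alpha < F(\kappa)$, an explicit surjection $s \colon \powerset^N(\kappa) \to \alpha$ living in $N$, by exploiting the ``clouds'' already constructed in Section~\ref{symmetric names}. I would split into two cases according to whether $\kappa$ is a limit cardinal or a successor cardinal, since the two halves of the forcing ($\m{P}_0$ and $\m{P}_1$) handle these differently, but in both cases the strategy is identical in spirit.

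First I would treat the case where $\kappa$ is a limit cardinal. Here $F_{\lim}(\kappa) = F(\kappa)$, so for any $\alpha < F(\kappa)$ we may pick a limit ordinal $\alpha'$ with $\alpha \le \alpha' < F(\kappa)$; it suffices to build a surjection onto $\alpha'$. Using the subgroup $Small_0(\kappa, [0, \alpha'))$, form the canonical name for the sequence $\big( (\dot{\wt{G_0}})^{\alpha'}_{(\kappa, i)} \ \big| \ i < \alpha' \text{ a limit ordinal} \big)$, i.e.\ the sequence of clouds. I would check that this name is symmetric — it is stabilized by every $\pi \in Small_0(\kappa, [0, \alpha'))$ because each such $\pi$ permutes the $(\kappa, i)$ only within the $\omega$-blocks $[\gamma(i), \gamma(i)+\omega)$, hence maps each cloud $(\dot{\wt{G_0}})^{\alpha'}_{(\kappa, i)}$ to itself — and that it is hereditarily symmetric since each $(\dot{G_0})_{(\kappa, i+n)} \in HS$. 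Its interpretation is a sequence of pairwise disjoint subsets of $\powerset^N(\kappa)$ indexed by the limit ordinals below $\alpha'$ (disjointness because distinct clouds collect disjoint families of generic $\kappa$-subsets, as noted in the text). Since the order type of the limit ordinals below $\alpha'$ is $\alpha'$, mapping each $Z \in \powerset^N(\kappa)$ lying in the $i$-th cloud to the ordinal indexing that cloud (and, say, everything else to $0$) defines a surjection $s \colon \powerset^N(\kappa) \to \alpha'$ inside $N$; this $s$ has a hereditarily symmetric name, being built from the symmetric name of the cloud sequence by a $\Delta_0$ recipe. The successor case is completely parallel, using $Small_1(\kappa, [0, \alpha))$, the clouds $(\dot{\wt{G_1}})^\alpha_{(\kappa, i)}$, and the fact that $\kappa \in Succ'$ or else $F(\kappa) = F(\nu^\plus)$ for some smaller $\nu^\plus \in Succ'$ — actually the cleanest route is to note that for any successor cardinal $\kappa$ we have $F(\kappa) \ge \kappa^{\plus\plus}$ and the $\m{P}_0$-clouds at level $\kappa$ already give surjections onto all $\alpha < F_{\lim}(\kappa)$, while the $\m{P}_1$-clouds supply the rest up to $F(\kappa)$; I would phrase the argument so that whichever of $\m{P}_0$, $\m{P}_1$ carries enough generic $\kappa$-subsets is used, and in either case $F(\kappa)$-many pairwise distinct generic subsets are available, so the cloud construction goes through for every $\alpha < F(\kappa)$.

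The main obstacle I anticipate is purely bookkeeping: verifying carefully that the cloud-sequence name is genuinely symmetric with respect to the \emph{full} group $A = A_0 \times A_1$ (not just its $Small$-subgroups) in the sense of the Definition — i.e.\ that $\{\pi \in A \mid \pi \ol{\dot{x}}^{D_\pi} = \ol{\dot{x}}^{D_\pi}\}$ contains one of the admissible finite intersections — and that hereditary symmetry is preserved all the way down through the transitive closure of the name. One has to be mildly careful that a $Small$-automorphism need not fix each cloud pointwise but does fix it setwise, and that this setwise fixing is exactly what symmetry of the sequence-name requires; and one must confirm the disjointness of distinct clouds survives in $N$ (it does, since membership in a cloud is absolute between $V[G]$ and $N$ once the cloud itself is in $N$). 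None of this is deep, but it is the part where a sign error or an off-by-one in the indexing by limit ordinals would matter, so that is where I would spend the care.
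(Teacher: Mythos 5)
Your proposal is correct and takes essentially the same approach as the paper: form the cloud names $(\wt{\dot{G_0}})^{\alpha}_{(\kappa,i)}$ (resp.\ $(\wt{\dot{G_1}})^{\alpha}_{(\kappa,i)}$) from the $Small_0$ (resp.\ $Small_1$) subgroups, observe pairwise disjointness (via the no-splitting-at-limits clause for $\m{P}_0$, and a genericity argument for $\m{P}_1$), package the sequence into a hereditarily symmetric name fixed by the relevant $Small$-subgroup, and read off the surjection, reducing the successor case to $\kappa^\plus \in Succ'$ by monotonicity of $\theta$. This matches the paper's proof step for step.
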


\begin{proof} First, we consider the case that $\kappa$ is a limit cardinal. Fix some cardinal $\alpha < F_{\lim} (\kappa) = F(\kappa)$; we construct in $N$ a surjection $s: \powerset (\kappa) \rightarrow \alpha$. \\ As already mentioned in section \ref{symmetric names}, we define for any limit ordinal $i < \alpha$ a \tbl cloud\tbr\, around $(\dot{G_0})_{(\kappa, i)}$ as follows: \[(\wt{\dot{G_0}})_{(\kappa, i)}^{\,\alpha} := \{ \big(\pi (\dot{G_0})_{(\kappa, i)}, \m{1}\big)\ | \ \pi \in Small_0 (\kappa, [0, \alpha))\, \}  = \{ \big( (\dot{G_0})_{(\kappa, i + n)}, \m{1} \big)\ | \ n < \omega\}.\] 

Then \[ (\wt{G_0})_{(\kappa, i)}^{\,\alpha} := \big((\wt{\dot{G_0}})_{(\kappa, i)}^{\,\alpha}\big)^G = \big\{ \, (G_0)_{(\kappa, i + n)}\ | \ n < \omega\, \big\} \in N \] for any limit ordinal $i < \alpha$, since the name $(\wt{\dot{G_0}})_{(\kappa, i)}^{\,\alpha}$ is fixed by all 
$\pi \in Small_0 (\kappa, [0, \alpha))$.
 
Moreover, any two distinct clouds $(\wt{G_0})_{(\kappa, i)}^{\,\alpha}$ and $(\wt{G_0})_{(\kappa, j)}^{\, \alpha}$ for limit ordinals $i$ and $j$ are disjoint -- here, we have to use that splitting at limits is not allowed in our tree forcing; so for $j, j^\prime < F_{\lim} (\kappa)$ with $j \neq j^\prime$ it follows that indeed, $(G_0)_{(\kappa, j)} \neq (G_0)_{(\kappa, j^\prime)}$. \\[-3mm] 

For $\m{P}$-names $\dot{x}$, $\dot{y}$, we denote by $\OR_{\m{P}} (\dot{x}, \dot{y})$ the canonical name for the ordered pair $(\dot{x}^G, \dot{y}^G)$. The sequence $\big( (\wt{G_0})_{(\kappa, i)}^{\,\alpha} \ | \ i < \alpha \mbox{ a limit ordinal} \big)$ is contained in $N$ as well, since its name \[ \big\{\,  \big( \, OR_{\m{P}}(i, (\wt{\dot{G_0}})_{(\kappa, i)}^{\,\alpha}), \m{1}\, \big)\ | \ i < \alpha \mbox{ a limit ordinal} \, \big\}\] is fixed by all $\pi \in Small_0 (\kappa, [0, \alpha))$. \\This gives in $N$ a well-defined surjection $\ol{s}: \powerset (\kappa) \rightarrow \{i < \alpha\ | \ i \mbox{ is a limit ordinal}\}$, by setting $\ol{s} (X) := i$ whenever $X \in (\wt{G_0})_{(\kappa, i)}^{\,\alpha}$ for some $i < \alpha$, and $\ol{s}(X) := 0$, else. \\ Also without the Axiom of Choice, $\ol{s}$ can be turned into a surjection $s: \powerset (\kappa) \rightarrow \alpha$. \\[-2mm] 

Concerning successor cardinals, it suffices to show that $\theta^N (\kappa^{\plus}) \geq F(\kappa^{\plus})$ for all $\kappa^{\plus} \in Succ^\prime$. Let $\alpha < F(\kappa^{\plus})$. We proceed similarly as before, setting \[(\wt{\dot{G_1}})_{(\kappa^{\plus}, i)}^{\,\alpha} := \big\{ \, \big(\, \pi (\dot{G_1})_{(\kappa^{\plus}, i)}, \m{1}\, \big)\ | \ \pi \in Small_1 (\kappa^{\plus}, [0, \alpha))\, \big\},\] and $\pi \,(G_1)_{(\kappa^\plus, i)}^{\alpha} := \big( \pi (\dot{G}_1)_{(\kappa^\plus, i)}^{\,\alpha} \big)^G$, with

\[(\wt{G_1})_{(\kappa^{\plus}, i)}^{\, \alpha} := \big( (\wt{\dot{G_1}})_{(\kappa^{\plus}, i)}^{\,\alpha} \big)^G = \big \{\,\pi\,(G_1)_{(\kappa^{\plus}, i)} \ | \ \pi \in Small_1 (\kappa^{\plus}, [0, \alpha)) \, \big\}.\]  

As before, it follows that the sequence $\big( (\wt{G_1})_{(\kappa^{\plus}, i)}^{\,\alpha}\ | \ i < \alpha \big)$ is contained in $N$, so it suffices to check that two distinct \tbl clouds\tbr\, $(\wt{G_1})_{(\kappa^{\plus}, i)}^{\,\alpha}$ and $(\wt{G_1})_{(\kappa^{\plus}, j)}^{\,\alpha}$ are indeed disjoint. Assume towards a contradiction, there were $\pi$, $\sigma \in Small_1 (\kappa^{\plus}, [0, \alpha))$ with $\pi \, (G_1)_{(\kappa^{\plus}, i)} = \sigma \, (G_1)_{(\kappa^{\plus}, j)}$. By genericity, take $\zeta \in [\kappa, \kappa^{\plus}) \setminus (\dom_x \pi (\kappa^{\plus}) \, \cup \, \dom_x \sigma (\kappa^{\plus}))$ with $ (G_1)_{(\kappa^{\plus}, i)} (\zeta) \neq (G_1)_{(\kappa^{\plus}, j)} (\zeta)$. Since $i, j < \alpha$ and $\pi, \sigma \in $ $Small_1 (\kappa^\plus, $ $[0, \alpha))$, it follows that $\pi \, (G_1)_{(\kappa^\plus, i)} (\zeta) = (G_1)_{(\kappa^\plus, i)} (\zeta)$ and $\sigma \, (G_1)_{(\kappa^\plus, j)} (\zeta) =  (G_1)_{(\kappa^\plus, j)} (\zeta)$. Contradiction. \\ Hence, the sequence $\big( (\wt{G_1})_{(\kappa^{\plus}, i)}^{\,\alpha}\ | \ i < \alpha \big)$ gives in $N$ a surjective function $s: \powerset(\kappa^\plus) \rightarrow \alpha$ as desired.
 
\end{proof}

It remains to show that $\theta^N (\kappa) \leq F(\kappa)$ for all cardinals $\kappa$. 

First, we consider the case that $\kappa$ is a limit cardinal. Assume towards a contradiction that there was a surjection $S: \powerset (\kappa) \rightarrow F(\kappa)$ in $N$. For the rest of this section, fix such a surjection $S$. \\
Let $\dot{S} \in HS$ with $S = \dot{S}^G$ such that $\pi \ol{\dot{S}}^{D_\pi} = \ol{\dot{S}}^{D_\pi}$ for all $\pi$ that are contained in the intersection \[Fix_0 (\kappa_0, i_0)\, \cap\, \cdots\, \cap\, Fix_0 (\kappa_{n-1}, i_{n-1})\, \cap\, Small_0 (\lambda_0, [0, \alpha_0))\, \cap\, \cdots\, \cap\, Small_0 (\lambda_{m-1}, [0, \alpha_{m-1}))\, \cap \]\[ \cap \, Fix_1 (\ol{\kappa}_0, \ol{\imath}_0)\, \cap\, \cdots\, \cap \, Fix_1 (\ol{\kappa}_{\ol{n}-1}, \ol{\imath}_{\ol{n}-1})\, \cap\, Small_1 (\ol{\lambda}_0, [0, \ol{\alpha}_0))\, \cap\, \cdots \, \cap Small_1 (\ol{\lambda}_{\ol{m}-1}, [0, \ol{\alpha}_{\ol{m}-1})),\] which will be abbreviated by $(A_{\dot{S}})$. \\[-3mm]

We know from Corollary \ref{corapprox} that any $X \in N$, $X \subseteq \kappa$ is contained in a generic extension of the form \[V [G_0 \uhr \{(\kappa^{\plus}, j_0), \, \ldots \,, (\kappa^{\plus}, j_{k-1})\} \times G_1 \uhr \{ (\mu_0, \ol{\jmath}_0), \, \ldots \,, (\mu_{\ol{k}-1}, \ol{\jmath}_{\ol{k}-1})\} \times G_1 (\kappa^{\plus})],\] where $k, \ol{k} < \omega$, $j_0, \, \ldots \,, j_{k-1} < F_{\lim} (\kappa^{\plus}) = F(\kappa)$, and $\mu_0, \, \ldots \,, \mu_{\ol{k}-1} < \kappa$, $\ol{\jmath}_0 < F(\mu_0), \, \ldots \,, $ $\ol{\jmath}_{\ol{k}-1} < F(\mu_{\ol{k}-1})$. \\[-2mm]

For a limit ordinal $\beta < F(\kappa)$ \textit{large enough for $(A_{\dot{S}})$} (we give a precise definition of this term later), we want to consider a map $S^\beta \subseteq S$, which will be the restriction of $S$ to all $X$ that are contained in a generic extension \[V [G_0 \uhr \{(\kappa^{\plus}, j_0), \, \ldots, \, (\kappa^\plus, j_{k-1})\} \times G_1 \uhr \{ (\mu_0, \ol{\jmath}_0), \, \ldots, \, (\mu_{\ol{k}-1}, \ol{\jmath}_{\ol{k}-1})\} \times G_1 (\kappa^{\plus})],\] where $j_0, \, \ldots \,, j_{k-1} < \beta$ and $\ol{\jmath}_0, \, \ldots \,, \ol{\jmath}_{\ol{k}-1} < \beta$. \\[-3mm]

Let $M$ denote the collection of all tuples $(s, (\mu_0, \ol{\jmath}_0), \, \ldots \,, (\mu_{\ol{k}-1}, \ol{\jmath}_{\ol{k}-1}))$ such that $\ol{k} < \omega$, $\mu_0, \, \ldots \,, \mu_{\ol{k}-1} \in \kappa\, \cap\, Succ^\prime$, $\ol{\jmath}_0 < F(\mu_0), \, \ldots \,, \ol{\jmath}_{\ol{k}-1} < F(\mu_{\ol{k}-1})$, and $s$ is a condition in $\m{P}_0$ with finitely many maximal points $(\kappa^{\plus}, j_0), \, \ldots, \, (\kappa^{\plus}, j_{k-1})$ with $j_0, \, \ldots, \, j_{k-1} < F_{\lim} (\kappa^\plus) = F(\kappa)$. \\
For $\beta < F(\kappa)$, we denote by $M_\beta$ the collection of all tuples $(s, (\mu_0, \ol{\jmath}_0), \, \ldots \,, (\mu_{\ol{k}-1}, \ol{\jmath}_{\ol{k}-1})) \in M$ such that additionally, $\ol{\jmath}_0 < \beta, \, \ldots, \, \ol{\jmath}_{\ol{k}-1} < \beta$, and $s$ has maximal points $(\kappa^{\plus}, j_0), \, \ldots, $ $\, (\kappa^{\plus}, j_{k-1})$ with $j_0, \, \ldots, \, j_{k-1} < \beta$.

\begin{prop} \label{theta 1} 
There is a limit ordinal $\beta < F(\kappa)$ such that the restriction \[S^\beta := S \uhr \big\{\, X \subseteq \kappa\ | \ \exists\, (s, (\mu_0, \ol{\jmath}_0), \, \ldots \,, (\mu_{\ol{k}-1}, \ol{\jmath}_{\ol{k}-1})) \in M_\beta: \ s \in G_0 \uhr (\kappa^\plus + 1)\, , \]\[ X \in V[G_0 \uhr t(s)\, \times\, G_1 \uhr \{(\mu_0, \ol{\jmath}_0), \, \ldots \,, (\mu_{\ol{k}-1}, \ol{\jmath}_{\ol{k}-1})\}\, \times\, G_1 (\kappa^{\plus})] \, \big\}\] is surjective onto $F(\kappa)$, as well. \end{prop}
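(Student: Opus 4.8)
The plan is to show that the "pieces" of $S$ indexed by finite subsets of $F(\kappa)$ exhaust $F(\kappa)$, and then use the regularity-type behaviour of $F(\kappa)$ together with a counting/cofinality argument to find a single bound $\beta$ that already works. First I would make precise the phrase "$\beta$ large enough for $(A_{\dot S})$": $\beta$ should be a limit ordinal $<F(\kappa)$ that is strictly above all the coordinates $i_0,\ldots,i_{n-1}$, all the $\alpha_0,\ldots,\alpha_{m-1}$, all the $\bar\imath_0,\ldots,\bar\imath_{\bar n-1}$, and all the $\bar\alpha_0,\ldots,\bar\alpha_{\bar m-1}$ appearing in the symmetry support $(A_{\dot S})$ of $\dot S$ that are associated to the cardinal $\kappa$ (resp.\ $\kappa^{\plus}$ and the relevant $\mu<\kappa$); since there are only finitely many such ordinals and $F(\kappa)$ is a cardinal $\geq\kappa^{\plus\plus}$, such $\beta$ exist and in fact form a club below $F(\kappa)$. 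For such $\beta$, the key closure fact is that whenever $\pi\in Small_0(\kappa,[0,\beta))$ or $\pi\in Small_1(\kappa^{\plus},[0,\beta))$ (and $\pi$ is the identity elsewhere), then $\pi$ lies in the intersection $(A_{\dot S})$, so $\pi\ol{\dot S}^{D_\pi}=\ol{\dot S}^{D_\pi}$; this is what will let me run an automorphism/isomorphism argument.

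The main step is to show $\operatorname{rg} S^\beta = F(\kappa)$ for a suitable $\beta$. Fix any $\gamma<F(\kappa)$. By surjectivity of $S$ there is $X\subseteq\kappa$ in $N$ with $S(X)=\gamma$, and by Corollary \ref{corapprox} there are finitely many coordinates $(\kappa^{\plus},j_0),\ldots,(\kappa^{\plus},j_{k-1})$ and $(\mu_0,\bar\jmath_0),\ldots,(\mu_{\bar k-1},\bar\jmath_{\bar k-1})$ with $X\in V[G_0\uhr\{(\kappa^{\plus},j_0),\ldots\}\times G_1\uhr\{(\mu_0,\bar\jmath_0),\ldots\}\times G_1(\kappa^{\plus})]$. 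Now I would apply a "clean-up" automorphism: choose $\pi\in Small_0(\kappa,[0,\beta'))\times Small_1(\kappa^{\plus},[0,\beta'))$ for $\beta'$ large enough, in fact an automorphism moving each offending coordinate $j_l\geq\beta$ (resp.\ $\bar\jmath_l\geq\beta$) by a \emph{small} (within-block-of-$\omega$) permutation isn't enough to push it below $\beta$ — so instead I move to a genuinely different, but $G$-equivalent, representation. More precisely: pick fresh coordinates $j_l',\bar\jmath_l'<\beta$ not used by $t(r)$ or by the symmetry support, and an automorphism $\pi$ supported on $\kappa$ and $\kappa^{\plus}$ (and the relevant $\mu$'s) with $\pi(\kappa^{\plus})(\kappa^{\plus},j_l')=(\kappa^{\plus},j_l)$ etc. Such $\pi$ need not be "small", so it need not fix $\dot S$; but applying the symmetry lemma to $p\,(\Vdash_s)\, S(X)=\gamma$ and then composing with a small automorphism that sends $\pi\dot X$ back into the right generic filter, I get a set $X'\in V[G_0\uhr\{(\kappa^{\plus},j_0'),\ldots\}\times G_1\uhr\{(\mu_0,\bar\jmath_0'),\ldots\}\times G_1(\kappa^{\plus})]$ with $S(X')=\gamma$ and all coordinates below $\beta$, i.e.\ $\gamma\in\operatorname{rg}S^\beta$. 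The delicate point — and I expect this to be the main obstacle — is that moving the coordinates by $\pi$ disturbs $\dot S$, and one must arrange the move so that the composite automorphism used in the isomorphism argument genuinely lies in $(A_{\dot S})$; this forces $\beta$ to be chosen above the symmetry support, and forces the "fresh" coordinates to be chosen inside $[0,\beta)$ but outside the finitely many coordinates already constrained, which is possible precisely because $\beta$ is a limit ordinal of size $\geq\kappa^{\plus\plus}>\omega$ and only finitely many constraints are in play.

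Having shown that \emph{every} sufficiently large limit $\beta$ (above the symmetry support) satisfies $\operatorname{rg}S^\beta=F(\kappa)$, the proposition follows immediately by taking any one such $\beta$; no cofinality argument on $F(\kappa)$ is actually needed, only the finiteness of the symmetry support and of the approximating coordinate sets. I would also remark that $S^\beta$ is indeed a set in $N$: it is the restriction of $S\in N$ to the class of $X\subseteq\kappa$ admitting a $G$-approximation as in $M_\beta$, and by Corollary \ref{corapprox} and the Approximation Lemma this class is $\powerset^N(\kappa)$-definable with parameters in $N$, hence by Separation $S^\beta\in N$; so it makes sense to speak of its being surjective onto $F(\kappa)$ in $N$. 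The remaining routine verifications — that the $\ol{\dot y}^{D_\pi}$ gymnastics and the isomorphism-of-forcing-relations argument go through exactly as in Propositions \ref{separation}, \ref{Replacement} and \ref{power set} — I would only indicate, not carry out in full.
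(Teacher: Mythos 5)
Your high-level strategy is the right one and matches the paper's: take an arbitrary $\alpha < F(\kappa)$ witnessed by some $X$ with $S(X)=\alpha$, then use a $\m{P}$-automorphism in $(A_{\dot S})$ to move the finitely many "offending" coordinates $\geq\beta$ down into $[0,\beta)$, thereby producing a witness that $\alpha\in\rg S^\beta$. But there are three concrete gaps in the way you propose to carry this out.

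First, your choice of $\beta$ is not precise enough and will not in general work. You assert that every limit ordinal $\beta$ "above the symmetry support" does the job, and justify the existence of fresh target coordinates by noting that "only finitely many constraints are in play" and that $\beta$ has "size $\geq\kappa^{\plus\plus}$". Neither of these is correct. The $\m{P}_1$ part of the automorphism needs to relocate each offending index $\ol\jmath_l\geq\beta$ to a fresh index $\pi_1\ol\jmath_l < \beta$ on which the relevant condition $\ol p_1(\mu^\plus)$ takes the \emph{same values}; to make this possible one first extends $p_1$ to $\ol p_1$ by a density argument, and the extension may require up to $<\mu^\plus\leq\kappa$ many new $y$-coordinates. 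So the set of constraints is emphatically not finite, and you need the interval between the symmetry support and $\beta$ to have cardinality at least $\kappa^\plus$. Also $|\beta|\geq\kappa^{\plus\plus}$ is false whenever $F(\kappa)=\kappa^{\plus\plus}$. The paper handles this by fixing $\wt\beta$ large enough for $(A_{\dot S})$ and then taking $\beta:=\wt\beta+\kappa^\plus$ (ordinal addition), which guarantees a gap $(\wt\beta,\beta)$ of order type (and cardinality) $\kappa^\plus$ that is disjoint from the symmetry support; the fresh targets are then chosen in this gap.

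Second, your description of the automorphism as a two-step process ("Such $\pi$ need not be small, so it need not fix $\dot S$; but \ldots\ then composing with a small automorphism\ldots") is muddled and, as written, does not close the argument: if $\pi$ does not fix $\ol{\dot S}^{D_\pi}$, the symmetry lemma only yields $(\pi\dot S)(X')=\alpha$, and an additional small automorphism will not restore $\dot S$. The paper's point is that the single automorphism $\pi$ \emph{can} be chosen directly in $(A_{\dot S})$ even though it is not "small" in the $\omega$-block sense: the constraints from $Fix_0(\kappa_l,i_l)$ and $Small_0(\lambda_l,[0,\alpha_l))$ (and their $\m{P}_1$-analogues) only concern indices $<\wt\beta$, and $\pi$ is engineered to leave everything at indices $<\wt\beta$ untouched (swapping indices $\geq\beta$ with fresh indices in $(\wt\beta,\beta)$). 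Recognizing this removes the need for any composition with a separate "small" correction.

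Third, the forcing-statement one contradicts is not $S(X)\neq\gamma$ in $N$ but the pair of statements $p\Vdash_{\m P\uhr(\lambda+1)} OR(\wt{\dot X},\alpha)\in\dot S$ and $p\Vdash_{\m P\uhr(\lambda+1)}\alpha\notin\rg\dot S^\beta$, where $\dot S^\beta$ is a concrete $\m P\uhr(\lambda+1)$-name. After applying $\pi$ and taking a common extension $q\leq\ol p,\pi\ol p$, one shows that $\pi\wt{\dot X}^{D_\pi}$ equals $\wt{\ddot X}^{D_\pi}$ for a name $\ddot X$ for a product forcing whose $(\kappa^\plus,\cdot)$ and $(\mu_l,\cdot)$ coordinates all sit below $\beta$; hence $(\,OR(\wt{\ddot X}^{D_\pi},\alpha),q\,)\in\dot S^\beta$, contradicting $q\Vdash\alpha\notin\rg\dot S^\beta$. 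Your aside about $S^\beta\in N$ via Separation is not needed for this argument (the surjectivity claim is verified via the name $\dot S^\beta$ inside $V[G\uhr(\lambda+1)]$), and conflating the two obscures where the real work is.
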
 

Later on, we will lead this into a contradiction by showing that any such $S^\beta$ must be contained in an intermediate generic extension which preserves cardinals $\geq F(\kappa)$, but also contains an injection $\iota: \dom S^\beta \hookrightarrow \beta$. \\[-2mm] 

We now define what we mean by  \textit{large enough for $(A_{\dot{S}})$}:
Fix a condition $r \in G_0$ such that $\{(\kappa_0, i_0), \, \ldots \,, (\kappa_{n-1}, i_{n-1})\} \subseteq t(r)$ contains all maximal points of $t(r)$, and an extension $\ol{r} \leq_0 r$, $\ol{r} \in G_0$ such that all $t(\ol{r})$-branches have height $\geq \kappa^{\plus}$. For $l < n$ with $\kappa_l \geq \kappa^\plus$, let $(\kappa^{\plus}, i_l^\prime)$ be the $t (\ol{r})$-predecessor of $(\kappa_l, i_l)$ on level $\kappa^{\plus}$; in the case that $\kappa_l < \kappa^{\plus}$, let $(\kappa^{\plus}, i_l^\prime)$ denote some $t (\ol{r})$-successor of $(\kappa_l, i_l)$ on level $\kappa^{\plus}$. \\ We say that a limit ordinal $\wt{\beta} < F_{\lim} (\kappa^{\plus}) = F(\kappa)$ is  {\textit{large enough for $(A_{\dot{S}})$}} if the following hold: 
\begin{itemize}\item $\wt{\beta} > i_0^\prime, \, \ldots \,, i_{n-1}^\prime$, \item $\wt{\beta} > \alpha_l$ for all $l < m$ with $\lambda_l \leq \kappa^{\plus}$, \item $\wt{\beta} > \ol{\imath}_l$ for all $l < \ol{n}$ with $\ol{\kappa}_l < \kappa$, \item $\wt{\beta} > \ol{\alpha}_l$ for all $l < \ol{m}$ with $\ol{\lambda}_l < \kappa$.
\end{itemize}

We will refer to these conditions $r$, $\ol{r}$ later on. \\[-3mm]

We want to show that whenever a limit ordinal $\wt{\beta} < F(\kappa)$ is \textit{large enough for} $(A_{\dot{S}})$ and $\beta := \wt{\beta} + \kappa^{\plus}$ (addition of ordinals), then $S^\beta$ must be surjective onto $F(\kappa)$, as well. \\[-2mm]

For any tuple $(s, (\mu_0, \ol{\jmath}_0), \, \ldots \,, (\mu_{\ol{k}-1}, \ol{\jmath}_{\ol{k}-1})) \in M$ and $\dot{x} \in \Name (\m{P}_0 \uhr t(s)\, \times\, \m{P}_1 \uhr \{(\mu_0, \ol{\jmath}_0), \, \ldots, \, (\mu_{\ol{k}-1}, \ol{\jmath}_{\ol{k}-1})\}\, \times\, \m{P}_1 (\kappa^{\plus}))$, we define a canonical extension $\wt{\dot{x}} \in \Name (\m{P})$ as follows: \\[-3mm]

Recursively, set \[ \wt{\dot{x}} := \big\{ \, (\wt{\dot{y}}, \ol{v})\ \big| \ \exists\, \big(\dot{y}, (v_0 \uhr t(s), v_1 \uhr \{(\mu_0, \ol{\jmath}_0), \, \ldots \,\}, v_1 (\kappa^{\plus}))\big) \in \dot{x}\ : \ \ol{v}_0 = v_0 \uhr t(s), \]\[ \supp \ol{v}_1 \subseteq \kappa^{\plus} + 1, \ol{v}_1 \uhr \{ (\mu_0, \ol{\jmath}_0), \, \ldots \,\} = v_1 \uhr \{ (\mu_0, \ol{\jmath}_0), \, \ldots \,\}, \ol{v}_1 (\kappa^{\plus}) = v_1 (\kappa^{\plus}) \, \big\}.\]
If $s \in G_0 \uhr (\kappa^\plus + 1)$, it follows that
\[\wt{\dot{x}}^G =\dot{x}^{G_0 \, \uhr \, t(s) \times G_1 \, \uhr \, \{(\mu_0, \ol{\jmath}_0), \, \ldots \,\} \times G_1 (\kappa^{\plus})}.\]
Sometimes, this name $\wt{\dot{x}}$ will be extended further to a name ${\ol{\wt{\dot{x}}}}^{D_\pi} \in \ol{\Name (\m{P})}^{D_\pi}$. In order to simplify notation, this extension will be denoted by $\wt{\dot{x}}^{D_\pi}$.

We now give a proof of Proposition \ref{theta 1}.

\begin{proof} Assume towards a contradiction that a limit ordinal $\wt{\beta} < F(\kappa)$ is large enough and $\beta := \wt{\beta} + \kappa^{\plus}$ (addition of ordinals), but $S^\beta$ is not surjective. Let $\alpha < F(\kappa)$ with $\alpha \notin \rg S^\beta$. \\
Fix some cardinal $\lambda$ with $\lambda > \max\{ \kappa^{\plus}, \kappa_0, \, \ldots \,, \kappa_{n-1}, \lambda_0, \, \ldots \,, \lambda_{m-1}, \ol{\kappa}_0, \, \ldots \,, \ol{\kappa}_{\ol{n}-1},$ $\ol{\lambda}_0, \, \ldots \,, \ol{\lambda}_{\ol{m}-1}\}$ such that $\dot{S} \in \Name (\m{P} \uhr (\lambda + 1))^V$. 
Then $S \in V[G \uhr (\lambda + 1)]$, and 
we can define a canonical $\m{P}\, \uhr (\lambda + 1)$-name for $S^\beta$ as follows: \[\dot{S}^\beta := \{ \big(\, OR_{\m{P} \, \uhr \, (\lambda + 1)} (\wt{\dot{X}}, \alpha), \ol{p}\, \big)\ | \ \exists\, (s, (\mu_0, \ol{\jmath}_0), \, \ldots \,, (\mu_{\ol{k}-1}, \ol{\jmath}_{\ol{k}-1})) \in M_\beta: \ \]\[ \dot{X} \in \Name (\m{P}_0 \uhr t(s)\, \times\, \m{P}_1 \uhr \{ (\mu_0, \ol{\jmath}_0), \, \ldots \,, (\mu_{\ol{k}-1}, \ol{\jmath}_{\ol{k}-1})\}\, \times\, \m{P}_1 (\kappa^{\plus})), \, \ol{p} = (\ol{p}_0, \ol{p}_1) \in \m{P} \uhr (\lambda + 1), \]\[\ol{p}_0 \leq s, \, \ol{p} \Vdash_{\m{P} \, \uhr \, (\lambda + 1)} OR_{\m{P} \, \uhr \, (\lambda + 1)} (\wt{\dot{X}}, \alpha) \in \dot{S} \}.  \]

It is not difficult to check that indeed, $(\dot{S}^\beta)^{G \, \uhr \, (\lambda + 1)} = S^\beta$. \\[-2mm]

Since $S: \powerset^N (\kappa) \rightarrow F(\kappa)$ is surjective, there must be $X \in \powerset^N (\kappa)$ with $(X, \alpha) \in S$. By Corollary \ref{corapprox}, take $(s, (\mu_0, \ol{\jmath}_0), \, \ldots \,, (\mu_{\ol{k}-1}, \ol{\jmath}_{\ol{k}-1})) \in M$ such that $s \in G_0 \uhr (\kappa^\plus + 1)$ has maximal points $(\kappa^{\plus}, j_0) ,\, \ldots \,, (\kappa^{\plus}, j_{k-1})$, and \[X = \dot{X}^{G_0 \, \uhr \, t(s) \, \times\, G_1 \, \uhr \, \{ (\mu_0, \ol{\jmath}_0), \, \ldots \,, (\mu_{\ol{k}-1}, \ol{\jmath}_{\ol{k}-1})\, \times\, G_1 (\kappa^{\plus})}\] for some $\dot{X} \in \Name (\m{P}_0 \uhr t(s)\, \times\, \m{P}_1 \uhr \{ (\mu_0, \ol{\jmath}_0), \, \ldots \,, (\mu_{\ol{k}-1}, \ol{\jmath}_{\ol{k}-1})\}\, \times\, \m{P}_1 (\kappa^{\plus}))$. W.l.o.g. we can assume that the sequences $(j_l\ | \ l < k)$ and $(\ol{\jmath}_l\ | \ l < \ol{k})$ are both increasing. \\[-3mm] 

Now, $(X, \alpha) = (\wt{\dot{X}}^{G \, \uhr \, (\lambda + 1)}, \alpha) \in \dot{S}^{G \, \uhr \, (\lambda + 1)}$, but $\alpha \notin \rg (\dot{S}^\beta)^{G \, \uhr \, (\lambda + 1)}$, so we can take $p \in G \uhr (\lambda + 1)$ such that \[p \Vdash_{\m{P} \, \uhr \, (\lambda + 1)} OR_{\m{P} \, \uhr \, (\lambda + 1)} (\wt{\dot{X}}, \alpha) \in \dot{S}\ \ \ , \ \ \ p \Vdash_{\m{P} \, \uhr \, (\lambda + 1)} \alpha \notin \rg \, \dot{S}^\beta.\]

W.l.o.g., let $p_0 \leq \ol{r}$, $p_0 \leq s$ and $ht\, p \geq \kappa^{\plus}$. \\[-3mm]

Now, take $h \leq k$ such that $j_0, \, \ldots \,, j_{h-1} < \beta$, $j_h, \, \ldots \,, j_{k-1} \geq \beta$, and $\ol{h} \leq \ol{k}$ with $\ol{\jmath}_0, \, \ldots \,, \ol{\jmath}_{\ol{h}-1} < \beta$, $\ol{\jmath}_{\ol{h}}, \, \ldots \,, \ol{\jmath}_{\ol{k}-1} \geq \beta$. Then $(X, \alpha) \notin S^\beta$ implies that $h < k$ or $\ol{h} < \ol{k}$. \\[-3mm]

Pick pairwise distinct ordinals $\pi_0 j_{h}, \, \ldots \,, \pi_0 j_{k-1}$ in $(\wt{\beta}, \beta) \setminus \{j_0, \, \ldots \,, j_{h-1} \}$ such that $\{(\kappa^\plus, \pi_0 j_h), \, \ldots \,, (\kappa^\plus, \pi_0 j_{k-1})\} \, \cap\, t(p_0) = \emptyset$. \\ We want to construct a $\m{P}_0$-automorphism $\pi_0$ which is the identity below level $\kappa^\plus$, and swaps for any $l \in [h, k)$ the vertex $(\kappa^\plus, j_l)$ with the vertex $(\kappa^\plus, \pi_0 j_l)$;
i.e. for any $q_0 \in \m{P}_0$ and $l \in [h, k)$ with $(\kappa^\plus, j_l), (\kappa^\plus, \pi_0 j_l) \in t(q_0)$, we want that
$(\pi_0 q_0) \uhr \{(\kappa^{\plus}, \pi_0 j_l)\} = q_0 \uhr \{ (\kappa^{\plus}, j_l)\}$, and $(\pi_0 q_0) \uhr \{ (\kappa^\plus, j_l)\} = q_0 \uhr \{( \kappa^\plus, \pi_0 j_l)\}$. Since $\{(\kappa^\plus, \pi_0 j_h), \, \ldots \,, (\kappa^\plus, \pi_0 j_{k-1})\} \, \cap\, t(p_0) = \emptyset$, we can assure that at the same time, $\pi_0 p_0 \, \| \, p_0$. \\[-3mm]

For $\m{P}_1$ we proceed similarly, but in order to achieve $\pi_1 p_1 \, \| \, p_1$, we first have to extend $p$ to  condition $\ol{p} = (p_0, \ol{p}_1) \leq (p_0, p_1)$ with $\ol{p} \in G \uhr (\lambda + 1)$ such that the following holds: \\[-3mm]

\textit{For any $\mu^\plus \in \Succ^\prime$ with $\{l \in [\ol{h}, \ol{k})\ | \ \mu_l = \mu^\plus\} = \{l_0, \ldots, l_{z-1}\}$ for some $1 \leq z < \omega$ (i.e.,
 $\mu_{l_0} = \cdots = \mu_{l_{z-1}} = \mu^\plus$, so $l_0, \, \ldots \,, l_{z-1} \in [\ol{h}, \ol{k})$ implies that $\ol{\jmath}_{l_0}, \ldots, \ol{\jmath}_{l_{z-1}} \geq \beta)$,
it follows that $\mu^\plus \in \supp \ol{p}_1$ with $\ol{\jmath}_{l_0}, \, \ldots \,, \ol{\jmath}_{l_{z-1}}\ \in \dom_y \ol{p}_1 (\mu^\plus)$, and there are $\pi_1 \ol{\jmath}_{l_0}, \, \ldots \,, \, $ $\pi_1 \ol{\jmath}_{l_{z-1}} \in (\wt{\beta}, \beta) \setminus \{ \ol{\jmath}_0, \ldots, \ol{\jmath}_{\ol{h}-1}\}$ with $\ol{p}_1 \uhr \{(\mu^\plus, \ol{\jmath}_{l_0})\} = \ol{p}_1 \uhr \{(\mu^\plus, \pi_1 \ol{\jmath}_{l_0})\}, \, \ldots \,, $ $\ol{p}_1 \uhr \{(\mu^\plus, \ol{\jmath}_{l_{z-1}})\} = \ol{p}_1 \uhr \{(\mu^\plus, \pi_1 \ol{\jmath}_{l_{z-1}})\}$. } \\[-3mm]

Since $\beta = \wt{\beta} + \kappa^\plus$, and $\dom_y p_1 (\mu^\plus)$ has cardinality $\leq \mu < \kappa$, this is possible by a density argument. \\[-3mm]

Now, it is possible to construct a $\m{P}_1$-automorphism that exchanges for every $l \in [\ol{h}, \ol{k})$ the $(\m{P}_1)_{(\mu_l, \ol{\jmath}_l)}$-coordinate with the according $(\m{P}_1)_{(\mu_l, \pi_1 \ol{\jmath}_l)}$-coordinate; so for any $q_1 \in D_\pi$, we will have $(\pi_1 q_1)_{(\mu_l, \pi_1 \ol{\jmath}_l)} = (q_1)_{(\mu_l, \ol{\jmath}_l)}$, $(\pi_1 q_1)_{(\mu_l, \ol{\jmath}_l)} = (q_1)_{(\mu_l, \pi_1 \ol{\jmath}_l)}$. By our preparations about $\ol{p}$, we can also assure $\pi_1 \ol{p}_1 \, \| \, \ol{p}_1$. \\[-2mm]

Moreover, we will have $\pi \ol{\dot{S}}^{D_\pi} = \ol{\dot{S}}^{D_\pi}$: Recall that $\wt{\beta}$ was \textit{large enough for} $(A_{\dot{S}})$, and for both $\pi_0$ and $\pi_1$ we do not disturb indices below $\wt{\beta}$; so $\pi \in Fix_0 (\kappa_0, i_0) \, \cap\, \cdots\, \cap \,  Small_0 (\lambda_0, [0, \alpha_0))\, \cap\, \cdots\, \cap\, Fix_1 (\ol{\kappa}_0, \ol{\imath}_0)\, \cap\, \cdots\, \cap\, Small_1 (\ol{\lambda}_0, [0, \ol{\alpha}_0))\, \cap\, \cdots$\,. \\ For a condition $\ol{q} \leq \ol{p}, \pi \ol{p}$ and $H$ a $V$-generic filter on $\m{P}$ with $\ol{q} \in H$, it follows that $\alpha \notin \rg (\dot{S}^\beta)^H$, but at the same time \[\big(\, \big(\pi {\wt{\dot{X}}}^{\,D_\pi}\big)^H, \alpha\, \big) \in \big(\pi {\ol{\dot{S}}}^{D_\pi}\big)^H = \dot{S}^H.\] We will see that this is a contradiction, since $\big(\pi {\wt{\dot{X}}}^{\, D_\pi}\big)^H$ will be equal to some $\big({{\wt{\ddot{X}}}}^{\,D_\pi}\big)^H$, where $\ddot{X}$ is a name for the forcing \[\m{P}_0 \uhr t(\pi_0 s) \, \times\, \m{P}_1 \uhr \{ (\mu_0, j_0), \, \ldots \,, (\mu_{\ol{h} - 1}, \ol{\jmath}_{\ol{h}-1}), (\mu_{\ol{h}}, \pi_1 \ol{\jmath}_{\ol{h}}), \, \ldots \,, (\mu_{\ol{k}-1}, \pi_1 \ol{\jmath}_{\ol{k}-1})\}\, \times\, \m{P}_1 (\kappa^{\plus}), \] where $\pi_0 s \geq \pi_0 \ol{p} \geq q$ has maximal points $(\kappa^{\plus}, j_0), \dots, (\kappa^{\plus}, j_{h-1}), (\kappa^{\plus}, \pi_0 j_h), \, \ldots \,,$ $(\kappa^{\plus}, \pi_0 j_k)$; thus, $(\pi_0 s, (\mu_0, \ol{\jmath}_0), \, \ldots \,, (\mu_{\ol{h}-1}, \ol{\jmath}_{\ol{h}-1}), (\mu_{\ol{h}}, \pi_1 \ol{\jmath}_{\ol{h}}), \, \ldots \,, (\mu_{\ol{k}}, \pi_1 \ol{\jmath}_{\ol{k}})) \in M_\beta$. This will imply  
$\big(\, \big({\wt{\ddot{X}}}^{\,D_\pi} \big)^H, \alpha\, \big) \in (\dot{S}^\beta)^H$, contradicting that $\alpha \notin \rg (\dot{S}^\beta)^H$. \\[-2mm]

We start with defining $\pi_0$. Let $ht\, \pi_0 := \eta (\ol{p})$. For any $\alpha < \kappa^{\plus}$, $\pi_0 (\alpha)$ will be the identity on $\{ (\alpha, i)\ | \ i < F_{\lim} (\alpha)\}$. Regarding level $\kappa^{\plus}$, let $\supp \pi_0 (\kappa^\plus) := \{ (\kappa^\plus, j_h), \, \ldots \,, (\kappa^\plus, j_{k-1}), $ $(\kappa^\plus, \pi_0 j_h), \, \ldots \,, (\kappa^\plus, \pi_0 j_{k-1}) \}$ with $\pi_0 (\kappa^{\plus}) (\alpha, j_l) = (\alpha, \pi_0 j_l)$, $\pi_0 (\kappa^{\plus}) (\alpha, \pi_0 j_l) = (\alpha, j_l)$ for all $l \in [h, k)$. For $\kappa^{\plus \plus} \leq \alpha \leq \HT \pi_0$, the map $\pi_0 (\alpha)$ is constructed as follows: Let $\{(\alpha, \delta_0), \, \ldots \,, (\alpha, \delta_{n (\alpha)-1})\}$ denote the collection of all $(\alpha, \delta) \in t (p_0)$ which have a $t(p_0)$-predecessor in $\{(\kappa^{\plus}, j_h), \, \ldots \,, (\kappa^{\plus}, j_{k-1})\}$. Pick $\wt{\delta}_0, \, \ldots \,, \wt{\delta}_{n (\alpha)-1}  < F_{\lim} (\alpha)$ pairwise distinct with $\{ (\alpha, \wt{\delta}_0), \, \ldots \,, (\alpha, \wt{\delta}_{n (\alpha)-1})\} \, \cap\, t(p_0) = \emptyset$ such that for all $i < n (\alpha)$, there is a limit ordinal $\gamma$ with $\delta_i, \wt{\delta}_i \in [\gamma, \gamma + \omega)$. Let $\supp \pi_0 (\alpha) := \{(\alpha, \delta_0), \, \ldots \,, (\alpha, \delta_{n (\alpha)-1}), (\alpha, \wt{\delta}_0), \, \ldots \,, (\alpha, \wt{\delta}_{n (\alpha)-1})\}$ with $\pi_0 (\alpha) (\alpha, \delta_l) = (\alpha, \wt{\delta}_l)$, $\pi_0 (\alpha) (\alpha, \wt{\delta}_l) = (\alpha, \delta_l)$ for all $l < n (\alpha)$. \\
This defines $\pi_0$. \\[-3mm]   

First, we have to check whether $\pi_0 \in Fix_0 (\kappa_0, i_0)\, \cap\, \cdots\, \cap\, Fix_0 (\kappa_{n-1}, i_{n-1})$. Consider $l < n$. Then $\pi_0 \in Fix_0 (\kappa_l, i_l)$ is clear in the case that $\kappa_l < \kappa^{\plus}$. If $\kappa_l = \kappa^{\plus}$, then $(\kappa_l, i_l) \notin \supp \pi_0 (\kappa_l)$ follows from $\wt{\beta} > i_l^\prime = i_l$. In the case that $\kappa_l > \kappa^{\plus}$, let $\supp \pi_0 (\kappa_l) = \{(\kappa_l, \delta_0), \, \ldots \,, (\kappa_l, \delta_{n (\kappa_l)-1}), (\kappa_l, \wt{\delta}_0), \, \ldots \,, (\kappa_l, \wt{\delta}_{n (\kappa_l)-1})\}$ as before. Recall that we denote by $(\kappa^{\plus}, i_l^\prime)$ the $t(\ol{r})$-predecessor of $(\kappa_l, i_l)$ on level $\kappa^{\plus}$ (which is also its $t(p_0)$-predecessor). Since $\wt{\beta}$ is \textit{large enough for} $(A_{\dot{S}})$, it follows that $i_l^\prime < \wt{\beta}$, so $(\kappa^\plus, i_l^\prime) \notin \{(\kappa^\plus, j_h), \, \ldots \,, (\kappa^\plus, j_{k-1})\}$; thus, $(\kappa_l, i_l) \notin \{(\kappa_l, \delta_0), \, \ldots \,, (\kappa_l, \delta_{n (\kappa_l)-1})\}$. \\ 
 Also, $(\kappa_l, i_l) \in t(r) \subseteq t(p_0)$ gives $(\kappa_l, i_l) \notin \{ (\kappa_l, \wt{\delta}_0), \ldots, (\kappa_l, \wt{\delta}_{n(\kappa_l)-1})\}$. Hence, $(\kappa_l, i_l) \notin \supp \pi_0 (\kappa_l)$ as desired; and it follows that $\pi_0 \in Fix_0 (\kappa_0, i_0)\, \cap\, \cdots\, \cap\, Fix_0(\kappa_{n-1}, i_{n-1})$. \\[-3mm]

Also, $\pi_0 \in Small_0 (\lambda_0, [0, \alpha_0))\, \cap\, \cdots\, \cap\, Small_0 (\lambda_{m-1}, [0, \alpha_{m-1}))$: Let $l < m$. For $\lambda_l \neq \kappa^{\plus}$, there is nothing to show. In the case that $\lambda_l = \kappa^{\plus}$, we use again that $\wt{\beta}$ is \textit{large enough for} $(A_{\dot{S}})$; so $\wt{\beta} > \alpha_0, \ldots, \alpha_{m-1}$, which implies $\supp \pi_0 (\kappa^{\plus})\, \cap \, \{ (\kappa^\plus, i)\ | \ i < \alpha_l\} = \emptyset$. \\[-3mm]

Finally, $\pi p_0\, \| \, p_0$ by construction. \\[-2mm]

Now, we turn to $\pi_1$. Let $\supp \pi_1 := \{\mu_{\ol{h}}, \ldots, \mu_{\ol{k}-1}\}$. \\ [-3mm]

Consider $\mu^\plus \in Succ^\prime$ with $\{l \in [\ol{h}, \ol{k})\ | \ \mu_l = \mu^\plus\} = \{ l_0, \ldots, l_{z-1}\}$ for some $1 \leq z < \omega$.
(Then $\mu_{l_0} = \cdots = \mu_{l_{z-1}} = \mu^\plus$, and $l_0, \, \ldots \,, l_{z-1} \in [\ol{h}, \ol{k})$ implies $\ol{\jmath}_{l_0}, \ldots, \ol{\jmath}_{l_{z-1}} \geq \beta$.) Recall that we have $\pi_1 \ol{\jmath}_{l_0}, \, \ldots \,, \pi_1 \ol{\jmath}_{l_{z-1}} \in (\wt{\beta}, \beta) \setminus \{\ol{\jmath}_0, \ldots, \ol{\jmath}_{\ol{h}-1}\}$ with $\ol{p}_1 \uhr \{(\mu_{l_0},  \ol{\jmath}_{l_0})\} = \ol{p}_1 \uhr \{(\mu_{l_0}, \pi_1 \ol{\jmath}_{l_0})\}, \, \ldots \,, \ol{p}_1 \uhr \{(\mu_{l_{z-1}}, \ol{\jmath}_{l_{z-1}})\} = \ol{p}_1 \uhr \{(\mu_{l_{z-1}}, \pi_1 \ol{\jmath}_{l_{z-1}})\}$. \\
Let $\dom \pi_1 (\mu^\plus) = \dom_x\,  \pi_1 (\mu^\plus) \, \times \, \dom_y\, \pi_1 (\mu^\plus) := \dom_x \, \ol{p}_1 (\mu^\plus)\, \times \, \dom_y\, \ol{p}_1 (\mu^\plus)$, and $\supp \pi_1 (\mu^\plus) := \{\ol{\jmath}_{l_0}, \, \ldots \,, \ol{\jmath}_{l_{z-1}}, \pi_1 \ol{\jmath}_{l_0}, \, \ldots \,, \pi_1 \ol{\jmath}_{l_{z-1}}\}$.
The map $f_{\pi_1} (\mu^\plus): \SUPP \pi_1 (\mu^\plus) \rightarrow \SUPP \pi_1 (\mu^\plus)$ will be defined as follows: $f_{\pi_1}(\mu^\plus) (\ol{\jmath}_l) = \pi_1 \ol{\jmath}_l$, $f_{\pi_1}(\mu^\plus) (\pi_1 \ol{\jmath}_l) = \ol{\jmath}_l$ for all $l \in \{l_0, \ldots, l_{z-1}\}$.

 For $\zeta \in \dom_x \pi_1 (\mu^\plus)$, we need a bijection $\pi_1 (\mu^\plus) (\zeta): 2^{\SUPP \pi_1 (\mu^\plus)} \rightarrow 2^{\SUPP \pi_1 (\mu^\plus)}$. Again, we swap any $\ol{\jmath}_l$-coordinate with the according $\pi_1 \ol{\jmath}_l$-coordinate:

$\big(\, \pi_1 (\mu^\plus) (\zeta) \big(\epsilon_i\ | \ i \in \SUPP \pi_1 (\mu^\plus)\big)\, \big)_{\, \ol{\jmath}_l} := \epsilon_{\pi_1 \ol{\jmath}_l}$, $\big(\pi_1 (\mu^\plus) (\zeta) \big(\epsilon_i\ | \ i \in \SUPP \pi_1 (\mu^\plus)\big) \, \big)_{\,\pi_1 \ol{\jmath}_l} := \epsilon_{\ol{\jmath}_l}$ for $l \in \{l_0, \ldots, l_{z-1}\}$.

Finally, for $(\zeta, i) \in [\mu, \mu^\plus) \, \times\, F (\mu^\plus)$, let $\pi_1 (\mu^\plus) (\zeta, i) = 0$. \\[-3mm]

This defines $\pi_1$, with $D_{\pi_1} = \{q \in \m{P}_1\ | \ \forall\, \mu^\plus \in \supp q\, \cap\, \{\mu_{\ol{h}}, \ldots, \mu_{\ol{k}-1}\}\ \dom q (\mu^\plus) \supseteq \dom \pi_1 (\mu^\plus)\}$. \\[-3mm]

For any such $q \in D_{\pi_1}$ and $\mu^\plus \in \supp q$ with $\mu^\plus = \mu_{l_0} = \cdots = \mu_{l_{z-1}}$ for $\l_0, \ldots, l_{z-1}$ as above, we have $\{\ol{\jmath}_{l_0}, \, \ldots \,, \ol{\jmath}_{l_{z-1}}, \pi_1 \ol{\jmath}_{l_0}, \, \ldots \, \pi_1 \ol{\jmath}_{l_{z-1}}\} \subseteq \dom_y \ol{p} (\mu^\plus) = \dom_y \pi_1 (\mu^\plus) \subseteq \dom_y q_1 (\mu^\plus)$, and $(\pi_1 q) (\mu^\plus) (\zeta, \ol{\jmath}_l) = q (\mu^\plus) (\zeta, \pi_1 \ol{\jmath}_l)$, $(\pi_1 q)(\mu^\plus) (\zeta, \pi_1 \ol{\jmath}_l) = q (\mu^\plus) (\zeta, \ol{\jmath}_l)$ for all $l \in \{l_0, \ldots, l_{z-1}\}$, $\zeta \in \dom_x q (\mu^\plus)$. Moreover,
$(\pi_1 q) (\mu^\plus) (\zeta, i) = q (\mu^\plus) (\zeta, i)$ for all the $\zeta \in \dom_x q (\mu^\plus)$, $i < F(\mu^\plus)$ remaining with $i \notin \supp \pi_1 (\mu^\plus) = \{\ol{\jmath}_{l_0}, \ldots, \ol{\jmath}_{l_{z-1}}$, $\pi_1 \ol{\jmath}_{l_0}, \ldots, \pi_1 \ol{\jmath}_{l_{z-1}}\}$. \\
Since we have arranged that $\ol{p}_1 \uhr \{(\mu_{l_0},  \ol{\jmath}_{l_0})\} = \ol{p}_1 \uhr \{(\mu_{l_0}, \pi_1 \ol{\jmath}_{l_0})\},$ $\, \ldots \,, \ol{p}_1 \uhr \{(\mu_{l_{z-1}}, \ol{\jmath}_{l_{z-1}})\} = \ol{p}_1 \uhr \{(\mu_{l_{z-1}}, \pi_1 \ol{\jmath}_{l_{z-1}})\}$, it follows that $(\pi_1 \ol{p}_1) (\mu^\plus) = \ol{p}_1 (\mu^\plus)$. \\Hence, $\pi_1 \ol{p}_1\, \| \, \ol{p}_1$. \\[-2mm]

It remains to check that $\pi_1 \in Fix_1 (\ol{\kappa}_0, \ol{\imath}_0)\, \cap\, \cdots\, \cap\, Fix_1 (\ol{\kappa}_{\ol{n}-1}, \ol{\imath}_{\ol{n}-1})\, \cap\, $ $Small_1 (\ol{\lambda}_0, [0, \alpha_0))\, $ $\cap\, \cdots\, \cap\, Small_1 (\ol{\lambda}_{\ol{m}-1}, [0, \alpha_{\ol{m}-1}))$. For $l < \ol{n}$ and $\ol{\kappa}_l < \kappa$, we have $\ol{\imath}_l < \wt{\beta}$, since $\wt{\beta}$ is \textit{large enough}, so $\ol{\imath}_l \notin \SUPP \pi_1 (\ol{\kappa}_l)$.
Hence, $\pi_1 \in Fix_1 (\ol{\kappa}_l, \ol{\imath}_l)$. In the case that $\ol{\kappa}_l \geq \kappa$, it follows that $\ol{\kappa}_l \notin \supp \pi_1$,
so again, $\pi_1 \in Fix_1 (\ol{\kappa}_l, \ol{\imath}_l)$ as desired. Similarly, $\pi_1 \in Small_1 (\ol{\lambda}_0, [0, \alpha_0))\, \cap\, \cdots\, \cap\, Small_1 (\ol{\lambda}_{\ol{m}-1}, [0, \alpha_{\ol{m}-1}))$. \\[-2mm]

Thus, we have constructed an automorphism $\pi = (\pi_0, \pi_1)$ with $\pi \ol{p} \, \| \, \ol{p}$ and $\pi \in Fix_0 (\kappa_0, i_0)\, \cap\, \cdots\, \cap\, Small_0 (\lambda_0, [0, \alpha_0))\, \cap\, \cdots\, \cap\, Fix_1 (\ol{\kappa}_0, \ol{\imath}_0)\, \cap\, \cdots\, \cap \, Small_1 (\ol{\lambda}_0, [0, \ol{\alpha}_0))\, \cap\, \cdots$. This gives $\pi \ol{\dot{S}}^{D_\pi} = \ol{\dot{S}}^{D_\pi}$. \\[-2mm]

Since $p \Vdash_{\m{P} \, \uhr \, (\lambda + 1)} OR_{\m{P} \, \uhr \, (\lambda + 1)} (\wt{\dot{X}}, \alpha) \in \dot{S}$,
it follows that 
$\pi \ol{p} \Vdash_{\m{P} \, \uhr \, (\lambda + 1)} OR_{\m{P} \, \uhr \, (\lambda + 1)} (\pi {\wt{\dot{X}}}^{D_\pi}, \alpha) \in \pi \ol{\dot{S}}^{D_\pi}$;
hence, \[\pi \ol{p} \Vdash_{\m{P} \, \uhr \, (\lambda + 1)} OR_{\m{P} \, \uhr \, (\lambda + 1)} (\pi {\wt{\dot{X}}}^{D_\pi}, \alpha) \in \dot{S}\,. \]
Take $q \in \m{P} \uhr (\lambda + 1)$ with $q \leq \ol{p}, \pi \ol{p}$. Then $q \Vdash_{\m{P} \, \uhr \, (\lambda + 1)} \alpha \notin \rg \, \dot{S}^\beta$, and $q \Vdash_{\m{P} \, \uhr \, (\lambda + 1)} OR_{\m{P} \, \uhr \, (\lambda + 1)} (\pi {\wt{\dot{X}}}^{D_\pi}, \alpha) \in \dot{S}$. We will lead this into a contradiction. \\[-2mm]

As already indicated, $\pi {\wt{\dot{X}}}^{D_\pi}$ will be equal to some ${\wt{\ddot{X}}}^{D_\pi}$, where $\ddot{X}$ is a name for the forcing \[\m{P}_0 \uhr t(\pi_0 s) \, \times\, \m{P}_1 \uhr \{ (\mu_0, j_0), \, \ldots \,, (\mu_{\ol{h} - 1}, \ol{\jmath}_{\ol{h}-1}), (\mu_{\ol{h}}, \pi_1 \ol{\jmath}_{\ol{h}}), \, \ldots \,, (\mu_{\ol{k}-1}, \pi_1 \ol{\jmath}_{\ol{k}-1})\}\, \times\, \m{P}_1 (\kappa^{\plus}), \] where $\pi_0 s \in \m{P}_0$ has maximal points $(\kappa^{\plus}, j_0), \dots, (\kappa^{\plus}, j_{h-1}), (\kappa^{\plus}, \pi_0 j_h), \, \ldots \,, (\kappa^{\plus}, \pi_0 j_{k-1})$. \\[-3mm]

More generally, for a name $\dot{x} \in \Name (\m{P}_0 \uhr t(s)\, \times\, \m{P}_1 \uhr \{ (\mu_0, \ol{\jmath}_0), \, \ldots \,, (\mu_{\ol{k}-1}, \ol{\jmath}_{\ol{k}-1})\}\, \times\, \m{P}_1 (\kappa^{\plus}))$, we cannot apply $\pi$ directly to obtain $\pi \dot{x}$, but have transform $\dot{x}$ into a $\m{P}$-name $\wt{\dot{x}}$ first, and then consider the extension ${\wt{\dot{x}}}^{D_\pi}$. \\[-3mm]

However, the map $\pi$ induces a canonical isomorphism $T_\pi: \m{P}_0 \uhr t(s)\, \times\, \m{P}_1 \uhr \{ (\mu_0, \ol{\jmath}_0), \, \ldots \,, $ $(\mu_{\ol{k}-1}, \ol{\jmath}_{\ol{k}-1})\}\, \times\, \m{P}_1 (\kappa^{\plus}) \rightarrow \m{P}_0 \uhr t(\pi_0 s)\, \times\, \m{P}_1 \uhr \{(\mu_0, \ol{\jmath}_0), \, \ldots \,, (\mu_{\ol{h}-1}, \ol{\jmath}_{\ol{h}-1}), (\mu_{\ol{h}}, \pi_1 \ol{\jmath}_{\ol{h}}), \, \ldots \,, $ $(\mu_{\ol{k}-1}, \pi_1 \ol{\jmath}_{\ol{k}-1})\}\, \times\, \m{P}_1 \uhr (\kappa^{\plus})$, which extends to the name space, such that for all $\dot{x} \in \Name (\m{P}_0 \uhr t(s)\, \times\, \m{P}_1 \uhr \{ (\mu_0, \ol{\jmath}_0), \, \ldots \,, (\mu_{\ol{k}-1}, \ol{\jmath}_{\ol{k}-1})\}\, \times\, \m{P}_1 (\kappa^{\plus}))$, we have \[{\wt{T_\pi \dot{x}}}^{D_\pi} = \pi {\wt{\dot{x}}}^{D_\pi}.\]

This transformation $T_\pi$ can be defined as follows: \\ Recall that $s$ is a condition in $\m{P}_0$ with maximal points $(\kappa^{\plus}, j_0), \, \ldots \,, (\kappa^{\plus}, j_{k-1})$, so the condition $\pi_0 s$ has maximal points $(\kappa^{\plus}, j_0), \, \ldots \,, (\kappa^{\plus}, j_{h-1})$, $(\kappa^{\plus}, \pi_0 j_h), \, \ldots \,, $ $(\kappa^{\plus}, \pi_0 j_{k-1})$ with $(\pi_0 s) \uhr \kappa^{\plus} = s \uhr \kappa^{\plus}$, and for any $l < h$, it follows that $\pi_0 s$ has the same branch below $(\kappa^\plus, j_l)$ as $s$; but for $l \in [h, k)$, the $\pi_0 s$-branch below $(\kappa^\plus, \pi_0 j_l)$ coincides with the $s$-branch below $(\kappa^\plus, j_l)$. \\ [-3mm]

For a condition \[\big(\, v_0 \uhr t(s), v_1 \uhr \{(\mu_0, \ol{\jmath}_0), \, \ldots \,, (\mu_{\ol{k}-1}, \ol{\jmath}_{\ol{k}-1})\}, v_1 (\kappa^{\plus})\, \big)\] in $\m{P}_0 \uhr t(s)\, \times\, \m{P}_1 \uhr \{ (\mu_0, \ol{\jmath}_0), \, \ldots \,, (\mu_{\ol{k}-1}, \ol{\jmath}_{\ol{k}-1})\}\, \times\, \m{P}_1 (\kappa^{\plus})$, let \[T_\pi \big(v_0 \uhr t(s), v_1 \uhr \{(\mu_0, \ol{\jmath}_0), \, \ldots \,, (\mu_{\ol{k}-1}, \ol{\jmath}_{\ol{k}-1})\}, v_1 (\kappa^{\plus})\big)\] be the condition \[\big(\, v_0^\prime \uhr t(\pi_0 s), v_1 ^\prime \uhr \{(\mu_0, \ol{\jmath}_0), \, \ldots \,, (\mu_{\ol{h}-1}, \ol{\jmath}_{\ol{h}-1}), (\mu_{\ol{h}}, \pi_1 \ol{\jmath}_{\ol{h}}), \, \ldots \,, (\mu_{\ol{k}-1}, \pi_1 \ol{\jmath}_{\ol{k}-1})\}, v_1^\prime (\kappa^{\plus})\, \big)\] with \begin{itemize} \item $v_0^\prime \uhr t(\pi_0 s) = \pi_0 (v_0 \uhr t(s))$,

\item $v_1^\prime \uhr \{(\mu_0, \ol{\jmath}_0), \, \ldots \,, (\mu_{\ol{h}-1}, \ol{\jmath}_{\ol{h}-1}), (\mu_{\ol{h}}, \pi_1 \ol{\jmath}_{\ol{h}}), \, \ldots \,, (\mu_{\ol{k}-1}, \pi_1 \ol{\jmath}_{\ol{k}-1})\}$ $\in$ $\m{P}_1 \uhr \{(\mu_0, \ol{\jmath}_0), \, \ldots \,, $ $(\mu_{\ol{h}-1}, \ol{\jmath}_{\ol{h}-1}), (\mu_{\ol{h}}, \pi_1 \ol{\jmath}_{\ol{h}}), \, \ldots \,, (\mu_{\ol{k}-1}, \pi_1 \ol{\jmath}_{\ol{k}-1})\}$ is obtained from $v_1 \uhr \{(\mu_0, \ol{\jmath}_0), \, \ldots \,, $ $(\mu_{\ol{k}-1}, \ol{\jmath}_{\ol{k}-1})\}$ by swapping any $(\mu_l, \ol{\jmath}_l)$-coordinate for $l \in [\ol{h}, \ol{k})$ with the according $(\mu_l, \pi_1 \ol{\jmath}_l)$-coordinate,

\item $v_1^\prime (\kappa^{\plus}) = v_1 (\kappa^{\plus})$.
\end{itemize} 

Then $T_\pi$ induces a canonical transformation of names $T_\pi: \Name \big(\m{P}_0 \uhr t(s)\, \times\, \m{P}_1 \uhr \{ (\mu_0, \ol{\jmath}_0), \, \ldots \,,$ $(\mu_{\ol{k}-1}, \ol{\jmath}_{\ol{k}-1})\}\, \times\, \m{P}_1 (\kappa^{\plus})\big) \rightarrow \Name \big(\m{P}_0 \uhr t(\pi_0 s)\, \times\, \m{P}_1 \uhr \{(\mu_0, j_0), \, \ldots \,, $ $(\mu_{\ol{h}-1}, j_{\ol{h}-1}), (\mu_{\ol{h}}, \pi_1 j_{\ol{h}}), \, \ldots \,, (\mu_{\ol{k}-1}, \pi_1 j_{\ol{k}-1})\}\, \times\, \m{P}_1 \uhr (\kappa^{\plus})\big)$, which will be denoted by the same letter. \\ Recursively, it is not difficult to check that indeed, $\pi {\wt{\dot{x}}}^{D_\pi} = {\wt{T_\pi \dot{x}}}^{D_\pi}$. \\[-2mm]

Thus, from \[q \Vdash_{\m{P}  \, \uhr \, (\lambda + 1)} OR_{\m{P} \, \uhr \, (\lambda + 1)} (\pi {\wt{\dot{X}}}^{D_\pi}, \alpha) \in \dot{S}\] it follows that \[q \Vdash_{\m{P} \, \uhr \, (\lambda + 1)} OR_{\m{P} \, \uhr \, (\lambda + 1)} ({\wt{T_\pi \dot{X}}}^{D_\pi}, \alpha) \in \dot{S}.\]
 Now, $T_\pi \dot{X} \in \Name \big(\m{P}_0 \uhr t(\pi_0 s)\ \times\, \m{P}_1 \uhr \{(\mu_0, \ol{\jmath}_0), \, \ldots \,, (\mu_{\ol{h}-1}, \ol{\jmath}_{\ol{h}-1}), $ $(\mu_{\ol{h}}, \pi_1 \ol{\jmath}_{\ol{h}}), \, \ldots \,, $ $(\mu_{\ol{k}-1}, \pi_1 \ol{\jmath}_{\ol{k}-1})\}\, \times\, \m{P}_1 (\kappa^{\plus})\big)$, where $\pi_0 s$ has maximal points $(\kappa^{\plus}, j_0), \, \ldots \,, $ $(\kappa^{\plus}, j_{h-1}), $ $(\kappa^{\plus}, \pi_0 j_h), \, \ldots \,, $ $(\kappa^{\plus}, \pi_0 j_{k-1})$ with $j_0 < \beta, \, \ldots \,, j_{h-1} < \beta$, and $\pi_0 j_h < \beta, \, \ldots \,, \pi_0 j_{k-1} < \beta$ by construction. Also, $\ol{\jmath}_0 < \beta, \, \ldots \,, \ol{\jmath}_{\ol{h}-1} < \beta$, and $\pi_1 \ol{\jmath}_{\ol{h}} < \beta, \, \ldots \,, \pi_1 \ol{\jmath}_{\ol{k}-1} < \beta$ by construction. Thus, $(\pi_0 s, (\mu_0, \ol{\jmath}_0), \, \ldots \,, $ $(\mu_{\ol{h}-1}, \ol{\jmath}_{\ol{h}-1}), $ $(\mu_{\ol{h}}, \pi_1 \ol{\jmath}_{\ol{h}}), \, \ldots \,, $ $(\mu_{\ol{k}-1}, \pi_1 \ol{\jmath}_{\ol{k}-1})) \in M_\beta$. \\ Since $q_0 \leq \pi_0 \ol{p}_0 \leq \pi_0 s$ and $q \Vdash_{\m{P} \, \uhr \, (\lambda + 1)} OR_{\m{P} \, \uhr \, (\lambda + 1)} (\wt{T_\pi \dot{X}}, \alpha) \in \dot{S}$, 
it follows 
that $\big(\, OR_{\m{P} \, \uhr \, (\lambda + 1)} (\wt{T_\pi \dot{X}}^{D_\pi}, \alpha), q\, \big) \in \dot{S}^\beta$, 
contradicting that also $q \Vdash_{\m{P} \, \uhr \, (\lambda + 1)} \alpha \notin \rg \dot{S}^\beta$. \\[-2mm]

Hence, $S^\beta$ must be surjective, which finishes the proof. \end{proof} 

Thus, we have shown that for any $\wt{\beta} < F(\kappa)$ large enough and $\beta = \wt{\beta} + \kappa^\plus$, the restriction $S^\beta: \dom S^\beta \rightarrow F(\kappa)$ must be surjective, as well. \\ We will now lead this into a contradiction. \\[-2mm] 

For the rest of this section, we fix some limit ordinal $\wt{\beta} < F(\kappa)$ {\textit{large enough}} and let $\beta := \wt{\beta} + \kappa^\plus$. We want to capture $S^\beta$ in an intermediate model $V[G^\beta \uhr (\kappa^{\plus} + 1)]$, which will be a generic extension by a certain set forcing $\m{P}^\beta \uhr (\kappa^{\plus} + 1)$. We will show that $V[G^\beta \uhr (\kappa^{\plus} + 1)]$ also contains an injection $\iota: \dom S^\beta \hookrightarrow \beta$, while $\m{P}^\beta \uhr (\kappa^\plus + 1)$ preserves all cardinals $\geq F(\kappa)$ -- a contradiction. \\[-3mm]

Roughly speaking, this forcing $\m{P}^\beta \uhr (\kappa^{\plus} + 1)$ will be obtained from $\m{P}$, by first cutting off at height $\kappa^{\plus} + 1$, and then cutting off at width $\beta$. The latter procedure is rather clear for $\m{P}_1$: For successor cardinals $\lambda^\plus < \kappa$, $\lambda^\plus \in Succ^\prime$, we take for $(\m{P}_1)^\beta (\lambda^\plus)$ the forcing $Fn ( [\lambda, \lambda^\plus) \times \beta, 2, \lambda^\plus)$ instead of $Fn ( [\lambda, \lambda^\plus) \times F(\lambda^\plus), 2, \lambda^\plus)$ in the case that $\beta < F(\lambda^\plus)$. However, the forcing notion $(\m{P}_0)^\beta \uhr (\kappa^{\plus} + 1)$ requires a careful construction. One could try and restrict $\m{P}_0$ to all those $p \in \m{P}_0 \uhr (\kappa^\plus + 1)$ which have only maximal points $(\kappa^{\plus}, i)$ with $i < \beta$. Nevertheless, their predecessors $(\lambda, j)$ on lower levels $\lambda < \kappa^\plus$ might still have indices $j > \beta$, so our forcing would still be \tbl too big\tbr. \\
Our idea will be to drop all indices at levels below $\kappa^{\plus}$ -- then the domain $t (p)$ of the conditions $p \in (\m{P}_0)^\beta \uhr (\kappa^\plus + 1)$ will be given by their maximal points $(\kappa^{\plus}, i)$ and the structure of the tree below, i.e. for any two maximal points $(\kappa^{\plus}, i)$ and $(\kappa^{\plus}, i^\prime)$ we only need information about the level at which the branches below them meet. \\[-2mm]

We start with a \tbl preliminary version\tbr\, $(\widehat{\m{P}}_0)^\beta \uhr (\kappa^\plus + 1)$: Any condition $p \in (\widehat{\m{P}}_0)^\beta \uhr (\kappa^\plus + 1)$ will be of the form $p: t (p) \rightarrow V$ with a tree $t (p)$ given by its finitely many maximal points $(\kappa^{\plus}, \beta_0), \, \ldots \,, (\kappa^{\plus}, \beta_{k-1})$ and the tree structure below. We will now specify how this tree structure should be coded into the forcing conditions: \\[-3mm]

On the one hand, for any level $\alpha \leq \kappa^{\plus}$, the tree structure of $t(p)$ induces 
an equivalence relation $\sim_{\alpha}$ on the set $\{ \beta_0, \, \ldots \,, \beta_{k-1}\}$ by setting $\beta_i \sim_{\alpha} \beta_j$ iff $(\kappa^{\plus}, i)$ and $(\kappa^{\plus}, j)$ have a common $t(p)$-predecessor on level $\alpha$. This equivalence relation $\sim_{\alpha}$ induces a partition $A_\alpha$ on $\{\beta_0, \, \ldots \,, \beta_{k-1}\}$ such that for all $l, l^\prime < k$, there exists $z \in A_\alpha$ with $\{\beta_l, \beta_{l^\prime}\} \subseteq z$ iff the vertices $(\kappa^\plus, \beta_l)$ and $(\kappa^\plus, \beta_{l^\prime})$ have a common $t(p)$-predecessor on level $\alpha$. \\[-3mm]

Conversely, the tree structure below $(\kappa^\plus, \beta_0), \, \ldots \,, (\kappa^\plus, \beta_{k-1})$ could be described by a sequence $(A_\alpha\ | \ \alpha \leq \kappa^\plus, \alpha \in \Card)$ of partitions of the set $\{\beta_0, \ldots, \beta_{k-1}\}$ such that any $A_{\alpha^\plus}$ is finer than $A_{\alpha}$, and $A_0 = \{ \{\beta_0, \ldots, \beta_{k-1}\}\}$, $A_{\kappa^\plus} = \{ \{\beta_0\}, \ldots, \{\beta_{k-1}\}\}$. Since for $F_{\lim}$-trees we do not allow splitting at limits, we have to require that for any limit cardinal $\alpha \leq \kappa$, there exists a cardinal $\ol{\alpha} < \alpha$ such that $A_\alpha = A_\beta$ for all $\beta$ with $\ol{\alpha} \leq \beta \leq \alpha$. \\ We will give any $t(p)$-vertex on level $\alpha \leq \kappa^{\plus}$ a \tbl name\tbr\, $(\alpha, z)$, where $z \in A_\alpha$ is the collection of all $i < k$ with $(\alpha, z) \leq_{t(p)} (\kappa^{\plus}, \{\beta_i\})$. Then the vertices already determine the tree structure of $t(p)$.

\begin{definition} Let $k < \omega$ and $\beta_0, \, \ldots \,, \beta_{k-1} < F_{\lim}(\kappa^\plus) = F(\kappa)$. We say that $(t, \leq_t)$ is a {\normalfont tree
below $(\kappa^{\plus}, \beta_0), \, \ldots \,, $ $(\kappa^{\plus}, \beta_{k-1})$}, iff there is a sequence $(A_\alpha\ | \ \alpha \leq \kappa^\plus, \alpha \in \Card)$ of partitions of the set $\{ \beta_0, \, \ldots \,, \beta_{k-1}\}$ such that \begin{itemize} \item for any cardinal $\alpha < \kappa^\plus$, it follows that $A_{\alpha^\plus}$ is finer than $A_\alpha$, $A_0 = \{ \{\beta_0 ,\ldots, \beta_{k-1}\}\}$, and $A_{\kappa^\plus} =  \{\{\beta_0\}, \, \ldots \,, \{\beta_{k-1}\} \}$, \item for all limit cardinals $\alpha$, there exists $\ol{\alpha} < \alpha$ with $A_\beta = A_\alpha$ for all $\ol{\alpha} \leq \beta \leq \alpha$, \end{itemize} such that \[t\, := \, \bigcup_{\substack{\alpha \in \Card \\ \alpha \leq \kappa^\plus}} \{\alpha\} \times A_\alpha,\]
i.e., the vertices of $t$ are pairs $(\alpha, z)$ with $ z \in A_\alpha$ a subset of $\{\beta_0, \ldots, \beta_{k-1}\}$. \\[-3mm]

The order $\leq_t$ is defined as follows: $(\alpha, z ) \leq_t (\beta, z^\prime)$ iff $\alpha \leq \beta$ and $z \supseteq z^\prime$. \\[-3mm]

We call $\supp t = \{(\kappa^\plus, \beta_0), \, \ldots \,, (\kappa^\plus, \beta_{k-1})\}$ the {\normalfont support of} $(t, \leq_t)$. \\[-3mm]

For $\beta \leq F_{\lim} (\kappa^\plus)$, we denote by $T(\kappa^\plus,\beta)$ the collection of all $(t, \leq_t)$ such that $(t, \leq_t)$ is a tree below some $(\kappa^{\plus}, \beta_0), \, \ldots \,, (\kappa^{\plus}, \beta_{k-1})$ with $k < \omega$ and $\beta_0, \, \ldots \,, \beta_{k-1} < \beta$.

\end{definition}

There is a canonical partial order $\leq_{T(\kappa^{\plus}, \beta)}$ on $T(\kappa^{\plus}, \beta)$: Set $(s, \leq_s) \leq_{T(\kappa^\plus, \beta)} (t, \leq_t)$ iff $supp\, s \supseteq supp \, t$, and the tree structures of $s$ and $t$ below $supp$ $t$ agree, i.e. for any $i, j \in supp$ $t$, the $(t, \leq_t)$-branches below $(\kappa^{\plus}, i)$ and $(\kappa^{\plus}, j)$ meet at the same level as they do in $(s, \leq_s)$.

\begin{definition} Let $(t, \leq_t), (s, \leq_s) \in T(\kappa^{\plus}, \beta)$ with $\supp t = \{ \beta_0, \, \ldots \,, \beta_{k-1}\}$, $\supp s = \{ \ol{\beta}_0, \, \ldots \,, \ol{\beta}_{\ol{k}-1}\}$, and the according sequences of partitions $(A_\alpha\ | \ \alpha \in \Card, \alpha \leq \kappa^\plus)$ and $(\ol{A}_\alpha\ | \ \alpha \in \Card, \alpha \leq \kappa^\plus)$. Then $(s, \leq_s) \leq_{T(\kappa^{\plus}, \beta)} (t, \leq_t)$ iff the following hold: \begin{itemize} \item $\supp s = \{\ol{\beta}_0, \, \ldots\, , \ol{\beta}_{\ol{k}-1}\} \supseteq \{\beta_0, \, \ldots\, , \beta_{k-1}\} = \supp t$, 
\item for any $\alpha \leq \kappa^\plus$, the partition $\ol{A}_\alpha$ extends $A_\alpha$, i.e. for any $\beta_l ,\beta_{l^\prime} \in \supp t$, \[(\exists\, z \in A_\alpha\  \{ \beta_l, \beta_{l^\prime} \} \subseteq z) \ \Leftrightarrow\ (\exists\, \ol{z} \in \ol{A}_\alpha\ \{\beta_l, \beta_{l^\prime}\} \subseteq \ol{z}). \]\end{itemize} \end{definition}

One can check that $\leq_{T(\kappa^\plus, \beta)}$ is indeed a partial order. \\[-3mm]

For trees $(s, \leq_s)$ and $(t, \leq_t)$ in $T(\kappa^{\plus}, \beta)$ with $(s, \leq_s) \leq_{T(\kappa^{\plus}, \beta)} (t, \leq_t)$, we can define an embedding $\iota: (t, \leq_t) \hookrightarrow (s, \leq_s)$ as follows: $\iota (\alpha, z) := (\alpha, \ol{z})$, where $(\alpha, \ol{z}) \in s$ with $\ol{z} \supseteq z$ (then $z = \ol{z} \, \cap \, \supp t$). 
With $\leq_{\iota [t]} \, := \, \iota[\leq_t] = \{ (\iota (\alpha, z), \iota (\beta, z^\prime))\ | \ (\alpha, z) \leq_t (\beta, z^\prime)\}$, it follows that $\leq_{\iota[t]} \, = \, \leq_s \, \cap \, \iota[t]$, and $(\iota [t], \leq_{\iota [t]}) \subseteq (s, \leq_s)$ is a subtree. \\
Conversely, consider $s, t \in T(\kappa^{\plus}, \beta)$ with an embedding $\iota: (t, \leq_t) \hookrightarrow (s, \leq_s)$ such that for all $(\alpha, z) \in t$, we have $\iota (\alpha, z) = (\alpha, \ol{z})$ with $\ol{z} \supseteq z$. Then $(\iota[t], \iota[\leq_t]) \subseteq (s, \leq_s)$ is a subtree, and one can easily check that $(s, \leq_s) \leq_{T(\kappa^\plus, \beta)} (t, \leq_t)$. \\ Hence, the partial order $\leq_{T(\kappa^{\plus}, \beta)}$ can also be described via embeddings. \\[-3mm]

The maximal element of $T(\kappa^{\plus}, \beta)$ is the empty tree.\\[-3mm]

Now, we can define $(\widehat{\m{P}}_0)^\beta \uhr (\kappa^{\plus} + 1)$:

\begin{definition} \label{defforc} The forcing $(\widehat{\m{P}}_0)^\beta \uhr (\kappa^{\plus} + 1)$ consists of all $p: t (p) \rightarrow V$ such that \begin{itemize} \item $t (p) \in T(\kappa^{\plus}, \beta)$, \item $p( \alpha^\plus, z) \in Fn ( [\alpha, \alpha^\plus), 2, \alpha^\plus)$ for all $(\alpha^\plus, z) \in t (p)$ with $\alpha^\plus$ a successor cardinal, \item $p (\aleph_0, z) \in Fn( \aleph_0, 2, \aleph_0)$ for all $(\aleph_0, z) \in t (p)$, \item $p( \alpha, z) = \emptyset$ for all $(\alpha, z) \in t(p)$ with $\alpha$ a limit cardinal{\normalfont, and} \item $|p \uhr \alpha| < \alpha$ for all regular limit cardinals $\alpha$. \end{itemize} For $\ol{p}, p \in (\widehat{\m{P}}_0)^\beta \uhr (\kappa^{\plus} + 1)$, set $ \ol{p} \leq p$ iff \begin{itemize} \item $t (\ol{p}) \leq_{T(\kappa^\plus, \beta)} t (p)$, \item $\ol{p}( \alpha, \ol{z}) \supseteq p(\alpha, z)$ whenever $\ol{z} \supseteq z$.\end{itemize} The maximal element $\m{1}$ in $(\widehat{\m{P}}_0)^\beta \uhr (\kappa^\plus + 1)$ is the empty condition with $t(\m{1}) = \emptyset$. \end{definition}

Our argument for capturing $S^\beta$ inside $V[G^\beta \uhr (\kappa^{\plus} + 1)]$ will roughly be as follows: We define a function $(S^\beta)^\prime$ as the set of all $(\dot{X}^{G^\beta\, \uhr\, (\kappa^\plus + 1)}, \alpha)$ for an appropriate name $\dot{X}$, such that there exists $p \in \m{P}$ with $p \Vdash (\dot{X}, \alpha) \in \dot{S}$ and $p^\beta\, \uhr \, (\kappa^\plus + 1) \in G^\beta \uhr (\kappa^\plus + 1)$. In order to show that $(S^\beta)^\prime \subseteq S^\beta$, we use an isomorphism argument similarly as before: If there was $(\dot{X}^{G^\beta \, \uhr \ (\kappa^\plus + 1)}, \alpha) \in (S^\beta)^\prime \setminus S^\beta$, one could take $p$ and $q$ in $\m{P}$ with $p^\beta \uhr (\kappa^\plus + 1) \in G^\beta \uhr (\kappa^\plus + 1)$, $q \in G$ such that $p \Vdash (\dot{X}, \alpha) \in \dot{S}$ and $q \Vdash (\dot{X}, \alpha) \notin \dot{S}$. We construct an automorphism $\pi$ with $\pi p \, \| \, q$ with $\pi {\wt{\dot{X}}}^{D_\pi} = {\wt{\dot{X}}}^{D_\pi}$ and $\pi {\ol{\dot{S}}}^{D_\pi} = {\ol{\dot{S}}}^{D_\pi}$, and obtain a contradiction. \\[-3mm] 

Recall that prior to the proof of Proposition \ref{theta 1}, we have fixed a condition $r \in G_0$ such that the maximal points of $t(r)$ are among $\{(\kappa_0, i_0), \, \ldots \,, (\kappa_{n-1}, i_{n-1})\} \subseteq t(r)$, and $\ol{r} \in G_0$, $\ol{r} \leq r$, such that all branches of $\ol{r}$ have height $\geq \kappa^{\plus}$. For $l < n$ with $\kappa_l \geq \kappa^{\plus}$, we denote by $(\kappa^{\plus}, i_l^\prime)$ the $t(\ol{r})$-predecessor of $(\kappa_l, i_l)$ on level $\kappa^{\plus}$; in the case that $\kappa_l < \kappa^{\plus}$, we have chosen for $(\kappa^{\plus}, i_l^\prime)$ some $t(\ol{r})$-successor of $(\kappa_l, i_l)$ on level $\kappa^\plus$. \\[-3mm] 

Firstly, in order to make sure that $\pi p\, \| \, q$ is possible while at the same time $\pi \in Fix_0 (\kappa_0, i_0)\, \cap\, \cdots\, \cap \, Fix_0 (\kappa_{n-1}, i_{n-1})$, it will be necessary that from $(p_0)^\beta \uhr (\kappa^\plus + 1) \in (G_0)^\beta \uhr (\kappa^\plus + 1)$, $q \in G$, it follows that $p$ and $q$ coincide on the tree $t(\ol{r})$.
Thus, we will have to include $t(\ol{r})$ into our forcing $(\widehat{\m{P}}_0)^\beta \uhr (\kappa^\plus + 1)$: Namely, we will restrict $(\widehat{\m{P}}_0)^\beta \uhr (\kappa^\plus + 1)$ to those conditions that coincide with $t(\ol{r})$ below level $\kappa^\plus$.

Secondly, for $\pi \in Small_0 (\lambda_0, [0, \alpha_0))\, \cap\, \cdots\, \cap \, Small_0 (\lambda_{m-1}, [0, \alpha_{m-1}))$, we will have to make sure that $(p_0)^\beta \uhr (\kappa^\plus + 1) \in (G_0)^\beta \uhr (\kappa^\plus + 1)$, $q \in G$ implies that for all $l < m$, the indices $(\lambda_l, i)$ at level $\lambda_l$ agree for $p$ and $q$ for all $i < \alpha_l$. In order to achieve this, we enhance our forcing $(\widehat{\m{P}}_0)^\beta \uhr (\kappa^\plus + 1)$ and assign indices $(\lambda, i)$ with $i < \alpha_l$ to some some vertices $(\lambda_l, z)$. \\[-2mm]

We start with the second, defining a forcing $\big((\widehat{\m{P}}_0)^\beta \uhr (\kappa^{\plus} + 1)\big)_{(\lambda_0, \alpha_0), \, \ldots \,}$ that will be the collection of all $p \in (\widehat{\m{P}}_0)^\beta \uhr (\kappa^{\plus} + 1)$ equipped with an additional indexing function $N (p)$ on $\{ (\lambda_l, z) \in t (p)\ | \ l < m, \lambda_l \leq \kappa\}$ such that \begin{itemize} \item $N(p)(\lambda_l, z) \in \{(\lambda_l, i)\ | \ i < \alpha_l\} \cup \{ \ast\}$ for all $(\lambda_l, z) \in$ $\dom$ $N(p)$, \item any $(\lambda, i) \in rg \; N(p)$ has only one preimage:
\[\big(\, N (p)(\lambda_l, z) = N (p)(\lambda_l, z^\prime)\, \wedge\, z \neq z^\prime\, \big) \Rightarrow N (p)(\lambda_l, z) = N (p)(\lambda_l, z^\prime) = \ast.\] \end{itemize}

The idea about this indexing function $N(p)$ is that for a condition $p \in \big((\widehat{\m{P}}_0)^\beta \uhr (\kappa^{\plus} + 1)\big)_{(\lambda_0, \alpha_0), \, \ldots \,}$, any vertex $(\lambda_l, z) \in t(p)$ with $N(p) (\lambda_l, z) = (\lambda_l, i)$ for some $i < \alpha_l$ should correspond to the vertex $(\lambda_l, i)$ for conditions in $\m{P}_0$, while all vertices $(\lambda_l, z) \in t(p)$ with $N(p) (\lambda_l, z) = \ast$ should correspond to vertices $(\lambda_l, i)$ with $i \geq \alpha_l$. \\[-3mm]

For $\ol{p}$, $p \in \big((\widehat{\m{P}}_0)^\beta \uhr (\kappa^{\plus} + 1) \big)_{(\lambda_0, \alpha_0), \, \ldots \,}$ with indexing functions $N(p)$ and $N(\ol{p})$, we set $\ol{p} \leq p$ iff $\ol{p} \leq p$ in $(\widehat{\m{P}}_0)^\beta \uhr (\kappa^{\plus} + 1)$, and $N(\ol{p})(\lambda_l, \ol{z}) = N(p) (\lambda_l, z)$ for all $\ol{z} \supseteq z$.  \\[-2mm] 

Now, we define our forcing $\big((\widehat{\m{P}}_0)^\beta \uhr (\kappa^{\plus} + 1)\big)_{(\lambda_0, \alpha_0), \, \ldots \,}^{\ol{r}}$, which could be regarded the collection of all those
conditions $p \in \big((\widehat{\m{P}}_0)^\beta \uhr (\kappa^{\plus} + 1)\big)_{(\lambda_0, \alpha_0), \, \ldots \,}$ that coincide with $t (\ol{r})$ below $(\kappa^{\plus}, i_0^\prime), \, \ldots \,, (\kappa^{\plus}, i_{n-1}^\prime)$, where the function $N(p)$
is now defined on \[ \{(\lambda_l, z) \in t(p)\ | \ l < m, \lambda_l \leq \kappa\} \, \cup \, \{(\alpha, z) \in t(\ol{r})\ | \ \alpha \leq \kappa^\plus\}.\] 

First, we define $T(\kappa^\plus, \beta)^{t(\ol{r})} \subseteq T(\kappa^\plus, \beta)$ as follows: The condition $t(\ol{r})$ induces on any level $\alpha \leq \kappa^{\plus}$ an equivalence relation $\sim_\alpha^{t(\ol{r})}$ on $\{i_0^\prime, \, \ldots \,, i_{n-1}^\prime\}$ by setting $i_l^\prime \sim_\alpha^{t(\ol{r})} i_{\ol{l}}^\prime$ iff $(\kappa^{\plus}, i_l^\prime)$ and $(\kappa^{\plus}, i_{\ol{l}}^\prime)$ have a common $t(\ol{r})$-predecessor on level $\alpha$. \\ Thus, let $(t, \leq_t) \in T(\kappa^\plus, \beta)^{t(\ol{r})}$ iff $(t, \leq_t) \in T(\kappa^{\plus}, \beta)$ with partitions $(A_\alpha\ | \ \alpha \in \Card, \alpha \leq \kappa^\plus)$ as in the definition of $T(\kappa^\plus, \beta)$, such that $\{(\kappa^\plus, i_0^\prime), \ldots, (\kappa^\plus, i_{n-1}^\prime)\} \subseteq supp \, t$, and for any level $\alpha \leq \kappa^\plus$, the partition $A_\alpha$ coincides with $\sim_\alpha^{t(\ol{r})}$, i.e. for all $l, \ol{l} < n$, we have $i_l^\prime \sim_\alpha^{t(\ol{r})} i_{\ol{l}}^\prime$ iff there exists $z \in A_\alpha$ with $\{i_l^\prime, i_{\ol{l}}^\prime\} \subseteq z$. \\ In other words, we want the tree structure of $t$ below $(\kappa^\plus, i_0^\prime), \ldots, (\kappa^\plus, i_{n-1}^\prime)$ coincide with the tree structure of $t(\ol{r})$. \\ The partial order $\leq_{T(\kappa^\plus, \beta)^{t(\ol{r})}}$ on $T(\kappa^\plus, \beta)^{t(\ol{r})}$ is inherited from $T(\kappa^\plus, \beta)$.\\[-3mm]

Now, any $p \in \big((\widehat{\m{P}}_0)^\beta \uhr (\kappa^{\plus} + 1)\big)_{(\lambda_0, \alpha_0), \, \ldots \,}^{\ol{r}}$ will be of the form $p: t (p) \rightarrow V$ with $t (p) \in T(\kappa^\plus, \beta)^{t(\ol{r})}$ and the values $p(\alpha, z)$ as in Definition \ref{defforc},
equipped with an indexing function $N(p)$ defined on \[ \big\{ \,(\lambda_l, z) \in t(p)\ | \ l < m, \lambda_l \leq \kappa\, \big\} \, \cup \, \big\{\, (\alpha, z)\ | \ \exists\, l < n\, \ (\alpha, z) \leq_{t(p)} (\kappa^{\plus}, \{i_l^\prime\})\, \big\}\] with the following properties: \begin{itemize}  \item For $(\alpha, z) \leq_{t (p)} (\kappa^{\plus}, \{i_l^\prime\})$ with $N(p)(\alpha, z) = (\alpha, i)$, it follows that $(\alpha, i)$ is the $t(\ol{r})$-predecessor of $(\kappa^{\plus}, i_l^\prime)$ on level $\alpha$. \item For all the $(\lambda_l, z)$ remaining, $N(p) (\lambda_l, z) \in \{(\lambda_l, i)\ | \ i < \alpha_l\}\, \cup\, \{\ast\}$ as before with \[ \big( N(\lambda_l, z) = N(\lambda_l, z^\prime)\, \wedge\, z  \neq z^\prime\big) \Rightarrow N(\lambda, z) = N(\lambda, z^\prime) = \ast.\] \end{itemize} 
The idea about extending the domain of $N(p)$ is that any $(\alpha, z) \leq_{t(p)} (\kappa^\plus, \{i_l^\prime\})$ with $N(p) (\alpha, z) = (\alpha, i)$ should correspond to the vertex $(\alpha, i) \in t(\ol{r})$. \\[-3mm]

The partial order \tbl $\leq$\tbr\, on $\big((\widehat{\m{P}}_0)^\beta \uhr (\kappa^{\plus} + 1)\big)_{(\lambda_0, \alpha_0), \, \ldots \,}^{\ol{r}}$ is defined as follows: Set $\ol{p} \leq p$ iff $t (\ol{p}) \leq t(p)$ in $T(\kappa^{\plus}, \beta)^{t(\ol{r})}$, and for all $(\alpha, z) \in t (p)$, $(\alpha, \ol{z}) \in t (\ol{p}) $ with $z \subseteq \ol{z}$, it follows that $\ol{p} (\alpha, \ol{z}) \supseteq p (\alpha, z)$, and $N(p) (\alpha, z) = N (\ol{p}) (\alpha, \ol{z})$ in the case that $(\alpha, z) \in \dom N(p)$. \\[-3mm] 

For the maximal element $\m{1}$, we have for $t(\m{1})$ a tree below $(\kappa^\plus, i_0^\prime), \ldots, (\kappa^\plus, i_{n-1}^\prime)$ with partitions $(A_\alpha\ | \ \alpha \in \Card, \alpha \leq \kappa^\plus)$ and the values $N(\m{1}) (\alpha, z)$ given by $t(\ol{r})$, and $\m{1} (\alpha, z) = \emptyset$ for all $(\alpha, z) \in t(\m{1})$. \\[-3mm]

This defines $(\m{P}_0)^\beta \uhr (\kappa^\plus + 1) := \big((\widehat{\m{P}}_0)^\beta \uhr (\kappa^{\plus} + 1)\big)_{(\lambda_0, \alpha_0), \, \ldots \,}^{\ol{r}}$. \\[-2mm]

We will now see that there is a subforcing $(\wt{\m{P}}_0)^{\ol{r}} \subseteq \m{P}_0$ dense in $\m{P}_0$ below $\ol{r}$ with a projection of forcing posets $\rho^\beta_0: (\wt{\m{P}})^{\ol{r}} \rightarrow (\m{P}_0)^\beta \uhr (\kappa^\plus + 1)$. Hence, $G_0$ induces a $V$-generic filter $(G_0)^\beta \uhr (\kappa^\plus + 1)$ on $(\m{P}_0)^\beta \uhr (\kappa^\plus + 1)$. \\[-3mm]

Generally, for a condition $p \in \m{P}_0$ with $t (p) \leq t( \ol{r})$ such that all the branches of $t (p) $ have height $\geq \kappa^{\plus}$, we can define $\rho_0^\beta (p) \in (\m{P}_0)^\beta \uhr (\kappa^\plus + 1)$
as follows: Roughly, we take all predecessors of the points $\{(\kappa^{\plus}, i) \in t (p) \ | \ i < \beta\}$ and drop the indices below level $\kappa^\plus$.
We start with defining $t := t \Big(\rho_0^\beta (p)\Big)$. Let $\supp  t := \{(\kappa^{\plus}, \beta_l)\ | \ l < k\} := \{(\kappa^{\plus}, i) \in t (p) \ | \ i < \beta\}$.
For any level $\alpha \leq \kappa^{\plus}$, the condition $p$ induces an equivalence relation $\sim_\alpha$ on $\{\beta_0, \, \ldots \,, \beta_{k-1}\}$ by setting $\beta_l \sim_\alpha \beta_{\ol{l}}$ iff $(\kappa^{\plus}, \beta_l)$ and $(\kappa^{\plus}, \beta_{\ol{l}})$ have a common $t(p)$-predecessor on level $\alpha$. We take for $t$ the sequence $(A_\alpha\ | \ \alpha \in \Card, \alpha \leq \kappa^\plus)$
of partitions such that any $A_\alpha$ corresponds to the equivalence relation $\sim_\alpha$: For any $\beta_l, \beta_{\ol{l}}$, we have $\beta_l \sim_\alpha \beta_{\ol{l}}$ iff there exists $z \in A_\alpha$ with $\{\beta_l, \beta_{\ol{l}}\} \subseteq z$.
Together with the order relation $\leq_t$ given by $(\alpha, z) \leq_t (\beta, z^\prime)$ iff $\alpha \leq \beta$ and $z \supseteq z^\prime$, this defines $t \in T(\kappa^{\plus}, \beta)$. From $t(p) \leq t(\ol{r})$ it follows that $t \in T(\kappa^{\plus}, \beta)^{t(\ol{r})}$. \\[-2mm]

The tree $t$ can be embedded into $t(p)$: Namely, a canonical map $\iota_0^\beta (p): t \hookrightarrow t(p)$ can be defined as follows. 
For $(\alpha, z) \in t$, consider $\beta_l \in z$. Let $(\alpha, i)$ denote the $t (p)$-predecessor of $(\kappa^{\plus}, \beta_l)$ on level $\alpha$. Then $(\alpha, z) \in t$
corresponds to the vertex $(\alpha, i) \in t(p)$, and we set $\iota_0^\beta (p) (\alpha, z) := (\alpha, i)$. This map is well-defined and injective, with $(\alpha, z) \leq_t (\beta, z^\prime)$ if and only if $\iota_0^\beta (p) (\alpha, z) \leq_{t(p)} \iota_0^\beta (p) (\beta, z^\prime)$. \\Hence, $(\iota_0^\beta (p) [t], \iota_0^\beta (p) [\leq_t]) \subseteq (t(p), \leq_{t(p)})$ is a subtree. \\[-3mm]

For $(\alpha, z) \in t = t \big( \rho_0^\beta(p) \big)$, we set $\big( \rho_0^\beta (p) \big) (\alpha, z) := p \big( \iota_0^\beta (p) (\alpha, z)\big)$. \\[-3mm]

It remains to define the indexing function $N := N\Big(\rho_0^\beta (p) \Big)$: For $(\alpha, z) \in t$ with $(\alpha, z) \leq_t (\kappa^{\plus}, \{i_l^\prime\})$ for some $l < n$, let $N(\alpha, z) := (\alpha, i) := \iota_0^\beta (p) (\alpha, z)$. For all $(\lambda_l, z) \in t$, $\l < m$, with  $\iota_0^\beta (p) (\lambda_l, z) = (\lambda_l, i)$, let $N (\lambda_l, z) := \iota_0^\beta (p) (\lambda_l, z) = (\lambda_l, i)$ in the case that $i < \alpha_l$, and $N (\lambda_l, z) := \ast$, else. \\[-3mm]

This defines the projection $\rho_0^\beta (p)$. \\[-3mm]

Whenever $(\alpha, z) \in t$ with $(\alpha, z) \leq_{t} (\kappa^\plus, \{i_l^\prime\})$ for some $l < n$, then $N(\alpha, z) = (\alpha, i)$ is the $t(p)$-predecessor of $(\kappa^\plus, i_l^\prime)$ on level $\alpha$. Since $t(p) \leq t(\ol{r})$ it follows that $(\alpha, i)$ is also the $t(\ol{r})$-predecessor of $(\kappa^\plus, i_l^\prime)$ on level $\alpha$. Hence, $\rho_0^\beta (p)$ is indeed a condition in $\big((\widehat{\m{P}}_0)^\beta \uhr (\kappa^\plus + 1) \big)^{\ol{r}}_{(\lambda_0, \alpha_0), \ldots} = (\m{P}_0)^\beta \uhr (\kappa^\plus + 1)$.\\[-3mm]

Let now $(\wt{\m{P}}_0)^{\ol{r}}$ 
denote the collection of all $p \in \m{P}_0$ with $t(p) \leq t(\ol{r})$ such that all branches of $p$ have height at least $\kappa^{\plus}$, and the following additional property holds:

\begin{itemize} \item[(1)] For $l < m$, every $(\lambda_l, k) \in t(p)$ with $k < \alpha_l$ has a $t(p)$-successor $(\kappa^{\plus}, i)$ with $i < \beta$. \end{itemize}

Then $\big((\wt{\m{P}}_0)^{\ol{r}}, (\wt{\leq}_0)^{\ol{r}}\big)$ is a forcing with the partial order $(\wt{\leq}_0)^{\ol{r}}$ induced by $\leq_0$ and maximal element $\m{1}: t(\ol{r}) \rightarrow V$ with $\m{1} (\alpha, i) = \emptyset$ for all $(\alpha, i) \in t(\ol{r})$. \\[-3mm]

Since $(\wt{\m{P}}_0)^{\ol{r}}$ is dense in $\m{P}_0$ below $\ol{r}$,
it follows that $(\wt{G}_0)^{\ol{r}} := \{p \in (\wt{\m{P}}_0)^{\ol{r}}\ | \ p \in G_0\}$ is a $V$-generic filter on $(\wt{\m{P}}_0)^{\ol{r}}$.

\begin{prop} The map $\rho_0^\beta: (\wt{\m{P}}_0)^{\ol{r}} \rightarrow (\m{P}_0)^\beta \uhr (\kappa^\plus + 1)$, $p \mapsto \rho_0^\beta (p)$ 
is a projection of forcing posets.
In particular, \[(G_0)^\beta \uhr (\kappa^\plus + 1) := \rho_0^\beta \big[(\wt{G}_0)^{\ol{r}}\big] = \big\{\rho_0^\beta (p)\ | \ p \in (\wt{\m{P}}_0)^{\ol{r}} \, \cap \, G_0 \big\}\] 

is a $V$-generic filter on $(\m{P}_0)^\beta \uhr (\kappa^\plus + 1)$. \end{prop}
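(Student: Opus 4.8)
The plan is to verify that $\rho_0^\beta$ satisfies the three defining clauses of a projection of forcing posets --- it preserves the maximal element, it is order preserving, and it has the lifting property --- and then to invoke the standard fact that the pointwise image of a $V$-generic filter under a projection is a $V$-generic filter on the target poset.

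First I would check $\rho_0^\beta(\m{1}) = \m{1}$. The maximal element of $(\wt{\m{P}}_0)^{\ol{r}}$ is the all-empty function on $t(\ol{r})$; its level-$\kappa^{\plus}$ vertices of index $<\beta$ are exactly $(\kappa^{\plus}, i_0^\prime), \ldots, (\kappa^{\plus}, i_{n-1}^\prime)$ (here one uses that $\wt{\beta}$ is large enough, so each $i_l^\prime < \wt{\beta} < \beta$), the partition sequence read off the tree $t(\ol{r})$ below these vertices is by definition of $T(\kappa^{\plus}, \beta)^{t(\ol{r})}$ the one occurring in the maximal element of $(\m{P}_0)^\beta \uhr (\kappa^{\plus} + 1)$, the function values are $\emptyset$, and the indexing function is the one prescribed by $t(\ol{r})$; so the two maximal elements coincide. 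Next, order preservation: if $p^\prime \leq_0 p$ in $(\wt{\m{P}}_0)^{\ol{r}}$, then since an extension of an $F_{\lim}$-tree restricts back to the original order on old vertices (forced by the clause that every vertex has exactly one predecessor on each lower level), the levels at which the branches below the surviving maximal points $(\kappa^{\plus}, i)$, $i<\beta$, meet are computed the same in $t(p^\prime)$ and in $t(p)$; hence the partition sequence of $\rho_0^\beta(p^\prime)$ extends that of $\rho_0^\beta(p)$. The value and indexing clauses of the order on $(\m{P}_0)^\beta \uhr (\kappa^{\plus} + 1)$ then follow because, for a given $(\alpha, z)$, the maps $\iota_0^\beta(p^\prime)$ and $\iota_0^\beta(p)$ both return the unique level-$\alpha$ predecessor of the relevant maximal point in the respective tree, and these agree as $t(p^\prime) \leq t(p)$, while $p^\prime$ extends $p$ there.

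The main obstacle is the lifting clause: given $p \in (\wt{\m{P}}_0)^{\ol{r}}$ and $s \leq \rho_0^\beta(p)$ in $(\m{P}_0)^\beta \uhr (\kappa^{\plus} + 1)$, I must produce $p^\prime \leq_0 p$ in $(\wt{\m{P}}_0)^{\ol{r}}$ with $\rho_0^\beta(p^\prime) \leq s$ (in fact I will arrange $\rho_0^\beta(p^\prime) = s$). The idea is to realize the extra tree structure of $t(s)$ inside $\m{P}_0$ by grafting fresh branches onto $t(p)$. Since $s \leq \rho_0^\beta(p)$, the partitions of $t(s)$ restricted to $\supp t(\rho_0^\beta(p))$ agree with those of $\rho_0^\beta(p)$, so the part of $t(p)$ seen by $\rho_0^\beta$ is left untouched; for each new maximal point $(\kappa^{\plus}, \gamma)$ of $t(s)$ ($\gamma < \beta$) I add a fresh branch below it, making it diverge from the existing tree at exactly the successor-cardinal levels prescribed by the partition sequence of $t(s)$ (the ``no splitting at limits'' requirement on trees in $T(\kappa^{\plus}, \beta)^{t(\ol{r})}$ guarantees these divergence levels are successor cardinals, so the result is again an $F_{\lim}$-tree), choosing at each new vertex an index not already occurring in $t(p)$, except that at the distinguished levels $\lambda_l$ I let the indexing function $N(s)$ dictate the choice: index $i$ when $N(s)(\lambda_l, z) = (\lambda_l, i)$ with $i < \alpha_l$, and a fresh index $\geq \alpha_l$ when $N(s)(\lambda_l, z) = \ast$ --- the ``at most one preimage'' property of $N(s)$ makes these choices mutually consistent. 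I then put $p^\prime$ equal to $p$ on all old vertices, equal to the corresponding value of $s$ on the grafted vertices, and, on the old vertices that $\rho_0^\beta$ sees, equal to the value of $s$ (which dominates that of $p$ since $s \leq \rho_0^\beta(p)$); membership in the various $Fn(\cdot, 2, \cdot)$ and the size constraint $|p^\prime \uhr \alpha| < \alpha$ at regular limit cardinals are inherited from $s$ and $p$. By construction $t(p^\prime) \leq t(\ol{r})$, all branches of $p^\prime$ have height $\geq \kappa^{\plus}$, and property (1) holds because every newly created vertex of index $< \alpha_l$ at level $\lambda_l$ lies below one of the new maximal points $(\kappa^{\plus}, \gamma)$ with $\gamma < \beta$; so $p^\prime \in (\wt{\m{P}}_0)^{\ol{r}}$, and $\rho_0^\beta(p^\prime) = s$ because I introduced no new maximal point of index $< \beta$ beyond $\supp t(s)$.

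Finally, with the three clauses in hand, the ``in particular'' is the standard projection lemma: for dense $D \subseteq (\m{P}_0)^\beta \uhr (\kappa^{\plus} + 1)$ its downward closure is dense and open, the set of $p \in (\wt{\m{P}}_0)^{\ol{r}}$ with $\rho_0^\beta(p)$ in that downward closure is dense by the lifting clause, and meeting it with the $V$-generic filter $(\wt{G}_0)^{\ol{r}}$ on $(\wt{\m{P}}_0)^{\ol{r}}$ shows that $\rho_0^\beta[(\wt{G}_0)^{\ol{r}}]$ meets $D$; directedness of $\rho_0^\beta[(\wt{G}_0)^{\ol{r}}]$ comes from directedness of $(\wt{G}_0)^{\ol{r}}$ together with order preservation; and upward closure follows because any $\ol{p} \in (\wt{\m{P}}_0)^{\ol{r}}$ with $\rho_0^\beta(\ol{p}) \leq s^\prime$ can be weakened to some $p^\prime \geq_0 \ol{p}$ in $(\wt{\m{P}}_0)^{\ol{r}}$ with $\rho_0^\beta(p^\prime) = s^\prime$ (discard the branches not lying below $\supp t(s^\prime)$, truncate at level $\kappa^{\plus}$, and shrink the values to match $s^\prime$), which, since $G_0$ is upward closed, puts $p^\prime$ into $(\wt{G}_0)^{\ol{r}}$. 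Hence $(G_0)^\beta \uhr (\kappa^{\plus} + 1) := \rho_0^\beta[(\wt{G}_0)^{\ol{r}}]$ is a $V$-generic filter on $(\m{P}_0)^\beta \uhr (\kappa^{\plus} + 1)$.
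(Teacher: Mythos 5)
Your lifting argument is essentially the paper's: you realize the abstract tree and indexing structure of the target condition inside $\m{P}_0$ by assigning concrete indices --- fresh ones disjoint from $t(p)$ except where $t(\ol r)$, the level-$\kappa^{\plus}$ maximal points, and the $N$-labels at the levels $\lambda_l$ force specific choices --- and then amalgamate with $p$. The paper organizes the same construction in two steps (first build $\ol q$ compatible with $p$ with $\rho_0^\beta(\ol q)=q$, then take $p\cup\ol q$), whereas you graft directly onto $t(p)$; the difference is presentational, not mathematical.
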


The latter will be important, since we want to work with models of the form $V[(G_0)^\beta \uhr (\kappa^\plus + 1)]$ as intermediate generic extensions to capture parts of the map $S^\beta$.

\begin{proof} It is not difficult to see that $\rho_0^\beta$ is order-preserving and surjective with $\rho_0^\beta (\m{1}) = \m{1}$. 

In order to show that  $\rho_0^\beta$ is a projection of forcing posets, it remains to verify the following property: {\textit{For any $p \in (\wt{\m{P}}_0)^{\ol{r}}$ and $q \in (\m{P}_0)^\beta \uhr (\kappa^\plus + 1)$ with $q \leq \rho_0^\beta (p)$, there exists $s \in (\wt{\m{P}}_0)^{\ol{r}}$, $s \leq p$ with $\rho_0^\beta (s) \leq q$.}} \\ Then it follows that  
$(G_0)^\beta \uhr (\kappa^\plus + 1)$ hits any open dense set $D \subseteq (\m{P}_0)^\beta \uhr (\kappa^\plus + 1)$.\\[-3mm]

Let $p \in (\wt{\m{P}}_0)^{\ol{r}}$ and $q \leq \rho_0^\beta (p)$ as above. First, we construct a condition $\ol{q} \in (\wt{\m{P}}_0)^{\ol{r}}$ compatible with $p$ such that $\rho_0^\beta (\ol{q}) = q$. We do not change the tree structure of $q$, 
but give any vertex $(\alpha, z) \in t(q)$ an index $\ol{N} (q) (\alpha, z) = (\alpha, i)$, where $\ol{N} (q)$ should extend the following indexing functions $N_{\kappa^\plus} (q)$, $N^\prime (q)$ and $N_p (q)$:

\begin{itemize} \item $N_{\kappa^\plus} (q)$ maps any $(\kappa^\plus, \{i\}) \in t(q)$ to the number $(\kappa^\plus, i)$,
\item  
$N^{\prime} (q)$ is the restriction of $N(q)$ to the set of all $(\lambda_l, z) \in t(q)$, $\lambda_l \leq \kappa$, with $N(q) (\lambda_l, z) \neq \ast$,  
\item $N_p (q)$ maps any $(\alpha, \ol{z}) \in t(q)$ which corresponds to a vertex $(\alpha, z)\in t = t \big( \rho_0^\beta (p) \big)$ to the number $(\alpha, i)$ that $(\alpha, z)$ inherits from $t(p)$. \\ More precisely:
Since $q \leq \rho_0^\beta (p)$, 
there is an embedding $\iota: (t, \leq_t) \hookrightarrow $ $(t(q), \\ \leq_{t(q)})$ such that for all $(\alpha, z) \in t$, it follows that $\iota (\alpha, z) = (\alpha, \ol{z})$ for some $\ol{z} \supseteq z$. For any $(\alpha, \ol{z}) \in \im\, \iota$ with $(\alpha, \ol{z}) = \iota (\alpha, z)$, let $N_p (q) (\alpha, \ol{z})$ be the number $(\alpha, i)$ of the $t(p)$-vertex corresponding to $(\alpha, z)$:
With our canonical map $\iota_0^\beta (p): t \hookrightarrow t(p)$ with $\iota^\beta (p) (\alpha, z) = (\alpha, i)$, set $N_p (q) (\alpha, \ol{z}) := (\alpha, i) = \iota^\beta (p) (\iota^{-1} (\alpha, \ol{z})\big)$.

\end{itemize}

It is not difficult to see that $N_{\kappa^{\plus}} (q)\, \cup \, N^{\prime} (q)\, \cup \, N_p (q)$ is well-defined and injective. \\ Since $t(p) \leq t(\ol{r})$, it follows that for any $(\alpha, \ol{z}) \in t(q)$ with $(\alpha, \ol{z}) \leq_{t(q)} (\kappa^\plus, \{i_l^\prime\})$ for some $l < n$, we have $N_p (q) (\alpha, \ol{z}) = (\alpha, i)$, where $(\alpha, i)$ is the $t(\ol{r})$-predecessor of $(\kappa^\plus, i_l^\prime)$ on level $\alpha$. \\[-3mm]

It remains to define $\ol{N} (q) (\alpha, z)$ for those $(\alpha, z) \in t(q)$ remaining with $(\alpha, z) \notin \dom \, \big(N_{\kappa^\plus} (q)\, \cup \, N^{\prime} (q)\, \cup \, N_p (q) \big)$. \\[-3mm]

For $\alpha < \kappa^{\plus}$, $\alpha \notin \{\lambda_l\ | \ l < m\}$, let \[Z_\alpha := \big \{ \, (\alpha, i)\ | \ i < F_{\lim} (\alpha)\, , \, (\alpha, i) \notin t(p) \; \cup \; \im \, \big( N_{\kappa^{\plus}} (q)\, \cup\, N^{\prime} (q)\, \cup\, N_p (q) \big)\, \big \}.\] For $l < m$ with $\lambda_l \leq \kappa$, let \[Z_{\lambda_l} := \big\{ \, (\lambda_l, i)\ | \ i \in [\alpha_l, F_{\lim} (\lambda_l))\, , \, (\lambda_l, i) \notin t(p) \; \cup \; \im\, \big(N_{\kappa^{\plus}} (q)\, \cup\, N^{\prime} (q) \, \cup\, N_p (q) \big)\, \big \}. \] 

We take for $\ol{N} (q): t(q) \rightarrow V$ an injective function with $\ol{N} (q) \supseteq N_{\kappa^{\plus}} (q)\, \cup \, N^{\prime} (q)\, \cup \, N_p (q)$ such that $\ol{N} (q) (\alpha, z) \in Z_{\alpha}$ for all $(\alpha, z) \in t(q) \, \setminus\, \dom \big(N_{\kappa^{\plus}} (q)\, \cup \, N^{\prime} (q)\, \cup \, N_p (q)\big)$. \\ The condition $\ol{q} \in (\wt{\m{P}}_0)^{\ol{r}}$ is defined as follows: $t (\ol{q}) := \{ \ol{N} (q) (\alpha, z)\ | \ (\alpha, z) \in t (q)\}$, with $\leq_{t(\ol{q})} \, := \, \{\big(\ol{N} (q) (\alpha, z), \ol{N} (q) (\beta, z^\prime)\big)\ | \ (\alpha, z) \leq_{t(q)} (\beta, z^\prime)\}$.\\ For any $(\alpha, i) = \ol{N} (q) (\alpha, z) \in t(\ol{q})$, let $\ol{q} (\alpha, i) := q (\alpha, z)$. \\ This finishes the construction of $\ol{q}$.\\[-3mm]

By construction, it follows that $\rho_0^\beta (\ol{q}) = q$. Also, $\ol{q} \, \| \, p$: Firstly, for any $(\kappa^{\plus}, j) \in t(p)$ with $j < \beta$, it follows by construction of $N_p(q)$ that the $t(p)$-branch below $(\kappa^{\plus}, j)$ coincides with the $t(\ol{q})$-branch below $(\kappa^{\plus}, j)$. \\
On the other hand, the set of all $(\alpha, i) \in t(p)$ which have no successor $(\kappa^\plus, j)$ with $j < \beta$ is disjoint from $t(\ol{q})$: The sets $Z_\alpha$ and $Z_{\lambda_l}$ are disjoint from $t(p)$ by construction, so $\ol{N} (q) (\alpha, \ol{z}) = (\alpha, i) \in t(p)$ would imply $(\alpha, i) \in \im \big( N_{\kappa^\plus} (q)\, \cup \, N^\prime (q)\, \cup\, N_p (q) \big)$. But any $(\alpha, i) \in \im N_{\kappa^\plus} (q)\, \cup\, \im N_p (q)$ clearly \textit{has} a $t(p)$-successor $(\kappa^\plus, j)$ with 
$j < \beta$,
so the only possibility remaining is that $(\alpha, i) = (\lambda_l, i) = N^\prime (q) (\lambda_l, z) = N(q) (\lambda_l, z)$ for some $l < m$ with $i < \alpha_l$.
But then it follows from property $(1)$ for $(\wt{\m{P}}_0)^{\ol{r}}$ that again, $(\lambda_l, i)$ has a $t(p)$-successor $(\kappa^\plus, j)$ with $j < \beta$ -- contradiction. \\ 
For any $(\alpha, i) = \ol{N} (q) (\alpha, \ol{z}) \in t(\ol{q})\, \cap \, t(p)$, we have $(\alpha, i) = N_p (q) (\alpha, \ol{z})$, and with the embedding $\iota: (t, \leq_t) \hookrightarrow (t(q), \leq_{t(q)})$ 
as in the definition of $N_p (q)$ with $\iota (\alpha, z) = (\alpha, \ol{z})$, it follows from $q \leq \rho_0^\beta (p)$ that $\ol{q} (\alpha, i) = q (\alpha, \ol{z}) \supseteq \rho_0^\beta (p) (\alpha, z) = p (\alpha, i)$. \\ Hence, $\ol{q}\, \| \, p$. \\[-3mm]

Setting $s := p\, \cup\, \ol{q}$, it follows that $s \leq p$ with $s \in (\wt{\m{P}}_0)^{\ol{r}}$ and $\rho_0^\beta (s) \leq \rho_0^\beta (\ol{q}) = q$. \\ Hence, the condition $s$ has all the desired properties, and it follows that $\rho_0^\beta$ is indeed a projection of forcing posets. 

\end{proof}

For capturing $S^\beta$, we will consider the product forcing \[(\m{P}_0)^\beta \uhr (\kappa^\plus + 1)\, \times \, \big(\m{P}_0 \uhr t(\ol{r}) \big) \uhr [\kappa^{\plus}, \infty).\]  Then also the map $\ol{\rho}_0^\beta: (\wt{\m{P}}_0 )^{\ol{r}} \rightarrow (\m{P}_0)^\beta \uhr (\kappa^\plus + 1)\, \times \, \big(\m{P}_0 \uhr t(\ol{r}) \big) \uhr [\kappa^{\plus}, \infty)$, which maps a condition $p \in (\wt{\m{P}}_0 )^{\ol{r}}$ to
$\big( \, \rho_0^\beta (p), (p \uhr t(\ol{r}) ) \uhr [\kappa^{\plus}, \infty)\, \big)$ is a projection of forcing posets; hence, 
$(G_0)^\beta \uhr (\kappa^\plus + 1)\, \times\, \big(G_0 \uhr t(\ol{r}) \big)\, \uhr [\kappa^{\plus}, \infty)$ is a $V$-generic filter on $(\m{P}_0)^\beta \uhr (\kappa^\plus + 1)\, \times \, \big(\m{P}_0 \uhr t(\ol{r}) \big) \uhr [\kappa^{\plus}, \infty)$.  \\[-2mm]

Now, we turn to $(\m{P}_1)^\beta \uhr (\kappa + 1)$. As already mentioned, we take for any $\lambda^\plus \in Succ^\prime\, \cap\, \kappa$ at stage $\lambda^\plus$ the forcing $Fn \big( [\lambda, \lambda^\plus) \times \min\{\beta, F(\lambda^\plus)\}, 2, \lambda^\plus\big)$ instead of $Fn \big( [\lambda, \lambda^\plus) \times F(\lambda^\plus), 2, \lambda^\plus\big)$.

More precisely, $(\m{P}_1)^\beta \uhr (\kappa + 1)$ consists of all conditions $p: Succ^\prime \, \cap\, (\kappa + 1) \rightarrow V$ with $\supp p := \{\lambda^\plus < \kappa\ | \ p(\lambda^\plus) \neq \emptyset\}$ finite such that for all $\lambda^\plus \in \supp p$, \[p(\lambda^\plus) \in Fn \big( [\lambda, \lambda^\plus) \times \min \{F(\beta, \lambda^\plus)\}, 2, \lambda^\plus \big)\] with $\dom p$ rectangular, i.e. \[\dom p(\lambda^\plus) = \dom_x\,  p(\lambda^\plus) \times \dom_y\, p (\lambda^\plus)\] for some $dom_x \, p(\lambda^\plus) \subseteq [\lambda, \lambda^\plus)$ and $dom_y \, p(\lambda^\plus) \subseteq \min \{\beta, F(\lambda^\plus)\}$. The partial order \tbl$\leq$\tbr\, is reverse inclusion, and the maximal element $\m{1}$ is the empty condition. \\[-3mm]

For $p \in \m{P}_1$, we can define a projection $\rho^\beta_1 (p)$ as follows: $\supp \rho_1^\beta (p) := \supp p \, \cap \, (\kappa + 1)$, and for any $\lambda^\plus < \kappa$ with $\lambda^{\plus} \in \supp p$, \[\dom \big(\rho_1^\beta (p) \big) (\lambda^{\plus}) := \dom_x p (\lambda^{\plus})\, \times\, (\dom_y p (\lambda^\plus)\, \cap\, \beta),\] with $\big(\rho_1^\beta (p) \big) (\lambda^{\plus}) (\zeta, i) = p (\lambda^{\plus}) (\zeta, i)$ for all $(\zeta, i) \in \dom \big(\rho_1^\beta (p) \big) (\lambda^{\plus})$. \\[-3mm]

It is not difficult to check that $\rho_1^\beta$ is indeed a projection from $\m{P}_1$ onto $(\m{P}_1)^\beta \uhr (\kappa + 1)$. Hence, \[(G_1)^\beta \uhr (\kappa + 1) := \{ \rho_1^\beta (p) \ | \ p \in G_1\}\] is a $V$-generic filter on $(\m{P}_1)^\beta \uhr (\kappa + 1)$. \\[-3mm]

For capturing $S^\beta$, we will work with the forcing \[\big((\m{P}_1)^\beta \uhr (\kappa + 1)\big) \, \times \, \m{P}_1 (\kappa^{\plus}) \, \times \, \m{P}_1 \uhr \{(\ol{\kappa}_l, \ol{\imath}_l)\ | \ l < \ol{n},\,  \ol{\kappa}_l > \kappa^{\plus} \}. \] The map $\ol{\rho}_1^\beta: \m{P}_1 \rightarrow \big((\m{P}_1)^\beta \uhr (\kappa + 1)\big) \, \times \, \m{P}_1 (\kappa^{\plus}) \, \times \, \m{P}_1 \uhr \{(\ol{\kappa}_l, \ol{\imath}_l)\ | \ l < \ol{n},\,  \ol{\kappa}_l > \kappa^{\plus} \}$ that maps a condition $p \in \m{P}_1$ to $\big( \rho_1^\beta (p), p_1 (\kappa^\plus), p_1 \uhr \{ (\ol{\kappa}_l, \ol{\imath}_l)\ | \ l < \ol{n}, \ol{\kappa}_l > \kappa^\plus\}\big)$ is a projection of forcing posets, as well.
Hence, it follows that \[(G_1)^\beta \uhr (\kappa + 1) \times G_1 (\kappa^{\plus}) \times G_1 \uhr \{(\ol{\kappa}_l, \ol{\imath}_l)\ | \ l < \ol{n}, \, \ol{\kappa}_l > \kappa^{\plus}\}\] is a $V[G_0]$-generic filter on $\big((\m{P}_1)^\beta \uhr (\kappa + 1)\big) \times \m{P}_1 (\kappa^{\plus}) \times \m{P}_1 \uhr \{(\ol{\kappa}_l, \ol{\imath}_l)\ | \ l < \ol{n}, \, \ol{\kappa}_l > \kappa^{\plus}\}$. \\
In particular, \[V \big[ (G_0)^\beta \uhr (\kappa^\plus + 1)\, \times\, \big(G_0 \uhr t(\ol{r}) \big) \uhr [\kappa^{\plus},\infty)\, \times \, (G_1)^\beta \uhr (\kappa + 1)\, \times \, G_1 (\kappa^{\plus}) \, \times\] \[ \times\,  G_1 \uhr \{(\ol{\kappa}_l, \ol{\imath}_l)\ | \ l < \ol{n},\, \ol{\kappa}_l > \kappa^{\plus}\} \big]\] is a well-defined generic extension by the forcing \[(\m{P}_0)^\beta \uhr (\kappa^\plus + 1)\, \times\, \big(\m{P}_0 \uhr t(\ol{r}) \big) \uhr [\kappa^{\plus}, \infty)\, \times \, (\m{P}_1)^\beta \uhr (\kappa + 1) \, \times \, \m{P}_1 (\kappa^{\plus}) \, \times \, \]\[ \times\, \m{P}_1 \uhr \{(\ol{\kappa}_l, \ol{\imath}_l)\ | \ l < \ol{n}, \, \ol{\kappa}_l > \kappa^{\plus}\}.\]

\begin{lem} \label{prescardF(k)} \[(\m{P}_0)^\beta \uhr (\kappa^\plus + 1)\, \times\, \big(\m{P}_0 \uhr t(\ol{r}) \big) \uhr [\kappa^{\plus}, \infty)\, \times \, (\m{P}_1)^\beta \uhr (\kappa + 1) \, \times \, \m{P}_1 (\kappa^{\plus}) \, \times \, \]\[ \times \,\m{P}_1 \uhr \{(\ol{\kappa}_l, \ol{\imath}_l)\ | \ l < \ol{n}, \, \ol{\kappa}_l > \kappa^{\plus}\}\] preserves cardinals $\geq F(\kappa)$.

\end{lem}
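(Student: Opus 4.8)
The plan is to mimic the factoring argument used in the proof of Proposition \ref{prescard}. Write $\mathbb{Q}$ for the product forcing in the statement of the lemma and fix a $V$-generic $H$ on $\mathbb{Q}$. It suffices to establish two things: (i) $F(\kappa)$ is not collapsed in $V[H]$, and (ii) $(2^\lambda)^{V[H]} = (\lambda^+)^V$ for every cardinal $\lambda \geq F(\kappa)$. Indeed, as in Proposition \ref{prescard}, (ii) together with the $GCH$ in $V$ rules out the collapse of any cardinal $\mu > F(\kappa)$: if $\mu$ were the least such and $\mu = \nu^+$ with $\nu \geq F(\kappa)$ a preserved cardinal, then $(2^\nu)^{V[H]}$ would be a cardinal $\geq (\nu^+)^V = \mu$ which is not a cardinal of $V[H]$, contradicting (ii); and limit cardinals above $F(\kappa)$ are automatically preserved as suprema of preserved cardinals. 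Combined with (i) this gives the lemma.

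For (ii), fix $\lambda \geq F(\kappa)$ and factor $\mathbb{Q} = \mathbb{Q}\uhr(\lambda+1) \times \mathbb{Q}\uhr[\lambda,\infty)$ exactly as before: the first factor collects $(\m{P}_0)^\beta \uhr (\kappa^{+}+1)$, $(\m{P}_1)^\beta \uhr (\kappa+1)$ and $\m{P}_1(\kappa^{+})$ in full (all of these live strictly below $\lambda$), together with $\big(\m{P}_0 \uhr t(\ol r)\big)\uhr[\kappa^{+},\lambda+1)$ and the $\m{P}_1\uhr\{(\ol\kappa_l,\ol\imath_l)\}$-factors with $\ol\kappa_l \leq \lambda$; the second factor collects $\big(\m{P}_0 \uhr t(\ol r)\big)\uhr[\lambda,\infty)$ and the remaining $\m{P}_1$-factors with $\ol\kappa_l > \lambda$, and is $\leq\lambda$-closed, hence adds no new subsets of $\lambda$. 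For the first factor the crucial point is that the modified forcing $(\m{P}_0)^\beta\uhr(\kappa^{+}+1)$ is \emph{finite-width}: the domain $t(p)$ of any of its conditions is a tree below finitely many points $(\kappa^{+},\beta_i)$ with $\beta_i < \beta$, so each level carries only finitely many vertices, and it is therefore, level by level, an Easton-style product of sets $Fn([\mu,\mu^{+}),2,\mu^{+})$ of size $\leq\kappa^{+} < F(\kappa)$; under the $GCH$ it thus has the $\kappa^{++}$-chain condition, and $\kappa^{++}\leq F(\kappa)\leq\lambda$. Likewise $\m{P}_1(\kappa^{+})$ has the $\kappa^{++}$-cc, while $\big(\m{P}_0 \uhr t(\ol r)\big)\uhr[\kappa^{+},\lambda+1)$ and the lower $\m{P}_1$-factors are finite-width Easton products cut off at $\lambda$ and so have the $\lambda^{+}$-cc; a routine application of Easton's lemma then shows that $\mathbb{Q}\uhr(\lambda+1)$ has the $\lambda^{+}$-cc and preserves $\lambda^{+}$. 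Hence $(2^\lambda)^{V[H]}$ is already computed in $V[H\uhr(\lambda+1)]$, where the $\lambda^{+}$-cc and the $GCH$ in $V$ give $(2^\lambda)^{V[H\uhr(\lambda+1)]} = (\lambda^{+})^V$. The case $\cf\lambda < \lambda$ is handled precisely as in Proposition \ref{prescard}, via a cofinal sequence in $\lambda$ and factoring at $\cf\lambda$.

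For (i), note that each component of $\mathbb{Q}$ preserves all cardinals $\geq F(\kappa)$: $(\m{P}_0)^\beta\uhr(\kappa^{+}+1)$ and $\m{P}_1(\kappa^{+})$ by the $\kappa^{++}$-cc just discussed, $\big(\m{P}_0 \uhr t(\ol r)\big)\uhr[\kappa^{+},\infty)$ and the $\m{P}_1\uhr\{(\ol\kappa_l,\ol\imath_l)\}$-factors because they are $\leq\kappa$- respectively $\leq\kappa^{+}$-closed, and $(\m{P}_1)^\beta\uhr(\kappa+1)$ because it is a finite-support product below $\kappa$ whose factors have the $<\kappa$-cc. Splitting $\mathbb{Q}$ into its $\kappa^{++}$-cc part and its $\leq\kappa^{+}$-closed part and applying Easton's lemma once more shows that $\mathbb{Q}$ preserves all cardinals $\geq\kappa^{++}\geq F(\kappa)$; if $F(\kappa)$ happens to be singular one instead argues that every successor cardinal lying in $[\kappa^{++},F(\kappa))$ is preserved, whence so is their supremum $F(\kappa)$.

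The main obstacle is the bookkeeping around the modified forcing $(\m{P}_0)^\beta\uhr(\kappa^{+}+1) = \big((\widehat{\m{P}}_0)^\beta\uhr(\kappa^{+}+1)\big)^{\ol r}_{(\lambda_0,\alpha_0),\dots}$: one must verify carefully that dropping the indices below $\kappa^{+}$ really does make it finite-width (so that it behaves like the forcings $\m{P}_0 \uhr t(p)$ covered by Proposition \ref{prescard}), and that neither the extra indexing function $N(p)$ nor the tree $t(\ol r)$ and the data $(\lambda_l,\alpha_l)$ built into the definition of this forcing spoil the $\kappa^{++}$-chain condition or the closure of the complementary factor. A secondary delicate point is the case in which $F(\kappa)$ is singular, where one cannot simply invoke an ``$F(\kappa)$-cc'' argument and must instead run the preservation argument through the successor cardinals below $F(\kappa)$.
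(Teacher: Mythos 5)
Your overall plan — factor into a low part and a $\leq\kappa^{+}$-closed high part, control the low part by a chain condition, and handle the case $\cf F(\kappa)<F(\kappa)$ separately — is in the right spirit, but the crucial chain-condition claim is false, and it is precisely the thing you flagged as unchecked.

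The claim that $(\m{P}_0)^\beta\uhr(\kappa^{+}+1)$ has the $\kappa^{++}$-cc does not hold in general. Fix $l<m$ with $\lambda_l\leq\kappa$ and suppose $\alpha_l\geq\kappa^{++}$ (this is possible, since $\alpha_l<F_{\lim}(\lambda_l)\leq F(\kappa)$ and $F(\kappa)$ can be much larger than $\kappa^{++}$; the \textit{large enough} requirement only gives $\alpha_l<\wt\beta<\beta$). Take any single new maximal point $(\kappa^{+},\gamma)$ with $\gamma<\beta$ not among the $i_l^{\prime}$, let all the $p_i$ ($i<\alpha_l$) share the same tree (the new branch diverging from $t(\ol r)$ at level $0$) and the same Cohen values, and set $N(p_i)(\lambda_l,z_\gamma):=(\lambda_l,i)$ for the unique $z_\gamma$ at level $\lambda_l$ containing $\gamma$. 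Any common extension $s$ would have to carry both values $(\lambda_l,i)$ and $(\lambda_l,j)$ at the vertex extending $z_\gamma$, since the definition of $\leq$ on $\big((\widehat{\m{P}}_0)^\beta\uhr(\kappa^{+}+1)\big)^{\ol r}_{(\lambda_0,\alpha_0),\dots}$ forces $N(s)(\lambda_l,\ol z)=N(p_i)(\lambda_l,z_\gamma)$ for $\ol z\supseteq z_\gamma$. Hence the $p_i$ form an antichain of size $\alpha_l\geq\kappa^{++}$, and your Easton-product heuristic — which only tracks the Cohen data, not the $N$-function — does not apply. What \emph{is} true, and what the paper uses, is the weaker cardinality bound $|(\m{P}_0)^\beta\uhr(\kappa^{+}+1)|\leq|\beta|<F(\kappa)$ (here one uses that $\wt\beta>\alpha_l$ for every $l<m$ with $\lambda_l\leq\kappa^{+}$), which gives only $|\beta|^{+}$-cc; that suffices because $|\beta|^{+}\leq F(\kappa)$, but it is a different and weaker bound than $\kappa^{++}$-cc.

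A second, structurally more significant gap is that your appeal to ``a routine application of Easton's lemma'' glosses over exactly the case analysis the paper has to perform. The factor $(\m{P}_1)^\beta\uhr(\kappa+1)$ need not have cardinality $<F(\kappa)$: if $Succ^{\prime}$ has a maximal element $\mu^{+}<\kappa$ with $F(\mu^{+})=F(\kappa)=|\beta|^{+}$, then $(\m{P}_1)^\beta(\mu^{+})$ already has cardinality $|\beta|^\mu$, which by the $GCH$ may equal $F(\kappa)$, and the same holds one step lower if $Succ^{\prime}$ also has a maximal element $\nu^{+}<\mu^{+}$. The paper resolves this by peeling off $(\m{P}_1)^\beta(\nu^{+})\times(\m{P}_1)^\beta(\mu^{+})\times\m{P}_1(\kappa^{+})$ as a product of Cohen-type forcings that preserve \emph{all} cardinals, and then noting the remaining factor $(\m{P}_0)^\beta\uhr(\kappa^{+}+1)\times(\m{P}_1)^\beta\uhr(\nu+1)$ has cardinality $<F(\kappa)$ — in $V$, and hence also after forcing with the cardinal-preserving Cohen part. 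Your sketch neither isolates these possibly large Cohen blocks nor confronts the case $|\beta|^{+}=F(\kappa)$, so the step from chain conditions of the individual factors to a chain condition of the whole low part (and thence to cardinal preservation $\geq F(\kappa)$) is not justified.
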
 
\begin{proof} First, it is not difficult to see that the forcing $(\m{P}_0)^\beta \uhr (\kappa^\plus + 1)$ has cardinality $\leq |\beta| < F(\kappa)$ (one has to use that $\beta$ is {\textit{large enough}}, which implies that $\beta > \alpha_l$ for all $l < m$ with $\lambda_l \leq \kappa^\plus$). \\[-3mm]

Concerning $(\m{P}_1)^\beta \uhr (\kappa + 1)$, we have several cases to distinguish: If $|\beta|^\plus < F(\kappa)$, then $|(\m{P}_1)^\beta \uhr (\kappa + 1)| \leq |\beta|^\plus < F(\kappa)$. For the rest of the proof, assume $|\beta|^\plus = F(\kappa)$. \begin{itemize} \item If the class $Succ^\prime$ has no maximal element below $\kappa$, it follows that $F(\lambda^\plus) < |\beta|$ for all $\lambda^\plus < \kappa$ with $\lambda^\plus \in Succ^\prime$, since
$F(\lambda^\plus) < F(\mu^\plus)$ for all $\lambda^\plus, \mu^\plus \in Succ^\prime$ with $\lambda^\plus < \mu^\plus$. Hence, all the blocks $Fn \big( [\lambda, \lambda^\plus) \times F(\lambda^\plus), 2, \lambda^\plus\big)$ in $(\m{P}_1)^\beta \uhr (\kappa + 1)$ have cardinality $\leq |\beta|$; so $|(\m{P}_1)^\beta \uhr (\kappa + 1)| < F(\kappa)$. \end{itemize} It remains to consider the case that $Succ^\prime$ has a maximal element $\mu^\plus$ below $\kappa$. \\ 
Now, we have to treat the block $(\m{P}_1)^\beta (\mu^\plus) = Fn \big( [\mu, \mu^\plus) \times \min\{ F(\mu^\plus), \beta\}, 2, \mu^\plus\big)$ separately and consider the forcing $(\m{P}_1)^\beta \uhr (\mu + 1)$. \begin{itemize} \item In the case that $F(\mu^\plus) \leq |\beta|$ or \tbl \textit{$F(\mu^\plus) = F(\kappa) = |\beta|^\plus$ and the class $Succ^\prime$ has no maximal element below $\mu^{\plus}$}\tbr, it follows that $|(\m{P}_1)^\beta \uhr (\mu + 1)| < F(\kappa)$ similarly as before.

\item Finally, if $F(\mu^\plus) = F(\kappa) = |\beta|^\plus$ and $Succ^\prime$ {\textit{has}} a maximal element $\nu^\plus$ below $\mu^{\plus}$, we have to treat the product $(\m{P}_1)^\beta (\nu ^\plus) \times (\m{P}_1)^\beta (\mu^\plus)$ separately. Since $F(\nu^\plus) \leq |\beta|$, it follows that $F(\lambda^\plus) < |\beta|$ for all $\lambda^\plus \in Succ^\prime$ with $\lambda^\plus < \nu^\plus$; hence, $| (\m{P}_1)^\beta \uhr (\nu + 1)| \leq |\beta| < F(\kappa)$. \end{itemize} 
For the rest of the proof, we restrict to the latter case with $(\m{P}_1)^\beta \uhr (\kappa + 1)\cong ((\m{P}_1)^\beta \uhr (\nu + 1)) \times (\m{P}_1)^\beta (\nu^\plus) \times (\m{P}_1)^\beta (\mu^\plus)$ and $|(\m{P}_1)^\beta \uhr (\nu + 1)| < F(\kappa)$ -- the other cases can be treated similarly. \\[-2mm]

Consider the product forcing \[(\m{P}_0)^\beta \uhr (\kappa^\plus + 1)\, \times\, \big(\m{P}_0 \uhr t(\ol{r}) \big) \uhr [\kappa^{\plus}, \infty)\, \times\, (\m{P}_1)^\beta \uhr (\kappa + 1)\, \times \, \m{P}_1 (\kappa^{\plus}) \, \times \, \]\[ \times\, \m{P}_1 \uhr \{(\ol{\kappa}_l, \ol{\imath}_l)\ | \ l < \ol{n}, \, \ol{\kappa}_l > \kappa^{\plus}\}.\] Similarly as in Proposition \ref{prescard}, it follows that the \tbl upper part\tbr\, \[\big( \m{P}_0 \uhr t(\ol{r}) \big) \uhr [\kappa^{\plus}, \infty)\, \times \m{P}_1 \uhr \{(\ol{\kappa}_l, \ol{\imath}_l)\ | \ l < \ol{n}, \, \ol{\kappa}_l > \kappa^{\plus} \}\] preserves cardinals. Since this forcing is also $\leq \kappa^{\plus}$-closed, it follows that the \tbl lower part\tbr, namely, \[(\m{P}_0)^\beta \uhr (\kappa^\plus + 1)\, \times (\m{P}_1)^\beta \uhr (\kappa + 1)\, \times \, \m{P}_1 (\kappa^{\plus}),\] is the same forcing in a $\big(\m{P}_0 \uhr t(\ol{r}) \big) \uhr [\kappa^{\plus} ,\infty)\, \times \, \m{P}_1 \uhr \{(\ol{\kappa}_l, \ol{\imath}_l)\ | \ l < \ol{n}, \, \ol{\kappa}_l > \kappa^{\plus}\}$-generic extension as it is in $V$. \\[-3mm] 

Thus, it suffices to show that $(\m{P}_0)^\beta \uhr (\kappa^\plus + 1) \times (\m{P}_1)^\beta \uhr (\kappa + 1)\times \m{P}_1 (\kappa^{\plus})$ preserves cardinals $\geq F(\kappa)$. We factor \[(\m{P}_0)^\beta \uhr (\kappa^\plus + 1) \times (\m{P}_1)^\beta \uhr (\kappa + 1)\times \m{P}_1 (\kappa^{\plus})\ \cong \]\[ \cong \Big(\; (\m{P}_0)^\beta \uhr (\kappa^\plus + 1)\, \times \, (\m{P}_1)^\beta \uhr (\nu + 1) \Big)\; \times \; \Big( (\m{P}_1)^\beta (\nu^\plus) \times (\m{P}_1)^\beta (\mu^\plus) \times \m{P}_1 (\kappa^{\plus}) \Big). \] 

The product $(\m{P}_1)^\beta (\nu^\plus) \times (\m{P}_1)^\beta (\mu^\plus) \times \m{P}_1 (\kappa^{\plus})$ preserves all cardinals. Secondly, as we have argued before, the forcing $(\m{P}_0)^\beta \uhr (\kappa^\plus + 1) \times (\m{P}_1)^\beta \uhr (\nu + 1)$ has cardinality $< F(\kappa)$ (in $V$ and hence, also in any $(\m{P}_1)^\beta (\nu^\plus) \times (\m{P}_1)^\beta (\mu^\plus) \times \m{P}_1 (\kappa^{\plus})$-generic extension). Hence, the product forcing $(\m{P}_0)^\beta \uhr (\kappa^\plus + 1) \times (\m{P}_1)^\beta \uhr (\kappa + 1) \times \m{P}_1 (\kappa^{\plus})$ preserves cardinals $\geq F(\kappa)$, which finishes the proof.
\end{proof}

We want to show by an isomorphism argument that our surjection $S^\beta: \dom S^\beta \rightarrow F(\kappa)$ is contained in \[V[ (G_0)^\beta \uhr (\kappa^\plus + 1)\, \times\, \big(G_0 \uhr t(\ol{r}) \big) \uhr [\kappa^{\plus}, \infty)\, \times \, (G_1)^\beta \uhr (\kappa + 1)\, \times \, G_1 (\kappa^{\plus}) \, \times \, \]\[ \times \, G_1 \uhr \{(\ol{\kappa}_l, \ol{\imath}_l)\ | \ l < \ol{n}, \, \ol{\kappa}_l > \kappa^{\plus}\}].\] 

Also, we will see that in $V[ (G_0)^\beta \uhr (\kappa^\plus + 1)\, \times\, \big( G_0 \uhr t(\ol{r}) \big) \uhr [\kappa^{\plus}, \infty)\, \times \, (G_1)^\beta \uhr (\kappa + 1)\,  \times \, G_1 (\kappa^{\plus}) \, \times \, G_1 \uhr \{(\ol{\kappa}_l, \ol{\imath}_l)\ | \ l < \ol{n}, \, \ol{\kappa}_l > \kappa^{\plus}\}]$, there is also an injection $\iota: \dom S^\beta \hookrightarrow \beta$. Together with Lemma \ref{prescardF(k)} this gives the desired contradiction. \\

Recall that any $X$ in the domain of $S^\beta$ is of the form \[X = \dot{X}^{G_0  \, \uhr \, t(s) \times G_1 \, \uhr \, \{(\mu_0, \ol{\jmath}_0), \, \ldots \,, (\mu_{\ol{k}-1}, \ol{\jmath}_{\ol{k}-1})\} \times G_1 (\kappa^{\plus})}\] where $s$ is a condition in $G_0 \uhr (\kappa^\plus + 1)$ and $(s, (\ol{\mu}_0, \ol{\jmath}_0), \ldots, (\ol{\mu}_{\ol{k}-1}, \ol{\jmath}_{\ol{k}-1})) \in M_\beta$, i.e. $s$ has finitely many maximal points $(\kappa^{\plus}, j_0), \, \ldots \,, (\kappa^{\plus}, j_{k-1})$ with $j_0 < \beta, \, \ldots \,, j_{k-1} < \beta$, and $\ol{k} < \omega$, $\mu_0, \, \ldots \,, \mu_{\ol{k}-1} \in \kappa\, \cap\, Succ^\prime$, $\ol{\jmath}_0 < \min \{F(\mu_0), \beta\}, \, \ldots \,, \ol{\jmath}_{\ol{k}-1} < \min \{F(\mu_{\ol{k}-1}), \beta\}$. For any such $s$, let $\ol{s} = s\, \cup\, \ol{r}$. \\[-3mm]

Since do not want to use $G_0 \uhr (\kappa^\plus + 1)$ for capturing $S^\beta$, but only $(G_0)^\beta \uhr (\kappa^\plus + 1)$, 
we would like to replace the filter \[G_0 \uhr t(s) = \{p \uhr t(s)\ | \ p \in G_0, t(p) \leq t(s) \},\] by something like \[\mbox{\tbl} \, \big((G_0)^\beta \uhr (\kappa^\plus + 1)\big) \uhr t(s) :=  \big\{\, p \uhr t(s)\ \big| \ \rho_0^\beta (p) \in (G_0)^\beta \uhr (\kappa^\plus + 1), \, t \big(\rho_0^\beta (p) \big) \leq t \big(\rho_0^\beta (\ol{s})\big)\, \big\}\tbr\] but we have to specify what we mean by $p \uhr t(s)$ if not necessarily $t(p) \leq t(s)$, but we only know that $t \big(\rho_0^\beta (p) \big) \leq t \big(\rho_0^\beta (\ol{s})\big)$, i.e. merely the tree structures of $t(p)$ and $t(s)$ agree below the vertices $(\kappa^{\plus}, j) \in t(s)$. \\[-3mm] 

We will have $t \big(p \uhr t(s)\big) := t(s)$. For a vertex $(\alpha, m) \in t(s)$ with $t(s)$-successor $(\kappa^{\plus}, j)$, let $(\alpha, m^\prime)$ denote the $t(p)$-predecessor of $(\kappa^{\plus}, j)$ on level $\alpha$. We will set $(p \uhr t(s)) (\alpha, m) := p (\alpha, m^\prime)$.
From $t \big( \rho_0^\beta (p) \big) \leq t \big(\rho_0^\beta (\ol{s}) \big)$ it follows that this is well-defined: If $(\kappa^{\plus}, j), (\kappa^\plus, j^\prime)$ are both $t(s)$-successors of $(\alpha, m)$, then also in the tree $t(p)$, 
the vertices $(\kappa^{\plus}, j)$ and $(\kappa^{\plus}, j^\prime)$ have a common predecessor $(\alpha, m^\prime)$ on level $\alpha$. \\
In other words: The condition $p \uhr t(s)$ is  constructed from $p \uhr \{(\kappa^\plus, j)\ | \ (\kappa^\plus, j) \in t(s)\}$ by exchanging any index $(\alpha, m^\prime)$ such that $(\alpha, m^\prime) \leq_{t(p)} (\kappa^{\plus}, j)$, with $(\alpha, m)$ such that $(\alpha, m) \leq_{t(s)} (\kappa^\plus, j)$.

\begin{deflem} \label{Deflem} Let $q$ denote a condition in $\m{P}_0 \uhr (\kappa^\plus + 1)$ with maximal points $(\kappa^{\plus}, j_0), \, \ldots \,, (\kappa^{\plus}, j_{\ol{k}-1})$ such that $j_0, \, \ldots \,, j_{\ol{k}-1} < \beta$, and $q \, \| \, \ol{r}$. With $\ol{q} = q\, \cup\, \ol{r}$, assume $\rho_0^\beta (\ol{q}) \in (G_0)^\beta \uhr (\kappa^\plus + 1)$. \\ We define \[\big((G_0)^\beta \uhr (\kappa^\plus + 1)\big) \uhr t(q)\] as the set of all $p \uhr t(q)$ with $p \in \m{P}_0 \uhr (\kappa^\plus + 1)$ such that \[\rho_0^\beta (p) \in (G_0)^\beta \uhr (\kappa^\plus + 1)\] and \[t \big(\rho_0^\beta (p) \big) \leq t \big(\rho_0^\beta (\ol{q}) \big),\] with $p \uhr t(q)$ as defined before.

This is a $V$-generic filter on $\m{P}_0 \uhr t(q)$, with $\big((G_0)^\beta \uhr (\kappa^\plus + 1)\big) \uhr t(q) = G_0 \uhr t(q)$ in the case that $q \in G$. \end{deflem}

\begin{proof} More generally, for conditions $q_0, q_1 \in \m{P}_0$ with maximal points in $\{(\kappa^{\plus}, j)\ | \ j < \beta\}$ and $q_0 \, \| \, \ol{r}$, $q_1 \, \| \, \ol{r}$, let $\ol{q}_0 = q_0 \, \cup\, \ol{r}$ and $\ol{q}_1 = q_1\, \cup\, \ol{r}$ as before. If \[t \big(\rho_0^\beta (\ol{q}_0) \big) = t \big(\rho_0^\beta (\ol{q}_1) \big),\]there is the following canonical isomorphism $T (q_0, q_1): \m{P}_0 \uhr t(q_0) \rightarrow \m{P}_0 \uhr t(q_1)$: For a condition $p \in \m{P}_0 \uhr t(q_0)$ and some vertex $(\alpha, m) \in t(q_1)$, consider a $t(q_1)$-successor $(\kappa^{\plus}, j)$. Let $(\alpha, m^\prime)$ denote the according $t(q_0)$-predecessor of $(\kappa^{\plus}, j)$ on level $\alpha$. Set $\big(T(q_0, q_1) (p)\big) (\alpha, m) := p (\alpha, m^\prime)$. As argued before, it follows from $t \big(\rho_0^\beta (\ol{q}_0) \big) = t \big(\rho_0^\beta (\ol{q}_1) \big)$ that this is well-defined.

This isomorphism $T(q_0, q_1)$ extends to an isomorphism $\ol{T} (q_0, q_1): \Name (\m{P}_0 \uhr t(q_0)) \rightarrow \Name (\m{P}_0 \uhr t(q_1))$ on the name space as usual: For $\dot{Y} \in \Name (\m{P}_0 \uhr t(q_0))$, define recursively: \[\ol{T} (q_0, q_1) (\dot{Y}) := \big \{ \, \big(\, \ol{T} (q_0, q_1) (\dot{Z}), T(q_0, q_1) (p)\, \big)\ | \ (\dot{Z}, p) \in \dot{Y}\, \big\}.\] 

In the case that $t \big(\rho_0^\beta (\ol{q}_0) \big) = t \big(\rho_0^\beta (\ol{q}_1) \big)$ agrees with the generic filter $(G_0)^\beta \uhr (\kappa^\plus + 1)$, it is not difficult to check that \[\dot{Y}^{(G_0)^\beta \, \uhr \, (\kappa^\plus + 1))\;  \uhr \; t(q_0)} = \big(\, \ol{T} (q_0, q_1) \dot{Y} \, \big)^{ ((G_0)^\beta \, \uhr \, (\kappa^\plus + 1) )\; \uhr \; t(q_1)}.\] 
Hence, using canonical names for the generic filter, it follows that \[\big((G_0)^\beta \uhr (\kappa^\plus + 1) \big) \uhr t(q_1) = T (q_0, q_1) \big [\, \big((G_0)^\beta \uhr (\kappa^\plus + 1)\big) \uhr t(q_0)\, \big].\]

Now, let $q \in \m{P}_0 \uhr (\kappa^\plus + 1)$ as in the statement of this lemma, with maximal points $(\kappa^\plus, j_0), \ldots, (\kappa^\plus, j_{\ol{k}-1})$ with $j_0, \ldots, j_{\ol{k}-1} < \beta$ such that $q\, \| \, r$, and $\rho_0^\beta (\ol{q}) \in (G_0)^\beta \uhr (\kappa^\plus + 1)$ for $\ol{q} := q\, \cup \, r$. \\ Let $s \in G_0$ with the same maximal points $(\kappa^\plus, j_0), \ldots, (\kappa^\plus, j_{\ol{k}-1})$
and $\rho_0^\beta (\ol{s}) = \rho_0^\beta (\ol{q})$, where $\ol{s} := s\, \cup\, \ol{r}$ as before. Since $\big((G_0)^\beta \uhr (\kappa^\plus + 1)\big) \uhr t(s) = G_0 \uhr t(s)$ is a $V$-generic filter on $\m{P}_0 \uhr t(s)$ and $T(s, q): \m{P}_0 \uhr t(s) \rightarrow \m{P}_0 \uhr t(q)$ is an isomorphism of forcings, it follows from
\[\big((G_0)^\beta \uhr (\kappa^\plus + 1)\big) \uhr t(q) =  T (s, q) [G_0 \uhr t(s)]\] 
that $\big((G_0)^\beta \uhr (\kappa^\plus + 1)\big) \uhr t(q)$ is a $V$-generic filter on $\m{P}_0 \uhr t(q)$ as desired. \end{proof}

Now, we turn to $\m{P}_1$: For finitely many $(\mu_0, \ol{\jmath}_0), \ldots, (\mu_{\ol{k}-1}, \ol{\jmath}_{\ol{k}-1})$ with $\mu_0, \, \ldots \,, \mu_{\ol{k}-1} < \kappa$, $\ol{\jmath}_0 < \min\{ F(\mu_0), \beta\}, $ $\, \ldots \, $, $\ol{\jmath}_{\ol{k}-1} < \min \{F(\mu_{\ol{k}-1}), \beta\}$, let 
\[\big((G_1)^\beta \uhr (\kappa + 1) \big) \uhr \{(\mu_0, \ol{\jmath}_0), \, \ldots \,, (\mu_{\ol{k}-1}, \ol{\jmath}_{\ol{k}-1})\} \] denote the collection of all $p_1 \uhr \{(\mu_0, \ol{\jmath}_0), \, \ldots \,, (\mu_{\ol{k}-1},\ol{\jmath}_{\ol{k}-1})\}$ with $p_1 \in \m{P}_1 \uhr (\kappa + 1)$ such that \[(p_1)^\beta \uhr (\kappa + 1) \in (G_1)^\beta \uhr (\kappa + 1).\]

Then $\big((G_1)^\beta \uhr (\kappa + 1) \big) \uhr \{(\mu_0, \ol{\jmath}_0), \, \ldots \,, (\mu_{\ol{k}-1}, \ol{\jmath}_{\ol{k}-1})\} = G_1 \uhr \{(\mu_0, \ol{\jmath}_0), \, \ldots \,, (\mu_{\ol{k}-1}, \ol{\jmath}_{\ol{k}-1})\}$.

Thus, for any $X \in \dom S^\beta$, $X = \dot{X}^{G_0 \, \uhr \, t(s) \, \times \, G_1 \, \uhr \, \{(\mu_0, \ol{\jmath}_0), \, \ldots \,, (\mu_{\ol{k}-1}, \ol{\jmath}_{\ol{k}-1})\} \, \times \, G_1 (\kappa^{\plus})}$, it follows that \[X = \dot{X}^{\big((G_0)^\beta \, \uhr \, (\kappa^\plus + 1) \big) \, \uhr \; t(s)\,  \times \, \big((G_1)^\beta \, \uhr \, (\kappa + 1) \big)\, \uhr \, \{ (\mu_0, \ol{\jmath}_0), \, \ldots \,, (\mu_{\ol{k}-1}, \ol{\jmath}_{\ol{k}-1})\} \, \times \, G_1 (\kappa^{\plus})}.\]

This will help us prove the following proposition: 

\begin{prop} \label{sbetacontained} The restriction $S^\beta$ is contained in \[V\big[\, (G_0)^\beta \uhr (\kappa^\plus + 1)\, \times\, ( G_0 \uhr t(\ol{r}) ) \, \uhr \, [\kappa^{\plus}, \infty)\, \times \, (G_1)^\beta \, \uhr \, (\kappa + 1)\, \times \, G_1 (\kappa^\plus) \, \times \, \]\[ \times \, G_1 \uhr \{(\ol{\kappa}_l, \ol{\imath}_l)\ | \ l < \ol{n}, \ol{\kappa}_l > \kappa^\plus\}\, \big]. \] \end{prop}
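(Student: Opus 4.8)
The plan is to establish Proposition \ref{sbetacontained} by the same isomorphism-argument blueprint used repeatedly in the preceding propositions (most closely Proposition \ref{power set} and Proposition \ref{theta 1}). Write $W := V\big[(G_0)^\beta \uhr (\kappa^\plus + 1)\, \times\, (G_0 \uhr t(\ol{r})) \uhr [\kappa^\plus, \infty)\, \times\, (G_1)^\beta \uhr (\kappa + 1)\, \times\, G_1(\kappa^\plus)\, \times\, G_1 \uhr \{(\ol{\kappa}_l, \ol{\imath}_l)\ |\ l < \ol{n}, \ol{\kappa}_l > \kappa^\plus\}\big]$ for the candidate intermediate model. First I would fix a cardinal $\lambda$ large enough that $\dot{S} \in \Name(\m{P} \uhr (\lambda + 1))^V$, and recall the canonical $\m{P}\uhr(\lambda+1)$-name $\dot{S}^\beta$ built before the proof of Proposition \ref{theta 1}, with $(\dot{S}^\beta)^{G\uhr(\lambda+1)} = S^\beta$. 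Then I would write down the analogue $(S^\beta)^\prime$ of the sets $Z^\prime$, $X^\prime$, $\wt{F}$ appearing earlier: namely $(S^\beta)^\prime$ is the set of all pairs $(\dot{X}^{((G_0)^\beta\uhr(\kappa^\plus+1))\uhr t(s)\, \times\, ((G_1)^\beta\uhr(\kappa+1))\uhr\{(\mu_0,\ol{\jmath}_0),\ldots\}\, \times\, G_1(\kappa^\plus)}, \alpha)$ such that there is $(s, (\mu_0,\ol{\jmath}_0),\ldots,(\mu_{\ol{k}-1},\ol{\jmath}_{\ol{k}-1})) \in M_\beta$, a name $\dot X$ for the corresponding three-factor forcing, and a condition $p = (p_0,p_1) \in \m{P}$ with $p_0 \le \ol{s} := s \cup \ol{r}$, $\rho_0^\beta(p_0) \in (G_0)^\beta\uhr(\kappa^\plus+1)$, $(p_0 \uhr t(\ol{r}))\uhr[\kappa^\plus,\infty) \in (G_0\uhr t(\ol{r}))\uhr[\kappa^\plus,\infty)$, $\rho_1^\beta(p_1)\in(G_1)^\beta\uhr(\kappa+1)$, $p_1(\kappa^\plus)\in G_1(\kappa^\plus)$, $p_1\uhr\{(\ol{\kappa}_l,\ol{\imath}_l)\ |\ \ol{\kappa}_l>\kappa^\plus\}\in G_1\uhr\{\ldots\}$, and $p \Vdash_{\m{P}} OR(\wt{\dot{X}},\alpha)\in\dot{S}$. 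By the projections $\ol\rho_0^\beta$, $\ol\rho_1^\beta$ established just above, and by the identifications $((G_0)^\beta\uhr(\kappa^\plus+1))\uhr t(s) = G_0\uhr t(s)$ (Definition/Lemma \ref{Deflem}) and $((G_1)^\beta\uhr(\kappa+1))\uhr\{\ldots\} = G_1\uhr\{\ldots\}$, the set $(S^\beta)^\prime$ is definable in $W$; so it suffices to prove $S^\beta = (S^\beta)^\prime$.

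The inclusion $S^\beta \subseteq (S^\beta)^\prime$ is immediate from the forcing theorem together with Corollary \ref{corapprox}: given $(X,\alpha) \in S^\beta$, pick $(s,(\mu_0,\ol{\jmath}_0),\ldots)\in M_\beta$ witnessing membership of $X$ in the right intermediate extension, pick $\dot X$ with $X = \dot X^{G_0\uhr t(s)\times G_1\uhr\{\ldots\}\times G_1(\kappa^\plus)}$, and pick $p\in G$ with $p_0\le\ol{s}$ and $p\Vdash OR(\wt{\dot X},\alpha)\in\dot S$; all the generic-filter membership requirements then hold by $p\in G$. For the reverse inclusion $(S^\beta)^\prime\subseteq S^\beta$ I would run the isomorphism argument: suppose $(\dot X^{\ldots},\alpha)\in(S^\beta)^\prime\setminus S^\beta$ with witness $p$ as above. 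Then $\dot X^{G_0\uhr t(s)\times G_1\uhr\{\ldots\}\times G_1(\kappa^\plus)} = \wt{\dot X}^G \notin$ the value of $S^\beta$, so there is $q\in G$ with $q_0\le\ol{s}$ and $q\Vdash OR(\wt{\dot X},\alpha)\notin\dot S$ (more precisely $q\Vdash \forall\ \text{suitable}\ \dot X'\ OR(\wt{\dot X'},\alpha)\notin\dot S$, or just $q\Vdash OR(\wt{\dot X},\alpha)\notin\dot S$ after thinning). I would then construct a $\m{P}$-automorphism $\pi=(\pi_0,\pi_1)$ with $\pi p\,\|\,q$, with $\pi$ the identity on all coordinates that are "seen" by $W$ — i.e. $\pi_0$ fixes $t(\ol{r})$ and every vertex $(\kappa^\plus,i)$ with $i<\beta$ and, by using only small permutations, lies in $Small_0(\lambda_l,[0,\alpha_l))$ for each $l$; $\pi_1$ has empty $\supp\pi_1(\cdot)$ and does not touch $\m{P}_1\uhr(\kappa+1)$, $\m{P}_1(\kappa^\plus)$, or the coordinates $(\ol{\kappa}_l,\ol{\imath}_l)$ with $\ol{\kappa}_l>\kappa^\plus$ — exactly as the automorphisms built in Propositions \ref{separation} and \ref{power set}. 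Crucially $\pi$ moves $p$ only in the "upper/wide part" of $\m{P}_0$ (vertices above $\beta$, and tree structure below $\kappa^\plus$ off $t(\ol r)$) and in the "wide part" of $\m{P}_1\uhr(\kappa+1)$, which is where $p$ and $q$ can disagree; this makes $\pi p\,\|\,q$ achievable by the usual finiteness-of-trees / density argument. Since $\wt{\dot X}$, after extension $\wt{\dot X}^{D_\pi}$, is fixed by $\pi$ (it depends only on $W$-coordinates), and $\dot S\in HS$ with $\pi\in(A_{\dot S})$ so $\pi\ol{\dot S}^{D_\pi}=\ol{\dot S}^{D_\pi}$, applying the symmetry lemma to $q\Vdash OR(\wt{\dot X},\alpha)\notin\dot S$ and to $\pi p\Vdash OR(\wt{\dot X}^{D_\pi},\alpha)\in\dot S$ at a common extension yields a contradiction. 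Hence $S^\beta=(S^\beta)^\prime\in W$.

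The main obstacle I expect is bookkeeping the automorphism $\pi$ so that it simultaneously (a) is the identity on every coordinate recorded by $W$ — this forces $\pi_0$ to preserve the entire tree $t(\ol r)$, all width-$<\beta$ vertices on level $\kappa^\plus$, and the portion of $t(p_0)$ below level $\kappa^\plus$ that feeds those vertices, and forces $\pi_1$ to be trivial below $\kappa$, on the $\kappa^\plus$-block, and on the high $\ol{\kappa}_l$-blocks; (b) still satisfies $\pi p\,\|\,q$, which needs enough freedom on the remaining coordinates (the width-$\ge\beta$ vertices and the tree-structure-below-$\kappa^\plus$ data not pinned by $t(\ol r)$); and (c) lands in the intersection $(A_{\dot S})$, which requires using only small permutations at levels $\lambda_l$ and empty supports at levels $\ol{\lambda}_l$, as in Proposition \ref{power set}. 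Verifying that requirement (a) is compatible with the projections — i.e. that $\rho_0^\beta(\pi_0 p_0) = \rho_0^\beta(p_0)$ and $\rho_1^\beta(\pi_1 p_1) = \rho_1^\beta(p_1)$, so that $\pi p$ genuinely induces the same $W$-generic filter as $p$ — is the delicate point; it follows from the definitions of $\rho_0^\beta$, $\rho_1^\beta$ because dropping indices below $\kappa^\plus$ and truncating width at $\beta$ exactly quotients out the coordinates $\pi$ is allowed to move, but this needs to be spelled out. Everything else (the equivalence $S^\beta=(S^\beta)^\prime$, definability in $W$, the recursive identity $\pi\wt{\dot x}^{D_\pi}=\wt{T_\pi\dot x}^{D_\pi}$-type fact for the relevant names) is routine given the machinery already developed, in particular Definition/Lemma \ref{Deflem} and the projection propositions just proved.
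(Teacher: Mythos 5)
Your overall strategy --- define $(S^\beta)'$ as a forcing-theoretic approximation inside the intermediate model $W$, prove $S^\beta = (S^\beta)'$ by an isomorphism argument, conclude $S^\beta \in W$ --- is the same as the paper's, and your definition of $(S^\beta)'$ and the easy inclusion $S^\beta \subseteq (S^\beta)'$ are correct. But the hard direction $(S^\beta)' \subseteq S^\beta$ contains a genuine gap, precisely at the point where this section's new machinery is indispensable. The witness tree $s$ in a tuple $(s, (\mu_0, \ol{\jmath}_0), \ldots) \in M_\beta$ verifying $(X, \alpha) \in (S^\beta)'$ need \emph{not} belong to $G_0 \uhr (\kappa^\plus + 1)$: the only thing $W$ sees is $\rho_0^\beta(\ol{s}) \in (G_0)^\beta \uhr (\kappa^\plus + 1)$, i.e.\ the tree \emph{structure} of $\ol{s}$ below level $\kappa^\plus$ with the concrete indices $(\alpha, m)$ dropped. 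So your identification $\dot{X}^{((G_0)^\beta \uhr (\kappa^\plus + 1)) \uhr t(s) \times \cdots} = \dot{X}^{G_0 \uhr t(s) \times \cdots} = \wt{\dot{X}}^G$ fails when $s \notin G_0$ (and $G_0 \uhr t(s)$ is not even defined there), and your claim that $\pi$ fixes $\wt{\dot{X}}^{D_\pi}$ because ``it depends only on $W$-coordinates'' is false: as a $\m{P}$-name, $\wt{\dot{X}}$ is pinned to the actual indices $(\alpha, m)$ of $t(s)$ below level $\kappa^\plus$, which $W$ forgets and which any automorphism making $\pi p\, \| \, p'$ must in fact permute.

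The paper closes this gap with a re-indexing step your proposal omits. It chooses $s' \in G_0 \uhr (\kappa^\plus + 1)$ with $\rho_0^\beta(\ol{s'}) = \rho_0^\beta(\ol{s})$, transports $\dot{X}$ through the isomorphism $T(s, s')$ of Definition/Lemma~\ref{Deflem} to $\ddot{X} := \ol{T}(s, s')\dot{X}$ so that $X = \wt{\ddot{X}}^G$, takes $p' \in G$ with $p' \Vdash (\wt{\ddot{X}}, \alpha) \notin \dot{S}$, and builds $\pi$ so that $\pi p\, \| \, p'$, $\pi \ol{\dot{S}}^{D_\pi} = \ol{\dot{S}}^{D_\pi}$, and --- crucially --- $\pi \wt{\dot{X}}^{D_\pi} = \wt{\ddot{X}}^{D_\pi}$, not $\pi \wt{\dot{X}}^{D_\pi} = \wt{\dot{X}}^{D_\pi}$. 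Concretely, $\pi_0$ sends each index $(\alpha, m) \leq_{t(p_0)} (\kappa^\plus, i)$ with $i < \beta$ to the corresponding index $(\alpha, m') \leq_{t(p_0')} (\kappa^\plus, i)$; this is well defined since both trees project to the same $\rho_0^\beta$-image, it respects all the $Fix$- and $Small$-constraints, and it realizes $T(s, s')$ on the name side. Your observation that one should check $\rho_0^\beta(\pi_0 p_0) = \rho_0^\beta(p_0)$ is exactly the right invariant, but it is strictly weaker than $\pi$ fixing $\wt{\dot{X}}^{D_\pi}$; once you replace that fixing claim by the $T$-transport identity and insert the re-indexing of $\dot{X}$, the argument closes.
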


\begin{proof} 

As in the proof of Proposition \ref{theta 1}, fix a cardinal $\lambda$
with $\lambda > \max\{ \kappa^\plus, \kappa_0, \, \ldots \,, \kappa_{n-1}, \lambda_0, $ $\, \ldots \,, $ $\lambda_{m-1}, \ol{\kappa}_0, \, \ldots \,, \ol{\kappa}_{\ol{n}-1}, \ol{\lambda}_0, $ $\, \ldots \,, $ $\ol{\lambda}_{\ol{m}-1}\}$ such that $\dot{S} \in \Name (\m{P} \uhr (\lambda + 1))$. Then also $\dot{S}^\beta \in \Name (\m{P} \uhr (\lambda + 1))$.

Let $(S^\beta)^\prime$ denote the collection of all \[\big(\, \dot{X}^{\big((G_0)^\beta \, \uhr \, (\kappa^\plus + 1) \big) \, \uhr \; t(q)\, \times\, \big( (G_1)^\beta \, \uhr \, (\kappa + 1)\big) \, \uhr \, \{ (\mu_0, \ol{\jmath}_0), \, \ldots \,, (\mu_{\ol{k}-1}, \ol{\jmath}_{\ol{k}-1})\}\, \times\, G_1 (\kappa^\plus)}, \alpha \, \big)\] such that \begin{itemize} \item[(i)] $q$ is a condition in $\m{P}_0 \uhr (\kappa^\plus + 1)$ where $t(q)$ has maximal points $(\kappa^\plus, j_0), \, \ldots \,, (\kappa^\plus, j_{k-1})$ with $j_0, \ldots, j_{k-1} < \beta$; moreover, $q \, \| \, \ol{r}$, and for $\ol{q} := q \, \cup\, \ol{r}$, it follows that $\rho_0^\beta (\ol{q}) \in (G_0)^\beta \uhr (\kappa^\plus + 1)$,

\item[(ii)] $\ol{k} < \omega$, $\mu_0, \, \ldots \,, \mu_{\ol{k}-1} \in \kappa\; \cap\; Succ^\prime$ and $\ol{\jmath}_0 < \min \{F(\mu_0), \beta\}, \, \ldots \,, \ol{\jmath}_{\ol{k}-1} < \min \{ F(\mu_{\ol{k}-1}), \beta\}$,  \item[(iii)] $\dot{X}$ is a name for the forcing $\m{P}_0 \uhr t(q) \, \times \, \m{P}_1 \uhr \{(\mu_0, \ol{\jmath}_0), \, \ldots \, ,\,(\mu_{\ol{k}-1}, \ol{\jmath}_{\ol{k}-1})\} \, \times \, \m{P}_1 (\kappa^\plus)$, 
\item[(iv)] there is a condition $p \in \m{P} \uhr (\lambda + 1)$ with $p_0 \in (\wt{\m{P}}_0)^{\ol{r}}$, $p_0 \leq \ol{q}$ and \begin{itemize} \item $\rho_0^\beta (p) \in (G_0)^\beta \uhr (\kappa^\plus + 1)$ \item $(p_0 \uhr t(\ol{r})) \uhr [\kappa^{\plus}, \infty) \in (G_0 \uhr t(\ol{r})) \uhr [\kappa^{\plus}, \infty)$ \item  $(p_1)^\beta \uhr (\kappa + 1) \in (G_1)^\beta \uhr (\kappa + 1)$ \item $p_1 (\kappa^\plus) \in G_1 (\kappa^\plus)$ \item $p_1 \uhr \{(\ol{\kappa}_l, \ol{\imath}_l)\ | \ l < \ol{n}, \ol{\kappa}_l > \kappa^\plus\} \in G_1 \uhr \{ (\ol{\kappa}_l, \ol{\imath}_l)\ | \ l < \ol{n}, \ol{\kappa}_l > \kappa^\plus\}$ \end{itemize} such that $p \Vdash_{\m{P} \, \uhr \, (\lambda + 1)} (\wt{\dot{X}}, \alpha) \in \dot{S}$. \end{itemize}

It suffices to show that $S^\beta = (S^\beta)^\prime$. \begin{itemize} \item[\tbl $\supseteq$\tbr:] For $(X, \alpha) \in S^\beta$, we have  $X = \dot{X}^{G_0 \, \uhr \, t(s) \times G_1 \, \uhr \, \{(\mu_0, \ol{\jmath}_0), \, \ldots \,, (\mu_{\ol{k}-1}, \ol{\jmath}_{\ol{k}-1})\} \times G_1 (\kappa^\plus)}$ for some $(s, (\mu_0, \ol{\jmath}_0), \ldots, (\mu_{\ol{k}-1}, \ol{\jmath}_{\ol{k}-1})) \in M_\beta$ with $s \in G_0 \uhr (\kappa^\plus + 1)$, where $\dot{X}$ is a name for the forcing $\m{P}_0 \uhr t(s)\, \times\, \m{P}_1 \uhr \{(\mu_0, \ol{\jmath}_0), \, \ldots \,\} \, \times\, \m{P}_1 (\kappa^\plus)$. \\[-4mm]

Then $(X, \alpha) = ({\wt{\dot{X}}}^{\;G \, \uhr \, (\lambda + 1)}, \alpha) \in \dot{S}^{G \, \uhr \, (\lambda + 1)}$, so there must be $p \in G \uhr (\lambda + 1)$, $p_0 \leq \ol{s} := s\, \cup\, r$, with $p \Vdash_{\m{P} \, \uhr \, (\lambda + 1)} (\wt{\dot{X}}, \alpha) \in \dot{S}$. 

Setting $q := s$, it follows that \[ (X, \alpha) = \big(\, \dot{X}^{\big((G_0)^\beta \, \uhr \, (\kappa^\plus + 1) \big) \, \uhr \, t(q)\, \times\, \big((G_1)^\beta \, \uhr \, (\kappa + 1)\big) \, \uhr \, \{ (\mu_0, \ol{\jmath}_0), \, \ldots \,, (\mu_{\ol{k}-1}, \ol{\jmath}_{\ol{k}-1})\}\ \times\, G_1 (\kappa^\plus)}, \alpha\, \big)\] is contained in $(S^\beta)^\prime$ as desired. 

\item[\tbl $\subseteq$\tbr\,:] Assume towards a contradiction, there was $(X, \alpha) \in (S^\beta)^\prime \setminus S^\beta$. Let \[X = \dot{X}^{\big((G_0)^\beta \, \uhr \, (\kappa^\plus + 1)\big) \, \uhr \, t(q)\, \times\, \big( (G_1)^\beta \, \uhr \, (\kappa + 1)\big) \, \uhr \, \{ (\mu_0, \ol{\jmath}_0), \, \ldots \,, (\mu_{\ol{k}-1}, \ol{\jmath}_{\ol{k}-1})\}\ \times\, G_1 (\kappa^\plus)}\] as in the definition of $(S^\beta)^\prime$ with $p \in \m{P} \uhr (\lambda + 1)$ as in (iv)
such that \[p \Vdash_{\m{P} \, \uhr \, (\lambda + 1)} (\wt{\dot{X}}, \alpha) \in \dot{S} \hspace*{2cm}  (\times).\]

Since $\rho_0^\beta (\ol{q}) \in (G_0)^\beta \uhr (\kappa^\plus + 1)$, we can take a condition $\ol{s} \in G_0 \uhr (\lambda + 1)$, $\ol{s} \in (\wt{\m{P}}_0)^{\ol{r}}$ with $\ol{s} \leq \ol{r}$ and $\rho_0^\beta (\ol{s}) = \rho_0^\beta (\ol{q})$. W.l.o.g. we can assume that $\ol{s} = s \cup \ol{r}$ for some $s \in G_0 \uhr (\kappa^\plus + 1)$ which has the same maximal points $(\kappa^\plus, j_0), \ldots, (\kappa^\plus, j_{k-1})$ as $q$. The isomorphism $T(q, s): \m{P}_0 \uhr q \rightarrow \m{P}_0 \uhr s$ from the proof of Definition / Lemma \ref{Deflem} can be extended to an isomorphism from $\m{P}_0 \uhr t(q) \, \times \, \m{P}_1 \uhr \{(\mu_0, \ol{\jmath}_0), \, \ldots \,, \ (\mu_{\ol{k}-1}, \ol{\jmath}_{\ol{k}-1})\} \, \times \, \m{P}_1 (\kappa^\plus)$ onto $\m{P}_0 \uhr t(s)\,  \times \,  \m{P}_1 \uhr \{(\mu_0, \ol{\jmath}_0), \, \ldots \,, \ (\mu_{\ol{k}-1}, \ol{\jmath}_{\ol{k}-1})\} \, \times \, \m{P}_1 (\kappa^\plus)$ that is the identity on the second and third coordinate. We will denote this extension by $T(q, s)$ as well, and consider the according isomorphism on the name space $\ol{T} (q, s): \Name \big(\m{P}_0 \uhr t(q) \times \m{P}_1 \uhr \{(\mu_0, \ol{\jmath}_0), \, \ldots \,, \ (\mu_{\ol{k}-1}, \ol{\jmath}_{\ol{k}-1})\} \times \m{P}_1 (\kappa^\plus)) \rightarrow \Name \big(\m{P}_0 \uhr t(s) \times \m{P}_1 \uhr \{(\mu_0, \ol{\jmath}_0), \, \ldots \,, \ (\mu_{\ol{k}-1}, \ol{\jmath}_{\ol{k}-1})\} \times \m{P}_1 (\kappa^\plus)\big)$. \\[-3mm]

Let $\ddot{X} := \ol{T}(q, s) \dot{X}$. Then \[X = \dot{X}^{\big((G_0)^\beta \, \uhr \, (\kappa^\plus + 1)\big) \, \uhr \; t(q)\, \times\, \big((G_1)^\beta \, \uhr \, (\kappa + 1)\big) \, \uhr \, \{ (\mu_0, \ol{\jmath}_0), \, \ldots \,, (\mu_{\ol{k}-1}, \ol{\jmath}_{\ol{k}-1})\}\, \times\, G_1 (\kappa^\plus)} = \] \[ = \ddot{X}^{\big((G_0)^\beta \, \uhr \, (\kappa^\plus + 1) \big) \, \uhr \; t(s)\, \times\, \big( (G_1)^\beta \, \uhr \, (\kappa + 1)\big) \, \uhr \, \{ (\mu_0, \ol{\jmath}_0), \, \ldots \,, (\mu_{\ol{k}-1}, \ol{\jmath}_{\ol{k}-1})\}\, \times\, G_1 (\kappa^\plus)} =\] \[ = \ddot{X}^{G_0 \, \uhr \, t(s) \, \times \, G_1 \ \uhr \, \{(\mu_0, \ol{\jmath}_0), \, \ldots \,, (\mu_{\ol{k}-1}, \ol{\jmath}_{\ol{k}-1})\, \times\, G_1 (\kappa^\plus)} = {\wt{\ddot{X}}}^{\; G},\] where as before, $\wt{\ddot{X}}$ denotes the canonical extension of $\ddot{X}$ to a $\m{P}$-name. \\[-3mm]

Since $(X, \alpha) \notin S$, there exists $p^\prime \in G \uhr (\lambda + 1)$, $p_0^\prime \in (\wt{\m{P}}_0)^{\ol{r}}$, with \[p^\prime \Vdash_{\m{P} \, \uhr \, (\lambda + 1)} (\wt{\ddot{X}}, \alpha) \notin \dot{S} \hspace*{2cm} (\times \times).\]

W.l.o.g. we can take $p^\prime_0 \leq \ol{s}$, and assure by a density argument, 
that $\rho_0^\beta (p_0^\prime) \leq \rho_0^\beta (p_0)$. \\[-3mm]

We want to construct an isomorphism $\pi: \m{P} \rightarrow \m{P}$ with the following properties:

\begin{itemize} \item[--] $\pi p \, \| \, p^\prime$ \item[--] ${\pi \wt{\dot{X}}}^{\;D_\pi} = {\wt{\ddot{X}}}^{\;D_\pi}$ 
\item[--] $\pi {\ol{\dot{S}}}^{D_\pi} = {\ol{\dot{S}}}^{D_\pi}$. \end{itemize} Together with $(\times)$ and $(\times \times)$, this gives the desired contradiction. \\[-3mm]

The third condition is satisfied if we make sure that $\pi$ is contained in the intersection $Fix_0 (\kappa_0, i_0)\, \cap\, \cdots\, \cap\, Small_0 (\lambda_0, [0, \alpha_0))\, \cap\, \cdots\,\cap\, Fix_1 (\ol{\kappa}_0, \ol{\imath}_0)\, \cap\, \cdots\, \cap\, Small_1 (\ol{\lambda}_0, [0, \ol{\alpha}_0)\, \cap\, \cdots$. 

We start with the construction of $\pi_0$. From $\rho_0^\beta (p_0^\prime) \leq \rho_0^\beta (p_0)$, it follows that the tree structures of $t(p_0)$ and $t(p_0^\prime)$ coincide below the vertices $(\kappa^\plus, i) \in t(p_0)$ with $i < \beta$. Hence, we can achieve $\pi_0 p_0\, \| \, p_0^\prime$ by changing any index $(\alpha, m)$ with $(\alpha, m) \leq_{t(p_0)} (\kappa^\plus, i)$ for some $i < \beta$, to $(\alpha, m^\prime)$, where $(\alpha, m^\prime) \leq_{t(p_0^\prime)} (\kappa^\plus, i)$, i.e. $(\alpha, m^\prime)$ is the corresponding index in the tree structure of $t(p_0^\prime)$; and outside the branches below $\{(\kappa^\plus, i) \in t(p_0)\ | \ i < \beta\}$, we make $t(\pi_0 p_0)$ and $t(p_0^\prime)$ disjoint.\\[-3mm]

Let $\HT \pi_0 := \lambda + 1$. 
For a cardinal $\alpha < \HT \pi_0$ with $\alpha \notin \{\lambda_0, \ldots, \lambda_{m-1}\}$, take for $\pi_0 (\alpha)$ a bijection on $\{ (\alpha, j) \ | \ j < F_{\lim} (\alpha)\}$ with finite support  such that the following hold:\begin{itemize} \item[--] If $(\alpha, j) \in t( \ol{r})$, 
then $\pi_0(\alpha) (\alpha, j) := (\alpha, j)$. \item[--] If $(\alpha, j)$ has a $t(p_0)$-successor $(\kappa^\plus, i)$ with $i < \beta$, it follows from $\rho_0^\beta (p_0^\prime) \leq \rho_0^\beta (p_0)$ that also $(\kappa^\plus, i) \in t(p_0^\prime)$. 
Let $\pi_0 (\alpha) (\alpha, j) := (\alpha, j^\prime)$ be the $t(p_0^\prime)$-predecessor of $(\kappa^\plus, i)$ on level $\alpha$. \item[--] 
For all the $(\alpha, j) \in t(p_0)$ remaining, $j \in [\gamma (j), \gamma (j) + \omega)$ for $\gamma (j)$ a limit ordinal, let $\pi_0 (\alpha)(\alpha, j) = (\alpha, j^\prime)$ for some $j^\prime \in [\gamma (j), \gamma (j) + \omega)$ with $(\alpha, j^\prime) \notin t(p_0)\, \cup \, t(p_0^\prime)$.

\end{itemize}

This is well-defined: If $(\alpha, j)$ has two $t(p_0)$-successors $(\kappa^\plus, i)$ and $(\kappa^\plus, i^\prime)$ with $i, i^\prime < \beta$, then it follows from $\rho_0^\beta (p_0^\prime) \leq \rho_0^\beta (p_0)$ that $(\kappa^\plus, i)$ and $(\kappa^\plus, i^\prime)$ also have the same $t(p_0^\prime)$-predecessor on level $\alpha$.
Also, if $(\alpha, j) \in t(\ol{r})$
has a $t(p_0)$-successor $(\kappa^\plus, i)$ with $i < \beta$, it follows that in $t(p_0^\prime)$, the vertex $(\kappa^\plus, i)$ has predecessor $(\alpha, j)$ as well, since $t(p_0)$ and $t(p_0^\prime)$ both extend $t(\ol{r})$. Thus, $\pi_0 (\alpha) (\alpha, j) = (\alpha, j)$. \\[-3mm]

In the case that $\alpha = \lambda_l$ for some $l < m$, we have to be careful, since we want $\pi \in Small_0 (\lambda_l, [0, \alpha_l))$. Thus, for any interval $[\gamma, \gamma + \omega) \subseteq \alpha_l$ with $\gamma$ a limit ordinal and $j \in [\gamma, \gamma + \omega)$, we have to make sure that $\pi_0 (\lambda_l) (\lambda_l, j) = (\lambda_l, j^\prime)$ such that also $j^\prime \in [\gamma, \gamma + \omega)$: 

Consider $(\lambda_l, j) \in t(p_0)$ with $t(p_0)$-successor $(\kappa^\plus, i)$ for some $i < \beta$. Let $(\lambda_l, z) \in t \big(\rho_0^\beta (p_0)\big)$ with $i \in z$, and $(\lambda_l, \ol{z}) \in t\big( \rho_0^\beta (p_0^\prime) \big)$ with $i \in \ol{z}$. \\ Since $\rho_0^\beta (p_0^\prime) \leq \rho_0^\beta (p_0)$, it follows that $\ol{z} \supseteq z$, and in the case that $j < \alpha_l$, we have $N\big( \rho_0^\beta (p_0^\prime) \big) (\lambda_l, \ol{z}) = N \big(\rho_0^\beta (p_0) \big) (\lambda_l, z) = (\lambda_l, j)$. Hence, $(\lambda_l, j)$ is also the $t(p_0^\prime)$-predecessor of $(\kappa^\plus, i)$ on level $\lambda_l$, which gives $\pi_0 (\lambda_l) (\lambda_l, j) = (\lambda_l, j)$. \\
In the case that $j \geq \alpha_l$, it follows from \[N\big( \rho_0^\beta (p^\prime) \big) \, (\lambda_l, \ol{z}) = N\big( \rho_0^\beta (p)\big) (\lambda_l, z) = \ast\] that for $(\lambda_l, j^\prime)$ denoting the $t(p_0^\prime)$-predecessor of $(\kappa^\plus, i)$ on level $\lambda_l$, i.e.
$\pi_0 (\lambda_l) (\lambda_l, j) = (\lambda_l, j^\prime)$, we have $j^\prime \geq \alpha_l$, as well. \\[-3mm]

Thus, we can make sure that for any $l < m$, the following additional property holds for $\pi_0 (\lambda_l)$:     

\begin{itemize}

\item[--] For any $(\lambda_l, j)$ with $\gamma$ a limit ordinal such that $j \in [\gamma (j), \gamma (j) + \omega) \subseteq \alpha_l$, we have $\pi_0 (\lambda_l) (\lambda_l, j) = (\lambda_l, j^\prime)$ such that $j^\prime$ is contained in the interval $[\gamma (j), \gamma (j) + \omega)$, as well.
\end{itemize}

Then $\pi_0 \in Small_0 (\lambda_0, [0, \alpha_0))\, \cap\, \cdots\, \cap\, Small_0(\lambda_{m-1}, [0, \alpha_{m-1}))$, and $\pi_0 \in Fix_0 (\kappa_0, i_0)\, \cap\, \cdots\, \cap\, Fix_0 (\kappa_{n-1}, i_{n-1})$, since $(\kappa_l, i_l) \in t(\ol{r})$ for all $l < n$. \\[-3mm]

We now have to verify that $\pi_0 p_0 \, \| \, p^\prime_0$. Firstly, on the tree $t(\ol{r})$, the conditions $p_0$ and $p^\prime_0$ coincide, and $\pi_0$ is the identity. Secondly, from $\rho_0^\beta (p_0^\prime) \leq \rho_0^\beta (p_0)$ and by construction of the map $\pi_0$, it follows that $\pi_0 p_0$ and $p_0^\prime$ agree on the branches below $\{(\kappa^\plus, i) \in t(\pi_0 p_0)\ | \ i < \beta\}$.
All the remaining $t(\pi_0 p_0)$- and $t(p_0^\prime)$-branches are disjoint, 
i.e. whenever $(\alpha, j)  \in t(p_0^\prime)\, \setminus \, t(\ol{r})$, and $(\alpha, j)$ has no $t(p_0^\prime)$-successor $(\kappa^\plus, i)$ with $i < \beta$, then $(\alpha, j) \notin t(\pi_0 p_0)$.
Hence, $\pi_0 p_0\, \| \, p_0^\prime$.

The map $\pi_1$ with $\pi_1 p_1 \, \|  \, p_1^\prime$ can be constructed as in Proposition \ref{separation}, and since $p_1^\prime \in G_1 \uhr (\lambda + 1)$ and $p$ satisfies (iv), it follows that $\pi_1 \in Fix_1 (\ol{\kappa}_0, \ol{\imath}_0)\, \cap\, \cdots\, \cap\, Fix_1 (\ol{\kappa}_{\ol{n}-1}, \ol{\imath}_{\ol{n}-1})\, \cap\, Small_1 (\ol{\lambda_0}, [0, \ol{\alpha}_0))\, \cap\, \cdots\, \cap\, Small_1 (\ol{\lambda}_{\ol{m}-1}, [0, \ol{\alpha}_{\ol{m}-1}))$ as desired. 

It remains to check that ${\pi \wt{\dot{X}}}^{\, D_\pi} = {\wt{\ddot{X}}^{\, D_\pi}}$, where $\ddot{X} := \ol{T} (q, s) \dot{X}$. \\ Firstly, $\pi_1$ is the identity on $\m{P}_1 \uhr \{(\mu_0, \ol{\jmath}_0), \ldots, (\mu_{\ol{k}-1}, \ol{\jmath}_{\ol{k}-1})\}$, since $\mu_l < \kappa$, $\ol{\jmath}_l < \beta$ for all $l < k$; so from $(p_1)^\beta \uhr (\kappa + 1) \in (G_1)^\beta \uhr (\kappa^{\plus} + 1)$, $p_1^\prime \in G_1$, it follows that $p_1$ and $p_1^\prime$ coincide on $\m{P}_1 \uhr \{ (\mu_0, \ol{\jmath}_0), \ldots, (\mu_{\ol{k}-1}, \ol{\jmath}_{\ol{k}-1})\}$. Similarly, $\pi_1$ is the identity on $\m{P}_1 (\kappa^\plus)$. \\ Now, consider $\pi_0$. Recall that any $(\alpha, j) \in t(p_0)$ with $(\alpha, j) \leq_{t(p_0)} (\kappa^\plus, i)$ for some $i < \beta$ is mapped to $(\alpha, j^\prime)$ such that $(\alpha, j^\prime)$ is the $t(p^\prime_0)$-predecessor of $(\kappa^\plus, i)$ on level $\alpha$. Since $p_0 \leq \ol{q} = q\, \cup\, \ol{r}$, $p_0^\prime \leq \ol{s} = s\, \cup\, \ol{r}$ with $\rho_0^\beta (\ol{s}) = \rho_0^\beta (\ol{q})$, it follows that any $(\alpha, j) \in t(q)$ with $(\alpha, j) \leq_{t(q)} (\kappa^\plus, i)$ for some $i < \beta$ is mapped to the corresponding $t(s)$-predecessor of $(\kappa^\plus, i)$ on level $\alpha$: $\pi_0 (\alpha) (\alpha, j) = (\alpha, j^\prime)$ with $(\alpha, j^\prime) \leq_{t(s)} (\kappa^\plus, i)$. Hence, it follows for any condition $\wt{q} \in \m{P}_0 \uhr t(q)$ that $\pi_0 \wt{q} = T(q, s) (\wt{q}) \in \m{P}_0 \uhr t(s)$. \\
Inductively, this implies ${\pi \wt{\dot{x}}}^{D_\pi} = {\wt{\ddot{x}}}^{D_\pi}$ whenever $\dot{x}$ is a name for $\m{P}_0 \uhr t(q) \times \m{P}_1 \uhr \{(\mu_0, \ol{\jmath}_0), \, \ldots \,, (\mu_{\ol{k}-1}, \ol{\jmath}_{\ol{k}-1})\} \times \m{P}_1 (\kappa^\plus)$ and $\ddot{x} := \ol{T}(q, s) \dot{x}$.

In particular, ${\pi \wt{\dot{X}}}^{\;D_\pi} = {\wt{\ddot{X}}}^{\;D_\pi}$, which finishes the proof. 

\end{itemize}
\end{proof}

Thus, we have shown that the surjection $S^\beta: \dom S^\beta \rightarrow F(\kappa)$ is contained in $V[ (G_0)^\beta \uhr (\kappa^\plus + 1)\, \times\, (G_0 \uhr t(\ol{r}) ) \uhr [\kappa^\plus, \infty) \, \times \, (G_1)^\beta \uhr (\kappa + 1) \times G_1 (\kappa^\plus) \times G_1 \uhr \{(\ol{\kappa}_l, \ol{\imath}_l)\ | \ l < \ol{n}, \ol{\kappa}_l > \kappa^\plus\}]$. We will now see that in this model, there is also an injection $\iota^\beta: \dom S^\beta \hookrightarrow \beta$. Together with Lemma \ref{prescardF(k)} this gives the desired contradiction.  

\begin{prop} \label{iotabeta} In $V[ (G_0)^\beta \uhr (\kappa^\plus + 1)\, \times\, (G_0 \uhr t(\ol{r})) \uhr [\kappa^{\plus}, \infty)\, \times \, (G_1)^\beta \uhr (\kappa + 1) \, \times \, G_1 (\kappa^\plus) \, \times \, G_1 \uhr \{(\ol{\kappa}_l, \ol{\imath}_l)\ | \ l < \ol{n}, \ol{\kappa}_l > \kappa^\plus\}]$, there is an injection $\iota^\beta: \dom S^\beta \rightarrow \beta$. \end{prop}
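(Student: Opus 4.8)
The plan is to establish the Proposition by a cardinality count carried out inside the $ZFC$-model
\[
W \ := \ V\big[\, (G_0)^\beta \uhr (\kappa^\plus + 1)\, \times\, (G_0 \uhr t(\ol{r})) \uhr [\kappa^{\plus}, \infty)\, \times\, (G_1)^\beta \uhr (\kappa + 1)\, \times\, G_1 (\kappa^\plus)\, \times\, G_1 \uhr \{(\ol{\kappa}_l, \ol{\imath}_l)\ | \ l < \ol{n},\, \ol{\kappa}_l > \kappa^\plus\}\, \big].
\]
It suffices to show $|\dom S^\beta|^W \le |\beta|^V$, for then $AC$ in $W$ (which holds since $W$ is a set-generic extension of $V$) produces an injection $\dom S^\beta \hookrightarrow |\beta|^V$, which composed with a bijection $|\beta|^V \to \beta$ taken from $V$ yields the desired $\iota^\beta$. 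Recall $|\beta|^V < F(\kappa)$, so together with the surjectivity of $S^\beta$ onto $F(\kappa)$ from Proposition \ref{theta 1} this is exactly the contradiction that completes the proof of $\theta^N(\kappa) \le F(\kappa)$ for limit cardinals $\kappa$.

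The first step is to discard the factor $G_1(\kappa^\plus)$. Every $X \in \dom S^\beta$ lies in $V[G_0 \uhr t(s)\, \times\, G_1 \uhr \{(\mu_0, \ol{\jmath}_0), \ldots, (\mu_{\ol{k}-1}, \ol{\jmath}_{\ol{k}-1})\}\, \times\, G_1 (\kappa^\plus)]$ for some $(s, (\mu_0,\ol{\jmath}_0),\ldots) \in M_\beta$ with $s \in G_0 \uhr (\kappa^\plus + 1)$. Put $\mathbb{Q} := \m{P}_0 \uhr t(s)\, \times\, \m{P}_1 \uhr \{(\mu_0,\ol{\jmath}_0),\ldots\}$ and $\mathbb{R} := \m{P}_1(\kappa^\plus) = Fn([\kappa,\kappa^\plus)\times F(\kappa^\plus), 2, \kappa^\plus)$. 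By Proposition \ref{prescard} the forcing $\mathbb{Q}$ preserves cardinals, so $\kappa^\plus$ is still a cardinal in $V[G_\mathbb{Q}]$; since the pointwise union of at most $\kappa$ conditions in $\mathbb{R}$ has size $\le \kappa < \kappa^\plus$ and extends them all, $\mathbb{R}$ is $\le\kappa$-closed in $V$ and in $V[G_\mathbb{Q}]$. As $G_{\mathbb{R}}$ is $V[G_\mathbb{Q}]$-generic and a $\le\kappa$-closed forcing adds no new subsets of $\kappa$, it follows that $X \in V[G_0 \uhr t(s)\, \times\, G_1 \uhr \{(\mu_0,\ol{\jmath}_0),\ldots\}]$. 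Finally, writing $\hat q := \rho_0^\beta(s \cup \ol{r}) \in (G_0)^\beta\uhr(\kappa^\plus+1) \in W$, Definition/Lemma \ref{Deflem} and its $\m{P}_1$-analogue give $G_0 \uhr t(s) = \big((G_0)^\beta\uhr(\kappa^\plus+1)\big)\uhr t(s)$ and $G_1\uhr\{(\mu_0,\ol{\jmath}_0),\ldots\} = \big((G_1)^\beta\uhr(\kappa+1)\big)\uhr\{(\mu_0,\ol{\jmath}_0),\ldots\}$, so this generic filter, and hence $X$, already lives in $W$.

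The next step is the count, performed inside $W$. For each $\hat q \in (G_0)^\beta\uhr(\kappa^\plus+1)$ of the relevant form (support containing the vertices $(\kappa^\plus, i_l^\prime)$, all top indices below $\beta$, and carrying the $t(\ol{r})$-structure below level $\kappa^\plus$) and each finite tuple $\tau = ((\mu_0,\ol{\jmath}_0),\ldots,(\mu_{\ol{k}-1},\ol{\jmath}_{\ol{k}-1}))$ with $\mu_l \in \kappa \cap Succ^\prime$ and $\ol{\jmath}_l < \min\{F(\mu_l),\beta\}$, I would pick a representative $q$ with $\rho_0^\beta(q\cup\ol{r}) = \hat q$ and set $H_{\hat q,\tau} := \big((G_0)^\beta\uhr(\kappa^\plus+1)\big)\uhr t(q)\, \times\, \big((G_1)^\beta\uhr(\kappa+1)\big)\uhr\{(\mu_0,\ol{\jmath}_0),\ldots\}$; by Definition/Lemma \ref{Deflem} this is a $V$-generic filter on a forcing of the kind treated in Proposition \ref{prescard} (independently of the representative, by the canonical isomorphism between such forcings), so $GCH$ holds in $V[H_{\hat q,\tau}]$ and hence $|\powerset(\kappa)^{V[H_{\hat q,\tau}]}|^W \le (\kappa^\plus)^V$. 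The collection of pairs $(\hat q,\tau)$ has cardinality at most $|(\m{P}_0)^\beta\uhr(\kappa^\plus+1)| \cdot (\kappa\cdot|\beta|)^{<\omega} \le |\beta|^V$, using the bound $|(\m{P}_0)^\beta\uhr(\kappa^\plus+1)| \le |\beta|$ from the proof of Lemma \ref{prescardF(k)} together with $\kappa^\plus \le |\beta|$. By the previous step $\dom S^\beta \subseteq \bigcup_{(\hat q,\tau)} \powerset(\kappa)^{V[H_{\hat q,\tau}]}$, a union of at most $|\beta|^V$ sets each of size at most $(\kappa^\plus)^V$, whence $|\dom S^\beta|^W \le |\beta|^V\cdot\kappa^\plus = |\beta|^V$, as required.

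The only substantive point beyond bookkeeping is the closure reduction of the second paragraph — that $\m{P}_1(\kappa^\plus)$ stays $\le\kappa$-closed over $V[G_\mathbb{Q}]$ and may therefore be stripped off when looking at a subset of $\kappa$ — which, as indicated, follows directly from the shape of $Fn$-conditions and cardinal preservation for $\mathbb{Q}$. The remaining verifications are routine: that the admissible $\hat q$ form a set lying in $W$; that each $H_{\hat q,\tau}$ is correctly recognized in $W$ via the restriction identities of Definition/Lemma \ref{Deflem} and its $\m{P}_1$-counterpart and that the transitive model $V[H_{\hat q,\tau}]$ does not depend on the chosen representative $q$; and that $\m{P}_0 \uhr t(q)\times\m{P}_1\uhr\{(\mu_0,\ol{\jmath}_0),\ldots\}$ genuinely falls under Proposition \ref{prescard}, so that its $GCH$-preservation may be invoked.
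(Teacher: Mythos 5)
Your proof is correct and follows essentially the same approach as the paper: each $X \in \dom S^\beta$ is placed into the $\powerset(\kappa)$ of a small $GCH$-preserving intermediate extension living in $W$, the index set of such extensions has size at most $|\beta|$, each such $\powerset(\kappa)$ has size at most $\kappa^\plus \le |\beta|$, and $AC$ in $W$ assembles the injection. The only cosmetic deviation is that you strip off the $\le\kappa$-closed factor $\m{P}_1(\kappa^\plus)$ first so that Proposition~\ref{prescard} applies directly, whereas the paper keeps that factor and observes $GCH$ below $\kappa^\plus$ survives it; and you phrase the conclusion as a cardinality count rather than the paper's explicit gluing of injections over an index set in bijection with $\beta$.
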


\begin{proof} We work inside 
$V[ (G_0)^\beta \uhr (\kappa^\plus + 1)\, \times\, (G_0 \, \uhr \, t(\ol{r}) ) \, \uhr \, [\kappa^{\plus}, \infty)\, \times \, (G_1)^\beta \uhr (\kappa + 1) \, \times \, G_1 (\kappa^\plus) \, \times \, G_1 \uhr \{(\ol{\kappa}_l, \ol{\imath}_l)\ | \ l < \ol{n}, \ol{\kappa}_l > \kappa^\plus\}] \vDash ZFC$. \\[-3mm]

Let $\wt{M}_\beta$ denote the collection of all tuples $(q, (\mu_0, j_0), \, \ldots \,, (\mu_{\ol{k}-1}, j_{\ol{k}-1})) \in M_\beta$ with the property that $q \, \| \, \ol{r}$, and for $\ol{q} = q \, \cup \,  \ol{r}$ as before, $\rho_0^\beta (\ol{q}) \in (G_0)^\beta \uhr (\kappa^\plus + 1)$. \\[-3mm]

Fix some $(q, (\mu_0, \ol{\jmath}_0), \, \ldots \,, (\mu_{\ol{k}-1}, \ol{\jmath}_{\ol{k}-1})) \in \wt{M}_\beta$. Then $\big((G_0)^\beta \uhr (\kappa^\plus + 1)\big) \uhr t(q)\, \times\, \big((G_1)^\beta \uhr (\kappa + 1)\big) \uhr \{(\mu_0, \ol{\jmath}_0), \, \ldots \,, (\mu_{\ol{k}-1}, \ol{\jmath}_{\ol{k}-1})\}\, \times\, G_1 (\kappa^\plus)$ is a $V$-generic filter on $\m{P}_0 \uhr t(q)\, \times\, \m{P}_1 \uhr \{(\mu_0, \ol{\jmath}_0), \, \ldots \,, \,  (\mu_{\ol{k}-1}, \ol{\jmath}_{\ol{k}-1})\}\, \times\, \m{P}_1 (\kappa^\plus)$.\\
From Proposition \ref{prescard} we know that the forcing $\m{P}_0 \uhr t (q)\, \times\, \m{P}_1 \uhr \{ (\mu_0, \ol{\jmath}_0), \, \ldots \,\}$ preserves cardinals and the $GCH$. By the same proof, one can show that $\m{P}_0 \uhr t (q)\, \times\, \m{P}_1 \uhr \{ (\mu_0, \ol{\jmath}_0), \, \ldots \,\} \, \times\, \m{P}_1 (\kappa^\plus)$ preserves cardinals and the $GCH$ below $\kappa^\plus$ (since $\m{P}_1 (\kappa^\plus)$ is $\leq \kappa$-closed): \\ For every $\alpha \leq \kappa$, \[ \big(2^\alpha\big)^{ V \boldsymbol{[( }(G_0)^\beta \, \uhr \, (\kappa^\plus + 1) \boldsymbol{)} \, \uhr \, t(q) \, \times\, \boldsymbol{(}(G_1)^\beta \, \uhr \, (\kappa + 1)\boldsymbol{)} \, \uhr \, \{(\mu_0, \ol{\jmath}_0), \, \ldots \,\}\, \times\, G_1 (\kappa^\plus)\boldsymbol{]}} = (\alpha^\plus)^V.\] Hence, in $V[ \big((G_0)^\beta \uhr (\kappa^\plus + 1) \big) \uhr t(q) \, \times\, \big( (G_1)^\beta \uhr (\kappa + 1)\big) \uhr \{ (\mu_0, \ol{\jmath}_0), \, \ldots \,\}\, \times\, G_1 (\kappa^\plus)]$, there is an injection $\iota: \powerset(\kappa) \hookrightarrow (\kappa^\plus)^V$. \\[-3mm]

Now, 
we can use $AC$ (in $V[ (G_0)^\beta \uhr (\kappa^\plus + 1)\, \times\, (G_0 \, \uhr \, t(\ol{r}) ) \, \uhr \, [\kappa^{\plus}, \infty)\, \times \, (G_1)^\beta \uhr (\kappa + 1) \, \times \, G_1 (\kappa^\plus) \, \times \, G_1 \uhr \{(\ol{\kappa}_l, \ol{\imath}_l)\ | \ l < \ol{n}, \ol{\kappa}_l > \kappa^\plus\}]$\big) to obtain a collection of injections $\iota_{\boldsymbol{(}q, (\mu_0, \ol{\jmath}_0), \ldots, (\mu_{k-1}, \ol{\jmath}_{\ol{k}-1}) \boldsymbol{)}}\: : \: \powerset (\kappa)\, \cap\, V[  \big((G_0)^\beta \uhr (\kappa^\plus + 1) \big) \uhr t(q) \, \times\,\big( (G_1)^\beta \uhr (\kappa + 1)\big) \uhr \{ (\mu_0, \ol{\jmath}_0), \, \ldots \,\}\, \times\, G_1 (\kappa^\plus)] \hookrightarrow (\kappa^\plus)^V$
for $(q, (\mu_0, \ol{\jmath}_0)\, , \, \ldots \, , \, (\mu_{k-1}, \ol{\jmath}_{k-1})) \in \wt{M}_\beta$. \\[-3mm]

Let $\wt{\wt{M}}_\beta$ denote the set of all tuples $((\kappa^\plus, j_0)\, , \, \ldots\, , \, (\kappa^\plus, j_{k-1}), (\mu_0, \ol{\jmath}_0)\, , \, \ldots\, , \, $ $(\mu_{\ol{k}-1}, \ol{\jmath}_{\ol{k}-1}))$ with $k, \ol{k} < \omega$ and $j_0\, , \, \ldots\, , \, j_{k-1} < \beta$, $\mu_0\, , \, \ldots\, , \, \mu_{\ol{k}-1} \in \kappa\, \cap\, Succ^\prime$, $\ol{\jmath}_0 < \min \{F(\mu_0), \beta\}\, , \, \ldots\, , \, $ $\ol{\jmath}_{\ol{k}-1} < \min \{F (\mu_{\ol{k}-1}), \beta\}$. \\[-3mm]

Let $\tau$ denote an injection that maps any tuple $((\kappa^\plus, j_0)\, , \, \ldots \, , \, (\kappa^\plus, j_{k-1}))$ with $j_0, \ldots, j_{k-1} < \beta$ as above to some condition $q \in \m{P}_0$ such that $t(q)$ has maximal points $(\kappa^\plus, j_0)\, , \, \ldots \, , \, (\kappa^\plus, j_{k-1})$, $q \, \| \, \ol{r}$, and for $\ol{q} := q \, \cup \, \ol{r}$ as before, $\rho_0^\beta (\ol{q}) \in (G_0)^\beta \uhr (\kappa^\plus + 1)$. \\[-3mm]

For any $((\kappa^\plus, j_0)\, , \, \ldots\, , \, (\kappa^\plus, j_{k-1}), (\mu_0, \ol{\jmath}_0)\, , \, \ldots\, , \, (\mu_{\ol{k}-1}, \ol{\jmath}_{\ol{k}-1})) \in \wt{\wt{M}}_\beta$, let \[\iota_{\boldsymbol{(}(\kappa^\plus, j_0)\, , \, \ldots\, , \, (\mu_0, \ol{\jmath}_0)\, , \, \ldots\boldsymbol{)}} := \iota_{ \boldsymbol{(} q, (\mu_0, \ol{\jmath}_0)\, , \, \ldots \boldsymbol{)}}, \] where $q := \tau ((\kappa^\plus, j_0), \ldots, (\kappa^\plus, j_{k-1}))$. \\[-3mm]

Any $X \in \dom S^\beta$ is of the form \[X = \dot{X}^{\big((G_0)^\beta \, \uhr \, (\kappa^\plus + 1) \big) \, \uhr \, t(q) \, \times\, \big((G_1)^\beta \, \uhr \, (\kappa + 1)\big) \, \uhr \, \{(\mu_0, \ol{\jmath}_0), \, \ldots \,\}\, \times\, G_1 (\kappa^\plus)}\] for some $\dot{X} \in \Name \big(\m{P}_0 \uhr t(q)\, \times\, \m{P}_1 \uhr \{(\mu_0, \ol{\jmath}_0), \ldots, (\mu_{\ol{k}-1}, \ol{\jmath}_{\ol{k}-1})\}\, \times\, \m{P} (\kappa^\plus) \big)$ with $(q, (\mu_0, \ol{\jmath}_0), \ldots, (\mu_{\ol{k}-1}, \ol{\jmath}_{\ol{k}-1})) \in \wt{M}_\beta$. Denote by $(\kappa^\plus, j_0), \ldots, (\kappa^\plus, j_{k-1})$ the maximal points of $t(q)$ with $\tau ( (\kappa^\plus, j_0), \ldots, (\kappa^\plus, j_{k-1})) =: q^\prime$. Then $\rho_0^\beta (\ol{q}), \rho_0^\beta (\ol{q}^\prime) \in G_0^\beta \uhr (\kappa^\plus + 1)$ with the same maximal points; hence, $\rho_0^\beta (\ol{q}) = \rho_0^\beta (\ol{q}^\prime)$. With the isomorphism $T (q, q^\prime): \m{P}_0 \uhr t(q) \rightarrow \m{P}_0 \uhr t(q^\prime)$ from Definition / Lemma \ref{Deflem} and its extension $\ol{T} (q, q^\prime): \Name (\m{P}_0 \uhr t(q) \, \times\, \m{P}_1 \uhr \{ (\mu_0, \ol{\jmath}_0), \ldots, (\mu_{\ol{k}-1}, \ol{\jmath}_{\ol{k}-1})\}\, \times\, \m{P}_1 (\kappa^\plus)) \rightarrow \Name (\m{P}_0 \uhr t(q^\prime) \, \times\, \m{P}_1 \uhr \{ (\mu_0, \ol{\jmath}_0), \ldots, (\mu_{\ol{k}-1}, \ol{\jmath}_{\ol{k}-1})\}\, \times\, \m{P}_1 (\kappa^\plus))$, 
it follows that \[X = \big(\ol{T} (q, q^\prime) \dot{X}\big)^{\big((G_0)^\beta \, \uhr \, (\kappa^\plus + 1) \big) \, \uhr \, t(q^\prime) \, \times\, \big((G_1)^\beta \, \uhr \, (\kappa + 1)\big) \, \uhr \, \{(\mu_0, \ol{\jmath}_0), \, \ldots \,\}\, \times\, G_1 (\kappa^\plus)}\] where $ \big( \ol{T} (q, q^\prime) \dot{X} \big) \in \Name (\m{P}_0 \uhr t(q^\prime) \, \times\, \m{P}_1 \uhr \{ (\mu_0, \ol{\jmath}_0), \ldots, (\mu_{\ol{k}-1}, \ol{\jmath}_{\ol{k}-1})\}\, \times\, \m{P}_1 (\kappa^\plus))$. \\[-3mm]

Hence, \[X \in \dom \iota_{ \boldsymbol{(}q^\prime, (\mu_0, \ol{\jmath}_0), \ldots\boldsymbol{)}} = \dom \iota_{\boldsymbol{(}(\kappa^\plus, j_0), \ldots, (\mu_0, \ol{\jmath}_0), \ldots\boldsymbol{)}} .\]

\vspace*{1mm}

There is a canonical bijection $b: \wt{\wt{M}}_\beta \rightarrow \beta$. Hence, 
the injections $\iota_{\boldsymbol{(}(\kappa^\plus, j_0)\, , \, \ldots \, , \, (\mu_0, \ol{\jmath}_0)\, , \, \ldots\, \boldsymbol{)}}$ for $((\kappa^\plus, j_0), \ldots, (\mu_0, \ol{\jmath}_0), \ldots) \in \wt{\wt{M}}_\beta$ can be \tbl glued together\tbr\,to an injection $\ol{\iota}: \dom S^\beta \rightarrow (\kappa^\plus)^V\, \times\, \beta$ as follows: For $X \in \dom S^\beta$, take $((\kappa^\plus, j_0), \ldots, (\mu_0, \ol{\jmath}_0), \ldots) \in \wt{\wt{M}}_\beta$ with $\delta := b ((\kappa^\plus, j_0), \ldots, (\mu_0, \ol{\jmath}_0), \ldots) < \beta$ least such that $X \in \dom \iota_{\boldsymbol{(}(\kappa^\plus, j_0), \ldots, (\mu_0, \ol{\jmath}_0), \ldots\boldsymbol{)}}$ and set \[\ol{\iota} (X) := \big(  \,\iota_{\boldsymbol{(}(\kappa^\plus, j_0)\, , \, \ldots\, , \, (\mu_0, \ol{\jmath}_0)\, , \, \ldots\,\boldsymbol{)}} (X), \delta \, \big).\]

This gives an injection $\iota: \dom S^\beta \rightarrow \beta$ in $V[ (G_0)^\beta \uhr (\kappa^\plus + 1)\, \times\, (G_0 \, \uhr \, t(\ol{r}) ) \, \uhr \, [\kappa^{\plus}, \infty)\, \times \, (G_1)^\beta \uhr (\kappa + 1) \, \times \, G_1 (\kappa^\plus) \, \times \, G_1 \uhr \{(\ol{\kappa}_l, \ol{\imath}_l)\ | \ l < \ol{n}, \ol{\kappa}_l > \kappa^\plus\}]$ as desired. 

\end{proof}

Thus, we have shown that our assumption of a surjective function $S: \powerset (\kappa) \rightarrow F(\kappa)$ in $N$ leads to a contradiction.

Hence, $\theta^N (\kappa) \leq F(\kappa)$ for any limit cardinal $\kappa$. \\[-2mm]

It remains to show that $\theta^N (\kappa^\plus) \leq F(\kappa^\plus)$ for all successor cardinals $\kappa^\plus$; which can be done by the same argument: \\[-3mm]

Like before, we assume towards a contradiction there was a surjective function $S: \powerset (\kappa^\plus) \rightarrow F(\kappa^\plus)$ in $N$, $S = \dot{S}^G$ with $\pi \ol{\dot{S}}^{D_\pi} = \ol{\dot{S}}^{D_\pi}$ for all $\pi$ that are contained in an intersection like $(A_{\dot{S}})$. Again, fix a condition $r \in G_0$ such that $\{(\kappa_0, i_0), \, \ldots \,, (\kappa_{n-1}, i_{n-1})\} \subseteq t(r)$ contains all maximal points of $t(r)$, and an extension $\ol{r} \leq_0 r$, $\ol{r} \in G_0$ such that all $t(\ol{r})$-branches have height $\geq \kappa^{\plus}$. \\
From Corollary \ref{corapprox}, it follows that any $X \in N$, $X \subseteq \kappa^\plus$, is contained in a model of the form \[V[G_0 \uhr \{ (\kappa^\plus, j_0), \ldots, (\kappa^\plus, j_{k-1})\}\, \times \, G_1 \uhr \{(\mu_0, \ol{\jmath}_0), \ldots, (\mu_{\ol{k} - 1}, \ol{\jmath}_{\ol{k} - 1})\} ]\] 
where $j_0, \ldots, j_{k-1} < F_{\lim} (\kappa^{\plus}) = F(\kappa)$ and $\mu_0, \ldots, \mu_{\ol{k}-1} \in Succ^\prime\, \cap\, (\kappa^\plus + 1)$ with $\ol{\jmath}_0 < F(\mu_0),\,  \ldots \,, \ol{\jmath}_{\ol{k}-1} < F(\mu_{\ol{k} - 1})$. \\[-3mm]

For a limit ordnal $\wt{\beta} < F(\kappa^\plus)$, our definition of \textit{large enough for $(A_{\dot{S}})$} has to be slightly modified: This time, we require that $\wt{\beta} > \ol{\imath}_l$ for all $l < \ol{n}$ with $\ol{\kappa}_l \leq \kappa^\plus$ (instead of just $\ol{\kappa}_l < \kappa$), and $\wt{\beta} > \ol{\alpha}_l$ for all $l < \ol{m}$ with $\ol{\lambda}_l \leq \kappa^\plus$ (instead of just $\ol{\lambda}_l < \kappa$). \\[-3mm]

Fix $\wt{\beta} < F(\kappa^\plus)$ \textit{large enough for} $(A_{\dot{S}})$ and $\beta := \wt{\beta} + \kappa^\plus$ (addition of ordinals). We define the restriction $S^\beta$ similarly as before:
Let $M^\prime$ denote the collection of all tuples $(s, (\mu_0, \ol{\jmath}_0)\, , \, \ldots\, , \, (\mu_{\ol{k}-1}, \ol{\jmath}_{\ol{k}-1}))$ with $\ol{k} < \omega$, $\mu_0\, , \, \ldots \, , \, \mu_{\ol{k}-1} \leq \kappa^\plus$, $\ol{\jmath}_0 < F(\mu_0)\, , \, \ldots \, , \, \ol{\jmath}_{\ol{k}-1} < F(\mu_{\ol{k}-1})$, and $s$ a condition in $\m{P}_0$ with maximal points $(\kappa^\plus, j_0)\, , \, $ $ \ldots \, , $ $\, (\kappa^\plus, j_{k-1})$ where $j_0 < F_{\lim} (\kappa^{\plus}), \, \ldots \,, j_{k-1} < F_{\lim} (\kappa^{\plus})$. Moreover, we denote by $M_\beta^\prime$ the collection of all tuples $(s, (\mu_0, \ol{\jmath}_0)\, , \, \ldots \, , \, (\mu_{\ol{k}-1}, \ol{\jmath}_{\ol{k}-1})) \in M^\prime$ with the additional property that $\ol{\jmath}_0 < \beta\, , \, \ldots\, , \, \ol{\jmath}_{\ol{k}-1} < \beta$, and $s$ has maximal points $(\kappa^\plus, j_0)\, , $ $ \, \ldots\, , $ $\, (\kappa^\plus, j_{k-1})$ with $j_0 < \beta \, , \,\ldots\, , \,j_{k-1} < \beta$.

Let \[ S^\beta := S \uhr \{ \,X \subseteq \kappa\ | \ \exists\, (s, (\mu_0, \ol{\jmath}_0), \ldots, (\mu_{\ol{k}-1}, \ol{\jmath}_{\ol{k}-1})) \in M^\prime_\beta: \ s \in G_0 \uhr (\kappa^\plus + 1), \]\[X \in V[G_0 \uhr t(s)\, \times\, G_1 \uhr \{ (\mu_0, \ol{\jmath}_0)\, , \, \ldots\, , \, (\mu_{\ol{k}-1}, \ol{\jmath}_{\ol{k}-1})\}] \, \}.\] 
The same proof as for Proposition \ref{theta 1} shows that the surjectivity of $S$ implies that $S^\beta$ must be surjective, as  well. \\[-3mm]

Now, with the same construction as before, one can capture $S^\beta$ in an intermediate model $V[ (G_0)^\beta \uhr (\kappa^\plus + 1)\, \times \,  (G_0 \uhr t(\ol{r})) \, \uhr \, [\kappa^{\plus}, \infty)\, \times \, (G_1)^\beta \, \uhr \, (\kappa^\plus + 1)\, \times \, G_1 \uhr \{ (\ol{\kappa}_l, \ol{\imath}_l)\ | \ l < \ol{n}, \ol{\kappa}_l > \kappa^\plus\}]$, and like in Lemma \ref{prescardF(k)}, one can show that the according forcing $(\m{P}_0)^\beta \uhr (\kappa^\plus + 1) \, \times \, ( \m{P}_0 \, \uhr \, t(\ol{r})) \, \uhr \, [\kappa^{\plus}, \infty)\, \times \, (\m{P}_1)^\beta\, \uhr \, (\kappa^\plus + 1) \, \times \, \m{P}_1 \uhr \{ (\ol{\kappa}_l, \ol{\imath}_l)\ | \ l < \ol{n},\, \ol{\kappa}_l > \kappa^\plus \}$ preserves cardinals $\geq F(\kappa^\plus)$. \\ Finally, one can show like in Proposition \ref{iotabeta} that in this model $V[ (G_0)^\beta \uhr (\kappa^\plus + 1)\, \times \,  (G_0 \uhr t(\ol{r})) \, \uhr \, [\kappa^{\plus}, \infty)\, \times \, (G_1)^\beta \, \uhr \, (\kappa^\plus + 1)\, \times \, G_1 \uhr \{ (\ol{\kappa}_l, \ol{\imath}_l)\ | \ l < \ol{n}, \, \ol{\kappa}_l > \kappa^\plus\}]$, there is also an injection $\iota^\beta: \dom S^\beta \hookrightarrow \beta$. This gives the desired contradiction. \\[-3mm]

Hence, it follows that $\theta^N (\kappa^\plus) \leq F(\kappa^\plus)$ for all successor cardinals $\kappa^\plus$. \\[-2mm]

Thus, our model $N$ has all the desired properties.

\section {Discussion and Remarks} One could ask whether it is possible to do a similar construction and obtain a $ZF$-model $N$ where additionally, $DC$ holds. Note that the Axiom of Dependent Choice imposes the following restrictions on the $\theta$-function: Firstly, $DC$ implies $\cf \theta (\kappa) > \omega$ for all $\kappa$. Secondly, in this particular setting where we obtain our choiceless model $N$ as a symmetric extension of $V \vDash ZFC\, \plus \, GCH$,
it follows from $\theta^N (\kappa) = \delta^\plus$ for some cardinal $\delta$
that $\cf \, \delta > \omega$. \\[-3mm]

Generally, whenever $\m{P}$ is a $<\eta$ - closed forcing in $V$ with a group $A$ of $\m{P}$-automorphisms and $\mathcal{F}$ a $<\eta $ - closed normal filter on $A$, then the according $\m{P}$-generic symmetric extension satisfies $DC_{< \eta}$ (for instance, see \cite[Lemma 1]{Karagila}).\\[-3mm]

A straightforward generalization of $\m{P}_0$ would be a forcing with trees $(t, \leq_t)$ where countably many maximal points are allowed, instead of just finitely many. \\[-4mm]

However, this gives rise to the following appearance that we call an \textit{open branch}: There might be a $\leq_t$-increasing chain of vertices $((\alpha, i_\alpha)\ | \ \alpha < \lambda)$ for some cardinal $\lambda$ of countable cofinality such that there exists no $(\lambda, i) \in t$ with $(\alpha, i_\alpha) \leq_t (\lambda, i)$ for all $\alpha < \lambda$. The number of open branches might be $2^{\aleph_0} = \aleph_1$, so we can not always \tbl close\tbr\,all of them and retain a condition in the forcing. \\[-3mm]

Let us shortly discuss the following technical problem that comes along with these open branches: If conditions $p$ and $q$ in $\m{P}_0$ agree on a subtree $t(r) \geq t(p)$, $t(q)$, it
might not be possible to 
achieve $\pi p \, \| \, q$ by a small $\m{P}_0$-automorphism $\pi$ that is the identity on $t(r)$: Consider the case that the tree $t(r)$ has an open branch $((\alpha, i_\alpha)\ | \ \alpha < \lambda)$ such that in $t(p)$, there is a vertex $(\lambda, i)$ with $(\lambda, i) \geq_{t(p)} (\alpha, i_\alpha)$ for all $\alpha < \lambda$, but in $t(q)$, there is a different vertex $(\lambda, i^\prime)$ with $i^\prime \neq i$ and $(\lambda, i^\prime) \geq_{t(q)} (\alpha, i_\alpha)$ for all $\alpha < \lambda$.
An automorphism $\pi$ with $\pi p\, \| \, q$ such that $\pi$ is the identity on this branch $((\alpha, i_\alpha)\ | \ \alpha < \lambda)$, 
has to satisfy $\pi (\lambda) (\lambda, i) = (\lambda, i^\prime)$, since the tree
$t(\pi p) \, \cup \, t(q)$ must not have a \tbl splitting\tbr\,at level $\lambda$. But then, there is no way to guarantee that $\pi$ is small, since in general, $i$ and $i^\prime$ will not be close to each other. \\[-3mm]

Hence, generalizing $\m{P}_0$ to trees with countably many maximal points makes us lose an essential homogeneity property, so several crucial arguments in the original proof do not work any more. \\[-3mm]

However, one could allow trees with $< \eta$ - many maximal points, where $\eta$ is an inaccessible cardinal.
Then our conditions in the forcing have $< \eta$ - many open branches, so we can now \tbl close\tbr\,all of them and still remain inside $\m{P}_0$. 

In this setting, we call a $\m{P}_0$-automorphism \textit{small}, if for any level $\kappa$ and $\pi (\kappa) (\kappa, i) = (\kappa, i^\prime)$, it follows that there is a ordinal $\gamma$ divisible by $\eta$ with $i, i^\prime \in [\gamma, \gamma + \eta)$. \\
Concerning $\m{P}_1$, we can use $\eta$ - support instead of finite support, and then take intersections of $< \eta$-many $Fix (\kappa, i)$ - and $Small(\lambda, [0, \alpha))$ - subgroups for generating $N$. 
Then $N \vDash DC_{< \eta}$.

But $DC_{< \eta}$ imposes further restrictions on the $\theta$-function, and one cannot use this modified forcing for setting $\theta$-values $\theta^N (\kappa)$ for cardinals $\kappa < \eta$. \\[-2mm]

In this setting with $N \vDash DC$, one could now furthermore ask whether certain large cardinal properties are preserved between $V$ and the symmetric extension $N$.

\bibliographystyle{alpha}
\nocite{*}
\bibliography{bib}

\vspace*{3mm}

{\scshape{ \footnotesize Anne Fernengel, Mathematisches Institut, Rheinische Friedrich-Wilhelms-Universit\"at, \\[-1mm] Bonn, Germany }}\\[-0,5mm]
{\rmfamily \itshape \footnotesize E-Mail address: }{ \ttfamily \footnotesize anne@math.uni-bonn.de} \\[1mm]

{\scshape{ \footnotesize Peter Koepke, Mathematisches Institut, Rheinische Friedrich-Wilhelms-Universit\"at, \\[-1mm] Bonn, Germany }} \\[-0,5mm]
{\rmfamily \itshape \footnotesize E-Mail address:} { \ttfamily \footnotesize koepke@math.uni-bonn.de}

\end{document}